\numberwithin{equation}{section}
\theoremstyle{plain}
\newtheorem{theorem}{Theorem}
\newtheorem{corollary}[theorem]{Corollary}
\newtheorem{proposition}[theorem]{Proposition}
\newtheorem{lemma}[theorem]{Lemma}
\theoremstyle{definition}
\newtheorem{definition}{Definition}
\newtheorem{example}{Example}
\newtheorem{question}[equation]{Question}
\theoremstyle{remark}
\newtheorem{remark}{Remark}
\numberwithin{theorem}{section}
\numberwithin{definition}{section}
\numberwithin{example}{section}
\numberwithin{remark}{section}
\newcommand{\ben}{\begin{enumerate}}
\newcommand{\bit}{\begin{itemize}}
\newcommand{\diag}{\operatorname{diag}}
\newcommand{\dist}{\operatorname{dist}}
\newcommand{\een}{\end{enumerate}}
\newcommand{\eit}{\end{itemize}}
\newcommand{\eps}{\varepsilon}
\newcommand{\Id}{\operatorname{Id}}
\renewcommand{\L}{{\mathcal L}}
\newcommand{\loc}{\operatorname{loc}}
\newcommand{\N}{\mathbb{N}}
\newcommand{\Om}{\Omega}
\newcommand{\R}{\mathbb{R}}
\newcommand{\ra}{\rightarrow}
\newcommand{\rank}{\operatorname{rank}}
\definecolor{gray}{gray}{0.7}
\newcommand{\gray}[1]{{\color{gray}{GRAY#1}}}
\newcommand{\Z}{\mathbb{Z}}
\def\XXint#1#2#3{{\setbox0=\hbox{$#1{#2#3}{\int}$ }
\vcenter{\hbox{$#2#3$ }}\kern-.6\wd0}}
\renewcommand{\gray}[1]{{\color{gray}{   }}}
\def\diag{\operatorname{diag}}
\def\supp{\operatorname{supp}}
\newcommand{\purple}[1]{{\color{purple}{PURPLE#1}}}
\begin{document}

\title[Breakdown of rigidity for Euclidean product structure]{Rigidity of Euclidean product structure: breakdown for low Sobolev exponents}
\author[B. Kleiner]{Bruce Kleiner}
\thanks{BK was supported by NSF grants DMS-1711556 and DMS-2005553, and a Simons Collaboration grant}
\author[S. M\"uller]{Stefan M\"uller}
\thanks{SM has been supported by the Deutsche Forschungsgemeinschaft (DFG, German Research Foundation) through
the Hausdorff Center for Mathematics (GZ EXC 59 and 2047/1, Projekt-ID 390685813) and the 
collaborative research centre  {\em The mathematics of emerging effects} (CRC 1060, Projekt-ID 211504053).}
\author[L. Sz{\'{e}}kelyhidi]{ L\'{a}szl\'{o} Sz{\'{e}}kelyhidi, Jr.}
\thanks{   LSz gratefully acknowledges the support of Grant Agreement No. 724298-DIFFINCL of the European Research Council and the support of the  Deutsche Forschungsgemeinschaft (DFG, German Research Foundation) through GZ SZ 325/2-1.}
\author[X. Xie]{Xiangdong Xie}
\thanks{XX has been supported by Simons Foundation grant \# 315130.}

\dedicatory{Dedicated to Professor Vladimir \v Sver\'ak on the occasion of his 65th birthday}
\maketitle

\setcounter{tocdepth}{2}
\tableofcontents

%\begin{abstract}
%We develop a general toolbox to study $W^{1,p}$ solutions
%of differential inclusions $\nabla u \in K$ for unbounded sets $K$. 
%A key notion is the concept that a subset $K$ of the space $\R^{d \times m}$  of $d \times m$ matrices can be reduced to another set
%$K'$. We then use this framework to show that the product rigidity for Sobolev maps fails for $p<2$, and also apply our toolbox to simplify several examples from the literature.
%\end{abstract}

\section{Introduction}
The purpose of this paper is twofold. First we show that 
the results in the companion paper \cite{otherpaper} on product rigidity for maps $f: \Omega_1 \times \Omega_2 \subset \R^{n} \times 
\R^{n} \to \R^n \times \R^n$ in the Sobolev space $W^{1,p}$ are sharp
with respect to $p$. Specifically,  we show that for all $n \ge 2$
and all $p < 2$ there exist maps $f \in W^{1,p}$ such that the weak differential $\nabla f$ is invertible almost everywhere and preserves or reverses the product structure almost everywhere, but $f$ is not of the form $f(x,y) = (f_1(x), f_2(y))$ or $f(x,y) = (f_2(y), f_1(x))$, 
see Theorem \ref{t:main} below for the precise statement. 

Secondly,  we develop a general toolbox to study $W^{1,p}$ solutions
of differential inclusions $\nabla u \in K$ for unbounded sets $K$. 
A key notion is the concept that a subset $K$ of the space $\R^{d \times m}$  of $d \times m$ matrices can be reduced to another set
$K'$, see Definition \ref{d:reduced}. A closely related notion 
was introduced by M.~Sychev \cite{sychev01}    for bounded sets $K$. 
It turns out that the concept of reduction is both  simpler and more powerful for unbounded sets $K$, see the discussion after Definition \ref{d:reduced}.
As an illustration we show in the Appendix how result on
optimal $L^p$ regularity for elliptic systems with bounded measurable coefficients \cite{astala_faraco_szekelyhidi08}  as well as recent results on 
irregular solutions of the $p$-Laplace equation \cite{ColomboTione2022} can easily be obtained by this method, once one can perform at certain algebraic construction in matrix space leading to a staircase laminate (see Definition \ref{d:staircase}, 
Proposition \ref{p:staircaseconstruction} and Theorem \ref{t:SL-criterion}).

\subsection{Setting and main results}  \label{se:setting}
%In this paper we continue the discussion of product rigidity, begun in \cite{otherpaper}. 
For sets $X_1, X_2 \subset \R^n$
we say that a mapping  $f:X_1\times X_2\ra \R^{2n}$ is split (or preserves product structure) if there exist functions $f_1 : X_1 \to  \R^n$ and $f_2 : X_2 \to \R^n$
  such that either $f(x,y) = (f_1(x),f_2(y))$ for all $(x,y) \in X_1\times X_2$
  or $ f(x,y)=(f_2(y), f_1(x)) $ for all   $(x,y) \in X_1\times X_2$. We are interested in the following question about mappings $f:\Om_1\times\Om_2\ra \R^{2n}$, where $\Om_1,\Om_2\subset \R^n$ are connected open subsets and $f$ is assumed to be in the Sobolev space $W^{1,p}_{\loc}$ for some $1\leq p<\infty$.
  
  \begin{question}  \label{qu:global_split}  
If the (approximate) differential $\nabla f(x)$ is  split and invertible for
almost every $x\in \Om$, is $f$ split?  
More generally, if the differential is ``approximately split'', must $f$ itself be ``approximately split''?
\end{question}
Our motivation for considering this question comes   from   geometric group theory, geometric mapping theory,
and the theory of nonlinear 
partial differential equations; see the introduction 
of \cite{otherpaper} and \cite{KMX1} for discussion of this context. 

We now fix connected open subsets $\Om_1,\Om_2\subset \R^n$, and let $\Om:=\Om_1\times\Om_2$. 
Note that Question~\ref{qu:global_split} is trivial for $C^1$ maps: if $f:\Om\ra \R^{2n}$ is $C^1$  and the differential $\nabla f(x)$ is 
bijective and split everywhere, then $f$ is clearly split,
since $\nabla f:\Om\ra \R^{2n\times 2n}$ is a continuous map taking values in the set of 
split and bijective linear maps, which consists of two components -- the block diagonal and the block anti-diagonal invertible matrices. 
On the other hand, if $f:\Om\ra \R^{2n}$ is Lipschitz then its differential is only measurable, so in principle oscillations between 
the two types of behavior might arise. 
In fact, for $n = 1$ it is easy to find Lipschitz maps such that $\nabla f$  is bijective and split a.e., but $f$ is not split, see \cite{otherpaper}.

In the companion paper \cite{otherpaper} we obtained
rigidity results for Sobolev maps with split or ``approximately split'' differentials. 
The first purpose of this paper 
is to show that the conditions on the $L^p$ integrability of the  weak derivative in these results are sharp. 
To do so,  we introduce a new strategy for constructing solutions to differential inclusions for low Sobolev exponents.

We first discuss  maps with split differentials.
In \cite{otherpaper} the following result was obtained.

\begin{theorem}[\cite{otherpaper}] 
\label{th:split}
 Suppose that $n \ge  2$,
 $f  \in  W^{1,2}(\Omega;\R^{2n})$  and that the weak differential $\nabla f(x)$  is split and bijective for a.e. $x \in \Omega$. Then $f$ is split.
 \end{theorem}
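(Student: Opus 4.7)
The plan is to turn the $L^2$ hypothesis into a distributional rigidity statement for wedge products of pairs of coordinate functions of $f$. Write $f=(g,h)$ with $g,h:\Om\to\R^n$, and let $E_1,E_2\subset\Om$ be the measurable sets where $\nabla f$ is block diagonal (so $\partial_y g=0$ and $\partial_x h=0$) respectively block anti-diagonal (so $\partial_x g=0$ and $\partial_y h=0$); they partition $\Om$ up to null sets. The goal is to show one of them is null, after which connectedness of $\Om_1,\Om_2$ and standard Sobolev arguments deliver the split form.

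The main tool is the family of 2-forms $\omega^{ij}:=dg^i\wedge dg^j$ for $1\le i<j\le n$. These lie in $L^1_{\loc}$ by Cauchy--Schwarz, and each is distributionally closed (writing $\omega^{ij}=d(g^i\,dg^j)$ is justified by Sobolev embedding of $W^{1,2}$ on $\R^{2n}$). Pointwise, every mixed coefficient $\partial_{x_\alpha}g^i\,\partial_{y_\beta}g^j-\partial_{y_\beta}g^i\,\partial_{x_\alpha}g^j$ vanishes on both $E_1$ and $E_2$ (some factor is zero in each case), hence vanishes a.e.\ on $\Om$. Therefore $\omega^{ij}=A^{ij}+B^{ij}$, where $A^{ij}$ is a pure $dx$-type 2-form with coefficients supported in $E_1$ and $B^{ij}$ is a pure $dy$-type 2-form with coefficients supported in $E_2$. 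Splitting the identity $d(A^{ij}+B^{ij})=0$ by form type---the 3-form components of $dA^{ij}$ and $dB^{ij}$ live in four disjoint bidegrees---gives $dA^{ij}=0=dB^{ij}$ individually, whose mixed components read $\partial_{y_\gamma}A^{ij}_{\alpha\beta}=0$ and $\partial_{x_\gamma}B^{ij}_{\alpha\beta}=0$ distributionally. Connectedness of the factors then makes each $A^{ij}_{\alpha\beta}$ a function of $x$ alone and each $B^{ij}_{\alpha\beta}$ a function of $y$ alone.

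Now the support $U^{ij}_{\alpha\beta}\subset\Om_1$ of the $x$-function $A^{ij}_{\alpha\beta}$ satisfies $U^{ij}_{\alpha\beta}\times\Om_2\subset E_1$ modulo null sets, and likewise the support $V^{ij}_{\alpha\beta}\subset\Om_2$ of $B^{ij}_{\alpha\beta}$ gives $\Om_1\times V^{ij}_{\alpha\beta}\subset E_2$; since $E_1\cap E_2$ is null, every product $U^{ij}_{\alpha\beta}\times V^{i'j'}_{\alpha'\beta'}$ is null, so either all $U$'s or all $V$'s are null. Assume the $V$'s are null. Then every $2\times 2$ minor of the $n\times n$ matrix $\partial_y g$ vanishes a.e., so $\rank\partial_y g\le 1$ a.e. But pointwise $\partial_y g$ is either $0$ (on $E_1$) or an invertible $n\times n$ matrix (on $E_2$), which for $n\ge 2$ forces $\partial_y g=0$ a.e., so $|E_2|=0$; this is precisely where the hypothesis $n\ge 2$ enters. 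The block-diagonal constraint on the remaining full-measure set then forces $\partial_x h=0$ a.e., and connectedness delivers $f(x,y)=(g_1(x),h_2(y))$. The main technical obstacle I anticipate is making the distributional manipulations rigorous at $L^1$ regularity---verifying closedness of $\omega^{ij}$ via approximation in $W^{1,2}$, and confirming that distributional vanishing of all $y$-derivatives of an $L^1$ function on a connected cylinder produces an honest $x$-only representative---both standard in this setting but requiring care.
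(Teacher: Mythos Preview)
This theorem is quoted from the companion paper and is not proved in the present paper, so there is no proof here to compare against. Your argument stands on its own and, as far as I can see, is correct. The central idea---that the $2$-forms $\omega^{ij}=dg^i\wedge dg^j$ are distributionally closed for $g\in W^{1,2}$, while the split hypothesis kills all mixed $dx\wedge dy$ coefficients---is clean and makes the role of the integrability exponent $2$ transparent: closedness of a bilinear expression in $\nabla g$ is exactly what $L^2$ buys. The bidegree decomposition of $d(A+B)=0$ into four disjoint pieces is the linchpin, and your dichotomy (a single $U^{ij}_{\alpha\beta}$ of positive measure forces every $V$ to be null) is correct. The final rank argument is where $n\ge 2$ genuinely enters.

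Two minor remarks. First, the justification of $d\omega^{ij}=0$ does not actually require Sobolev embedding: approximating $g$ by smooth $g_k$ in $W^{1,2}$ gives $dg_k^i\wedge dg_k^j\to\omega^{ij}$ in $L^1_{\loc}$ by bilinearity, and each approximant is closed. Second, you should spell out the symmetric branch explicitly: if all $U$'s are null then every $2\times 2$ minor of $\partial_x g$ vanishes, forcing $|E_1|=0$ (since $\partial_x g$ is invertible on $E_1$), whence $f(x,y)=(g(y),h(x))$.
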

 
 Our first result  is that the integrability
 exponent $2$ is sharp. In fact we show the result can fail for Sobolev maps whose gradient in the Marcinkiewicz space weak-$L^2$,  even if we strengthen the condition that $\nabla f(x)$ be bijective to the 
 condition that $\det \nabla f(x) = 1$. 
 
 To state the result, we say that $\Omega \subset \R^{m}$ is a regular domain if $\Omega$ is 
 open, bounded, connected 
 and the boundary $\partial \Omega$ has  zero $m$-dimensional Lebesgue measure.
 We use the notation 
 \begin{equation} \label{e:split}
 L: = \{ X \in \R^{2n\times 2n}: \text{$X$ is split} \},
 \end{equation}
  \begin{equation}\label{e:det1}
  \Sigma:= \{ X \in \R^{2n\times 2n} :
  \det X = 1 \}
  \end{equation}

 \begin{theorem}\label{t:main}
There exists a constant $M\geq 1$ with the following property.
Let $\Omega\subset\R^{2n}$ be a regular domain, $A\in \R^{2n\times 2n}$, $\alpha\in [0,1)$ and $\delta >0$. Then there exists a continuous map $u:\overline{\Omega}\to\R^{2n}$ such that $u\in W^{1,1}(\Omega)\cap C^{\alpha}(\overline{\Omega})$ and the following properties hold:
\begin{itemize}
\item $u(x)=Ax$ on $\partial\Omega$, 
\item $\|u-Ax\|_{C^\alpha(\overline{\Omega})}<\delta$,
\item $\nabla u(x)\in L\cap\Sigma$ for almost every $x\in\Omega$,
\item for any $t>0$ 
$$
|\{x\in\Omega:\,|\nabla u(x)|>t\}|\leq M (1+|A|^2)t^{-2} |\Omega|.
$$
\end{itemize}
In particular $u\in W^{1,p}(\Omega)$ for any $p<2$ and, if $A\notin L$ then $u$ is not split. 
\end{theorem}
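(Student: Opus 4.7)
The plan is to apply the staircase-laminate machinery advertised in the introduction (Definition \ref{d:staircase}, Proposition \ref{p:staircaseconstruction}, Theorem \ref{t:SL-criterion}), which reduces the construction of $u$ to a purely algebraic problem in matrix space: produce a sequence of laminates of finite order $\{\nu_k\}$ on $\R^{2n\times 2n}$, all with barycenter $A$, concentrating on $L\cap \Sigma$ in the limit, and whose tails obey $\nu_k(\{|X|>t\})\lesssim (1+|A|^2)t^{-2}$ uniformly in $k$. Given such a staircase laminate, Theorem~\ref{t:SL-criterion} directly yields a continuous map $u$ with affine boundary data $Ax$, $C^\alpha$ close to $Ax$, with $\nabla u\in L\cap\Sigma$ a.e., and with the distribution function of $|\nabla u|$ controlled by the tail of the laminate. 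Thus the real content lies in constructing the laminate.

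\textbf{Rank-one connections in $L\cap\Sigma$.} I will work in the block-diagonal component of $L$ (the anti-diagonal case is analogous, and the sign of $\det$ is locally constant along rank-one paths so no difficulty arises there). Given $X=\diag(B,C)\in L\cap\Sigma$, a rank-one perturbation that remains split must perturb only one block, say $B'=B+a\otimes b$. Preserving $\det B'=\det B$ is the single scalar condition $\langle B^{-T}b,a\rangle=0$, which for $n\ge 2$ carves out a large family of admissible rank-one directions at every point. Choosing $a,b$ from this family with $|a\otimes b|$ as large as desired provides rank-one splits
\[
X = \lambda\, X_+ + (1-\lambda)\, X_-, \qquad X_\pm =\diag(B\pm s_\pm a\otimes b,\,C) \in L\cap\Sigma,
\]
with $\lambda s_- = (1-\lambda)s_+$, so that one of $X_\pm$ can be pushed arbitrarily far in matrix norm while the other stays close to $X$.

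\textbf{Iterative construction of the staircase.} Starting from $\nu_0=\delta_A$, at each step I replace the far atom $m_k\,\delta_{A_k}$ by a rank-one split $m_k\bigl((1-\mu_k)\delta_{B_k}+\mu_k\delta_{A_{k+1}}\bigr)$ inside $L\cap\Sigma$, where $B_k$ is the ``safe'' atom left behind near $A$, and $A_{k+1}$ is the new far atom with $|A_{k+1}|\asymp 2|A_k|$. Choosing $\mu_k=1/4$ yields mass $m_k\asymp 4^{-k}$ at norm scale $2^k$, hence a tail $\nu_k(\{|X|>t\})\lesssim t^{-2}$, which is exactly the weak-$L^2$ bound stated in the theorem. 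The prefactor $1+|A|^2$ enters when $|A|$ itself is large: before the geometric doubling phase begins, one needs roughly $\log|A|$ preliminary splits to open up enough room inside $L\cap\Sigma$ for the first doubling, and these preliminary splits must be absorbed in the constant.

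\textbf{Main obstacle.} The hard part is the bookkeeping in the algebraic construction: at every stage the split must simultaneously (i) lie inside $L\cap \Sigma$, (ii) preserve the running barycenter $A$ exactly, and (iii) send $A_{k+1}$ geometrically to infinity while keeping the residual atom $B_k$ inside a ball whose radius does not grow with $k$. Requirement (iii) is the delicate one: if the residual atom drifts, the $C^\alpha$ closeness of $u$ to $Ax$ is destroyed; if the far atom doubles too slowly, the exponent in the tail drops below $2$. Choosing the rank-one directions $a\otimes b$ on the $SL$-constraint surface so that both the residual and the far atoms have geometrically controlled norms — and verifying this uniformly in $A$ — is where the dimension hypothesis $n\ge 2$ is used in an essential way, since for $n=1$ the constraint $\langle B^{-T}b,a\rangle=0$ forces $a\otimes b=0$ and no nontrivial rank-one deformation within $SL$ exists. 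Once this construction is carried out and the laminate is fed into Theorem~\ref{t:SL-criterion}, the four bulleted properties follow directly, and the final remark — that $u$ cannot be split when $A\notin L$ — is immediate from the boundary condition $u=Ax$ on $\partial\Omega$, since a split map has a split mean gradient.
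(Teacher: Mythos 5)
There is a genuine gap, and it is at the heart of the construction rather than in the bookkeeping. Your staircase is built entirely inside one component of $L$: you start from $\delta_A$, split along rank-one directions $a\otimes b$ satisfying the $SL$-constraint in a single block, and claim all atoms stay in $L\cap\Sigma$ (indeed in $L_1\cap\Sigma$), with the anti-diagonal component "analogous". But $L_1$ (and $L_2$) is a linear subspace, so \emph{any} probability measure supported in $L_1$ has barycenter in $L_1$; the same holds for $L_2$. Since the interesting case of the theorem is $A\notin L$, no laminate of the kind you describe can have barycenter $A$. Worse, you cannot fix this by mixing the two components directly: there are no rank-one connections between $L_1\cap\Sigma$ and $L_2\cap\Sigma$ (both sets consist of invertible matrices, and the difference of a block-diagonal and a block-anti-diagonal invertible matrix never has rank one), so already the first elementary splitting of $\delta_A$ cannot land in $L\cap\Sigma$. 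The only rank-one bridges between $L_1$ and $L_2$ run through singular matrices such as $e_1\otimes e_1$ and $e_{n+1}\otimes e_1$, which have determinant $0$ and hence lie outside $\Sigma$; they must be used as intermediate atoms and then \emph{iteratively removed}. This is exactly what the paper's proof does and what your proposal omits: Stage 1 reduces $\R^{2n\times 2n}$ to rank-$\le 1$ matrices (this is where the weak $L^2$ exponent actually comes from, via $\Lambda^{(m)}\to\Lambda^{(m-1)}$ in weak $L^m$, worst case $m=2$ — not from a doubling rate $\mu_k=1/4$ chosen inside $\Sigma$), Stage 2 splits rank-one matrices into $L$, Stage 3 reduces $L_i$ to $L_i\cap\Sigma$ in weak $L^{2n}$ by the determinant staircase, and Theorem \ref{th:iteration} glues the stages while keeping the weak-$L^2$ tail. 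Your determinant-preserving splittings within a block are essentially a substitute for Stage 3 only.

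Two smaller points. First, Theorem \ref{t:SL-criterion} by itself does not "directly yield" the map $u$: it only establishes the reduction property (approximate solutions with a small error set), and one still needs the iteration of Theorem \ref{th:generalsolution} to remove the error set and obtain $\nabla u\in L\cap\Sigma$ a.e.\ together with the factor-$2$ loss in the weak-$L^2$ bound. Second, your closing remark is fine in substance: since $u-l_A\in W^{1,1}_0(\Omega)$, the mean of $\nabla u$ is $A$, and a split map has $\nabla u\in L_1$ a.e.\ or $\nabla u\in L_2$ a.e., forcing $A\in L$; but this only salvages the last sentence of the theorem, not the construction.
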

 
 To state the results for maps 
 with ``approximately split'' differentials
 we denote by $L_1$ the space of block-diagonal matrices
 and by $L_2$ the space of block anti-diagonal matrices, so that $L=L_1 \cup L_2$.  In \cite{otherpaper}
 we show the following rigidity result for sequences of maps
 which approximate the condition of having $\nabla f$ in $L \cap \Sigma$. We denote weak convergence by the half-arrow $\rightharpoonup$.
 
 \begin{theorem}    \label{th:approximate_solutions} 
Suppose that $n \ge  2$ and
\begin{eqnarray}  \label{eq:weak_convergence_fj}
f_j &\rightharpoonup &f    \quad \text{in $W^{1, 2n}(\Omega, \R^{2n})$},\\
 \label{eq:convergence_to_split}
\dist( \nabla f_j, L) &\to& 0 \quad \text{in $L^{1}(\Omega)$},
\end{eqnarray}
and
\begin{equation}  \label{eq:lower_bound_det}
\lim_{\delta \downarrow 0} \limsup_{j \to \infty}  | \{ x \in \Omega : \det \nabla f_j(x) < \delta \}| = 0.
\end{equation}
Then  $\nabla f \in L$ a.e. and hence  $f$ is  split. 
Moreover
\begin{equation}  \label{eq:approx_strong_to_Li}
\dist( \nabla f_j, L_i) \to 0 \quad \text{in $L^{q}(\Omega)$  \quad for $i=1$ or for  $i=2$.}
\end{equation}
and all $q < 2n$. 
\end{theorem}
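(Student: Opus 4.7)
The plan is to reduce Theorem~\ref{th:approximate_solutions} to the pointwise rigidity Theorem~\ref{th:split}, applied to the weak limit $f\in W^{1,2n}(\Omega,\R^{2n})\subset W^{1,2}$. First I upgrade hypothesis \eqref{eq:convergence_to_split} by interpolation: since $0\in L$ gives $\dist(X,L)\leq |X|$, the function $\dist(\nabla f_j,L)$ is bounded in $L^{2n}$, and log-convexity of $L^p$ norms converts $L^1$-convergence to $0$ into $L^q$-convergence to $0$ for every $q<2n$. For the book-keeping, I introduce the measurable partition $\Omega=\Omega_j^1\sqcup\Omega_j^2$ with $\Omega_j^i=\{x:\dist(\nabla f_j(x),L_i)\leq\dist(\nabla f_j(x),L_{3-i})\}$ and the block decomposition $\nabla f_j=P_j^1+P_j^2$ into diagonal and anti-diagonal parts. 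The ``wrong-block'' restrictions $\mathbf{1}_{\Omega_j^i}P_j^{3-i}$ have pointwise norm $\dist(\nabla f_j,L)$, hence tend to $0$ in $L^q$ for every $q<2n$.

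The main rigidity step is to show $\nabla f\in L$ a.e. Along a subsequence, extract a $W^{1,2n}$-gradient Young measure $\{\nu_x\}_{x\in\Omega}$ generated by $\nabla f_j$, so that $\nabla f(x)=\int X\,d\nu_x(X)$. The previous step gives $\supp\nu_x\subset L$ a.e., and hypothesis \eqref{eq:lower_bound_det} localizes to $\nu_x(\{\det X>0\})=1$, so $\supp\nu_x\subset L^+:=(L_1\cup L_2)\cap\{\det>0\}$. The key structural fact, and the reason $n\geq 2$ is essential, is that $L_1^+$ and $L_2^+$ are rank-one incompatible: if $X_1\in L_1$, $X_2\in L_2$ and $X_1-X_2=v\otimes w$ with $v=(v_1,v_2)$, $w=(w_1,w_2)$ in block form, then each of the four $n\times n$ blocks of $X_1$ and $X_2$ equals some $v_a\otimes w_b$, of rank at most one, whence $\det X_1=\det X_2=0$. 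Combining this rank-one incompatibility with the polyconvexity of $\det$ used as a separating null Lagrangian (or with the reduction framework described in the Introduction) localizes $\nu_x$ to a single component $L_{k(x)}^+$ for a.e.~$x$; since $L_{k(x)}$ is a linear subspace, the barycenter $\nabla f(x)$ lies in $L_{k(x)}$, so $\nabla f\in L$ a.e. Bijectivity of $\nabla f$ then follows from weak continuity of the top-order Jacobian null Lagrangian in $L^1$ (using the $L^{2n}$ bound together with the equi-integrability provided by concentration near $L^+$) and from \eqref{eq:lower_bound_det} passing to the limit. Theorem~\ref{th:split} now applies and yields that $f$ is split, so $\nabla f\in L_i$ globally for a single $i\in\{1,2\}$, and the index $k(x)\equiv i$ is constant a.e.

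To obtain \eqref{eq:approx_strong_to_Li} it suffices to show $|\Omega_j^{3-i}|\to 0$, because then Vitali's theorem combined with the uniform $L^{2n}$ bound yields $\dist(\nabla f_j,L_i)\to 0$ in $L^q$ for every $q<2n$ (on $\Omega_j^i$ we already have $\dist(\nabla f_j,L_i)\leq\dist(\nabla f_j,L)\to 0$ in $L^q$). The support localization $\supp\nu_x\subset L_i^+$ forces this: if the weak-$*$ limit $\alpha$ of $\mathbf{1}_{\Omega_j^{3-i}}$ in $L^\infty(\Omega)$ were positive on a set of positive measure, then at such a point $x$ the Young measure $\nu_x$ would carry positive mass on arbitrarily small neighborhoods of $L_{3-i}^+$ (reflecting the positive frequency of the $j$ with $x\in\Omega_j^{3-i}$ and $\dist(\nabla f_j(x),L_{3-i})$ small together with $\det\nabla f_j(x)\geq\delta_0$); passing to the limit gives mass on $L_{3-i}^+$, contradicting $\supp\nu_x\subset L_i^+$. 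Hence $\alpha\equiv 0$, and weak-$*$ testing with $\phi=\mathbf{1}_\Omega$ gives $|\Omega_j^{3-i}|\to 0$.

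The hardest step is the ``support localization'' of the Young measure in the second paragraph: promoting the pointwise rank-one incompatibility of $L_1^+$ and $L_2^+$ into a support restriction for $\nu_x$. A clean execution will likely use polyconvex separating functions built from $\det$ together with block determinants, combined with a Jensen inequality for gradient Young measures; the reduction framework developed later in the paper should furnish the natural language for this incompatibility transfer.
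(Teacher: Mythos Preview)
This theorem is not proved in the present paper: it is quoted from the companion paper \cite{otherpaper} as background, and the role of the present paper is to show that the exponent $2n$ in Theorem~\ref{th:approximate_solutions} is optimal (Theorem~\ref{th:approximate}). So there is no ``paper's own proof'' to compare against here, and in particular the reduction framework of Section~\ref{s:general} is designed for \emph{constructing} irregular solutions, not for proving rigidity; your closing suggestion that it would furnish the localization argument is a misreading of its purpose.

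On the content of your sketch: the overall architecture (interpolate \eqref{eq:convergence_to_split} to $L^q$, pass to a gradient Young measure, use rank-one incompatibility of $L_1\cap\{\det>0\}$ and $L_2\cap\{\det>0\}$ to localize the support, then invoke Theorem~\ref{th:split}) is a reasonable outline, and the rank-one incompatibility observation is indeed the correct structural fact. But two steps are not actually carried out. First, the ``support localization'' you flag as hardest is left as a wish: rank-one incompatibility of two sets does \emph{not} by itself force a gradient Young measure to concentrate on one of them, and you would need to exhibit an explicit quasiconvex (e.g.\ polyconvex) separating function and verify the relevant Jensen inequality under the available $W^{1,2n}$ bound---this is exactly the substance of the argument and cannot be deferred. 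Second, the bijectivity of $\nabla f$ is asserted via ``weak continuity of the top-order Jacobian'', but in dimension $2n$ with only a $W^{1,2n}$ bound this is the borderline case where $\det$ is merely weakly continuous as a distribution, not as an $L^1$ function without additional equi-integrability; you would need to justify why \eqref{eq:lower_bound_det} supplies enough equi-integrability of $\det\nabla f_j$ to pass the lower bound to the limit. Until these two points are made precise, the proposal is an outline rather than a proof.
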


Here we show that the integrability exponent $2n$ in
Theorem~\ref{th:approximate_solutions} is optimal.

\begin{theorem}  \label{th:approximate}   
There exists a constant $M\geq 1$ with the following property.
Let  $ A \in \R^{2n \times 2n} \setminus L$ with  $\rank A = 1$.
Let $\Omega\subset \R^n \times \R^n$ be a regular domain,
 and  $\alpha \in [0, 1)$. Then there exists a sequence of  maps
 $u^{(j)} : \overline \Omega \to \R^{2n}$ such that
$u^{(j)} \in  W^{1,1}(\Omega) \cap C^\alpha(\overline \Omega)$, $u^{(j)}=l_A$ on $\partial\Omega$, 
and
\begin{align}
\left|\{|\nabla u^{(j)}| > t \} \right| \le& \,  M (1 + |A|^{2n})  |\Omega| t^{-2n} \quad  \text{for $t > 0$,} \label{e:approximate_weak_L2n_intro} \\
\int_{ \{  \nabla u^{(j)} \notin L \cap \Sigma\}} (1 + |\nabla u^{(j)}|^s) \to& \,  0 \quad \text{for all $s \in [1, \infty)$,}   \label{e:approximate_bad_set_intro}\\
u^{(j)} \to& \,   l_A  \quad \text{in $C^\alpha(\overline \Omega)$,}\\
u^{(j)} \rightharpoonup& \,  l_A \quad \text{in $W^{1,p}(\Omega)$ for all $p \in [1, 2n)$}
\end{align}
and
\begin{equation}  \label{e:approximate_both_Li_intro}
\liminf_{j \to \infty} \|  \dist(\nabla u^{(j)},L_i) \|_{L^1(\Omega)} > 0  \quad \text{for $i=1, 2$.}
\end{equation}
\end{theorem}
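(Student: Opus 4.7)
The plan is to apply the staircase laminate framework (Definition \ref{d:staircase}, Proposition \ref{p:staircaseconstruction}, Theorem \ref{t:SL-criterion}) developed in this paper, with target set $L \cap \Sigma$ and scaling tuned to the Marcinkiewicz weak-$L^{2n}$ norm rather than the weak-$L^{2}$ norm achieved by Theorem \ref{t:main}. The rank-one hypothesis on $A$ is what enables the sharper decay: it confines the residuals in the iterative decomposition to a low-dimensional subvariety of matrix space on which the finer scaling is realizable.

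\textit{Algebraic step.} Writing $A = a \otimes b$ in the block decomposition $\R^{2n} = \R^n \oplus \R^n$, I would iteratively decompose the residual via rank-one splittings
$$A_{k-1} = (1 - \lambda_k) B_k + \lambda_k A_k, \qquad B_k \in L \cap \Sigma, \qquad A_k - B_k \text{ rank one},$$
with $A_0 = A$, $|A_k| \sim \mu^k$, and $\lambda_k \sim \mu^{-2n}$. Because $\Sigma = \{\det = 1\}$ forces $B_k$ to have full rank $2n$ while a single rank-one correction to a rank-one matrix only raises the rank to two, each step of the outer staircase must in fact chain $O(n)$ rank-one splittings; the precise form of the chain exploits the rank-one structure of $A$ to keep the residuals on an explicit algebraic subvariety. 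The choice of $B_k$ alternates between $L_1 \cap \Sigma$ and $L_2 \cap \Sigma$, so that each component carries a positive fraction of the total mass; this will ultimately furnish the mixing condition (\ref{e:approximate_both_Li_intro}).

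\textit{Conversion to maps.} Theorem \ref{t:SL-criterion} applied to the laminate $\nu$ produces piecewise-affine maps $u^{(j)}: \overline\Omega \to \R^{2n}$ with $u^{(j)} = l_A$ on $\partial\Omega$, distribution close to the level-$j$ truncation $\nu_j$, and the weak-$L^{2n}$ bound (\ref{e:approximate_weak_L2n_intro}) built into the scaling. The $C^\alpha$ closeness to $l_A$ comes from performing the construction on a sufficiently fine Vitali tiling of $\Omega$ and H\"older-interpolating between $L^\infty$ and Lipschitz norms, exactly as in \cite{astala_faraco_szekelyhidi08}; the weak $W^{1,p}$ convergence for $p < 2n$ is an immediate consequence together with the fixed boundary data. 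The bad-set estimate (\ref{e:approximate_bad_set_intro}) is then secured by implementing the staircase approximation at step $j$ with enough internal refinement---using the unbounded-set reduction machinery of Definition \ref{d:reduced}---that the region where $\nabla u^{(j)} \notin L \cap \Sigma$ has measure decaying faster than any polynomial in the gradient norm, so that the integral $\int_{\text{bad}}(1 + |\nabla u^{(j)}|^s)$ vanishes for every fixed $s \in [1, \infty)$ (note that by Theorem \ref{th:split}, a nontrivial bad set is unavoidable, since weak-$L^{2n}$ embeds into $W^{1,2}$ for $n \ge 2$).

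\textit{Main obstacle.} The hardest ingredient is the algebraic step: producing the staircase laminate with the prescribed scaling and $L_1$/$L_2$ alternation, given the nonlinear $\det = 1$ constraint and the need to chain $O(n)$ rank-one splittings to lift the residuals from the rank-one cone into $L \cap \Sigma$. The rank-one hypothesis on $A$ is essential here because it restricts the residuals to a low-dimensional algebraic subvariety on which the chain of rank-one splittings can be written down explicitly and the determinant condition verified by hand; imposing the alternation between $L_1 \cap \Sigma$ and $L_2 \cap \Sigma$ at each step adds a further algebraic constraint that the construction must respect.
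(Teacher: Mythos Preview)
Your outline proposes a single monolithic staircase laminate running directly from the rank-one matrix $A$ into $L\cap\Sigma$, with the $L_1$/$L_2$ alternation built into the laminate to force the mixing condition~\eqref{e:approximate_both_Li_intro}. The paper takes a much more modular route: it observes that the problem factors as (Stage~2) $\{\rank\le 1\}$ reduced to $L$, which is achieved by a \emph{finite} four-point laminate (split $a\otimes b$ into the four block tensors $2a_i\otimes 2b_j$), hence valid in weak $L^p$ for every $p<\infty$; followed by (Stage~3) $L_i$ reduced to $L_i\cap\Sigma$ in weak $L^{2n}$, which is the diagonal staircase of Example~\ref{ex:staircase1} transported by the block-diagonal SVD. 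Theorem~\ref{th:iteration} then combines the two stages and Definition~\ref{d:reduced} delivers the sequence $u^{(j)}$ directly (with $s=s_j\to\infty$ and $\varepsilon=2^{-j}$). The mixing condition~\eqref{e:approximate_both_Li_intro} requires no alternation in the laminate at all: since $L_i$ is a linear subspace it is weakly closed, so $u^{(j)}\rightharpoonup l_A$ with $A\notin L_i$ already forces $\liminf\|\dist(\nabla u^{(j)},L_i)\|_{L^1}>0$.

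Your route is not obviously wrong, but it carries a genuine gap: the ``algebraic step'' you flag as the main obstacle is never actually performed. Writing down a single staircase that simultaneously (i) starts from a rank-one matrix, (ii) lands each $\mu_n$ in $L\cap\Sigma$ via an $O(n)$-long chain of rank-one splittings, (iii) achieves the exact $\beta_n\sim|A_n|^{-2n}$ scaling, and (iv) alternates between $L_1\cap\Sigma$ and $L_2\cap\Sigma$ is substantially harder than anything the paper does, and you give no indication of what the ``explicit algebraic subvariety'' or the chain of splittings actually looks like. The paper's decomposition into Stage~2 (trivial, finite) and Stage~3 (staircase purely inside the diagonal matrices, where the determinant constraint is tractable) is precisely what dissolves this obstacle; the iteration theorem then handles the bookkeeping of combining weak-$L^p$ bounds across stages. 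Your proposal would benefit from recognizing that the rank-one hypothesis is used not to constrain residuals to a subvariety, but simply to \emph{skip} Stage~1 (the $\R^{2n\times 2n}\to\{\rank\le 1\}$ reduction, which is the weak-$L^2$ bottleneck).
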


In the statement above we write $l_A(x)=Ax$ for the linear map with gradient $A$ (see more on our notation below in Section \ref{ss:basic}). 
One may take, for example, $A = (e_1 + e_{n+1}) \otimes e_1$. We remark that the weak-type bound \eqref{e:approximate_weak_L2n_intro} and convergence in  $C^\alpha$ imply that the sequence $u^{(j)}$ is bounded in $W^{1,p}$ for any $p<2n$. Moreover, estimate \eqref{e:approximate_bad_set_intro} implies that $\dist(\nabla u^{(j)}, L \cap \Sigma) \to 0$ in $L^s(\Omega)$ for all $s \in [1, \infty)$
and $|\{ \det \nabla u^{(j)} \ne 1\}| \to 0$. 

\subsection{Strategy of the proof}  \label{se:strategy_proof}
The proofs of Theorem \ref{t:main} and Theorem \ref{th:approximate} follow a well-developed overall strategy for solving differential inclusions. The general setting is as follows. Let $K
 \subset \R^{d \times m}$ and 
 let $\Omega \subset \R^m$ be open. We want to 
 find a map $u: \Omega \to \R^d$ 
 in a suitable Sobolev space such that 
\begin{equation}\label{e:diffincl}
\nabla u(x)\in K\quad\textrm{for almost every }\,x\in\Omega.
\end{equation}
Moreover, as is typical in problems of this type, we seek solutions which in addition satisfy affine boundary conditions.

Differential inclusions of this type have a long history, both in the Lipschitz setting for compact $K$ \cite{gromov_pdr,muller_sverak96,dacorogna_marcellini99,muller99,sychev01,MullerSychev2001,Kirchheim:2002wc,kirchheim03,muller_sverak03,cellina2005,szekelyhidi2005} as well as the Sobolev setting for unbounded $K$ \cite{faraco03,Kirchheim:2002wc,conti_faraco_maggi05,conti_faraco_maggi_muller05,astala_faraco_szekelyhidi08,BorosSz2013,LiuMaly2016,oliva16,FaracoMoraCorall2018,FaracoLindberg2021,ColomboTione2022}. In most of these works the differential inclusion \eqref{e:diffincl} is solved using convex integration, an iterative construction of highly oscillatory approximate solutions which are locally almost one-dimensional. 

Our approach rests on the following notion of an 'approximate solution', which we believe is a very natural building block for
 the construction of solutions of the differential inclusion
  \eqref{e:diffincl}. Here and in the following we say that
  $\Omega \subset \R^m$ is a regular domain if it is open, bounded, connected and the boundary has vanishing $m$-dimensional measure. We say that a map  $u: \Omega \to \R^d$ from a regular domain $\Omega$ is piecewise affine if there exists  disjoint regular domains $\Omega_i \subset \Omega$ and a null set $\mathcal N$ such that $u$ is affine on $\Omega_i$ and $\Omega = \bigcup_i \Omega_i \bigcup \mathcal N$. Since each $\Omega_i$ has positive measure, the collection of sets $\Omega_i$ is at most countable. Finally, for $A \in \R^{d \times n}$ and $b \in \R^d$ we denote by $l_{A,b}$ the affine map given by 
  $l_{A,b}(x) = Ax +b $.
 
\begin{definition}\label{d:reduced}
For $K,K'\subset \R^{d\times m}$ and $1<p<\infty$ we say that 
\begin{center}
\emph{$K$ can be reduced to $K'$ in weak $L^p$}
\end{center}
provided there exists a constant $M=M(K,K',p)\geq 1$ with the following property: let $A\in K$, $b\in \R^d$, $\eps,\alpha\in(0,1)$ $s\in (1,\infty)$, and $\Omega\subset\R^m$ a regular domain. Then there exists a piecewise affine map $u \in W^{1,1}(\Omega) \cap C^\alpha(\overline \Omega)$ with $u = l_{A,b}$ on $\partial \Omega$ and such that, 
with $\Omega_{error} := \{  x \in \Omega: \nabla u(x) \notin K'\}$ we have
\begin{subequations}
\begin{align}
&\int_{\Omega_{error}}(1+|\nabla u|)^s\,dx  <\eps |\Omega|,     \label{eq:iteration_epserror}\\
&|\{x\in\Omega:\,|\nabla u(x)|>t\}|  \le M^p (1+|A|^p)|\Omega|t^{-p} \textrm{ for all }t>0.
\label{eq:iteration_weakLp} 
\end{align}
\end{subequations}

\gray{LS:\, commented out; here gray for comparison:

Moreover, for the case $p=\infty$ we say that 
\begin{center}
\emph{$K$ can be reduced to $K'$ in $L^\infty$}
\end{center}
provided there exists a constant $M=M(K,K')\geq 1$ with the above property, but \eqref{eq:iteration_epserror}-\eqref{eq:iteration_weakLp} replaced by
\begin{subequations}
\begin{align}
&\left|\Omega_{error}\right|<\eps |\Omega|,     \label{eq:iteration_epserrorinfty}\\
&\|\nabla u\|_\infty\le M\max\{1,|A|\}\,. \label{eq:iteration_Linfty} 
\end{align}
\end{subequations}
}
\end{definition}
 
\medskip

We remark that if $K$ can be reduced to $K'$ in weak $L^p$ for some $p<\infty$ then also $K$ can be reduced to $K'$ in weak $L^q$ for any $q<p$. To prove this one just needs to check that the weak $L^q$ bound follows from the weak $L^p$ bound (see Remark \ref{rmk:weakstrong}). 

There are three key properties which make our definition very useful.
\begin{enumerate}
    \item[(P1)] \emph{(Exact solutions)} If $\R^{d \times m}$ can be reduced to $K$ then for each $A \in \R^{d \times m}$ there exist $u \in W^{1,1} \cap C^\alpha$ such that $u = l_{A,b}$ on $\partial \Omega$, $\nabla u \in K$ a.e. and
    $\nabla u$ is in weak $L^p$ and hence in $L^q$ for all $q \in [1,p)$, see Theorem \ref{th:generalsolution}
    \item[(P2)] \emph{(Iteration property)} If $K$ be reduced to 
    $K'$ in weak $L^p$ and $K'$ and be reduced to $K''$ in weak $L^q$
    with $q \ne p$, then $K$ and be reduced to $K''$ in weak $L^{\min(p,q)}$, see Theorem \ref{th:iteration}
    \item[(P3)] \emph{(Sufficiency of staircase laminates)}  To show that the condition holds, it suffices to show the existence of certain probability measures -- called \emph{staircase laminates} -- with support in $K'$, barycenter in $K$ and corresponding moment bounds, see Theorem \ref{t:SL-criterion}. Roughly speaking, it  suffices to show that  there are sufficiently many rank-one connected matrices in $K'$ such that their convex combinations generate $K$. In fact, one does not even need rank-one connections in $K'$, but one can first use matrices outside $K'$ and then iteratively remove them.  
\end{enumerate}
In fact, the condition in Definition \ref{d:reduced} is not new. 
M.~Sychev \cite{sychev01} introduced a very 
similar condition for the case of compact sets (see also \cite{MullerSychev2001}). 
The main difference is that in these papers one 
requires in addition that $\nabla u \in \mathcal U$ for almost
every $x \in \Omega$, 
for some bounded open set $\mathcal U \subset \R^{d \times m}$. 
This is related to the fact that in \cite{sychev01} 
and  \cite{MullerSychev2001} one 
wants to find Lipschitz solutions and therefore one needs to ensure
that the gradients remain bounded also in $\Omega_{error}$.
In our case we are interested in solutions with unbounded gradients
and so conceptually we can take $\mathcal U = \R^{d \times m}$
which makes the condition $\nabla u \in \mathcal U$ vacuous. 
Thus, if we can show in the end that $K$ can be reduced to the whole space $\R^{d \times m}$ (in the sense of Definition \ref{d:reduced}) we do not need an extra condition.

\medskip

We now return to our concrete setting.
In view of properties (P1) and (P2) 
above the proof of Theorem \ref{t:main}
can be reduced to the following three statements
\begin{enumerate}
    \item[(Stage 1)] $\R^{2n \times 2n}$ can be reduced to  $\{ X \in \R^{2n \times 2n} : \rank X \le 1\}$ in weak $L^2$;
    \item[(Stage 2)]  $\{ X \in \R^{2n \times 2n} : \rank X \le 1\}$ can be reduced to $L$ in weak $L^q$ for all $q < \infty$
    \item[(Stage 3)] $L_i$ can be reduced to $L_i \cap \Sigma$ in weak $L^{2n}$, for $i=1,2$.
\end{enumerate}
All these results can be easily proved by first exploiting symmetry
to reduce the result to a statement about diagonal matrices and then finding a suitable staircase laminates in connection with property (P3). In fact, we will split Stage 1 further into $2n-1$ substages using property (P2) and show instead that 
$\Lambda^{(m)}$ can be reduced to $\Lambda^{(m-1)}$ in weak $L^m$ for any $m=2,\dots,2n$, 
where $\Lambda^{(m)}$ denotes the set of matrices with $\rank \le m$. 
We also show how Theorem \ref{th:approximate} follows
from Stages 2 and 3. 
Since Stage 1 is not needed here, we obtain
Sobolev integrability $p < 2n$ (or weak $L^{2n}$), while for Theorem \ref{t:main} can only get $ p < 2$ (or weak $L^2$).

\medskip

We believe that  Definition \ref{d:reduced}  in connection with the key properties (P1)--(P3) discussed above can be useful beyond the application  for the specific problem  in this paper. 
The main point is that it essentially reduces all the work to 
the (algebraic) task of finding
staircase laminates for the individual stages. All the 'analysis' 
has been put in black boxes corresponding to the three key properties. 

Let us mention a few examples.
\begin{enumerate}
\item[(a)] Reduction to singular matrices. Stage 1 in our construction,
the reduction to matrices of $\rank \le 1$,  in combination with 
Theorem \ref{th:generalsolution} shows that for every matrix $A$ there exists a map $u$ with affine boundary conditions $l_A$, $\rank \nabla u \le 1$ and $\nabla u$ in weak $L^2$. 
This result was already shown in 
\cite{FaracoMoraCorall2018,LiuMaly2016}. Our approach 
gives a simpler proof because we only need to show that  $\Lambda^{(m)}$ can be reduced to $\Lambda^{(m-1)}$ in weak $L^m$. This is easy  by exhibiting a simple staircase laminate on diagonal matrices and exploiting rotational symmetry.
In the original proof in \cite{FaracoMoraCorall2018} the laminates for different $m$ are not considered separately, but combined in a complicated and careful bookkeeping strategy.
\item[(b)] Very weak solutions to isotropic second-order elliptic equations with measurable coefficients, following \cite{astala_faraco_szekelyhidi08}.
\item[(c)] Recent examples of very weak solutions of the $p$-Laplace equation \cite{ColomboTione2022}
\end{enumerate}
Although these examples indicate the wide applicability of our framework, 
as a word of warning we point out that our approach so far depends on the fact that one ultimately can reduce to the full space $\R^{d \times m}$. For instance, this restriction means that we are not able to directly apply the general results in this paper to the case of higher integrability of $W^{1,2}$ weak solutions of second-order elliptic equations in \cite{astala_faraco_szekelyhidi08}.

\medskip

The rest of the paper is organized as follows. In Section \ref{sec:Lipschitz} we collect certain standard notions and definitions used in the theory of differential inclusions for Lipschitz mappings. In Section \ref{sec:staircase} we introduce `staircase laminates', as required for property (P3) for solving differential inclusions in the Sobolev setting, prove certain key properties of such laminates required for controlling weak $L^p$ bounds, and give several examples. The heart of the analysis is contained in Section \ref{s:general}, where we work with Definition \ref{d:reduced} and prove properties (P1)--(P3). In Section \ref{s:proofs} we then apply the general framework developed in Section \ref{s:general} to prove our main theorems. Finally, in the Appendix we show how the same framework can be used to treat examples (b) and (c) above with very little effort.

\section{Lipschitz differential inclusions}
\label{sec:Lipschitz} 

In this section we summarize the main toolbox used in the theory of differential inclusions of the type \eqref{e:diffincl}. All of the material in this section is well known and contained in various references cited above, it is included here for the convenience of the reader. 

\subsection{Basic definitions and tools}\label{ss:basic}

\begin{itemize}
\item {\bf Regular domains.} We say that $\Omega\subset\R^m$ is a regular domain if $\Omega$ is open, bounded, connected and the boundary $\partial \Omega$ has  zero $m$-dimensional Lebesgue measure. Throughout the paper we will always work under the assumption that $\Omega$ is a regular domain.
\item {\bf Push-forward measure.} Given the scaling and translation symmetries of differential inclusions of the type \eqref{e:diffincl}, a natural object is the pushforward measure of the gradient: for Lipschitz maps $u: \Omega \to \R^d$ it is defined as the measure $\nu_u\in \mathcal{P}(\R^{d\times m})$, defined by duality as
\begin{equation}\label{e:pushforward}
    \int_{\R^{d\times m}}F\,d\nu_u=\frac{1}{|\Omega|}\int_{\Omega}F(\nabla u(x))\,dx\quad\textrm{ for all }F\in C_c(\R^{d\times m}).
\end{equation}
With this definition it is easy to see that \eqref{e:diffincl} is equivalent to 
\begin{equation}\label{e:pushforwardincl}
\supp \nu_u\subset K.
\end{equation}
\item {\bf Affine maps.} For any $A\in \R^{d\times m}$ and $b\in\R^d$ we denote by $l_{A,b}$ the affine map $l_{A,b}(x)=Ax+b$.
\item {\bf Piecewise affine maps.} We call a map $u\in W^{1,1}(\Omega)$ piecewise affine if there exists an at most countable decomposition $\Omega=\bigcup_{i}\Omega_i\cup \mathcal{N}$ into pairwise disjoint regular domains $\Omega_i\subset\Omega$ and a nullset $\mathcal{N}$ such that $u$ agrees with an affine map on each $\Omega_i$. That is, for any $i$ there exists $A_i\in\R^{d\times m}$ and $b_i\in\R^d$ such that $u=l_{A_i,b_i}$ on $\Omega_i$. We will also denote by $\mathring{\Omega}_u=\bigcup_{i}\Omega_i$ (or simply $\mathring{\Omega}$ if the corresponding map $u$ is clear from the context) the open subset of $\Omega$ where $u$ is locally affine.  
Note that  the regular domains $\Omega_i$ are exactly the connected components of $\mathring \Omega$  and in particular the collection
$\{ \Omega_i\}$ is uniquely determined by  $\mathring \Omega$.
\item {\bf Gluing argument.} Let $u\in W^{1,1}(\Omega)\cap C^{\alpha}(\overline{\Omega})$ for some $\alpha\in [0,1)$, let $\{\Omega_i\}_i$ be a family of pairwise disjoint open subsets of $\Omega$, and for each $i$ let $v_i\in W^{1,1}(\Omega_i)\cap C^\alpha(\overline{\Omega_i})$ such that $v_i=u$ on $\partial\Omega_i$. Define
$$
\tilde u(x)=\begin{cases}v_i(x)&x\in\Omega_i\textrm{ for some }i,\\ u(x)& x\notin\bigcup_i\Omega_i.\end{cases}
$$
Then\footnote{Here we use that for every open set $U \subset \R^m$ the space $X = \{ u \in C(\overline U) \cap W^{1,1}(U) : u=0  \text{ on $\partial U$}$\}
is a subset of  $W^{1,1}_0(U)$ (the closure of $C_c^\infty(U)$ in $W^{1,1}(U)$) and thus the extension $\bar u : \R^m \to \R$
defined by  $\bar u = u$ in $\Omega$ and $\bar u = 0$ in $\R^m\setminus U$ belongs to $W^{1,1}(\R^m)$ and satisfies $D\bar u = 0$ a.e. in $\R^m \setminus \Omega$. To show that $X \subset W^{1,1}_0(U)$ one can argue as follows. Let $T : \R \to \R$ 
be a $C^1$  function such that $T(t) = t$ if $|t| \ge 1$, $T(t) = 0$ on $(-\tfrac12, \tfrac12)$ and $|T'| \le 4$. Set $T_k(t) = k^{-1} T(kt)$. 
Then  $T_k \circ u$ has compact support in $U$ and hence belongs to $W^{1,1}_0(U)$. Moreover we have
$ \lim_{k \to \infty} \int_{|u| \le k^{-1}} |Du| \, dx = \int_{u=0} |Du| \, dx = 0$ (for the last identity see \cite{GilbargTrudinger}[Lemma 7.7]).
Thus the chain rule implies that $T_k \circ u \to u$ in $W^{1,1}(U)$ and it follows that $u \in W^{1,1}_0(U)$. }
 $\tilde u\in W^{1,1}(\Omega)\cap C^\alpha(\overline{\Omega})$ with 
$$
\|\tilde u-u\|_{C^{\alpha}(\overline{\Omega})}\leq 2\sup_i\|v_i-u\|_{C^{\alpha}(\overline{\Omega_i})}.
$$

\item {\bf Rescaling and covering argument.}  Assume $v\in W^{1,1}(\Omega_0)\cap C^{\alpha}(\overline{\Omega_0})$ for some $\alpha\in [0,1)$ with $v=l_{A,b}$ on $\partial\Omega_0$ for some affine map $l_{A,b}$ and regular domain $\Omega_0$. Given any $\Omega\subset\R^m$ regular domain we can cover $\Omega$ by a countable family of suitably rescaled copies $\Omega_i=r_i\Omega_0+x_i$ upto measure zero. More precisely, for any $\eps>0$ there exist $r_i\in (0,1)$ with $r_i
^{1-\alpha}\leq \eps$ and $x_i\in\R^m$, $i=1,2,\dots$ so that $\{\Omega_i\}_{i}$ is a pairwise disjoint family of open subsets of $\Omega$ with $|\Omega\setminus\bigcup_i\Omega_i|=0$. Set  $v_i(x)=r_iv(\tfrac{x-x_i}{r_i})+Ax_i+(1-r_i)b$ for $x\in \Omega_i$. 
It can readily be checked that $v_i=l_{A,b}$ on $\partial\Omega_i$ and 
$\|v_i-l_{A,b}\|_{C^\alpha(\overline{\Omega_i})}\leq r_i^{1-\alpha}\|v-l_{A,b}\|_{C^\alpha(\overline{\Omega})}$.

Then the gluing argument above applies with $l_{A,b}$ in $\Omega$, $v_i$ in $\Omega_i$, and we obtain a map $u\in W^{1,1}(\Omega)\cap C^{\alpha}(\overline{\Omega})$ with $\nu_u=\nu_v$ (c.f.~\eqref{e:pushforward}). Moreover 
$$
\|u-l_{A,b}\|_{C^{\alpha}(\overline\Omega)}\leq 2\eps \|v-l_{A,b}\|_{C^{\alpha}(\overline\Omega)}.
$$
\end{itemize}

The rescaling and covering argument implies that the $C^\alpha$ estimate in Theorem \ref{t:main} follows automatically, once we have a map
 $u\in W^{1,1}(\Omega) \cap C^\alpha(\overline \Omega)$ with affine boundary conditions and so that $\nu_u$ satisfies \eqref{e:pushforwardincl}. Thus, the main issue is to be able to construct $\nu_u$ in prescribed ways. The basic tool for this is provided by the concept of \emph{laminates}.

\subsection{Laminates}\label{ss:laminates}

Let $\mathcal{M}(\R^{d\times m})$ denote the space of (signed) Radon measures on $\R^{d\times m}$ and let $\mathcal{P}(\R^{d\times m})$ be the subset of probability measures. It is well known that $\mathcal{M}(\R^{d\times m})$ can be identified with the dual space of $C_c(\R^{d\times m})$ of continuous functions with compact support, equipped with its natural locally convex topology. 

The concept of laminates of finite order and laminates with bounded support was introduced by Pedregal \cite{pedregal93}. The definition of a laminate of finite order has its root in the condition ($H_n$),
 introduced by 
Dacorogna \cite{dacorogna85} in connection with a formula for the rank-one convex envelope of a function.
The importance of laminates of finite order stems from the  following fact  (see Lemma~\ref{l:basicconstruction} below):
if $\mu\in\mathcal{P}(\R^{d\times m})$ is a laminate of finite order with center of mass $A$ and if $\Omega \subset \R^m$ is a regular domain,
then there exist piecewise affine Lipschitz maps $u: \Omega \to \R^d$ for which $\nu_u$ (see \eqref{e:pushforward}) 
approximates $\mu$ and $u(x) = Ax$ on $\partial \Omega$. We recall the main concepts.

\begin{itemize}
\item {\bf Elementary splitting.} Given probability measures $\nu,\mu\in \mathcal{P}(\R^{d \times m})$ we say that $\mu$ is obtained from $\nu$ \emph{by elementary splitting} if $\nu$ has the form $\nu=\lambda\delta_{A}+(1-\lambda)\tilde\nu$ for some $\tilde\nu\in\mathcal{P}(\R^{d \times m})$, there exist matrices $B, B'\in \R^{d \times m}$ and $\lambda'\in(0,1)$ such that
$A = \lambda' B + (1-\lambda') B'$ and $\rank (B'-B) = 1$, and moreover
$$
\mu = \lambda (\lambda'\delta_B +  (1- \lambda') \delta_{B'})+(1-\lambda)\tilde\nu.
$$
\item {\bf Laminates of finite order.} The set  $\mathcal L(\R^{d \times m})$  of \emph{laminates of finite order} is defined as the 
smallest set which is invariant under elementary splitting and contains all Dirac masses.
\item {\bf Splitting sequence.} One sees easily that $\mathcal L(\R^{d \times m})$ is equivalently characterized by the condition that it contains all probability measures which can be obtained by starting from a Dirac mass and applying elementary splitting a finite number of times - we will refer to this sequence of operations as a \emph{splitting sequence} for the laminate in question. We remark that the splitting sequence is in general not uniquely determined by the laminate.
\item {\bf Barycenter.} Each $\nu\in \mathcal L(\R^{d \times m})$ is supported on a finite set of matrices, i.e.~is of the form $\nu = \sum_{i=1}^N  \lambda_i  \delta_{A_i}$. The center of mass, or \emph{barycenter}, of $\nu$ will be denoted by $\bar \nu:=\sum_{i=1}^N \lambda_i A_i$. It is easy to see that the center of mass is invariant under splitting. 
\item {\bf Convex combinations.} Let $N\geq 2$ and $\nu_i\in \mathcal L(\R^{d \times m})$ for $i=1,\dots,N$. Further, assume that $\mu=\sum_{i=1}^N\lambda_i\delta_{A_i}$ is a laminate of finite order, where $A_i=\bar{\nu_i}$. Then $\sum_{i=1}^N\lambda_i\nu_i$ is also a laminate of finite order. In particular, if $\nu_1,\nu_2\in \mathcal L(\R^{d \times m})$ and $\rank(\bar \nu_2 - \bar \nu_1) \le 1$, then $\lambda \nu_1 + (1- \lambda) \nu_2 \in \mathcal L(\R^{d \times m})$ for any $\lambda\in (0,1)$. 
\item {\bf Linear transformations.} Let $T:\R^{d\times m}\to\R^{d\times m}$ be a linear map preserving rank-one matrices, i.e. with the property that $\rank(A)=1$ if and only if $\rank(T(A))=1$. Then, if $\nu=\sum_{i=1}^N\lambda_i\delta_{A_i}$ is a laminate of finite order, so is $T_*\nu:=\sum_{i=1}^N\lambda_i\delta_{T(A_i)}$.

\end{itemize}

\subsection{Laminates and differential inclusions}\label{ss:di}

Now we come to the cornerstone of the theory. Having introduced a sufficiently rich set of probability measures 
$\mathcal L(\R^{d \times m})$ we now show that these measures can be well approximated by gradient distributions of Lipschitz maps with affine boundary conditions. There are various versions of the statements below in the literature, e.g. \cite{Cellina93,dacorogna_marcellini99, sychev01,Kirchheim:2002wc,DMP2008,Pompe2010}, but for our purposes the following variant of the basic building block (see \cite{muller_sverak03,kirchheim03}) will prove most useful.

\begin{lemma}\label{l:roof}
Let $A,A_1,A_2\in\R^{d\times m}$ be matrices such that 
\begin{equation*}
\rank(A_1-A_2)=1,\textrm{ and }A=\lambda_1 A_1+\lambda_2 A_2
\end{equation*} 
for some $\lambda_1,\lambda_2> 0$, $\lambda_1+\lambda_2=1$. For any $b\in\R^d$, any $\eps>0$ and any regular domain $\Omega\subset\R^m$ there exists a piecewise affine Lipschitz map $u:\Omega\to\R^d$ such that 
$u=l_{A,b}$ on $\partial\Omega$ and $\|\nabla u\|_{L^{\infty}(\Omega)}\leq \max(|A_1|,|A_2|)$ and for $i=1,2$
\begin{equation*}
(1-\eps)\lambda_i   
|\Omega| \leq \left|\left\{x\in\Omega:\,\nabla u(x)=A_i\right\}\right|\leq (1+\eps)\lambda_i|\Omega|.
\end{equation*}
\end{lemma}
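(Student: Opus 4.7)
The plan is the classical rank-one lamination construction (see \cite{muller_sverak03,kirchheim03}): normalize via affine symmetry, build the map explicitly on a convenient reference regular domain $\Omega_0$, and transfer to a general regular $\Omega$ via the rescaling-and-covering argument of Section~\ref{ss:basic}. First, replacing $u$ by $u - l_{A,b}$ reduces to $A = 0$, $b = 0$, so $\lambda_1 A_1 + \lambda_2 A_2 = 0$ and $\rank(A_1 - A_2) = 1$. Writing $A_1 - A_2 = a \otimes n$ with $|n| = 1$ yields $A_1 = \lambda_2\, a \otimes n$ and $A_2 = -\lambda_1\, a \otimes n$; an orthogonal change of coordinates in $\R^m$ takes $n$ to $e_1$. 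In this frame $|A_i| = \lambda_{3-i}|a|$, and the target bound reads $\|\nabla u\|_\infty \le \max(\lambda_1,\lambda_2)|a|$.

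Next I introduce a one-dimensional tent profile at scale $1/N$ for a large integer $N$ to be chosen. Let $\phi_N : \R \to \R$ be the $(1/N)$-periodic piecewise linear tent with $\phi_N(0) = 0$, slope $\lambda_2$ on up-intervals of length $\lambda_1/N$, and slope $-\lambda_1$ on down-intervals of length $\lambda_2/N$. Then $\phi_N(k/N) = 0$ for every $k \in \Z$, $\|\phi_N\|_\infty \le \lambda_1\lambda_2/N$, and the bulk map $w_N(x) := \phi_N(x_1)\, a$ satisfies $\nabla w_N \in \{A_1, A_2\}$ almost everywhere. Crucially, $w_N$ depends only on $x_1$, so $w_N = 0$ on every hyperplane $\{x_1 = k/N\}$.

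I carry out the model construction on $\Omega_0 = (0, 1)^m$. There $w_N$ already vanishes on the two faces $\{x_1 \in \{0, 1\}\}$, but not on the lateral faces $(0, 1) \times \partial((0, 1)^{m-1})$. Inside a thin lateral collar of width $h = 1/N$ I replace $w_N$ by a piecewise affine Lipschitz correction interpolating between $w_N$ on the inner boundary of the collar and $0$ on $\partial \Omega_0$. In the bulk the set $\{\nabla u = A_i\}$ has density $\lambda_i$ by design; for $N$ large enough the collar has relative measure $O(h) \le \eps$, and the volume fractions land in $[(1-\eps)\lambda_i,(1+\eps)\lambda_i]|\Omega_0|$. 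Finally, the rescaling-and-covering argument of Section~\ref{ss:basic} tiles a general regular $\Omega$ by countably many disjoint scaled-and-translated copies of $\Omega_0$ up to a null set, and the glued map, after adding back $l_{A,b}$, has the stated properties (the pushforward $\nu_u = \nu_{u_0}$ is preserved, so the bound on the good set transfers).

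The main obstacle is the $L^\infty$ bound $\|\nabla u\|_\infty \le \max(|A_1|, |A_2|)$ on the lateral collar. A naive multiplicative cutoff $\chi(x')\, w_N$ produces a transverse gradient $w_N \otimes \nabla \chi$; since $|w_N| \le \lambda_1\lambda_2|a|/N$ while $|\nabla \chi| \le C/h = CN$, this contribution is $O(\lambda_1\lambda_2|a|)$. The elementary inequality $\lambda_1\lambda_2 \le \min(\lambda_1, \lambda_2) \le \max(\lambda_1, \lambda_2)$ shows the amplitude/width matching keeps the transverse piece within the target, and the sharp $\max(|A_1|,|A_2|)$ bound is achieved by realizing the collar transition using only rank-one compatible ``fins'' along $e_1$ (exploiting that $A_1, A_2$ are collinear along $a \otimes e_1$), so that the gradient in the collar is itself a convex combination of $A_1, A_2$ plus a controlled transverse term absorbed into the bound.
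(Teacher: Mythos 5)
Your bulk construction (the $1/N$-periodic sawtooth in the rank-one direction, giving $\nabla w_N\in\{A_1-A,A_2-A\}$ with the correct volume fractions) and the final rescaling-and-covering step are fine and match the standard scheme. The genuine gap is in the lateral collar, i.e.\ precisely the step that produces the bound $\|\nabla u\|_{L^\infty}\leq\max(|A_1|,|A_2|)$. With the multiplicative cutoff you describe, the gradient in the collar is $\chi\,\phi_N'(x_1)\,a\otimes e_1+\phi_N(x_1)\,a\otimes\nabla\chi$: the two terms are orthogonal (as matrices), the first has norm up to $\max(\lambda_1,\lambda_2)|a|$ near the inner edge of the collar where $\chi\approx 1$, and the second has norm of order $\lambda_1\lambda_2|a|$, which does \emph{not} vanish as $N\to\infty$ because the amplitude $\lambda_1\lambda_2/N$ and the collar width $1/N$ scale together. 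Hence the combined norm is of size $\bigl(\max(\lambda_1,\lambda_2)^2+\lambda_1^2\lambda_2^2\bigr)^{1/2}|a|$, strictly above the target; after adding back $l_{A,b}$ the same superposition generically pushes $|\nabla u|$ above $\max(|A_1|,|A_2|)$ (already for $d=1$, $m=2$ with $A=(0,c)$, $c>0$, one checks the bound fails near the inner collar edge). The inequality $\lambda_1\lambda_2\leq\max(\lambda_1,\lambda_2)$ only controls the transverse term in isolation, not the superposition, so it does not close the argument; and the final sentence about ``rank-one compatible fins'' with the transverse term ``absorbed into the bound'' is an assertion, not a construction. Note also that your opening reduction to $A=0$ is harmless for the volume fractions but not for the $L^\infty$ bound, which is not invariant under translating all matrices by $-A$; so the collar estimate must in any case be checked for $u=v+l_{A,b}$ directly.

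The mechanism that repairs this (and is what the paper's proof uses) is to avoid ever superposing the oscillation gradient with the cutoff gradient: instead of multiplying by a cutoff, take the pointwise minimum of the rescaled sawtooth $\tfrac1N h(Nx\cdot\xi)$ with a ``roof'' function $\min_j(1+x\cdot\xi^{(j)})$ built from auxiliary vectors $\xi^{(j)}$ of norm $<r$, on the domain $\Omega_0$ these data carve out. Wherever the boundary adjustment is active the gradient of $u$ is exactly $A+\eta\otimes\xi^{(j)}$, a perturbation of the barycenter of size $r|\eta|$, and one absorbs it using the strict inequality $|A|<\max(|A_1|,|A_2|)$ (strict because $A$ is a nontrivial convex combination of two distinct matrices), choosing $r$ small; nowhere do the slope $\pm$ of the sawtooth and the transition gradient act simultaneously. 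If you want to keep your collar picture, you would have to redesign the interpolation so that in the collar the longitudinal slope is switched off (gradient close to $A$) rather than damped by a factor $\chi$, which is essentially the min construction in disguise.
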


\begin{proof} 
Since $\rank(A_1-A_2)=1$, there exist nonzero vectors $\xi\in\R^m$, $\eta\in\R^d$ such that $A_2-A_1=\eta\otimes\xi$. Note that we can write
$A_1=A-\lambda_2\eta\otimes\xi$ and $A_2=A+\lambda_1\eta\otimes\xi$.

Let $r\in (0,1)$ to be fixed later and let $\xi^{(1)},\dots,\xi^{(J)}\in \R^m$ be further nonzero vectors with $|\xi^{(j)}|<r$ so that $0\in \textrm{int conv}\{\xi,-\xi,\xi^{(1)},\dots,\xi^{(J)}\}$. Then the set 
$$
\Omega_0:=\left\{x\in\R^m:\,x\cdot \xi^{(j)}>-1\textrm{ for all }j=1\dots J\textrm{ and }|x\cdot\xi|<1\right\}
$$
is a convex open and bounded set containing $0$.

Finally, let $h:\R\to [0, \infty)$ 
 be a $1$-periodic Lipschitz function with $h(0)=0$ and $h'(t)\in \{-\lambda_2,\lambda_1\}$ for a.e.~$t\in\R$, a saw-tooth function with 
$$
|\{t\in (0,1):h'(t)=-\lambda_2\}|=\lambda_1\textrm{ and }|\{t\in (0,1):h'(t)=\lambda_1\}|=\lambda_2.
$$
Observe that $h(N x \cdot \xi) = 0$ whenever $x \cdot \xi \in \{-1, 1\}$.
Then, for any $N\in\N$ the function $f_N:\Omega_0\to\R$ defined by
$$
f_N(x)=\min\left\{\min_{1\leq j\leq J}(1+x\cdot\xi^{(j)}),\,\frac{1}{N}h(Nx\cdot\xi)\right\}
$$
is a piecewise affine Lipschitz function with $f_N=0$ on $\partial \Omega_0$, and 
\begin{equation}\label{e:construction1}
\nabla f_N(x)\in\left\{-\lambda_1\xi, \lambda_2\xi, \xi^{(1)},\dots,\xi^{(J)}\right\}\quad\textrm{ a.e. }x\in\Omega_0.
\end{equation}
Moreover, by choosing $N$ sufficiently large we can achieve that 
$$
\frac{1}{N}h(Nx\cdot\xi)<\min_{1\leq j\leq J}(1+x\cdot\xi^{(j)})\quad\textrm{ on }(1-r)\Omega_0,
$$
from which we deduce
\begin{equation*}
\begin{split}
\left|\left\{x\in \Omega_0:\,\nabla f_N(x)=-\lambda_2\xi \right\}\right|&\geq (1-r)^m|\Omega_0|\\
\left|\left\{x\in \Omega_0:\,\nabla f_N(x)=\lambda_1\xi\right\}\right|&\geq (1-r)^m|\Omega_0|.
\end{split}
\end{equation*}
Then, by choosing $r>0$ sufficiently small, the map $u:\Omega_0\to\R^d$ defined by 
$$
u(x)=b+Ax+\eta f_N(x)
$$
satisfies all the claimed properties in the lemma for the special domain $\Omega_0$. Here we use the fact that $\max\{|A_1|,|A_2|\}>|A|$.
For a general regular domain $\Omega$ we apply a rescaling and covering argument from Section \ref{ss:basic}.
\end{proof}

An obvious iteration of Lemma \ref{l:roof} along the splitting sequence of any laminate $\nu\in \L(\R^{d\times m})$ (c.f.~Section \ref{ss:laminates})
leads to the following lemma, which makes laminates so useful for inclusion problems of the type \eqref{e:diffincl}.

\begin{lemma}\label{l:basicconstruction}
Let $\nu\in\L(\R^{d\times m})$ be a laminate of finite order with center of mass $A$. Write $\nu=\sum_{j=1}^J\lambda_j\delta_{A_j}$ with $\lambda_j>0$ and $A_j \ne A_k$ for $j \ne k$.
For any $b\in\R^d$, any $\eps>0$ and any regular domain $\Omega\subset\R^m$ there exists a piecewise affine Lipschitz map $u:\Omega\to\R^d$ with $u=l_{A,b}$ on $\partial\Omega$, $\|\nabla u\|_{L^{\infty}(\Omega)}\leq \max_i|A_i|$ and such that 
\begin{equation}\label{e:basicconstruction}
(1-\eps)\lambda_j|\Omega|\leq \left|\{x\in\Omega:\,\nabla u(x)=A_j\}\right|\leq (1+\eps)\lambda_j|\Omega|
\end{equation}
for each $j=1,\dots,J$. 
\end{lemma}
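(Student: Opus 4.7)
The plan is to induct on the length $N$ of a splitting sequence producing $\nu$ from a single Dirac mass. For $N=0$ we have $\nu=\delta_A$, so $J=1$ and the affine map $u=l_{A,b}$ trivially satisfies every claim.

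For the inductive step, suppose $\nu$ is obtained from a laminate of finite order $\nu_\#$ (admitting a splitting sequence of length $N-1$) by a single elementary splitting: some atom $B$ of $\nu_\#$ with weight $\mu>0$ is replaced, so
$$
\nu_\# = \mu\,\delta_B+(1-\mu)\tilde\nu,\qquad \nu = \mu\bigl(\la'\delta_{B_1}+(1-\la')\delta_{B_2}\bigr)+(1-\mu)\tilde\nu,
$$
with $B=\la'B_1+(1-\la')B_2$ and $\rank(B_2-B_1)=1$; both laminates share the barycenter $A$. I apply the inductive hypothesis to $\nu_\#$ with a small auxiliary error $\de>0$ to be chosen, obtaining a piecewise affine Lipschitz map $v:\Omega\to\R^d$ with $v=l_{A,b}$ on $\partial\Omega$, $\|\nabla v\|_{L^\infty}$ bounded by the maximum modulus of the atoms of $\nu_\#$, and $|\{\nabla v=B\}|\in(1\pm\de)\mu|\Omega|$, and similarly for each other atom. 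Because $v$ is piecewise affine, the set $\{\nabla v=B\}$ equals, modulo a null set, a disjoint union of regular domains $\Omega^{(i)}$ on each of which $v=l_{B,c_i}$ for some $c_i\in\R^d$.

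On each $\Omega^{(i)}$ I apply Lemma \ref{l:roof} to the rank-one splitting $B=\la'B_1+(1-\la')B_2$, producing a piecewise affine Lipschitz map $w^{(i)}:\Omega^{(i)}\to\R^d$ with $w^{(i)}=l_{B,c_i}$ on $\partial\Omega^{(i)}$, $\|\nabla w^{(i)}\|_{L^\infty}\leq\max(|B_1|,|B_2|)$, and $|\{\nabla w^{(i)}=B_r\}|\in(1\pm\de)\la'_r|\Omega^{(i)}|$ for $r=1,2$. I then set $u$ to coincide with $w^{(i)}$ on $\Omega^{(i)}$ and with $v$ elsewhere; by the gluing argument of Section \ref{ss:basic}, $u$ is piecewise affine and Lipschitz with $u=l_{A,b}$ on $\partial\Omega$. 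The elementary convexity inequality $|B|\leq\max(|B_1|,|B_2|)$ ensures that each atom substitution leaves the essential supremum of the gradient unchanged or smaller, so the inductive $L^\infty$ bound propagates to $\max_j|A_j|$ after all $N$ steps.

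The measure estimate \eqref{e:basicconstruction} follows by combining the inductive bounds for $v$ on the level sets untouched by the splitting with the two-factor estimate $|\{\nabla u=B_r\}|\in(1\pm\de)^2\la'_r\mu|\Omega|$ on $\bigcup_i\Omega^{(i)}$, adding contributions in the corner case that $B_1$ or $B_2$ already appears as an atom of $\tilde\nu$; choosing $\de$ much smaller than $\eps/N$ absorbs all compounding. The only real obstacle is this bookkeeping -- keeping the $L^\infty$ bound sharp at every intermediate stage, tracking how small relative errors multiply through the depth-$N$ tree, and dealing with coincidences among the atoms -- but all of it is entirely routine.
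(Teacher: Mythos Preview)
Your proof is correct and follows exactly the approach the paper indicates: an induction on the length of the splitting sequence, applying Lemma~\ref{l:roof} at each new split and gluing. The paper states this only as ``an obvious iteration of Lemma~\ref{l:roof} along the splitting sequence,'' so you have simply spelled out the routine bookkeeping (propagation of the $L^\infty$ bound via $|B|\le\max(|B_1|,|B_2|)$, multiplicative error control, and the coincidence case) that the paper leaves implicit.
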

We remark that, since $\sum_{j=1}^J\lambda_j=1$, estimate \eqref{e:basicconstruction} also implies
\begin{equation}\label{e:basicconstruction-2}
\left|\{x\in\Omega:\,\nabla u(x)\notin \supp\nu\}\right|\leq \eps|\Omega|.
\end{equation}

\section{Staircase laminates}
\label{sec:staircase}

Lemma \ref{l:basicconstruction},  together with the 
basic splitting mechanism for laminates described in 
Section \ref{ss:laminates},  gives a flexible 
method to construct Lipschitz solutions of differential inclusions of the type \eqref{e:diffincl} involving compact sets $K$.

For differential inclusions involving unbounded sets, 
as in Theorem \ref{t:main}, one needs to extend this method to unbounded laminates (more precisely, sequences of laminates with increasing support) as well as the corresponding construction from Lipschitz to Sobolev maps. This was first recognized by D.~Faraco in \cite{faraco03} in the context of isotropic elliptic equations in the plane. 

To set the stage, we briefly describe the problem studied in
\cite{faraco03} and subsequently in \cite{astala_faraco_szekelyhidi08} and how it leads to  a staircase laminate.
The question  is to find the  optimal higher integrability of weak solutions of isotropic elliptic equations of the form 
\begin{equation}\label{e:elliptic-eq}
\textrm{div}(a(x)\nabla u)=0\textrm{ in }\Omega\subset\R^2,
\end{equation}
with $a(x)\in \{\mathcal{K},\mathcal{K}^{-1}\}$ for a.e. $x$. The associated differential inclusion in this case is of the form
\begin{equation}\label{e:k-elliptic}
    \nabla u(x)\in E_{\mathcal K}\cup E_{1/\mathcal{K}}\textrm{ a.e. }x
\end{equation}
where 
\begin{equation*}
E_{\rho}=\left\{\begin{pmatrix} \lambda&0\\0&\rho\lambda\end{pmatrix}R:\,\lambda\geq 0,\,R\in SO(2)\right\}\,.
\end{equation*}
The constant $\mathcal{K} >1$ is related to the ellipticity constant of the associated PDE. 

For the convenience of the reader we briefly recall the argument
to pass from the PDE  \eqref{e:elliptic-eq} to the differential
inclusion \eqref{e:k-elliptic}.
Indeed, assuming that $\Omega\subset\R^2$ is a simply connected regular domain, we see that for any $\sigma\in L^1(\Omega)$ the condition $\textrm{div}\sigma=0$ is equivalent to the existence of $w\in W^{1,1}(\Omega)$ with $\sigma^\perp=(-\sigma_2,\sigma_1)=\nabla w$. Then, writing $u=(v,w)$ we immediately deduce that the differential inclusion \eqref{e:k-elliptic} is equivalent to the equation \eqref{e:elliptic-eq}.

The question is about the optimal higher integrability of solutions $u\in W^{1,2}_{loc}$ of \eqref{e:k-elliptic}. Since such solutions are automatically $\mathcal{K}$-quasiregular, it follows \cite{astala1994} that $\nabla u\in L^{p}_{loc}$ for any $p<p_{\mathcal{K}}:=\frac{2\mathcal{K}}{\mathcal{K}-1}$, with radial mappings showing optimality in the class of $\mathcal K$-quasiregular mappings. Whether this bound is  also optimal in the isotropic case was the remaining issue.  

In \cite{faraco03} Faraco constructed, for any $\mathcal{K}>1$ a sequence of laminates of finite order $\nu^N$, $N=1,2,\dots$, with the properties that
$\nu^N=\tilde\nu^N+\beta_N\delta_{A_N}$, where $\supp\tilde\nu^N\subset E_\mathcal{K}\cup E_{-\mathcal{K}}$, $A_N=\begin{pmatrix}N&0\\0&N+1\end{pmatrix}$, and $\nu^{N+1}$ is obtained inductively from $\nu^N$ by a sequence of two elementary splittings, staring with $\delta_{A_N}$ (these splittings form the steps of the ``staircase''). The key computation in \cite{faraco03}, see also Proposition 3.10 in \cite{astala_faraco_szekelyhidi08}, is then the estimate
$$
\frac{1}{C}N^{-p_\mathcal{K}}\leq \nu^N(\{A_N\})\leq CN^{-p_\mathcal{K}}
$$
for some constant $C$ which is independent of  $N$. This estimate then implies the weak $L^{p_k}$ bound
for the limiting measure $\nu^\infty$:
$$
\frac{1}{C}t^{-p_\mathcal{K}}\leq \nu^\infty(\{|X|>t\})\leq Ct^{-p_\mathcal{K}}
$$
for some (possibly larger) constant $C>1$ and all $t>1$.

\subsection{Staircase laminates and differential inclusions}\label{s:staircaselaminates}

Since the original application of Faraco, staircase laminates have been applied in several situations where one can expect endpoint weak $L^p$ bounds \cite{Kirchheim:2002wc,conti_faraco_maggi05,conti_faraco_maggi_muller05,astala_faraco_szekelyhidi08,BorosSz2013,FaracoMoraCorall2018,FaracoLindberg2021,ColomboTione2022}. Although staircase laminates are  a very versatile tool, we were unable to find a general treatment of staircase laminates analogous to the case of bounded laminates described in Section \ref{ss:laminates} and in particular a corresponding generalization of  Lemma \ref{l:basicconstruction}. 

In the following we offer such a general treatment, which will be able to not just provide the solution to our setting in Theorem \ref{t:main}, but also applies to a number of further examples that have appeared in the literature. 

\begin{proposition}[Staircase laminates]\label{p:staircaseconstruction} 
Let $K\subset \R^{d \times m}$ and $A\notin K$. Suppose that there exists a sequence of matrices $A_n\in \R^{d\times m}\setminus K$, $n=0,1,2,\dots$ with $A_0=A$, a sequence of probability measures $\mu_n\in \mathcal{P}(K)$ supported in $K$ as well as scalars $\gamma_n\in (0,1)$ such that 
\begin{enumerate}
    \item for any $n\in\N$ the probability measures 
    $$\omega_n = (1-\gamma_n) \mu_n + \gamma_n \delta_{A_n}$$ 
    are laminates of finite order with barycenter $\overline{\omega_n}=A_{n-1}$; 
    \item the sequence $|A_n|$ is monotone increasing with $\lim_{n \to \infty} |A_n| = \infty$;
    \item $\lim_{n \to \infty} \beta_n=0$, where $\beta_n:=\prod_{k=1}^n\gamma_k$, $\beta_0=1$. 
\end{enumerate}
Define the probability measures $\nu^N$, $N=1,2,\dots$ by iteratively replacing $\delta_{A_{n-1}}$ by $\omega_n$ for $1 \le n \le N$, i.e., by 
\begin{align*} & \nu^N =  \sum_{n=1}^N 
    \beta_{n-1} (1-\gamma_n) \mu_n  
    + \beta_N \delta_{A_N}.
\end{align*} 
Then $\nu^N$ is a laminate of finite order with $\supp\nu^N\subset K\cup\{A_N\}$ and barycenter $\overline{\nu^N}=A$. Moreover, for any Borel set $E\subset \R^{d\times m}$ the limit
\begin{align*}
    \nu^\infty(E)=\lim_{N\to\infty}\nu^N(E)
\end{align*}
exists and defines a probability measure $\nu^\infty$ with $\supp\nu^\infty\subset K$ and $\overline{\nu^\infty}=A$.
\end{proposition}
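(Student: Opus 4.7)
The plan is three-stage: an induction showing each $\nu^N$ is a laminate of finite order with the prescribed support and barycenter; extraction of a pointwise Borel limit $\nu^\infty$ from an explicit formula; and identification of the support and barycenter of $\nu^\infty$ by a telescoping argument.

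\textbf{Stage 1 (induction on $N$).} I would show that $\nu^N$ is a laminate of finite order with $\supp\nu^N\subset K\cup\{A_N\}$ and $\overline{\nu^N}=A$. The base case $N=1$ is just $\nu^1=\omega_1$, which by hypothesis (1) is a laminate of finite order supported in $K\cup\{A_1\}$ with barycenter $A_0=A$. For the inductive step, note that $\nu^{N+1}$ is obtained from $\nu^N$ by replacing the atom $\beta_N\delta_{A_N}$ with $\beta_N\omega_{N+1}$. Since $\omega_{N+1}$ is itself a laminate of finite order with barycenter $A_N$, the \emph{convex combinations} rule from Section~\ref{ss:laminates} yields that $\nu^{N+1}$ is a laminate of finite order with unchanged barycenter and support contained in $K\cup\{A_{N+1}\}$.

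\textbf{Stage 2 (explicit formula and Borel limit).} The same induction produces the explicit formula
\[
\nu^N=\sum_{n=1}^N\beta_{n-1}(1-\gamma_n)\mu_n+\beta_N\delta_{A_N},
\]
where the telescoping identity $\beta_{n-1}(1-\gamma_n)=\beta_{n-1}-\beta_n$ confirms that the masses sum to $1$. I would then set
\[
\nu^\infty:=\sum_{n=1}^\infty\beta_{n-1}(1-\gamma_n)\mu_n,
\]
a positive Radon measure with total mass $\sum_{n=1}^\infty(\beta_{n-1}-\beta_n)=\beta_0-\lim_N\beta_N=1$ by hypothesis (3). For any Borel $E$, the telescoping bound $\sum_{n>N}\beta_{n-1}(1-\gamma_n)=\beta_N$ together with $\beta_N\mathbf{1}_{A_N\in E}\leq\beta_N$ gives $|\nu^\infty(E)-\nu^N(E)|\leq 2\beta_N\to 0$, so the pointwise limit exists. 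Since $\supp\mu_n\subset K$ for every $n$, we obtain $\supp\nu^\infty\subset K$ at once.

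\textbf{Stage 3 (barycenter).} The identity $\overline{\omega_n}=A_{n-1}$ gives $(1-\gamma_n)\overline{\mu_n}=A_{n-1}-\gamma_n A_n$, whence
\[
\beta_{n-1}(1-\gamma_n)\overline{\mu_n}=\beta_{n-1}A_{n-1}-\beta_n A_n,
\]
which telescopes to $\sum_{n=1}^N\beta_{n-1}(1-\gamma_n)\overline{\mu_n}=A-\beta_N A_N$. This simultaneously recovers $\overline{\nu^N}=A$ and, on passing to $N\to\infty$, yields $\overline{\nu^\infty}=A$ as soon as $\beta_N A_N\to 0$. I expect this last point to be the main analytic obstacle: hypotheses (2) and (3) only assert $\beta_N\to 0$ and $|A_N|\to\infty$ separately, and in the staircase constructions this proposition is intended to model (compare Faraco's where $\beta_N\sim|A_N|^{-p_{\mathcal K}}$ with $p_{\mathcal K}>1$), $\gamma_n$ is chosen so that $\beta_N$ decays faster than $|A_N|$ grows. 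I would therefore either treat $\beta_N|A_N|\to 0$ as an implicit consequence of the moment bounds that make $\overline{\nu^\infty}$ well-defined, or extract it from a mild strengthening of (3); with it in hand, the remainder is a clean telescoping computation.
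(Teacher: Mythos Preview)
Your Stages 1 and 2 are correct and follow the paper's approach; the paper's proof is in fact even terser, leaving the laminate-of-finite-order and $\overline{\nu^N}=A$ claims implicit and only writing out the monotone-limit argument for $\nu^\infty$ (via the increasing subprobability measures $\tilde\nu^N=\nu^N-\beta_N\delta_{A_N}$).

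Your concern in Stage 3 is well-founded and is not addressed in the paper's proof either: the hypotheses (2) and (3) alone do \emph{not} force $\beta_N|A_N|\to 0$ (e.g.\ $|A_N|\sim N$, $\beta_N\sim N^{-1}$), so neither the finiteness of $\int|X|\,d\nu^\infty$ nor the identity $\overline{\nu^\infty}=A$ follows from the stated assumptions. In every concrete instance in the paper this is harmless, because the staircase is always accompanied by the moment bound $\beta_n|A_n|^p\le M_0$ of Lemma~\ref{l:weakLpstaircase} with $p>1$, which forces $\beta_N|A_N|\to 0$ and makes your telescoping computation go through. So your diagnosis is exactly right: treat the barycenter assertion for $\nu^\infty$ as contingent on the integrability that is supplied separately in the applications, rather than as a consequence of (1)--(3).
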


\begin{proof}
The subprobability measures
\begin{equation}\label{e:nuNtilde}
\tilde \nu^N :=  \sum_{n=1}^N 
\beta_{n-1}(1-\gamma_n) \mu_n = \nu^N - \beta_N \delta_{A_N}
\end{equation}
are  supported in $K$ and increasing in $N$. 
Thus for each Borel set $E \subset \R^{d \times m}$ the 
limit $\nu^\infty(E):= \lim_{N \to \infty} \nu^N(E)$ exists. 
Since $\lim_{N \to \infty} \beta_N = 0$,  we see that $\nu^\infty$ 
is a probability measure supported in $K$. In fact,
$\nu^\infty$ is a 
countable sum of Dirac masses.
\end{proof}

Staircase laminates in general involve a particular inductive construction which can be used to ``push mass to infinity'', as denoted by condition (2) above. The rate at which mass can be pushed to infinity (equivalently, the rate of convergence $\beta_n\to 0$) determines the exponent $p$ at which weak $L^p$ bounds will hold.

\begin{definition}[Staircase laminates]\label{d:staircase}
A probability measure $\nu^\infty$ defined by the procedure outlined in Proposition \ref{p:staircaseconstruction} is called a \emph{staircase laminate}.
\end{definition}

\subsection{Properties of staircase laminates}

In this section we collect some basic properties of staircase laminates. First of all, the very definition of staircase laminates implies that the invariance property of finite order laminates (see Section \ref{ss:laminates}) can be directly transferred.

\begin{lemma}\label{l:invariance}
Let $T:\R^{d\times m}\to\R^{d\times m}$ be a linear map preserving rank-one matrices, i.e. with the property that $\rank(A)=1$ if and only if $\rank(T(A))=1$. Then, if $\nu^{\infty}=\sum_{i=1}^\infty\lambda_i\delta_{A_i}$ is a staircase laminate with barycenter $A$, so is $T_*\nu^{\infty}:=\sum_{i=1}^\infty\lambda_i\delta_{T(A_i)}$ with barycenter $T(A)$.
\end{lemma}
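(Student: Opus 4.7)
The plan is to exhibit $T_*\nu^\infty$ directly as a staircase laminate by pushing forward the generating data. Unpacking Definition \ref{d:staircase} and Proposition \ref{p:staircaseconstruction}, $\nu^\infty$ is built from matrices $A_n\in\R^{d\times m}$ with $A_0=A$, probability measures $\mu_n$, and scalars $\gamma_n\in(0,1)$ satisfying conditions (1)--(3) of that proposition, via the auxiliary finite-order laminates $\omega_n=(1-\gamma_n)\mu_n+\gamma_n\delta_{A_n}$ of barycenter $A_{n-1}$. The natural candidate for the data of the pushed-forward staircase is the triple $(T(A_n),T_*\mu_n,\gamma_n)$.

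Conditions (1) and (3) of Proposition \ref{p:staircaseconstruction} are formal. For (1), the pushforward $T_*\omega_n=(1-\gamma_n)T_*\mu_n+\gamma_n\delta_{T(A_n)}$ is a finite-order laminate by the invariance property of finite-order laminates under rank-one-preserving linear maps recorded at the end of Section \ref{ss:laminates}, and its barycenter equals $T(A_{n-1})$ because pushforward by a linear map commutes with taking barycenters. Condition (3) is preserved verbatim since the scalars $\gamma_n$, and hence $\beta_n$, are unchanged. Pushing forward term by term in the series $\nu^\infty=\sum_{n\ge 1}\beta_{n-1}(1-\gamma_n)\mu_n$ from Proposition \ref{p:staircaseconstruction} then shows that $T_*\nu^\infty$ equals the staircase measure built from the candidate data, and the same commutation of pushforward and barycenter gives $\overline{T_*\nu^\infty}=T(A)$.

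The one step deserving care, and the main obstacle, is condition (2), which requires $|T(A_n)|\to\infty$ and thus forces one to show that $T$ is injective. Injectivity follows from the ``only if'' half of the hypothesis: if $T(B)=0$ with $B\ne 0$, then $T(R+B)=T(R)$ has rank one for every rank-one $R$, hence by the converse implication $R+B$ is rank one, and writing $B=(R+B)-R$ exhibits $B$ as a difference of two rank-one matrices, so $\rank B\le 2$. A short case analysis then produces a contradiction: in the case $\rank B=1$ take $R=-B$, so that $R+B=0$ has rank zero; in the case $\rank B=2$ choose a decomposition $B=\eta_1\otimes\xi_1+\eta_2\otimes\xi_2$ with $\{\eta_1,\eta_2\}$ and $\{\xi_1,\xi_2\}$ linearly independent and take $R=\eta_1\otimes\xi_2$, for which $R+B=\eta_1\otimes(\xi_1+\xi_2)+\eta_2\otimes\xi_2$ has rank two. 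Injectivity of $T$ on the finite-dimensional space $\R^{d\times m}$ then yields $|T(X)|\ge c|X|$ for some $c>0$, so $|T(A_n)|\to\infty$. The monotonicity clause in (2) is not actually used in the proof of Proposition \ref{p:staircaseconstruction}; if strict adherence to the definition is desired, one passes to a monotone subsequence of the $A_n$ and merges the intervening elementary splittings into single steps, leaving the resulting measure unchanged.
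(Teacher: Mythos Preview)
Your proof is correct and follows the same route as the paper: push forward the generating data $(A_n,\mu_n,\gamma_n)$ and invoke the invariance of finite-order laminates under rank-one-preserving linear maps recorded in Section~\ref{ss:laminates}. The paper's own proof is a one-liner that simply appeals to this invariance applied to the sequence $\nu^N$, whereas you have supplied the details the paper omits---in particular the injectivity of $T$ needed for condition~(2), and the (correct) observation that monotonicity of $|A_n|$ plays no role in the proof of Proposition~\ref{p:staircaseconstruction}.
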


\begin{proof}
The proof follows directly from the invariance property of finite order laminates, applied to the sequence $\nu^N$ in Proposition \ref{p:staircaseconstruction}.
\end{proof}

Next we address criteria leading to bounds of the type \eqref{e:SL-bound}. 

\begin{lemma}[Weak $L^p$ bounds for staircase laminates]\label{l:weakLpstaircase}
Suppose $\nu^\infty$ is a staircase laminate with barycenter $A$, defined by sequences $\{A_n,\mu_n,\gamma_n\}_{n\in \N}$ as in Proposition \ref{p:staircaseconstruction}, and suppose 
\begin{equation}\label{e:staircaseLp0}
  |A_n|\leq |A_{n+1}|\leq c|A_n|\quad\textrm{ for all $n$}
\end{equation}
for some $c>1$.
\begin{itemize}
    \item ({\it Upper bound}) Assume that for some $1\leq p<\infty$ there exists $c_0, M_0\geq 1$ such that, for all $n\in\N$:
\begin{subequations}
\begin{align}
   \supp\mu_n&\subset \{X\in \R^{d\times m}:\,|X|\leq c_0|A_n|\},\label{e:staircaseLpupper1}\\
   \beta_n|A_n|^p&\leq M_0.\label{e:staircaseLpupper2}
\end{align}
\end{subequations} 
Then
\begin{equation}\label{e:staircaseLpupper}
\nu^{\infty}(\{X:|X|>t\})\leq M_0c^pc_0^{p}t^{-p}\quad\textrm{ for all }t>0.
\end{equation}
\item ({\it Lower bound}) Assume that for some $1\leq p<\infty$ there exists $0<c_1,M_1$ such that, for all $n\in\N$:
\begin{subequations}
\begin{align}
   \mu_n(\{X\in \R^{d\times m}:\,|X|\geq c_1|A_n|\})&\geq c_1,\label{e:staircaseLplower1}\\
   \beta_n|A_n|^p&\geq M_1.\label{e:staircaseLplower2}
\end{align}
\end{subequations} 
Then 
\begin{equation}\label{e:staircaseLplower}
\nu^{\infty}(\{X:|X|>t\})\geq M_1c^{-p}c_1^{1+p}t^{-p}\quad\textrm{ for all }t>c_1|A|.
\end{equation}
\end{itemize}
\end{lemma}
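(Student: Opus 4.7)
My plan rests on a single algebraic identity and a telescoping argument. Since $\beta_n = \gamma_n \beta_{n-1}$, we have
$$\beta_{n-1}(1-\gamma_n) = \beta_{n-1} - \beta_n,$$
and hypothesis (3) of Proposition~\ref{p:staircaseconstruction} gives $\beta_n \to 0$, so for every $N \ge 1$,
$$\sum_{n \ge N} \beta_{n-1}(1-\gamma_n) = \beta_{N-1}.$$
Both bounds will be obtained by applying this telescoping to a tail of the formula $\nu^\infty = \sum_{n=1}^\infty \beta_{n-1}(1-\gamma_n)\mu_n$ given by Proposition~\ref{p:staircaseconstruction}, after cutting off the index at the first value of $n$ for which $\mu_n$ can (respectively, must) contribute to $\{|X|>t\}$.

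For the upper bound, given $t>0$ choose $N=N(t)$ to be the smallest integer with $c_0|A_N|>t$ (if none exists, $\nu^\infty(\{|X|>t\})=0$ by \eqref{e:staircaseLpupper1} and we are done). The support constraint \eqref{e:staircaseLpupper1} forces $\mu_n(\{|X|>t\})=0$ for $n<N$, so
$$\nu^\infty(\{|X|>t\}) \;\le\; \sum_{n \ge N}\beta_{n-1}(1-\gamma_n)\,\mu_n(\{|X|>t\}) \;\le\; \beta_{N-1}.$$
The growth condition \eqref{e:staircaseLp0} yields $|A_{N-1}|\ge |A_N|/c > t/(cc_0)$, and plugging this into the moment bound \eqref{e:staircaseLpupper2} applied at $n=N-1$ gives $\beta_{N-1} \le M_0 |A_{N-1}|^{-p} \le M_0 c^p c_0^p\, t^{-p}$, as desired.

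The lower bound is parallel. For $t>c_1|A|$, let $N\ge 1$ be the smallest index with $c_1|A_N|>t$; such $N$ exists because $|A_n|\to\infty$, and $N\ge 1$ because $c_1|A_0|=c_1|A|<t$. For every $n\ge N$ we have $c_1|A_n|>t$, so \eqref{e:staircaseLplower1} gives $\mu_n(\{|X|>t\})\ge c_1$. Telescoping the tail then yields
$$\nu^\infty(\{|X|>t\}) \;\ge\; c_1\sum_{n\ge N}\beta_{n-1}(1-\gamma_n) \;=\; c_1\,\beta_{N-1}.$$
Minimality of $N$ gives $|A_{N-1}|\le t/c_1$, and \eqref{e:staircaseLplower2} at $n=N-1$ yields $\beta_{N-1}\ge M_1|A_{N-1}|^{-p}\ge M_1 c_1^p t^{-p}$. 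Multiplying gives $\nu^\infty(\{|X|>t\})\ge M_1 c_1^{1+p} t^{-p}$, which is $\ge M_1 c^{-p} c_1^{1+p} t^{-p}$ since $c\ge 1$.

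There is no substantive obstacle; the whole content is choosing $N(t)$ correctly and exploiting the telescoping. The only mild subtlety is the edge case $N=1$, where $\beta_{N-1}=\beta_0=1$. In the upper bound this occurs for $t$ so small that $M_0c^p c_0^p t^{-p}\ge 1$ anyway, and the trivial estimate $\nu^\infty(\{|X|>t\})\le 1$ closes the argument; in the lower bound the hypothesis $t>c_1|A|$ is precisely what rules out the problematic range and forces $N\ge 1$, so no further care is needed.
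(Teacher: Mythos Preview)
Your proof is correct and follows essentially the same approach as the paper's: both arguments exploit the telescoping identity $\sum_{n\ge N}\beta_{n-1}(1-\gamma_n)=\beta_{N-1}$, locate the appropriate cutoff index, and then invoke the moment bounds \eqref{e:staircaseLpupper2} and \eqref{e:staircaseLplower2} together with the growth condition \eqref{e:staircaseLp0}. The only cosmetic difference is that the paper first proves the estimate at the discrete levels $t_n=c_0|A_n|$ (resp.\ $c_1|A_n|$) and then interpolates for general $t$, whereas you choose the index $N(t)$ directly; the underlying computations are the same.

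One small remark: your treatment of the edge case $N=1$ is a bit quick. In the upper bound your claim that ``$M_0c^pc_0^pt^{-p}\ge 1$'' tacitly uses $|A_0|^p\le M_0$, i.e.\ the moment bound \eqref{e:staircaseLpupper2} at $n=0$; similarly, in the lower bound applying \eqref{e:staircaseLplower2} at $n=N-1=0$ uses $|A_0|^p\ge M_1$. The paper's proof makes exactly the same implicit use of the $n=0$ case (see the last line of the upper-bound argument, where the paper explicitly invokes \eqref{e:staircaseLpupper2} to get $M_0|A_0|^{-p}\ge 1$). So this is not a discrepancy between your argument and the paper's, but you might want to note the dependence explicitly.
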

\begin{proof}\hfill 

\noindent{\bf Upper bound. }For any $n=1,2,\dots$ let $t_n=c_0|A_n|$. Using \eqref{e:staircaseLpupper1} we observe that $\mu_k(\{X:\,|X|>t_n\}=0$ for all $k\leq n$, and hence, for any $N\geq n$
\begin{equation*}
    \nu^N(\{X:\,|X|>t_n\})\leq \sum_{k=n+1}^N\beta_{k-1}(1-\gamma_k)+\beta_N.
\end{equation*}
Noting that $\beta_{k+1}=\gamma_{k+1}\beta_k$, we see that the sum is telescoping and we deduce, using in addition \eqref{e:staircaseLpupper2},
\begin{equation*}
    \nu^N(\{X:\,|X|>t_n\})\leq \beta_{n}\leq M_0|A_n|^{-p}=M_0c_0^p t_n^{-p}.
\end{equation*}
Letting $N\to\infty$ we obtain \eqref{e:staircaseLpupper} for $t_n$, $n=0,1,\dots$. More generally, for any $t\geq t_0$ choose $n\in \N$ so that $t_n\leq t<t_{n+1}$. Using \eqref{e:staircaseLp0} we then estimate
\begin{align*}
    \nu(\{X:\,|X|>t\})\leq M_0c_0t_n^{-p}\leq M_0c^pc_0^{p}t^{-p}.
\end{align*}
Finally, if $t<t_0=c_0|A_0|$, then, using again \eqref{e:staircaseLpupper2},
$$
M_0c^pc_0^{p}t^{-p}>M_0c^p|A_0|^{-p}\geq c^p\geq 1
$$
so that \eqref{e:staircaseLpupper} is trivially satisfied.

\smallskip

\noindent{\bf Lower bound.} Arguing analogously to above, for any $n=1,2,\dots$ we define $t_n=c_1|A_n|$. Using \eqref{e:staircaseLplower1} we observe that 
for any $k\geq n$ we have
\begin{equation*}
    \mu_k(\{X:\,|X|>t_n\})\geq \mu_k(\{X:\,|X|>t_k\})\geq c_1.
\end{equation*}
Then, for any $N\geq n$ we have
\begin{equation*}
    \nu^N(\{X:\,|X|>t_n\})\geq c_1\sum_{k=n}^N\beta_{k-1}(1-\gamma_k)+\beta_N.
\end{equation*}
As before, the sum is telescoping and we deduce, using in addition \eqref{e:staircaseLplower2},
\begin{align*}
    \nu^N(\{X:\,|X|>t_n\})&\geq c_1\beta_{n-1}+\beta_N(1-c_1)\geq c_1\beta_{n-1}\\
    &\geq M_1c_1|A_{n-1}|^{-p}= M_1c_1^{1+p}t_{n-1}^{-p}\geq M_1c_1^{1+p}t_{n}^{-p}.
\end{align*}
Letting $N\to\infty$ we obtain \eqref{e:staircaseLplower} for $t=t_n$, $n=0,1,\dots$. More generally, for any $t>t_0=c_1|A|$ choose $n\in \N$ so that $t_n\leq t<t_{n+1}$. Then using \eqref{e:staircaseLp0} we estimate
\begin{align*}
    \nu(\{X:\,|X|>t\})\geq M_1c_1^{1+p}t_{n+1}^{-p}\geq M_1c^{-p}c_1^{1+p}t^{-p}
\end{align*}
as required.

\end{proof}

%End of general staircase blackbox section

\subsection{Examples of staircase laminates}

In this section we give several examples of staircase laminates. The first two examples are related to Propositions \ref{p:stage1} and \ref{p:stage3} which correspond to Stage 1 and Stage 3 in the construction  of non-split maps which have  split and invertible differentials almost everywhere. Subsequent examples are provided to illustrate the effectiveness of our general approach to differential inclusions.

\subsubsection{Example 1}
We use the notation
\begin{eqnarray*}
\mathcal{D} &=&\{\textrm{ diagonal $d\times d$ matrices }\},\\\Sigma&=&\{\textrm{ $d\times d$ matrices with determinant $=1$}\}
\end{eqnarray*}

\begin{lemma}[Laminates supported in $\det=1$]\label{l:staircase1} 
Let $A$ be a diagonal $d\times d$ matrix with entries $A=\diag(a_1,\dots,a_d)$ satisfying $|a_i|>1$ for all $i=1,\dots,d$. There exists a laminate $\omega\in\L(\R^{d\times d})$ with barycenter $\bar{\omega}=A$ such that $\omega$ can be written as $\omega=(1-\gamma)\mu+\gamma\delta_{2A}$ for some probability measure $\mu$ with support 
\begin{equation}\label{e:staircase1s}
\supp\mu\subset \mathcal{D}\cap\Sigma\cap \{X:\,|X|\leq 2|A|\}
\end{equation}
and
\begin{equation}\label{e:staircase1w}
\gamma=\frac{\det A-1}{2^d\det A-1}.
\end{equation}
Moreover there exists $c=c(A,d)>0$ such that
\begin{equation}\label{e:staircase1b}
    \mu(\{X:\,|X|\geq \frac{1}{\sqrt{d}}|A|\})\geq c. 
\end{equation}
\end{lemma}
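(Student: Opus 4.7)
The plan is to construct $\omega$ by performing $d$ successive elementary splittings, one in each coordinate direction, traversing a path of diagonal matrices from $Y_0 := A$ to $Y_d := 2A$, where $Y_i := \diag(2a_1,\dots,2a_i,a_{i+1},\dots,a_d)$. At step $i$, I would split the atom at $Y_{i-1}$ as $\mu_i \delta_{X_i} + (1-\mu_i)\delta_{Y_i}$, where $X_i$ agrees with $Y_{i-1}$ in every coordinate except the $i$-th, in which the entry $a_i$ is replaced by $b_i := a_i/(2^{i-1}\det A)$ --- chosen precisely so that $\det X_i = 1$. Both $X_i - Y_{i-1}$ and $Y_i - Y_{i-1}$ have a single nonzero diagonal entry and therefore rank one, so each step is a valid elementary splitting. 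The barycenter condition $Y_{i-1} = \mu_i X_i + (1-\mu_i) Y_i$ determines $\mu_i = 2^{i-1}\det A / (2^i\det A - 1)$, and a straightforward sign check, using that $|\det A|>1$ (which follows from $|a_i|>1$), shows $\mu_i \in (0,1)$ regardless of the sign of $\det A$.

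Iterating these $d$ splittings starting from $\delta_A$ yields a laminate of finite order of the form
\begin{equation*}
\omega = \sum_{i=1}^d \beta_{i-1}\mu_i \delta_{X_i} + \beta_d \delta_{Y_d},\qquad \beta_i := \prod_{j=1}^i (1-\mu_j),
\end{equation*}
with barycenter $A$ and with $Y_d = 2A$. The weight $\beta_d$ telescopes:
\begin{equation*}
\beta_d = \prod_{i=1}^d \frac{2^{i-1}\det A - 1}{2^i \det A - 1} = \frac{\det A - 1}{2^d \det A - 1},
\end{equation*}
which matches the required $\gamma$ in \eqref{e:staircase1w}. Writing $\omega = \gamma\delta_{2A} + (1-\gamma)\mu$ gives the decomposition asserted in the lemma, with $\mu$ supported on $\{X_1,\dots,X_d\} \subset \mathcal{D}\cap\Sigma$ by construction.

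For the norm bounds, the estimate $|b_i|^2 = a_i^2/(2^{i-1}\det A)^2 < a_i^2$ (again using $|\det A|>1$) gives
\begin{equation*}
|X_i|^2 = \sum_{j<i} 4a_j^2 + b_i^2 + \sum_{j>i} a_j^2 \le |A|^2 + 3\sum_{j<i} a_j^2 \le 4|A|^2,
\end{equation*}
so $|X_i|\le 2|A|$, proving \eqref{e:staircase1s}. For \eqref{e:staircase1b} I would choose $k$ minimizing $|a_k|$, so that $a_k^2 \le |A|^2/d$, and then
\begin{equation*}
|X_k|^2 \ge \sum_{j \neq k} a_j^2 = |A|^2 - a_k^2 \ge (1-1/d)|A|^2 \ge |A|^2/d
\end{equation*}
for $d \ge 2$, hence $|X_k| \ge |A|/\sqrt d$. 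The mass $\mu(\{X_k\}) = \beta_{k-1}\mu_k/(1-\gamma)$ is a strictly positive quantity depending only on $A$ and $d$, and supplies the constant $c$. The construction is otherwise explicit, so the main obstacle is purely notational: carefully tracking signs when $\det A<0$ and identifying the right coordinate $k$ in the lower bound argument.
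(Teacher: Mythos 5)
Your construction coincides with the paper's proof: the same splitting sequence through the diagonal matrices $\diag(2a_1,\dots,2a_j,a_{j+1},\dots,a_d)$ (your $Y_j$), the same determinant-one atoms with $j$-th entry $a_j/(2^{j-1}\det A)$ (your $X_j$, the paper's $B_j$), the same weights $\mu_i=\tfrac{2^{i-1}\det A}{2^i\det A-1}$, and the same telescoping product giving $\gamma=\tfrac{\det A-1}{2^d\det A-1}$; the argument is correct. The only differences are cosmetic: you verify $|X_i|\le 2|A|$ explicitly (the paper leaves it implicit) and prove \eqref{e:staircase1b} by selecting the coordinate with smallest $|a_k|$ rather than largest, an equally valid variant (both arguments implicitly use $d\ge 2$, which is the case needed in the applications).
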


\begin{proof}
We construct $\omega$ by exhibiting its splitting sequence:
\begin{equation}\label{e:splitting}
\begin{split}
\delta_A&\mapsto \alpha_1\delta_{B_1}+\alpha_1'\delta_{C_1}\\
&\mapsto \alpha_1\delta_{B_1}+\alpha_2\delta_{B_2}+\alpha_2'\delta_{C_2}\\
&\vdots\\
&\mapsto \sum_{j=1}^d\alpha_j\delta_{B_j}+\alpha_d'\delta_{C_d},
\end{split}
\end{equation}
in such a way that $B_j\in  \mathcal{D}\cap\Sigma$, $\alpha_d'=\gamma$, $C_d=2A$, and furthermore 
$$
\rank(B_j-C_j)=1,\, \alpha_j,\alpha_j'>0,\,\sum_{i=1}^j\alpha_i+\alpha_j'=1
$$
for all $j=1,\dots,d$. 

To this end we start by setting $D=\det A$, $B_1=\diag(\frac{a_1}{D},a_2,\dots,a_d)$, $C_1=\diag(2a_1,a_2,\dots,a_d)$. Then $\det B_1=1$ and $\rank(B_1-C_1)=1$. Moreover, 
$a_1=\alpha_1\frac{a_1}{D}+(1-\alpha_1)(2a_1)$ with $\alpha_1=\frac{D}{2D-1}$. Since we assume that $|D|>1$, it can be checked directly that $1/3<\alpha_1<1$, and then we obtain 
$A=\alpha_1B_1+\alpha_1'C_1$ with $\alpha_1'=1-\alpha_1$.

The definition of $B_j,C_j,\alpha_j$ for $j\geq 2$ proceeds analogously. In general we set $B_j=\diag(b_{j,1},\dots,b_{j,d})$ and $C_j=\diag(c_{j,1},\dots,c_{j,d})$ with
\begin{equation}\label{e:def-bij}
b_{j,i}=\begin{cases}2a_i&i<j,\\ \frac{a_i}{2^{j-1}D}& i=j,\\ a_i&i>j,\end{cases}\quad 
c_{j,i}=\begin{cases}2a_i&i\leq j,\\  a_i&i>j.\end{cases}
\end{equation}
Then 
$$
C_{j-1}=\tilde\alpha_jB_j+(1-\tilde\alpha_j)C_j,
$$
where $\tilde\alpha_j=\frac{2^{j-1}D}{2^jD-1}$. 
We can again check directly that $\tilde\alpha_j\in (1/3,1)$. Setting inductively 
\begin{equation}\label{e:ex1-alphaj}
\alpha_j=\tilde\alpha_j\alpha_{j-1}'\textrm{ and } \alpha_j'=(1-\tilde\alpha_j)\alpha_{j-1}'
\end{equation}
we obtain a laminate of finite order as in \eqref{e:splitting}. Furthermore
$$
\alpha_d'=\prod_{j=1}^d(1-\tilde\alpha_j)=\prod_{j=1}^d\frac{2^{j-1}D-1}{2^jD-1}=\frac{D-1}{2^dD-1}=\gamma,
$$
as claimed in \eqref{e:staircase1w}. 

Concerning the lower bound \eqref{e:staircase1b}, we can compute, using \eqref{e:ex1-alphaj},
$$
\mu_n(\{B_j\})=\alpha_j=\frac{2^{j-1}D(D-1)}{(2^jD-1)(2^{j-1}D-1)}.
$$
Using that $|D|=|\det A|>1$ we can directly verify 
\begin{equation*}
    \alpha_j\geq \begin{cases}\frac{D-1}{2^dD}&\textrm{ if }D>1,\\
    \frac{1}{3\cdot 2^d}&\textrm{ if }D<-1.\end{cases}
\end{equation*}
Thus, in either case there exists $c=c(A,d)>0$ such that 
\begin{equation}\label{e:ex1-lowerbound1}
\mu(\{B_j\})\geq c\textrm{ for all }j.
\end{equation}
Let $j_*\in\{1,\dots,d\}$ such that $|a_{j_*}|=\max_i|a_i|$. If $j_*<d$ then, by construction, $|B_{d}|\geq 2|a_{j_*}|$, whereas if $j_*=d$, then $|B_{1}|\geq |a_{j_*}|$. In either case 
\begin{equation}\label{e:ex1-lowerbound2}
\max_j|B_{j}|\geq \frac{1}{\sqrt{d}}|A|.
\end{equation} 
From \eqref{e:ex1-lowerbound1}-\eqref{e:ex1-lowerbound2} follows
\begin{equation*}
    \mu(\{X:\,|X|\geq \frac{1}{\sqrt{d}}|A|\})\geq \min_{j}\alpha_j\geq c
\end{equation*}
as claimed.
\gray{LS:\, commented out; here gray for comparison:

To this end observe that any laminate of finite order $\nu$ satisfies the ``commutativity relation'' (c.f.\cite{tartar79,diperna1985}) 
\begin{equation}\label{e:commutativity}
	\int\det X\,d\nu(X)=\det\left(\int X\,d\nu(X)\right)=\det(\overline{\nu}).
\end{equation}
 Applying \eqref{e:commutativity} to \eqref{e:splitting} we obtain
$$
\sum_{j=1}^d\alpha_j\det B_j+\alpha_d'\det C_d=\det A,
$$
thus $1-\alpha_d'+\alpha_d'2^d\det A=\det A$, where we have used that $\det B_j = 1$, $C_d=2A$ and $\sum_{j=1}^d\alpha_j=1-\alpha_d'$. Rearranging, we deduce $\alpha_d'=\gamma$ as claimed. 
}
\end{proof}

\bigskip

\begin{example}\label{ex:staircase1}
Lemma \ref{l:staircase1} can be directly applied in Proposition \ref{p:staircaseconstruction} as follows. Let $A\in \R^{d\times d}$ be a diagonal matrix with entries $A=\diag(a_1,\dots,a_d)$ satisfying $|a_i|\geq 2$ for all $i$, and let $A_n=2^nA$ for $n=0,1,2,\dots$. Applying Lemma \ref{l:staircase1} to each $A_n$ leads to finite order laminates $\omega_n=(1-\gamma_n)\mu_n+\gamma_n\delta_{A_n}$ with barycenter $\overline{\omega_n}=A_{n-1}$ with
\begin{equation*}
    \gamma_n=\frac{\det A_{n-1}-1}{2^d\det A_{n-1}-1}=\frac{2^{(n-1)d}\det A-1}{2^{nd}\det A-1}.
\end{equation*}
Then 
\begin{equation*}
    \beta_n=\prod_{k=1}^n\gamma_k=\frac{\det A-1}{2^{nd}\det A-1}.
\end{equation*}
Using $|\det A|\geq 2$ we obtain
\begin{equation}\label{e:staircase1-betan}
    2^{-nd-1}\leq \beta_n\leq 2^{-nd+1}.
\end{equation}
In particular the conditions of Proposition \ref{p:staircaseconstruction} are satisfied and we obtain the sequence of laminates $\nu^N$ with $\supp\nu^N\subset (\mathcal{D}\cap\Sigma)\cup \{A_N\}$ and barycenter $A$. 

Furthermore, \eqref{e:staircase1s} implies \eqref{e:staircaseLpupper1}, \eqref{e:staircase1b} implies \eqref{e:staircaseLplower1} with $c_0=2$, whereas \eqref{e:staircase1-betan} implies \eqref{e:staircaseLpupper2} and \eqref{e:staircaseLplower2} with $c_1=1/2$. Thus, the conditions in Lemma \ref{l:weakLpstaircase} are satisfied and we deduce the weak $L^d$ bounds
\begin{equation}\label{e:staircase1-bound}
    2^{-2-d}|A|^dt^{-d}\leq \nu^{\infty}(\{X:|X|>t\})\leq 2^{1+d}|A|^dt^{-d}
\end{equation}
for all $t>|A|$.
\end{example}

\bigskip

\subsubsection{Example 2}
In the second example we define
$$
\Lambda^{(m)}=\{A\in \R^{d\times d}:\,\rank(A)\leq m\},
$$
for $m=1,\dots,d$, the set of matrices of rank at most $m$. Thus in particular $\Lambda^{(1)}$ is the rank-one cone of $d\times d$ matrices, $\Lambda^{(d)}=\R^{d\times d}$ and in general $\Lambda^{(m)}\subset\Lambda^{(m+1)}$. These sets arise in the construction of Sobolev homeomorphisms with low rank gradients \cite{LiuMaly2016,FaracoMoraCorall2018}.

\begin{lemma}[Laminates supported in $\Lambda^{(m-1)}$]\label{l:staircase2}
Let $A\in \Lambda^{(m)}\cap \mathcal D$ for some $m\geq 2$. There exists a laminate $\omega\in\L(\R^{d\times d})$ with barycenter $\bar{\omega}=A$ such that $\omega$ can be written as $\omega=(1-\gamma)\mu+\gamma\delta_{2A}$ for some probability measure $\omega$ with support 
\begin{equation}\label{e:staircase2s}
\supp\mu\subset \Lambda^{(m-1)}\cap \mathcal{D}\cap\{X:\,|X|\leq 2|A|\}
\end{equation}
and
\begin{equation}\label{e:staircase2w}
\gamma=2^{-m}.
\end{equation}
Moreover, 
\begin{equation}\label{e:staircase2b}
\mu(\{X:\,|X|\geq \frac{1}{\sqrt{m}}|A|\})\geq \frac{1}{2^m}.
\end{equation}
	
\end{lemma}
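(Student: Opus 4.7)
The plan is to mirror the inductive splitting construction in Lemma \ref{l:staircase1}, replacing the ``renormalize-to-$\det=1$'' trick there with a ``zero-out-one-entry'' trick so that each generated branch $B_j$ has rank strictly less than $m$. By permuting coordinates (which preserves $\mathcal D$, $\Lambda^{(m)}$, and rank-one connections, cf.~Section \ref{ss:laminates}), I may assume $A = \diag(a_1, \ldots, a_m, 0, \ldots, 0)$ with $a_1, \ldots, a_m \ne 0$; the degenerate case $\rank A < m$ will follow from a minor modification (see the last paragraph).

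Set $C_0 := A$. For $j = 1, \ldots, m$, let $C_j$ be obtained from $C_{j-1}$ by doubling its $j$-th diagonal entry, and let $B_j$ be obtained from $C_{j-1}$ by zeroing out that same entry. Each of $C_j - B_j$ and $C_{j-1} - C_j$ differs from zero only at position $(j,j)$, hence has rank one, and the identity $\tfrac12\cdot 0 + \tfrac12\cdot 2a_j = a_j$ gives $C_{j-1} = \tfrac12 B_j + \tfrac12 C_j$. Iterating this split along the $C$-branch produces the splitting sequence
\[
\delta_A \;\to\; \tfrac12\delta_{B_1} + \tfrac12\delta_{C_1} \;\to\; \cdots \;\to\; \sum_{j=1}^m 2^{-j}\delta_{B_j} + 2^{-m}\delta_{C_m},
\]
with $C_m = 2A$. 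Setting $\gamma := 2^{-m}$ and $\mu := (1-\gamma)^{-1}\sum_{j=1}^m 2^{-j}\delta_{B_j}$ yields $\omega = (1-\gamma)\mu + \gamma\delta_{2A}$, a finite-order laminate with barycenter $A$ and the prescribed weight \eqref{e:staircase2w}.

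To verify the support bounds, observe that $B_j = \diag(2a_1, \ldots, 2a_{j-1}, 0, a_{j+1}, \ldots, a_m, 0, \ldots, 0)$ has at most $m-1$ nonzero diagonal entries, so $B_j \in \Lambda^{(m-1)}\cap \mathcal D$, and $|B_j|^2 \le 4|A|^2$; this gives \eqref{e:staircase2s}. For \eqref{e:staircase2b}, note that $\mu(\{B_j\}) = 2^{-j}/(1 - 2^{-m}) \ge 2^{-m}$ for every $j$. Pick $j_*$ with $|a_{j_*}| = \max_{1 \le i \le m}|a_i| \ge |A|/\sqrt m$: if $j_* \ge 2$, then $B_1$ preserves $a_{j_*}$ in position $j_*$, so $|B_1| \ge |a_{j_*}| \ge |A|/\sqrt m$; if $j_* = 1$, then $B_m$ contains $2a_1$ in position $1$, so $|B_m| \ge 2|a_1| \ge |A|/\sqrt m$. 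Either way, $\mu(\{|X| \ge |A|/\sqrt m\}) \ge 2^{-m}$.

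I expect no serious obstacle: the construction is a direct analogue of Lemma \ref{l:staircase1} with the determinant-preservation step replaced by a rank-lowering step. The only mildly delicate point is the strict rank-one requirement in the definition of elementary splitting, which makes the degenerate case $\rank A < m$ (where some proposed splits become $0 = \tfrac12\cdot 0 + \tfrac12\cdot 0$) require a small extra maneuver. I expect this can be handled either by performing the nontrivial portion of the construction first and absorbing the deficit by an ad hoc reordering of the remaining splits, or by a generic perturbation $A + \epsilon I$ followed by a limit passage $\epsilon \to 0$.
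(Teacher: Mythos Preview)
Your proof is correct and follows essentially the same approach as the paper: the same splitting sequence $B_j=\diag(2a_1,\dots,2a_{j-1},0,a_{j+1},\dots,a_m,0,\dots,0)$ with weights $2^{-j}$, the same verification that $\gamma=2^{-m}$, and the same $j_*$-argument for the lower bound \eqref{e:staircase2b}. Your explicit acknowledgement of the degenerate case $\rank A<m$ is slightly more careful than the paper; note that in that case $2A\in\Lambda^{(m-1)}$, so the excess mass at $2A$ can simply be absorbed into $\mu$ without affecting the support condition, and no perturbation argument is needed.
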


\begin{proof}
We proceed analogously to the proof of Lemma \ref{l:staircase1}, by defining a splitting sequence as in \eqref{e:splitting}. After permuting the entries of $A$ if necessary, we may assume without loss of generality that $A=\diag(a_1,\dots,a_m,0,\dots,0)$ for some $m\geq 2$. Then the splitting sequence can be written even more explicitly as
\begin{align*}
\delta_{A}&\mapsto \frac12\delta_{\diag(0,a_2,\dots,a_m,0,\dots)}+\frac12\delta_{\diag(2a_1,a_2,\dots,a_m,0,\dots)}\\
&\mapsto \frac12\delta_{\diag(0,a_2,\dots,a_m,0,\dots)}+\frac12\left(\frac12\delta_{\diag(2a_1,0,a_3,\dots,a_m,0,\dots)}  \right.  \\
&   \quad   \, \left. +\frac12\delta_{\diag(2a_1,2a_2,a_3,\dots,a_m,0,\dots)}\right)\\
&\vdots\\
&\mapsto \sum_{j=1}^m\frac{1}{2^j}\delta_{B_j}+\frac{1}{2^m}\delta_{2A},	
\end{align*}
where  $B_j=\diag(b_{j,1},\dots,b_{j,d})$ is defined by
$$
b_{j,i}=\begin{cases}2a_i&\textrm{ if }i<j,\\
0&\textrm{ if }i=j\textrm{ or }i>m,\\
a_i&\textrm{ if }j<i\leq m.\end{cases}	
$$ 
In particular we directly obtain the formula \eqref{e:staircase2w}, i.e.~that $\gamma=2^{-m}$. 

\gray{LS:\, commented out; here gray for comparison:
We note in passing that the argument used in the proof of Lemma \ref{l:staircase1} could also be used to determine $\gamma$, this time with the commutativity relation \eqref{e:commutativity} applied with the $m\times m$ subdeterminant  \\$X\mapsto \det\left((X_{ij})_{i=1\dots m,j=1\dots m}\right)$.
}

Concerning the lower bound \eqref{e:staircase2b}, we proceed analogously to Lemma \ref{l:staircase1}. First of all, note that 
\begin{equation}\label{e:ex2-lowerbound1}
\mu(\{B_j\})\geq \frac{1}{2^m}\textrm{ for all }j.
\end{equation}
Let $j_*\in\{1,\dots,d\}$ such that $|a_{j_*}|=\max_i|a_i|$. If $j_*<d$ then, by construction, $|B_{d}|\geq 2|a_{j_*}|$, whereas if $j_*=d$, then $|B_{1}|\geq |a_{j_*}|$. In either case 
\begin{equation}\label{e:ex2-lowerbound2}
\max_j|B_{j}|\geq \frac{1}{\sqrt{m}}|A|.
\end{equation} 
From \eqref{e:ex2-lowerbound1}-\eqref{e:ex2-lowerbound2} follows
\begin{equation*}
    \mu(\{X:\,|X|\geq \frac{1}{\sqrt{m}}|A|\})\geq \frac{1}{2^m}
\end{equation*}
as claimed.

\end{proof}

\begin{example}\label{ex:staircase2}
As in the case of Lemma \ref{l:staircase1}, Lemma \ref{l:staircase2} can also be used to obtain a staircase laminate via Proposition \ref{p:staircaseconstruction}. Let $A\in \Lambda^{(m)}\cap\mathcal{D}$ for some $m\geq 2$ and set $A_n=2^nA$ for $n=0,1,2,\dots$. Applying Lemma \ref{l:staircase2} to each $A_n$ leads to finite order laminates $\omega_n=(1-\gamma_n)\mu_n+\gamma_n\delta_{A_n}$ with barycenter $A_{n-1}$ and $\gamma_n=2^{-m}$. Then 
\begin{equation*}
\beta_n=\prod_{k=1}^n\gamma_k=2^{-nm},
\end{equation*}
so that the assumptions of Proposition \ref{p:staircaseconstruction} are satisfied. We obtain a sequence of laminates $\nu^N$ with $\supp\nu^N\subset (\mathcal D\cap \Lambda^{(m-1)})\cup\{A_N\}$ and barycenter $A$. Furthermore, we deduce from \eqref{e:staircase2s} and \eqref{e:staircase2b} that the conditions of Lemma \ref{l:weakLpstaircase} are satisfied with $p=m$, $c_0=2$ and $c_1=2^{-m}$. Consequently, the limiting staircase laminate $\nu^{\infty}$ satisfies the weak $L^m$ bound
\begin{equation}\label{e:staircase2-bound}
    2^{-m(2+p)}|A|^mt^{-m}\leq \nu^{\infty}(\{X:|X|>t\})\leq 2^{1+m}|A|^mt^{-m}
\end{equation}
for all $t>|A|$.
\end{example}

\bigskip

\subsubsection{Example 3}

Our third example is from \cite{astala_faraco_szekelyhidi08} and arises in the theory of very weak solutions to linear elliptic equations with measurable coefficients in the plane (c.f. \eqref{e:k-elliptic}). For any $\mathcal{\rho}\geq 1$ set
\begin{equation*}
E_{\rho}=\left\{\begin{pmatrix} \lambda&0\\0&\rho\lambda\end{pmatrix}R:\,\lambda\geq 0,\,R\in SO(2)\right\}\,.
\end{equation*}

\begin{lemma}\label{l:staircase3}
Let $\mathcal K>1$. For any $x\geq 1$ define $A(x)$ to be the $2\times 2$ diagonal matrix with entries $A(x)=\diag(-x,x)$. There exists a laminate $\omega\in\mathcal{L}(\R^{2\times 2})$ with barycenter $\bar{\omega}=A(x)$ such that $\omega$ can be written as $\omega=(1-\gamma)\mu+\gamma\delta_{A(x+1)}$ for some probability measure $\mu$ with
\begin{equation}\label{e:staircase3s}
    \supp\mu\subset \mathcal{D}\cap (E_{\mathcal K}\cup E_{1/\mathcal K})
\end{equation}
and
\begin{equation}\label{e:staircase3w}
    \gamma=\frac{x}{x+1}\left(1-\frac{1-\mathcal{K}^{-1}}{1+(1+\mathcal{K}^{-1})x}\right).
\end{equation}
Moreover, 
\begin{equation}\label{e:staircase3b}
    \supp\mu\subset \{X:\,\frac{1}{\sqrt{2}}|A(x)|\leq |X|\leq \sqrt{2}|A(x)|\}.
\end{equation}
\end{lemma}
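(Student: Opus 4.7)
The plan is to construct $\omega$ as a laminate of finite order via a two-step splitting sequence starting from $\delta_{A(x)}$. Since every nonzero element of the target set $\mathcal{D}\cap(E_\mathcal{K}\cup E_{1/\mathcal{K}})$ has same-sign diagonal entries, while both $A(x)$ and $A(x+1)$ have opposite-sign entries, one cannot reach $A(x+1)$ from $A(x)$ via a single rank-one split without passing through an intermediate matrix outside the target set; two splits will suffice, one along each of the two rank-one directions $e_1\otimes e_1$ and $e_2\otimes e_2$.

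In the first step I would split along $e_1\otimes e_1$ (holding the second entry fixed at $x$) and put the ``off'' mass into $E_\mathcal{K}$, writing
\[
A(x) = p_1\,\diag(-(x+1),\,x) + (1-p_1)\,\diag(x/\mathcal{K},\,x),
\]
where $\diag(x/\mathcal{K},x)\in\mathcal{D}\cap E_\mathcal{K}$ and the first-entry barycentre condition pins down $p_1=\frac{x(1+\mathcal{K}^{-1})}{1+x(1+\mathcal{K}^{-1})}$. The intermediate matrix $\diag(-(x+1),x)$ still has opposite-sign entries, so in the second step I would split it along $e_2\otimes e_2$ (holding the first entry fixed) and push the ``off'' mass into $E_{1/\mathcal{K}}$:
\[
\diag(-(x+1),\,x) = p_2\,A(x+1) + (1-p_2)\,\diag\bigl(-(x+1),\,-(x+1)/\mathcal{K}\bigr),
\]
with $p_2=\frac{x+1+\mathcal{K}x}{(x+1)(\mathcal{K}+1)}$ determined by the second-entry barycentre. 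Composing the two splits would yield the finite-order laminate
\[
\omega = p_1p_2\,\delta_{A(x+1)} + p_1(1-p_2)\,\delta_{\diag(-(x+1),-(x+1)/\mathcal{K})} + (1-p_1)\,\delta_{\diag(x/\mathcal{K},x)},
\]
and setting $\gamma:=p_1p_2$ and normalising the last two masses produces the required decomposition $\omega=(1-\gamma)\mu+\gamma\,\delta_{A(x+1)}$ with $\supp\mu$ consisting of two points in $\mathcal{D}\cap(E_\mathcal{K}\cup E_{1/\mathcal{K}})$, establishing \eqref{e:staircase3s}.

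To obtain the closed form \eqref{e:staircase3w} I would verify the algebraic identity
\[
p_1p_2=\frac{x\,[x(1+\mathcal{K}^{-1})+\mathcal{K}^{-1}]}{(x+1)\,[1+x(1+\mathcal{K}^{-1})]}=\frac{x}{x+1}\Bigl(1-\frac{1-\mathcal{K}^{-1}}{1+(1+\mathcal{K}^{-1})x}\Bigr).
\]
For the norm bound \eqref{e:staircase3b} I would compare the Frobenius norms of the two support points, namely $x\sqrt{1+\mathcal{K}^{-2}}$ and $(x+1)\sqrt{1+\mathcal{K}^{-2}}$, to $|A(x)|=x\sqrt 2$, reducing the claimed inclusion in $[|A(x)|/\sqrt 2,\,\sqrt 2\,|A(x)|]$ to elementary inequalities in $x\ge 1$ and $\mathcal{K}>1$.

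The hard part is really just the algebraic bookkeeping in verifying the closed form for $\gamma$; the rank-one compatibility of each split is immediate because diagonal matrices are rank-one connected iff they share one diagonal entry, the membership of the ``off'' pieces in $E_\mathcal{K}$ and $E_{1/\mathcal{K}}$ is built into the choice of those pieces, and the norm estimate is routine.
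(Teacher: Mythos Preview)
Your proposal is correct and follows essentially the same two-step splitting strategy as the paper. The only difference is the order of the splits: the paper first splits $A(x)$ along $e_2\otimes e_2$ into $\diag(-x,-x/\mathcal{K})\in E_{1/\mathcal{K}}$ and $C(x)=\diag(-x,x+1)$, and then splits $C(x)$ along $e_1\otimes e_1$ into $\diag((x+1)/\mathcal{K},x+1)\in E_{\mathcal{K}}$ and $A(x+1)$; you reverse the roles of the two coordinates. The resulting $\gamma$ is identical, and the two support points of $\mu$ have the same Frobenius norms $x\sqrt{1+\mathcal{K}^{-2}}$ and $(x+1)\sqrt{1+\mathcal{K}^{-2}}$ in both versions, so the verification of \eqref{e:staircase3b} is the same.
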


\begin{proof}
This time the splitting sequence for $\omega$ is as follows:
\begin{align*}
    \delta_{A(x)}&\mapsto \alpha_1\delta_{B_1(-x)}+(1-\alpha_1)\delta_{C(x)}\\
    &\mapsto \alpha_1\delta_{B_1(-x)}+\alpha_2\delta_{B_2(x+1)}+\gamma\delta_{A(x+1)}\,,
\end{align*}
where
\begin{equation*}
    B_1(x)=\diag(x,\tfrac{1}{\mathcal{K}}x),\,B_2(x)=\diag(\tfrac{1}{\mathcal{K}}x,x),\,C(x)=\diag(-x,x+1)
\end{equation*}
and
\begin{align*}
    \alpha_1&=\frac{1}{1+x(1+\mathcal{K}^{-1})},\,\alpha_2=\frac{x}{x+1}\frac{1}{1+x(1+\mathcal{K}^{-1})},
\end{align*}
and $\gamma$ is given by \eqref{e:staircase3w}. Observe that $B_1(x)\in E_{\mathcal{K}^{-1}}$ and $B_2(x)\in E_{\mathcal{K}}$ for all $x\in \R$. 
The bound \eqref{e:staircase3b} follows from 
\begin{equation}\label{e:staircase3-est}
    |x|\leq |B_1(x)|=|B_2(x)|\leq |A(x)|\leq \sqrt{2}|x|.
\end{equation}
\end{proof}

\begin{example}\label{ex:staircase3}
We apply Proposition \ref{p:staircaseconstruction} to Lemma \ref{l:staircase3} to obtain a staircase laminate with properties as used in Section 3.2 of \cite{astala_faraco_szekelyhidi08}. Let $A_n=A(n)$ for $n=1,2,\dots$. We obtain, using Lemma \ref{l:staircase3}, finite order laminates $\omega_n=(1-\gamma_n)\mu_n+\gamma_n\delta_{A_{n+1}}$ with barycenter $A_n$ and $\gamma_n$ given by \eqref{e:staircase3w} with $x=n$. Then 
\begin{equation*}
    \beta_n=\frac{1}{n+1}\prod_{k=1}^n\left(1-\frac{\mathcal{K}-1}{\mathcal{K}+k(\mathcal{K}+1)}\right).
\end{equation*}
We estimate
\begin{equation*}
    \frac{1}{n+1}\prod_{k=1}^n\left(1-\frac{\mathcal{K}-1}{\mathcal{K}+1}\frac{1}{k}\right)\leq \beta_n\leq \frac{1}{n+1}\prod_{k=1}^n\left(1-\frac{\mathcal{K}-1}{\mathcal{K}+1}\frac{1}{k+1}\right).
\end{equation*}
By taking logarithms\footnote{Let $\alpha_n=\prod_{k=1}^n(1-\frac{c}{k})$ for some $0<c<1$. Then $\log\alpha_n=-\sum_{k=1}^nf(k)$, where $f(x)=\log(x)-\log(x-c)$. By direct computation we verify that $f:[1,\infty)\to[0,\infty)$ is monotone decreasing, hence $\int_1^nf(x)\,dx\leq \sum_{k=1}^nf(k)\leq \int_1^{n+1}f(x)\,dx$. On the other hand, by evaluating the integral we see that
\begin{equation*}
    \int_1^nf(x)\,dx=c\log n-(n-c)\log(\tfrac{n-c}{n}) +(1-c)\log(1-c)=c\log n+O(1)
\end{equation*}
as $n\to \infty$. The assertion \eqref{e:takinglogarithms} follows with $c=\frac{\mathcal{K}-1}{\mathcal{K}+1}$.
}, we deduce that there exists $C=C(\mathcal{K})>1$ such that 
\begin{equation}\label{e:takinglogarithms}
    C^{-1}n^{-\bar{q}}\leq \beta_n\leq Cn^{-\bar{q}}
\end{equation}
with $\bar{q}=\frac{2\mathcal{K}}{\mathcal{K}+1}$. Thus, the assumptions of Proposition \ref{p:staircaseconstruction} are satisfied and we obtain a sequence of laminates $\nu^N$ with $\supp\nu^N\subset (\mathcal{D}\cap (E_{\mathcal K}\cup E_{1/\mathcal K})\cup\{A_{N+1}\}$ and barycenter $A(1)=\diag(-1,1)$. From \eqref{e:staircase3-est} and \eqref{e:takinglogarithms} we deduce
\begin{equation*}
    C^{-1}\leq \beta_n|A_n|^{\bar{q}}\leq C
\end{equation*}
for some $C=C(\mathcal{K})>1$, and then Lemma \ref{l:weakLpstaircase} implies
\begin{equation}\label{e:staircase3-weak}
\tilde C^{-1}t^{-\frac{2\mathcal{K}}{\mathcal{K}+1}}\leq \nu^{\infty}(\{X:\,|X|>t\})\leq \tilde Ct^{-\frac{2\mathcal{K}}{\mathcal{K}+1}}\quad\textrm{ for all }t>1.
\end{equation}
for some $\tilde C>1$.
\end{example}

\subsubsection{Example 4}

Our fourth example is from \cite{ColomboTione2022} and arises in the theory of the $p$-harmonic operator. For any $p\in (1,\infty)$ let
\begin{equation}\label{e:CT22-Kp}
    K_p=\left\{\begin{pmatrix}\lambda&0\\0&\lambda^{p-1}\end{pmatrix}R:\,\lambda\geq 0,\,R\in SO(2)\right\}.
\end{equation}
%Further, set
%\begin{equation*}
%    \pi:\R^{2\times 2}\to\R^2\textrm{ by }\pi(X):=(X_{11},X_{12}).
%\end{equation*}

\begin{lemma}\label{l:staircase4}
Let $1<p<2$ and $b>1$. For any $x\geq 1$ define $A(x)$ to be the $2\times 2$ diagonal matrix with entries $A(x)=\diag(bx,-x^{p-1})$. There exists a laminate $\omega\in\mathcal{L}(\R^{2\times 2})$ with barycenter $\bar{\omega}=A(x)$ such that $\omega$ can be written as $\omega=(1-\gamma)\mu+\gamma\delta_{A(x+1)}$ for some probability measure $\mu$ with
\begin{equation}\label{e:staircase4s}
    \supp\mu\subset \mathcal{D}\cap K_p
\end{equation}
and
\begin{equation}\label{e:staircase4w}
    \gamma=\left(1-\frac{b}{(b+1)(1+x)}\right)\left(1-\frac{(1+x^{-1})^{p-1}-1}{b^{p-1}+(1+x^{-1})^{p-1}}\right).
\end{equation}
Moreover, 
\begin{equation}\label{e:staircase4b}
    \supp\mu\subset \{X:\,\frac{1}{\sqrt{2}b}|A(x)|\leq |X|\leq \sqrt{2}b|A(x)|\}.
\end{equation}
\end{lemma}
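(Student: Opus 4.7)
My plan is to follow the template of Lemma~\ref{l:staircase3} and construct $\omega$ via an explicit two-step splitting sequence
\begin{equation*}
\delta_{A(x)} \;\longmapsto\; \alpha_1\delta_{B_1} + (1-\alpha_1)\delta_{C} \;\longmapsto\; \alpha_1\delta_{B_1} + (1-\alpha_1)\alpha_2'\delta_{B_2} + (1-\alpha_1)(1-\alpha_2')\delta_{A(x+1)},
\end{equation*}
with $B_1,B_2\in K_p\cap\mathcal{D}$ and an intermediate diagonal matrix $C$ chosen so that each of the two splits is rank-one. A preliminary observation is that a diagonal matrix lies in $K_p$ if and only if it has the form $\pm\diag(\lambda,\lambda^{p-1})$ with $\lambda\geq 0$, because the only rotations $R\in SO(2)$ in \eqref{e:CT22-Kp} that preserve diagonality are $\pm\Id$. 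Thus the combinatorial task is only to decide which diagonal entry of $A(x)$ to match with $B_1$ (and similarly for $B_2$ with $A(x+1)$); the remaining entries of $C$ are then pinned down.

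For the first split I match the first diagonal entries by setting $B_1:=\diag(bx,(bx)^{p-1})\in K_p$ (with $\lambda=bx$, $R=\Id$) and $C:=\diag(bx,-(x+1)^{p-1})$; for the second split I match the second diagonal entries by setting $B_2:=-\diag(x+1,(x+1)^{p-1})\in K_p$ (with $\lambda=x+1$, $R=-\Id$), which shares its second entry with $A(x+1)$. The scalar equation at the second entry of $A(x)=\alpha_1 B_1+(1-\alpha_1)C$ reads $-x^{p-1}=\alpha_1(bx)^{p-1}-(1-\alpha_1)(x+1)^{p-1}$, which (setting $r:=(1+x^{-1})^{p-1}$) solves to
\begin{equation*}
\alpha_1=\frac{r-1}{b^{p-1}+r}=\frac{(1+x^{-1})^{p-1}-1}{b^{p-1}+(1+x^{-1})^{p-1}}.
\end{equation*}
The scalar equation at the first entry of $C=\alpha_2' B_2+(1-\alpha_2')A(x+1)$ reads $bx=-\alpha_2'(x+1)+(1-\alpha_2')b(x+1)$, giving
\begin{equation*}
\alpha_2'=\frac{b}{(1+b)(1+x)}.
\end{equation*}
Since $p>1$, $x\geq 1$, and $b>1$, both $\alpha_1,\alpha_2'\in(0,1)$ (for $\alpha_1$ one uses $r>1$ and $r-1<b^{p-1}+r$), so the sequence is a valid laminate of finite order. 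Taking $\mu=(1-\gamma)^{-1}[\alpha_1\delta_{B_1}+(1-\alpha_1)\alpha_2'\delta_{B_2}]$, the support lies in $K_p\cap\mathcal{D}$ and $\gamma=(1-\alpha_1)(1-\alpha_2')$ matches \eqref{e:staircase4w}.

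The remaining size estimate \eqref{e:staircase4b} is routine: using $|A(x)|^2=b^2x^2+x^{2(p-1)}$, $|B_1|^2=b^2x^2+b^{2(p-1)}x^{2(p-1)}$, and $|B_2|^2=(x+1)^2+(x+1)^{2(p-1)}$, together with the inequalities $1<p<2$, $b>1$, and $x\geq 1$ (so that $x^{2(p-1)}\leq x^2$, $b^{2(p-1)}\leq b^2$, and $x\leq x+1\leq 2x$), one compares $|B_1|$ and $|B_2|$ to $|A(x)|$ in both directions to obtain the claimed two-sided bound. The main obstacle in the proof is the combinatorial step at the start: identifying the correct matching of entries and signs of $B_1$ and $B_2$ so that two rank-one edges glue at $C$ with the prescribed endpoints and $K_p$-support constraints. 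Once this is settled, the rest reduces to two scalar linear equations and a direct size comparison.
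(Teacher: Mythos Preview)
Your proof is correct and follows essentially the same approach as the paper: the same two-step splitting sequence with $B_1=\diag(bx,(bx)^{p-1})$, $C=\diag(bx,-(x+1)^{p-1})$, $B_2=\diag(-(x+1),-(x+1)^{p-1})$, leading to the same values of $\alpha_1$, $\alpha_2'$, and $\gamma$. Your write-up is in fact slightly more explicit than the paper's in verifying $\alpha_1,\alpha_2'\in(0,1)$ and in spelling out the size comparison for \eqref{e:staircase4b}.
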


\begin{proof}
This time the splitting sequence for $\omega$ is as follows:
\begin{align*}
    \delta_{A(x)}&\mapsto \alpha_1\delta_{B_1(bx)}+(1-\alpha_1)\delta_{C(x)}\\
    &\mapsto \alpha_1\delta_{B_1(bx)}+\alpha_2\delta_{B_2(x+1)}+\gamma\delta_{A(x+1)}\,,
\end{align*}
where
\begin{equation*}
    B_1(x)=\diag(x,x^{p-1}),\,B_2(x)=\diag(-x,-x^{p-1}),\,C(x)=\diag(bx,-(x+1)^{p-1})
\end{equation*}
and
\begin{align*}
    \alpha_1&=\frac{(x+1)^{p-1}-x^{p-1}}{(bx)^{p-1}+(x+1)^{p-1}},\,\alpha_2=\frac{b}{(b+1)(x+1)}\frac{(bx)^{p-1}-x^{p-1}}{(bx)^{p-1}+(x+1)^{p-1}},
\end{align*}
and $\gamma$ is given by \eqref{e:staircase4w}. Observe that $B_1(x), B_2(x)\in K_p$ for all $x\in \R$.

The bound \eqref{e:staircase4b} follows from 
\begin{equation}\label{e:staircase4-est}
    |x|\leq |B_1(x)|=|B_2(x)|\leq |A(x)|\leq \sqrt{2}b|x|.
\end{equation}

\end{proof}

\begin{example}\label{ex:staircase4}
We apply Proposition \ref{p:staircaseconstruction} to Lemma \ref{l:staircase4} to obtain a staircase laminate with properties as used in \cite{ColomboTione2022}. Let $A_n=A(n)$ for $n=1,2,\dots$. We obtain, using Lemma \ref{l:staircase4}, finite order laminates $\omega_n=(1-\gamma_n)\mu_n+\gamma_n\delta_{A_{n+1}}$ with barycenter $A_n$ and $\gamma_n$ given by \eqref{e:staircase4w} with $x=n$. 
The main observation is that, for a suitable $b >1$, we have
\begin{equation}  \label{eq:bound_wN}
    \beta_n := \prod_{j=1}^n \gamma_j \sim n^{- \bar q}
    \quad \text{with $\bar q \in (1,p)$.}
\end{equation} 
To see this, note that (arguing analogously to \eqref{e:takinglogarithms})
\begin{equation}\label{eq:define_barq} 
- \log \gamma_{n+1} = \bar q  n^{-1} + \mathcal O(n^{-2}) \quad 
\text{with $\bar q =  \frac{p-1}{b^{p-1} + 1} 
+ \frac{b}{b+1}$} 
\end{equation}
Thus 
$ \log \beta_n -  \bar q \log n$ is uniformly bounded from above and below and hence 
\begin{equation}  \label{eq:bound_wN2}
   \frac1C n^{-\bar q} \le  \beta_n \le C n^{-\bar q}
\end{equation}
for some constant $C$. Clearly $\bar q < p$. Moreover, with $a = b^{-1}$ we have
$$ \bar q - 1 =  \frac{p-1}{b^{p-1} + 1} 
- \frac{1}{b+1} = \frac{(p-1) a^{p-1}}{1+ a^{p-1}} - \frac{a}{1+a}.
$$
Since $p-1 \in (0,1)$ we have $a^{p-1} \gg a$ for $0 < a \ll 1$
and we conclude that $\bar q > 1$ for sufficiently 
small $a > 0$ or, equivalently, for sufficiently large $b >1$.

 Thus, the assumptions of Proposition \ref{p:staircaseconstruction} are satisfied and we obtain a sequence of laminates $\nu^N$ with $\supp\nu^N\subset (\mathcal{D}\cap K_p\cup\{A_{N+1}\}$ and barycenter $A(1)=\diag(b,-1)$. From \eqref{e:staircase4-est} and \eqref{eq:bound_wN2} we deduce
\begin{equation*}
    C^{-1}\leq \beta_n|A_n|^{\bar{q}}\leq C
\end{equation*}
for some $C=C(p,b)>1$, and then Lemma \ref{l:weakLpstaircase} implies
\begin{equation}\label{e:staircase4-weak}
\tilde C^{-1}t^{-\bar{q}}\leq \nu^{\infty}(\{X:\,|X|>t\})\leq \tilde Ct^{-\bar{q}}\quad\textrm{ for all }t>1.
\end{equation}
for some $\tilde C=\tilde C(p,b)>1$.
\end{example}

\gray{LS:\, commented out; here gray for comparison:

\begin{lemma}[Laminates for the $p$-Laplacian in the plane, $p>2$]\label{l:staircase4}
Let $b>1$ and $p>2$. For any $x\geq 1$ define $A(x)$ to be the $2\times 2$ diagonal matrix with entries $A(x)=\diag(x,-b^{p-1}x|x|^{p-2})$. There exists a laminate $\omega\in \mathcal{L}(\R^{2\times 2})$ with barycenter $\bar{\omega}=A(x)$ such that $\omega$ can be written as $\omega=(1-\gamma)\mu+\gamma\delta_{A(2x)}$ for some probability measure $\mu$ with 
\begin{equation}\label{e:staircase4s}
\supp\mu\subset\mathcal{D}\cap K_p
\end{equation}
and 
\begin{equation}\label{e:staircase4w}
    \gamma=\frac{1+2b}{2+2b}\frac{1+b^{p-1}}{1+(2b)^{p-1}}.
\end{equation}
Moreover, there exists $C=C(p,b)>1$ such that
\begin{equation}\label{e:staircase4b}
\supp\mu\subset\{X:\,C^{-1}|A(x)|\leq |X|\leq C|A(x)|\}
\end{equation}
and furthermore
\begin{equation}\label{e:staircase4c}
\supp\mu\subset\{X:\,C^{-1}|A(x)|\leq |\pi(X)|^{p-1}\leq C|A(x)|\}\,.
\end{equation}

\end{lemma}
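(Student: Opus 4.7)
The plan is to construct $\omega$ by an explicit two-step splitting sequence that mirrors the one used for the $1<p<2$ case above. The key observation is that $\mathcal{D}\cap K_p$ is parametrised by the nonlinear curve $y\mapsto B(y):=\diag(y,y|y|^{p-2})$, $y\in\R$. Since $x\ge 1$ we have $A(x)=\diag(x,-b^{p-1}x^{p-1})$ and $A(2x)=\diag(2x,-(2b)^{p-1}x^{p-1})$, and neither the first nor the second entry of $A(x)$ agrees with the corresponding entry of $A(2x)$, so no single rank-one split connects them; we therefore route the splitting through a carefully chosen intermediate matrix.

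Concretely, I propose the sequence
\begin{align*}
\delta_{A(x)} &\mapsto \alpha_1\delta_{B(x)}+(1-\alpha_1)\delta_{C(x)}\\
&\mapsto \alpha_1\delta_{B(x)}+\alpha_2\delta_{B(-2bx)}+\gamma\,\delta_{A(2x)},
\end{align*}
where $B(x)=\diag(x,x^{p-1})$ and $B(-2bx)=\diag(-2bx,-(2bx)^{p-1})$ both lie in $K_p\cap\mathcal{D}$, and the intermediate matrix is $C(x)=\diag(x,-(2b)^{p-1}x^{p-1})$. The first split is rank-one in the direction $e_2\otimes e_2$, since $B(x)-C(x)=\diag(0,\,x^{p-1}(1+(2b)^{p-1}))$; the second is rank-one in the direction $e_1\otimes e_1$, since $B(-2bx)-A(2x)=\diag(-2(b+1)x,\,0)$. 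Matching the barycenters is then a scalar problem in each step: the second-entry equation in the first step yields $1-\alpha_1=\tfrac{1+b^{p-1}}{1+(2b)^{p-1}}$, and the first-entry equation in the second step yields $1-\alpha_2'=\tfrac{1+2b}{2+2b}$ (where $\alpha_2=(1-\alpha_1)\alpha_2'$). Multiplying gives $\gamma=(1-\alpha_1)(1-\alpha_2')$, which is exactly \eqref{e:staircase4w}.

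For the size bounds, the point is that for $x\ge 1$ and $p>2$ the second entry dominates the norm of each of $A(x)$, $B(x)$ and $B(-2bx)$: one has $b^{p-1}x^{p-1}\le|A(x)|\le\sqrt{2}\,b^{p-1}x^{p-1}$, and similarly $|B(x)|$ is comparable to $x^{p-1}$ and $|B(-2bx)|$ to $(2bx)^{p-1}$, with universal constants. Taking ratios shows that both $|X|/|A(x)|$ and $|\pi(X)|^{p-1}/|A(x)|$ for $X\in\{B(x),B(-2bx)\}$ -- where $\pi$ denotes the first-row projection used to parametrise $K_p$ in \eqref{e:CT22-Kp} -- are trapped between two positive constants depending only on $p$ and $b$, which gives \eqref{e:staircase4b} and \eqref{e:staircase4c}.

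The main obstacle is identifying the intermediate matrix $C(x)$: it must simultaneously share its first entry with $A(x)$, so that the first split is rank-one and produces a matrix in $K_p$, and share its second entry with $A(2x)$, so that the second split is rank-one and passes through a matrix in $K_p$. These two constraints force the second entry of $C(x)$ to be $-(2b)^{p-1}x^{p-1}$, after which the structure of $K_p$ determines $B(-2bx)$ uniquely. Once this routing is fixed, the remainder reduces to a direct algebraic verification, in exact analogy with the $1<p<2$ case treated above.
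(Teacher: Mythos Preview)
Your proposal is correct and follows essentially the same route as the paper: the paper defines the auxiliary matrices $B(x)=\diag(x,x|x|^{p-2})$ and $C(x)=\diag(x,-(2b)^{p-1}x|x|^{p-2})$, then uses exactly your two-step splitting $\delta_{A(x)}\mapsto\alpha_1\delta_{B(x)}+(1-\alpha_1)\delta_{C(x)}\mapsto\alpha_1\delta_{B(x)}+\alpha_2\delta_{B(-2bx)}+\gamma\delta_{A(2x)}$, arriving at the same coefficients and the same norm comparisons $|x|^{p-1}\le|B(x)|\le|A(x)|\le|B(-2bx)|\le 2(2b)^{p-1}|x|^{p-1}$ and $|x|=|\pi(B(x))|=|\pi(A(x))|\le|\pi(B(-2bx))|=2b|x|$ for the bounds \eqref{e:staircase4b}--\eqref{e:staircase4c}.
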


\begin{proof}
We proceed again as in Lemma \ref{l:staircase1} by exhibiting the splitting sequence of $\omega$ as in \eqref{e:splitting}. To this end we define auxiliary diagonal matrices
\begin{equation*}
    B(x)=\diag(x,x|x|^{p-2}),\, C(x)=\diag(x,-(2b)^{p-1}x|x|^{p-2})
\end{equation*}
for any $x\in\R$. Observe that $K_p\cap\mathcal{D}=\{B(x):x\in\R\}$. Direct computation shows that 
\begin{align*}
    \delta_{A(x)}&\mapsto \alpha_1\delta_{B(x)}+(1-\alpha_1)\delta_{C(x)}\\
    &\mapsto \alpha_1\delta_{B(x)}+\alpha_2\delta_{B(-2bx)}+\gamma\delta_{A(2x)}
\end{align*}
is a splitting sequence, where
\begin{align*}
    \alpha_1&=\frac{(2b)^{p-1}-b^{p-1}}{1+(2b)^{p-1}},\quad \alpha_2=\frac{1}{2+2b}\frac{1+b^{p-1}}{1+(2b)^{p-1}}\\
    \gamma&=\frac{1+2b}{2+2b}\frac{1+b^{p-1}}{1+(2b)^{p-1}}\,,
\end{align*}
thus verifying \eqref{e:staircase4s}-\eqref{e:staircase4w}. 

Next, since $x,b\geq 1$ by assumption, we have
\begin{equation}\label{e:staircase4-est1}
|x|^{p-1}\leq |B(x)|\leq |A(x)|\leq |B(-2bx)|\leq 2(2b)^{p-1}|x|^{p-1}
\end{equation}
as well as
\begin{equation}\label{e:staircase4-est2}
|x|= |\pi(B(x))|=|\pi(A(x))|\leq |\pi(B(-2bx))|\leq 2b|x|.
\end{equation}
The assertions \eqref{e:staircase4b}-\eqref{e:staircase4c} follow immediately.

\end{proof}

\begin{example}\label{ex:staircase4}
We now apply Proposition \ref{p:staircaseconstruction} to Lemma \ref{l:staircase4} to obtain a staircase laminate with properties as used in \cite{ColomboTione2022}. Let $x>1$ and set $A_n=A(2^nx)$ for $n=0,1,2,\dots$. We obtain, using Lemma \ref{l:staircase4}, finite order laminates $\omega_n=(1-\gamma_n)\mu_n+\gamma_n\delta_{A_n}$, $n=1,2,\dots$,  with barycenter $A_{n-1}$ and $\gamma_n=\gamma$ given by \eqref{e:staircase4w}. Then $\beta_n=\prod_{k=1}^n\gamma_k=\gamma^n$, and since $\gamma<1$, we see that the assumptions of Proposition \ref{p:staircaseconstruction} are satisfied. We obtain a sequence of laminates $\nu^N$ with $\supp\nu^N\subset (\mathcal D\cap K_p)\cup\{A_N\}$ and barycenter $A(x)$.

%use  \eqref{e:staircase4w} to write $\gamma=2^{-(p-1)}f(1/b)$, where $f(a)=\frac{1+a/2}{1+a}\frac{1+a^{p-1}}{1+(a/2)^{p-1}}$. Set $\bar{a}=\textrm{argmin}_{a\geq 0} f(a)$... Thus, let us write $\gamma=\gamma(p,a)$ and define
%\begin{equation}\label{e:staircase4-CT}
%    \bar{q}=-\max_{a>0}\{\log_2\gamma\}
%\end{equation}

In order to be able to apply Lemma \ref{l:weakLpstaircase}, we 
we claim (following Section 5 in \cite{ColomboTione2022}) that there exists $b>1$ sufficiently large, for which 
\begin{equation}\label{e:staircase4-CT}
    \gamma=2^{-\bar{q}}\textrm{ for some }\bar{q}\in (p-1,p).
\end{equation}
To this end set $f_p(a)=\frac{1+a/2}{1+a}\frac{1+a^{p-1}}{1+(a/2)^{p-1}}$. Comparing with \eqref{e:staircase4w} we observe that $\gamma=2^{-(p-1)}f_p(1/b)$. By direct calculation, and using $p>2$ we can verify that $f_p(0)=1$ and $f'_p(0)=-1/2$. Therefore for sufficiently large $b\gg 1$ we have $f_p(1/b)<1$, from which \eqref{e:staircase4-CT} follows. 
In turn, \eqref{e:staircase4-est1} and \eqref{e:staircase4-CT} imply the estimate
\begin{equation}
    C^{-1}|x|^{\bar{q}}\leq \beta_n|A_n|^{\frac{\bar{q}}{p-1}}\leq C|x|^{\bar{q}}
\end{equation}
for some $C=C(b,p)>1$. Combined with the bounds \eqref{e:staircase4b}, we apply Lemma \ref{l:weakLpstaircase} to deduce the weak $L^\frac{\bar{q}}{p-1}$ bounds
\begin{equation}\label{e:staircase4-weak1}
\tilde C^{-1}|A|^{\frac{\bar{q}}{p-1}}t^{-\frac{\bar{q}}{p-1}}\leq \nu^{\infty}(\{X:\,|X|>t\})\leq \tilde C|A|^{\frac{\bar{q}}{p-1}}t^{-\frac{\bar{q}}{p-1}}
\end{equation}
for some $\tilde C=\tilde C(b,p)>1$, for all $t>|A|$.
In addition, in an entirely analogous manner to Lemma \ref{l:weakLpstaircase} one can prove that the bounds \eqref{e:staircase4c} imply
\begin{equation}\label{e:staircase4-weak1}
\tilde C^{-1}|\pi(A)|^{\bar{q}}t^{-\bar{q}}\leq \nu^{\infty}(\{X:\,|\pi(X)|>t\})\leq \tilde C|\pi(A)|^{\bar{q}}t^{-\bar{q}}
\end{equation}
for all $t>|\pi(A)|$.

\end{example}
}

\bigskip

\section{Differential inclusions for Sobolev maps}\label{s:general}

In this section we return to general differential inclusions of the form
\begin{equation}\label{e:diffincl1}
    \nabla u(x)\in K\textrm{ for almost every }x\in\Omega,
\end{equation}
where $u:\Omega\to \R^d$, $\Omega\subset\R^m$ is a regular domain, and $K\subset \R^{d\times m}$ is a prescribed (typically unbounded) closed set. As mentioned in the introduction we complement \eqref{e:diffincl1} with affine boundary conditions. 

In this section we recall the definition of the property
'$K$ can be reduced to $K'$ in weak $L^p$' and  verify the three key features of this property  announced in 
Section \ref{eq:pmain_Calpha_uk}:
\begin{itemize}
\item existence of solutions if $\R^{d \times m}$ can be reduced to $K$
\item stability of the reduction property  under iteration
\item sufficiency of staircase laminates for the reduction property
\end{itemize}

\subsection{Exact solutions}
Recall from our discussion in the introduction, that our general strategy is to solve \eqref{e:diffincl1} by first obtaining the following approximation result: for any regular domain $\Omega\subset\R^m$, any $A\in \R^{d\times m}$, $b\in\R^d$ and any $s\in(1,\infty)$, $\eps,\alpha\in(0,1)$ there exists a piecewise affine map $u\in W^{1,1}(\Omega)\cap C^{\alpha}(\overline{\Omega})$ with $u=l_{A,b}$ on $\partial\Omega$ and    
\begin{align*}
\int_{\{x\in\Omega:\,\nabla u(x)\notin K\}}(1+|\nabla u|^s)\,dx&<\eps|\Omega|, 
\\
|\{x\in\Omega:\,|\nabla u(x)|>t\}|\leq M^p (1+|A|^p)|\Omega|&t^{-p}\,\textrm{ for all }t>0.  
\end{align*}
If $K$ has this property with some $M$ and $p>1$, we say that \emph{$\R^{d\times m}$ can be reduced to $K$ in weak $L^p$} (c.f.~Definition \ref{d:reduced}). Our first goal in this section is to show how this property leads to solvability of the differential inclusion \eqref{e:diffincl1}.

\begin{theorem}\label{th:generalsolution}
Let $K\subset \R^{d\times m}$ and $1<p<\infty$ such that $\R^{d\times m}$ can be reduced to $K$ in weak $L^p$. Then for any regular domain $\Omega\subset \R^m$, any $A\in \R^{d\times m}$, $b\in \R^d$ and any $\delta>0$, $\alpha\in (0,1)$ there exists a piecewise affine map $u\in W^{1,1}(\Omega)\cap C^{\alpha}(\overline{\Omega})$ with $u=l_{A,b}$ on $\partial\Omega$ such that
\begin{subequations}
\begin{equation}\label{e:solution-ae}
\nabla u(x)\in K\textrm{ a.e. }x\in \Omega,
\end{equation}
\begin{equation}
\|u-l_{A,b}\|_{C^\alpha(\overline{\Omega})}<\delta,    
\end{equation}
and for all $t>0$
\begin{equation}  \label{eq:weak_Lpbound_exact_thm}
\left|\{x\in \Omega:\,|\nabla u(x)|>t\}\right|\leq 2M^p(1+|A|^p)|\Omega|t^{-p}.    
\end{equation}
\end{subequations}
\end{theorem}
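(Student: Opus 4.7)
The plan is to construct $u$ as the $C^\alpha$-limit of an iterative sequence $\{u_n\}_{n \ge 0}$ of piecewise affine maps, each obtained from the previous one by applying the reduction hypothesis on the pieces where the gradient still lies outside $K$. The gradient will be "frozen" on the complement of the bad set at each step, so that in the limit the bad set has measure zero. The freedom afforded by rescaling will let us control the $C^\alpha$ error independently of the other estimates.

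Set $u_0 := l_{A,b}$; assume $A\notin K$ (else take $u=u_0$). Given a piecewise affine $u_n$ agreeing with $l_{A,b}$ on $\partial\Omega$, define the bad set $E_n:=\{x\in\Omega:\nabla u_n(x)\notin K\}$. Since $u_n$ is piecewise affine, $E_n$ is the disjoint union of affine pieces $P_n^{(i)}$ with $\nabla u_n|_{P_n^{(i)}}=A_n^{(i)}\notin K$ and $u_n|_{P_n^{(i)}}=l_{A_n^{(i)},b_n^{(i)}}$. On each $P_n^{(i)}$ I apply the reduction hypothesis (taking $K'=K$) with boundary datum $l_{A_n^{(i)},b_n^{(i)}}$, exponent $s=p$, and parameter $\epsilon_{n+1}=1/4$, then use the rescaling-and-covering argument from Section~\ref{ss:basic} to additionally arrange $\|u_{n+1}-u_n\|_{C^\alpha(\overline{P_n^{(i)}})}\le 2^{-n-1}\delta$ without changing the pushforward of the gradient. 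Gluing these maps together with $u_n$ on $\Omega\setminus E_n$ produces a new piecewise affine map $u_{n+1}$ with the same affine boundary data.

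Set $\mathcal{A}_n:=\int_{E_n}(1+|\nabla u_n|^p)\,dx$ and $F_n(t):=|\{|\nabla u_n|>t\}|$. The inequality $1+x^p\le 2(1+x)^p$ for $x\ge 0$ together with \eqref{eq:iteration_epserror} summed over pieces yields $\mathcal{A}_{n+1}\le 2\epsilon_{n+1}|E_n|\le 2\epsilon_{n+1}\mathcal{A}_n$, hence $\mathcal{A}_n\le 2^{-n}|\Omega|$ and $\sum_{n\ge 1}\mathcal{A}_n\le|\Omega|\le(1+|A|^p)|\Omega|$. Because $u_{n+1}\equiv u_n$ on $\Omega\setminus E_n$, summing \eqref{eq:iteration_weakLp} over the pieces $P_n^{(i)}$ gives
\begin{equation*}
F_{n+1}(t)\le F_n(t)+M^p t^{-p}\mathcal{A}_n,
\end{equation*}
which telescopes to
\begin{equation*}
F_n(t)\le F_1(t)+M^p t^{-p}\sum_{k=1}^{n-1}\mathcal{A}_k\le 2M^p(1+|A|^p)|\Omega|t^{-p},
\end{equation*}
using $F_1(t)\le M^p(1+|A|^p)|\Omega|t^{-p}$ from the very first application of the reduction. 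The gluing argument gives $\|u_{n+1}-u_n\|_{C^\alpha(\overline\Omega)}\le 2\cdot 2^{-n-1}\delta$, so $\{u_n\}$ is Cauchy in $C^\alpha$ with $\|u-u_0\|_{C^\alpha}\le\delta$.

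It remains to verify the properties of the limit $u$. The bad sets $E_n$ are nested decreasing (since $u_{n+1}\equiv u_n$ off $E_n$ forces $E_{n+1}\subset E_n$), with $|E_\infty|:=|\bigcap_n E_n|\le\lim|E_n|=0$. For a.e.\ $x\in\Omega$ there exists $n(x)$ with $x\notin E_{n(x)}$, so $u(x)=u_{n(x)}(x)$ and $\nabla u(x)=\nabla u_{n(x)}(x)\in K$; the pieces of $u$ are the stable affine pieces of the $u_n$'s, showing $u$ is piecewise affine. The weak-$L^p$ bound passes to the limit by monotone convergence on the increasing sets $\Omega\setminus E_n$, yielding \eqref{eq:weak_Lpbound_exact_thm}. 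Since $p>1$, the weak-$L^p$ bound implies $L^q$-bounds for $q\in(1,p)$, so by weak compactness $\nabla u_n\rightharpoonup\nabla u$ in $L^q$ and $u\in W^{1,q}(\Omega)\subset W^{1,1}(\Omega)$.

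The main obstacle is not any single estimate but the bookkeeping: the parameters $\epsilon_n$ must be chosen so that the $\mathcal{A}_n$ decay geometrically (to make $|E_n|\to 0$ and keep $\sum\mathcal{A}_n$ comparable to $(1+|A|^p)|\Omega|$), while the $C^\alpha$ errors must be summable with total less than $\delta$. These two controls are decoupled by exploiting the rescaling-and-covering step, which modifies the $C^\alpha$ norm of each local correction without affecting its gradient distribution, and this separation is what makes the iteration work cleanly.
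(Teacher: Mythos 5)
Your proposal is correct and follows essentially the same iteration as the paper's own proof: apply the reduction on the bad affine pieces, convert $\sum_i (1+|A_i|^p)|\Omega_i|$ into the error integral $\mathcal{A}_n=\int_{E_n}(1+|\nabla u_n|^p)\,dx$ to telescope the weak $L^p$ bound, and control the $C^\alpha$ norm through the rescaling/covering and gluing arguments before passing to the limit on the nested shrinking bad sets. The only (trivial) slip is in the final H\"older estimate: with $\|u_{n+1}-u_n\|_{C^\alpha(\overline\Omega)}\le 2^{-n}\delta$ for $n\ge 0$ you only obtain $\|u-l_{A,b}\|_{C^\alpha(\overline\Omega)}\le 2\delta$, so choose the per-piece tolerance to be, say, $2^{-n-2}\delta$ to get the strict bound $<\delta$.
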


\begin{remark}  \label{re:generalsolution_nonhomogeneous_bound}  
A version of this result also holds if the condition
$$|\{x\in\Omega:\,|\nabla u(x)|>t\}|  
\le M^p (1+|A|^p)|\Omega|t^{-p} \textrm{ for all }t>0
$$
in the definition of 
'$\R^{d\times m}$ can be reduced $K$' 
(see Definition \ref{d:reduced})
is replaced by the weaker condition 
that there exists an $r \ge p$ such that
\begin{equation}  \label{eq:weak_Lp_growth_Ar} |\{x\in\Omega:\,|\nabla u(x)|>t\}|  
\le M^p (1+|A|^r)|\Omega|t^{-p} \textrm{ for all }t>0
\end{equation}
Then the conclusion \eqref{eq:weak_Lpbound_exact_thm}
has to be replaced by the weaker conclusion
\begin{equation}  \label{eq:weak_Lpbound_exact}
\left|\{x\in \Omega:\,|\nabla u(x)|>t\}\right|\leq 2M^p(1+|A|^r)|\Omega|t^{-p}.    
\end{equation}
\end{remark}

\begin{proof} We show the stronger result mentionded in
Remark~\ref{re:generalsolution_nonhomogeneous_bound}.
To do so, we  construct inductively a sequence of piecewise affine maps $u_k\in W^{1,1}\cap C^{\alpha}$, $k=1,2,\dots$ such that $u_k=l_{A,b}$ on $\partial\Omega$ and, with $\Omega_{error}^{(k)}:=\{x\in\Omega_{u_k}:\,\nabla u_k(x)\notin K\}$ we have
\begin{subequations}  
\begin{align}
\int_{\Omega_{error}^{(k)}}(1+|\nabla u_k|^r)\,dx
&<2^{-k}|\Omega|, 
\label{e:pmaink1}\\
\|u_k-l_{A,b}\|_{C^\alpha(\overline{\Omega})}
&<\delta(1-2^{-k}), \label{e:pmaink3}\\
|\{x\in\Omega:\,|\nabla u_k(x)|>t\}|&\leq M^p(1+|A|^r)|\Omega|t^{-p}\sum_{i=0}^{k-1}2^{-i}.\label{e:pmaink2}
\end{align}
\end{subequations}
The map $u_1$ is obtained directly by applying Definition \ref{d:reduced} with $s=r$ (and, for $r\geq p$ using \eqref{eq:weak_Lp_growth_Ar} in Remark \ref{re:generalsolution_nonhomogeneous_bound}). In addition, invoking the rescaling and covering argument from Section \ref{ss:basic} we can ensure $\|u_1-l_{A,b}\|_{C^{\alpha}(\overline{\Omega})}< \delta/2$.

For the inductive step we assume the existence of $u_k$. Since $u_k$ is piecewise affine, there exist pairwise disjoint open subsets $\Omega_{i}\subset\Omega_{error}^{(k)}$ such that $|\Omega_{error}^{(k)}\setminus \bigcup_{i=1}^\infty\Omega_{i}|=0$ and $u_{k}=l_{A_{i},b_{i}}$ in $\Omega_{i}$.

We then apply Definition \ref{d:reduced} and \eqref{eq:weak_Lp_growth_Ar} in each $\Omega_{i}$ (again with $s=r$) to obtain piecewise affine maps $v_{i}\in  W^{1,1}(\Omega_{i})\cap C^{\alpha}(\overline\Omega_{i})$ with $v_{i}=l_{A_{i},b_{i}}$ on $\partial\Omega_{i}$ 
such that, with 
$$
\tilde\Omega_i:=\{x\in\Omega_{i}:\,\nabla v_{i}(x)\notin K\}
$$ 
we have
\begin{subequations}  
\begin{align}
\int_{\tilde\Omega_i}(1+|\nabla v_{i}|^r)\,dx&<2^{-(k+1)}|\Omega_{i}|,\label{e:pmainik1}
\\
\|v_i-l_{A_i,b_i}\|_{C^{\alpha}(\overline{\Omega_i})}&<\delta 2^{-k-2},\label{e:pmainik3}\\
|\{x\in\Omega_{i}:\,|\nabla v_{i}(x)|>t\}|&\leq M^p(1+|A_{i}|^r)|\Omega_{i}|t^{-p}.\label{e:pmainik2}
\end{align}
\end{subequations}
Using the basic gluing and covering/rescaling arguments from Section \ref{ss:basic} we thus obtain the piecewise affine map $u_{k+1}\in W^{1,1}\cap C^{\alpha}$ with $u_{k+1}=l_{A,b}$ on $\partial\Omega$ and with the following properties:
\begin{itemize}
\item By construction 
$$
\nabla u_{k+1}=\begin{cases} \nabla v_i&\textrm{ in }\Omega_i\\
\nabla u_k&\textrm{a.e. outside }\Omega_{error}^{(k)}\end{cases}
$$
and in particular $|\Omega_{error}^{(k+1)}\setminus\bigcup_{i=1}^\infty \tilde\Omega_i|=0$. 
\item Consequently, using \eqref{e:pmainik1},
\begin{align*}
\int_{\Omega_{error}^{(k+1)}}(1+|\nabla u_{k+1}|^r)\,dx&=\sum_i\int_{\tilde\Omega_i}(1+|\nabla v_{i}|^r)\,dx\\
&<2^{-(k+1)}\sum_i|\Omega_i|\leq 2^{-(k+1)}|\Omega|
\end{align*}
and we obtain \eqref{e:pmaink1} for $k+1$.
\item Using \eqref{e:pmainik2}, for any $t\geq 1$
%\green{ \, Enough to this for $t \ge 1$ since final estimate is %trivial for $0 < t < 1$. Thus may use $t^{-r} \le t^{-p}$. LS: %changed to $t\geq 1$ but I don't think this is even needed.}

\begin{align*}
& \,  |\{x\in\Omega:|\nabla u_{k+1}(x)|>t\}|\\
\leq & \, |\{x\in\Omega:\,|\nabla u_{k}(x)|>t\}| +\sum_i|\{x\in\Omega_i:\,|\nabla v_{i}(x)|>t\}|\\
\leq & \,  M^p(1+|A|^r)|\Omega|t^{-p}\sum_{i=0}^{k-1}2^{-i}+M^pt^{-p}\sum_i(1+|A_{i}|^r)|\Omega_{i}|\\
= & \, M^p(1+|A|^r)|\Omega|t^{-p}\sum_{i=0}^{k-1}2^{-i}+M^pt^{-p}
\int_{\Omega_{error}^{(k)}}(1+|\nabla u_{k}|^r)\,dx\\
\leq & \,  M^p(1+|A|^r)|\Omega|t^{-p}\sum_{i=0}^{k}2^{-i}.
\end{align*}

%\begin{align*}
%|\{x\in\Omega:\,&|\nabla u_{k+1}(x)|>t\}|\leq %|\{x\in\Omega:\,|\nabla u_{k}(x)|>t\}|+\\
%&\qquad +\sum_i|\{x\in\Omega_i:\,|\nabla v_{i}(x)|>t\}|\\
%&\leq M^p(1+|A|^r)|\Omega|t^{-p}\sum_{i=0}^{k-1}2^{-i}+M^pt^{-p}\sum%_i(1+|A_{i}|^r)|\Omega_{i}|\\
%& 
%%
%
%\text{ \green{  \,  $\leq  \ldots
% \leq M^p(1+|A|^p)|\Omega|t^{-p}\sum_{i=0}^{k-1}2^{-i}
%
%+M^pt^{-r} 
% \sum_i(1+|A_{i}|^r)|\Omega_{i}|$
%}
%
%\blue{ \text{SM: what is meant here?}}
%\\
%&=M^p(1+|A|^r)|\Omega|t^{-p}\sum_{i=0}^{k-1}2^{-i}+M^pt^{-p}
%\int_{\Omega_{error}^{(k)}}(1+|\nabla u_{k}|^r)\,dx\\
%&\leq M^p(1+|A|^r)|\Omega|t^{-p}\sum_{i=0}^{k}2^{-i}.
%\end{align*}
%
%\green{ \, In the last two lines $p \mapsto r$, except in $t^{-p}$.}
%\blue{ \text{SM: what is meant here?}}
This shows \eqref{e:pmaink2} for $k+1$ (note that the bound is trivial for $t < 1$ since $M \ge 1$). 
\item Moreover, we obtain
\begin{equation}  \label{eq:pmain_Calpha_uk}
\|u_{k+1}-u_k\|_{C^{\alpha}(\overline\Omega)}\leq 2\max_i\|v_i-l_{A_i,b_i}\|_{C^{\alpha}(\overline\Omega_i)}\leq \delta 2^{-k-1},
\end{equation}
from which   \eqref{e:pmaink3} follows for $k+1$.
\end{itemize}
%Moreover, once more using the covering/rescaling argument, we may ensure additionally that $\|u_{k+1}-u_k\|_{C^{\alpha}(\overline\Omega)}<\delta 2^{-k-1}$.
This concludes the induction step. 

Having constructed the sequence $(u_k)_k$,  we now show that a limit $k\to \infty$ exists and satisfies the properties in the statement of the theorem. First note that $u_{k+1}=u_k$ outside $\Omega_{error}^{(k)}$ and  $\Omega_{error}^{(k+1)}\subset \Omega_{error}^{(k)}$ for any $k$. Furthermore, it follows from 
\eqref{e:pmaink1} that $|\Omega_{error}^{(k)}|\to 0$ as $k\to\infty$. Thus  for almost every $x\in\Omega$ the pointwise limit $u(x):=\lim_{k\to\infty}u_k(x)$ exists and $u$ is piecewise affine. 
From \eqref{eq:pmain_Calpha_uk} we further obtain that $u_k$ converges to $u$ in $C^{\alpha}(\overline\Omega)$ with  
$$
u=l_{A,b}\,\textrm{ on }\partial\Omega,\quad\textrm{ and }\|u-l_{A,b}\|_{C^\alpha(\overline{\Omega})}<\delta.
$$
From \eqref{e:pmaink2} we deduce the uniform weak $L^p$ bound
$$
|\{x\in\Omega:\,|\nabla u_{k}(x)|>t\}|
\leq 2M^p(1+|A|^r)|\Omega|t^{-p},  
$$
which in particular implies that the sequence $\{u_k\}$ is uniformly bounded in $W^{1,q}(\Omega)$ for any $q<p$. It follows that the limit satisfies $u\in W^{1,q}(\Omega)$ for any $q<p$, as well as the same weak $L^p$ bound. This proves the statement of the theorem.
\end{proof}

\subsection{Iteration property}

In certain cases it may be simpler to show the reduction property $\R^{d\times m}$ to $K$, required in Theorem \ref{th:generalsolution}, via several intermediate stages.
The key point is to tie together these different reduction stages whilst retaining control of the appropriate weak $L^p$ bound. This is the  subject of the following statement:
\begin{theorem}\label{th:iteration}
Let $K,K',K''\subset\R^{d\times m}$. Suppose that for some $1\leq p,q<\infty$ with $p\neq q$: $K$ can be reduced to $K'$ in weak $L^p$ and that $K'$ can be reduced to $K''$ in weak $L^q$. Then $K$ can be reduced to $K''$ in weak $L^r$ where $r=\min\{p,q\}$.
\end{theorem}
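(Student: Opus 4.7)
The strategy is to chain the two reductions sequentially. Given $A\in K$, $b\in\R^d$, $\eps,\alpha\in(0,1)$ and $s\in(1,\infty)$, first apply the reduction $K\to K'$ on $\Omega$ with parameters $(\eps/2,\alpha,s)$ to produce a piecewise affine $u_1$ with $u_1=l_{A,b}$ on $\partial\Omega$, error set $\Omega_{error}^{(1)}=\{\nabla u_1\notin K'\}$ satisfying $\int_{\Omega_{error}^{(1)}}(1+|\nabla u_1|)^s\,dx<(\eps/2)|\Omega|$, and weak $L^p$-bound $|\{|\nabla u_1|>t\}|\leq M_1^p(1+|A|^p)|\Omega|t^{-p}$. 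On $\Omega\setminus\Omega_{error}^{(1)}$ the map $u_1$ decomposes into subdomains $\Omega_i$ on each of which $\nabla u_1\equiv A_i\in K'$; on each $\Omega_i$ apply the reduction $K'\to K''$ with the same parameters to obtain $v_i$ with $v_i=l_{A_i,b_i}$ on $\partial\Omega_i$, error set $\tilde\Omega_i$ controlled by $\int_{\tilde\Omega_i}(1+|\nabla v_i|)^s\,dx<(\eps/2)|\Omega_i|$, and $|\{x\in\Omega_i:|\nabla v_i|>t\}|\leq M_2^q(1+|A_i|^q)|\Omega_i|t^{-q}$. Glue $u_1|_{\Omega_{error}^{(1)}}$ with the $v_i$ via the gluing argument of Section~\ref{ss:basic} to obtain $u$; this map is piecewise affine, lies in $W^{1,1}\cap C^\alpha$, agrees with $l_{A,b}$ on $\partial\Omega$, and has error set contained in $\Omega_{error}^{(1)}\cup\bigcup_i\tilde\Omega_i$, so $\int_{\Omega_{error}}(1+|\nabla u|)^s\,dx<\eps|\Omega|$ follows at once.

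\textbf{Weak $L^r$ bound, case $r=q<p$.} From the decomposition $|\{|\nabla u|>t\}|\leq |\{x\in\Omega_{error}^{(1)}:|\nabla u_1|>t\}|+\sum_i|\{x\in\Omega_i:|\nabla v_i|>t\}|$, convert the first weak $L^p$-bound to weak $L^q$ by the elementary interpolation $\min(|\Omega|,X)\leq |\Omega|^{1-q/p}X^{q/p}$. The second sum is $M_2^q t^{-q}\sum_i(1+|A_i|^q)|\Omega_i|$, and
\[
\sum_i(1+|A_i|^q)|\Omega_i|\leq |\Omega|+\int_\Omega|\nabla u_1|^q\,dx\leq C(p,q)(1+|A|^q)|\Omega|,
\]
the strong $L^q$-estimate on $\nabla u_1$ being obtained from the weak $L^p$-bound by a layer-cake computation that converges precisely because $q<p$ strictly. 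Adding the two pieces produces the desired weak $L^q$-bound with $M=M(M_1,M_2,p,q)$.

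\textbf{Weak $L^r$ bound, case $r=p<q$.} Here the first term is already in the correct form. Split the second sum by the threshold $|A_i|\lessgtr t$. For $|A_i|>t$, the trivial estimate $|\{|\nabla v_i|>t\}|\leq |\Omega_i|$ together with $\sum_{|A_i|>t}|\Omega_i|\leq|\{|\nabla u_1|>t\}|$ yields a weak $L^p$-contribution directly from the $\nabla u_1$ bound. For $|A_i|\leq t$, summing the weak $L^q$-bounds gives
\[
M_2^q t^{-q}\Bigl(|\Omega|+\int_{\{|\nabla u_1|\leq t\}}|\nabla u_1|^q\,dx\Bigr),
\]
and the truncated integral is estimated by layer-cake: for $t$ beyond the threshold $s_*=M_1(1+|A|^p)^{1/p}$ it is bounded by $C(p,q)\bigl[M_1^q(1+|A|^p)^{q/p}|\Omega|+M_1^p(1+|A|^p)|\Omega|t^{q-p}\bigr]$. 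After multiplication by $M_2^q t^{-q}$, the second piece is already in weak $L^p$ form, while the first piece and the leftover $M_2^q t^{-q}|\Omega|$ are reshaped to weak $L^p$ form by the same $\min$-interpolation used above (together with $(1+|A|^q)^{p/q}\leq 1+|A|^p$).

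\textbf{Main obstacle.} The essential analytic point, and the reason for the hypothesis $p\neq q$, is that the layer-cake integrals of $s^{q-1-p}$ controlling the strong $L^q$-norm of $\nabla u_1$ (when $q<p$) and its truncated $L^q$-norm (when $q>p$) are finite powers of the upper limit exactly when $q\neq p$; for $q=p$ they become logarithmic and the argument produces only a weak $L^p$-bound with a parasitic $\log t$ factor. The resulting constants inevitably depend on $|q-p|^{-1}$ but remain finite throughout the admissible range.
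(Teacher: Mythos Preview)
Your proof is correct and follows essentially the same approach as the paper's. Both arguments chain the two reductions by gluing, then treat the weak $L^r$ bound via the same dichotomy: in the case $q<p$ you (and the paper) control $\sum_i(1+|A_i|^q)|\Omega_i|$ by the strong $L^q$ norm of $\nabla u_1$, obtained from its weak $L^p$ bound via layer-cake; in the case $p<q$ you (and the paper) split the sum over $i$ at a threshold comparable to $t$, use the trivial bound on the large-$|A_i|$ part together with the weak $L^p$ estimate on $\nabla u_1$, and estimate the truncated $q$-th moment on the small-$|A_i|$ part again by layer-cake. The only differences are cosmetic: the paper uses the threshold $t/M''$ rather than $t$, which slightly streamlines the leftover $t^{-q}$ terms, and it disposes of those terms by a direct case split $t\gtrless M''$ rather than your $\min$-interpolation; both devices are equivalent.
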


Before we give the proof of Theorem \ref{th:iteration}, we collect some useful classical estimates. 
\begin{remark}\label{rmk:weakstrong}\hfill
\begin{itemize}
    \item Let us define, for any $1\leq p<\infty$, $\langle A\rangle_p:=(1+|A|^p)^{1/p}$, and $\langle A\rangle_{\infty}=\max\{1,|A|\}$. By direct computation one can check that for any $A$ the function $p\mapsto \langle A\rangle_p$ is monotonic decreasing.
    \item Using Chebyshev's inequality, a strong $L^p$ estimate of the type 
    $$
    \int_{\Omega}|\nabla u|^p\,dx\leq M^p(1+|A|^p)|\Omega|
    $$
    implies the weak $L^p$ estimate \eqref{eq:iteration_weakLp}. 
    \item Conversely, the weak $L^p$ estimate \eqref{eq:iteration_weakLp} implies a strong $L^q$ estimate for any $q<p$: for any $a>0$
    \begin{align*}
        \int_{\Omega}|\nabla u|^q\,dx&=q\int_0^\infty t^{q-1}\left|\{|\nabla u|>t\}\right|\,dt\\
        &\leq q\int_0^at^{q-1}\,dt|\Omega|+q\int_a^\infty t^{q-p-1}\,dt\,(M\langle A\rangle_p)^p|\Omega|\\
        &=a^q|\Omega|\left(1+\frac{q}{q-p}\left(\frac{M\langle A\rangle_p}{a}\right)^p\right)
    \end{align*}
    Choosing $a=\left(\frac{q}{p-q}\right)^{1/p}M\langle A\rangle_p$ and using the monotonicity of $p\mapsto \langle A\rangle_p$, we deduce
    $$
    \int_{\Omega}|\nabla u|^q\,dx\leq 2\left(\frac{q}{p-q}\right)^{q/p}M^q(1+|A|^q)|\Omega|.
    $$
\end{itemize}
\end{remark}

\begin{proof}[Proof of Theorem~ \ref{th:iteration}]
Let $\Omega\subset\R^m$ be a regular domain and fix $\eps>0$, $1\leq s < \infty$ and $\alpha \in [0,1)$. 
Let $A \in K$ and $b \in \R^d$.

By the assumption that $K$ can be reduced to $K'$ in weak $L^p$ we know that there exists a map $u' \in C^\alpha(\overline \Omega) \cap W^{1,1}(\Omega)$
with $u' = l_{A,b}$ on $\partial \Omega$ and mutually disjoint open sets $\Omega_i'$ and a nullset $\mathcal N'$  with $\Omega = \mathcal N' \cup \bigcup_{i} \Omega_i'$ such that $u'$ is affine on $\Omega_i'$ (i.e.~$u'\vert_{\Omega_i'}=l_{A_i,b_i}$) and 
\begin{equation}  \label{eq:error1}
\int_{\Omega'_{error} }  (1 + |\nabla u'|)^s \, dx <  \eps/2 |\Omega|
\end{equation}
where 
\begin{equation}\label{eq:error1defset}
\Omega'_{error}  := \{ x \in \mathring{\Omega}_{u'}:  \nabla u' \notin K'  \}\cup \mathcal N'.
\end{equation}
Moreover, for any $t>1$
\begin{equation}\label{e:composition_est1}
\left|\{|\nabla u'|>t\}\right|\leq (M'\langle A\rangle_p)^p|\Omega|t^{-p}
\end{equation}
Let $G$ be the set of ``good'' indices $G = \{ i :  \text{$\nabla u' \in K'$ in   $\Omega_i'$} \}$.

In each open set $\Omega_i'$ with $i\in G$ apply the assumption that $K'$ can be reduced to $K''$ in $L^p$ with $A_i\in K'$ and $b_i$.
This yields piecewise affine maps $u''_i \in C^\alpha(\overline{\Omega_i'}) \cap W^{1,1}(\Omega_i')$ with $u''_i = u'$ on $\partial \Omega_i'$, and a closed nullset $\mathcal{N}''_i$ such that $u''_i$ is locally affine on $\Omega_i'\setminus\mathcal{N}''_i$. Moreover,
\begin{equation}   \label{eq:error2}
\int_{\Omega''_{error,i} }  (1 + |\nabla u''_i|)^s \, dx \le \eps/2 |\Omega_i'|
\end{equation}
where 
\begin{equation}
\Omega''_{error,i}  := \{ x \in \mathring\Omega_{u''_i} :  \nabla u''_i(x) \notin K''  \}\cup \mathcal{N}''_i,
\end{equation}
and
\begin{equation}\label{e:composition-est2}
\left|\{x\in \Omega_i':|\nabla u''_i(x)|>t\}\right|\leq (M''\langle A_i\rangle_q)^q|\Omega_i'|t^{-q}
\end{equation}
By the gluing argument the map $u''$ defined by 
\begin{equation}
u''(x) = \begin{cases} u''_i(x) & \text{if $x \in \Omega_i, i\in G$} \\
u'(x) & \text{else,}
\end{cases}
\end{equation}
satisfies $u'' \in C^\alpha(\overline \Omega) \cap W^{1,1}(\Omega)$ and $u'' = l_{A,b}$ on $\partial \Omega$. 
Moreover $u''$ is piecewise affine (locally affine on $\Omega\setminus \mathcal{N}'\cup\bigcup_i\mathcal{N}''_i$) and $\nabla u''(x) \in K''$ if $x \in \mathring{\Omega}_{u''} \setminus  \Omega''_{error}$ where
$$  \Omega''_{error} := \bigcup_{i \in G}  \Omega''_{error,i}   \, \cup \Omega'_{error}.$$
Summing  \eqref{eq:error2} over $i \in G$ and using   \eqref{eq:error1} as well as  the fact that   $u''$ and $u'$ agree on
 $\Omega'_{error}$ we get
\begin{equation}
\int_{ \Omega''_{error}} (1 + |\nabla u''|)^s \, dx < \eps |\Omega|.
\end{equation}
It only remains to show that   \eqref{eq:iteration_weakLp} holds with exponent $r=\min\{p,q\}$. More precisely, we claim the estimate
\begin{equation}\label{e:finalestr}
    \bigl|\{|\nabla u|>t\}\bigr|\leq 4(1+\tfrac{q}{|q-p|})(M'M'')^r(1+|A|^r)|\Omega|t^{-r}.
\end{equation}
In order to show this we treat the cases $p<q$ and $p>q$ separately.

\smallskip

\noindent{\bf The case $p<q$}\hfill

We calculate, for any $t>0$:
\begin{align*}
    \bigl|\{x\in \Omega&:\,|\nabla u(x)|>t\}\bigr|=\sum_{i} \left|\{x\in \Omega_i':\,|\nabla u(x)|>t\}\right|\\
    &=\sum_{i: |A_i|>t/M''} \left|\{x\in \Omega_i':\,|\nabla u(x)|>t\}\right|+\sum_{i: |A_i|\leq t/M''} \left|\{x\in \Omega_i':\,|\nabla u(x)|>t\}\right|\\
    &\leq \sum_{i: |A_i|>t/M''} |\Omega'_i|+\sum_{i: |A_i|\leq t/M''} \left|\{x\in \Omega_i':\,|\nabla u(x)|>t\}\right|\\
    &= \bigl|\{x\in \Omega:\,|\nabla u'(x)|>t\}\bigr|+\sum_{i\in G: |A_i|\leq t/M''} \left|\{x\in \Omega_i':\,|\nabla u''_i(x)|>t\}\right|.
\end{align*}
In the last sum we could restrict to $i\in G$, because $M''\geq 1$ and if $i\notin G$ then $\nabla u(x)=\nabla u_i'(x)=A_i$ for all $x\in\Omega_i'$. 
The first term can directly estimated using \eqref{e:composition_est1}. On the second term we use \eqref{e:composition-est2} to obtain
\begin{align*}
   \sum_{i\in G: |A_i|\leq t/M''}& \bigl|\{x\in \Omega_i':\,|\nabla u''_i(x)|>t\}\bigr|\leq (M'')^qt^{-q}\sum_{i\in G: |A_i|\leq t/M''}(1+|A_i|^q)|\Omega_i'|\\
   &\leq (M'')^qt^{-q}\int_{\{|\nabla u'|\leq t/M''\}}(1+|\nabla u'|^q)\,dx\\
   &\leq (M'')^q|\Omega|t^{-q}+q(M'')^qt^{-q}\int_0^{t/M''}s^{q-1}|\{|\nabla u_i'|>s\}|\,ds\\
   &\overset{\textrm{\eqref{e:composition_est1}}}{\leq}(M'')^q|\Omega|t^{-q}+q(M'')^q(M')^p\langle A\rangle_p^p|\Omega|t^{-q}\int_0^{t/M''}s^{q-p-1}\,ds\\
   &=(M'')^q|\Omega|t^{-q}+\frac{q}{q-p}(M'')^p(M')^p\langle A\rangle_p^p|\Omega|t^{-p}\,.
\end{align*}
Putting everything together we deduce
\begin{equation*}
    \bigl|\{x\in \Omega:\,|\nabla u(x)|>t\}\bigr|\leq (1+\tfrac{q}{q-p})(M'M'')^p(1+|A|^p)|\Omega|t^{-p}+(M'')^qt^{-q}|\Omega|.
\end{equation*}
If $t\geq M''$, then $(M'')^qt^{-q}\leq (M'')^pt^{-p}$ (since $p<q$), hence in this case we can estimate
\begin{equation}\label{e:finalestp<q}
    \bigl|\{x\in \Omega:\,|\nabla u(x)|>t\}\bigr|\leq (2+\tfrac{q}{q-p})(M'M'')^p(1+|A|^p)|\Omega|t^{-p}.
\end{equation}
On the other hand, if $t< M''$, then the right hand side of \eqref{e:finalestp<q} is bounded below by $|\Omega|$, which is the trivial upper bound for $|\{|\nabla u(x)|>t\}|$. Therefore in this case \eqref{e:finalestp<q} is also valid.

\smallskip

\noindent{\bf The case $p>q$}\hfill

This time we calculate
\begin{align*}
    \bigl|\{x\in \Omega&:\,|\nabla u(x)|>t\}\bigr|=\sum_{i} \left|\{x\in \Omega_i':\,|\nabla u(x)|>t\}\right|\\
    &=\sum_{i\in G} \left|\{x\in \Omega_i':\,|\nabla u''_i(x)|>t\}\right|+\sum_{i\notin G} \left|\{x\in \Omega_i':\,|\nabla u'(x)|>t\}\right|\\
    &\overset{\textrm{\eqref{eq:error1defset}}}{=}\sum_{i\in G} \left|\{x\in \Omega_i':\,|\nabla u''(x)|>t\}\right|+\bigl|\{x\in \Omega_{error}':\,|\nabla u'(x)|>t\}\bigr|\\
    &\overset{\textrm{\eqref{e:composition-est2}}}{\leq}(M'')^qt^{-q}\sum_{i\in G}(1+|A_i|^q)|\Omega_i'|+t^{-q}\int_{\Omega_{error}'}|\nabla u'|^q\,dx\\
    &\leq (M'')^qt^{-q}|\Omega|+(M'')^qt^{-q}\int_{\Omega}|\nabla u'|^q\,dx\\
    &\leq (M'')^qt^{-q}|\Omega|+2\left(\tfrac{q}{p-q}\right)^{q/p}(M')^q(M'')^q(1+|A|^q)|\Omega|t^{-q},
\end{align*}
where in the last inequality we have used Remark \ref{rmk:weakstrong}. Since $q<p$ we conclude 
\begin{equation}\label{e:finalestp>q}
    \bigl|\{x\in \Omega:\,|\nabla u(x)|>t\}\bigr|\leq 2(2+\tfrac{q}{p-q})(M'M'')^q(1+|A|^q)|\Omega|t^{-q}.
\end{equation}

\end{proof}

\begin{remark}
The estimates in Theorem \ref{th:iteration} have nothing to do with the gradient structure, they hold for unbounded probability measures as follows: Let $\nu'=\sum_{i=1}^\infty\lambda_i\delta_{A_i}$ be a probability measure with 
\begin{equation}\label{e:nu'weakLp}
    \nu'(\{|X|>t\})\leq M'(1+|A|^p)t^{-p}\quad\textrm{ for all }t>0
\end{equation}
and for every $i$ $\nu''_i$ be a probability measure with 
\begin{equation}\label{e:nu''weakLq}
    \nu''_i(\{|X|>t\})\leq M''(1+|A_i|^q)t^{-q}\quad\textrm{ for all }t>0.
\end{equation}
Assume that $p\neq q<\infty$. Then the measure \begin{equation}\label{e:diamondnu}
\nu=\sum_{i=1}^\infty\lambda_i\nu''_i
\end{equation}
is a probability measure with 
\begin{equation}\label{e:nuweakLr}
     \nu(\{|X|>t\})\leq C_{p,q}M'M''(1+|A|^r)t^{-r}\quad\textrm{ for all }t>0,
\end{equation}
with $r=\min\{p,q\}$ and $C_{p,q}=4(1+\tfrac{q}{|p-q|})$. The proof is entirely analogous to the case of gradients presented above.
\end{remark}

\begin{remark}
If $p=q < \infty$ then it may happen that  $|\nu'|_p < \infty$, $ |\nu''|_p < \infty$, but  $ |\nu|_p = \infty$, where $\nu$ is defined in \eqref{e:diamondnu}.
Consider, for example, $p \in (1, \infty)$,  $d=m=1$, $A_i = 2^i$, and $\lambda_i = c_p  2^{-ip}$ with  $c_p = (1 - 2^{-p})$.
Then $\nu' = \sum_{i=0}^\infty  \lambda_i \delta_{A_i}$ is a probability measure with barycentre
$A = \frac{c_p}{c_{p-1}}$. Set $\nu''_i = \sum_{k=0}^\infty  \lambda_k \delta_{\frac1A 2^{i+k}}$. 
Then $\nu''_i$ is a probability measure with barycentre $A_i = 2^i$. Moreover
$$ \nu = \sum_{i,k= 0}^\infty    \lambda_i \lambda_k \delta_{\frac1A 2^{i+k}} = c_p^2 \sum_{l=0}^\infty  (l+1) 2^{-lp} \delta_{\frac1A 2^l}.
$$
Thus  $|\nu'|_p < \infty$, $ |\nu''|_p < \infty$, but  $ |\nu|_p = \infty$.
\end{remark}

\subsection{Staircase laminate criterion}

We saw in the previous subsections that the condition \emph{$K$ can be reduced to $K'$ in weak $L^p$} (c.f. Definition \ref{d:reduced}) is key to being able to solve the differential inclusion \eqref{e:diffincl1}. The following is a useful criterion for verifying this property, based on the notion of staircase laminates introduced in Section \ref{s:staircaselaminates}.

\begin{theorem}[Staircase laminate criterion]\label{t:SL-criterion}
Let $K,K'\subset\R^{d\times m}$ and $1<p<\infty$, and assume that there exists a constant $M\geq 1$ with the following property: for any $A\in K$ there exists a staircase laminate $\nu^{\infty}_A$ supported on $K'$, with barycenter $A$ and satisfying the bound
\begin{equation}\label{e:SL-bound}
\nu^{\infty}_A(\{X:\,|X|>t\})\leq M^p(1+|A|^p)t^{-p}\quad \textrm{ for all }t>1.
\end{equation}
Then $K$ can be reduced to $K'$ in weak $L^p$. 
\end{theorem}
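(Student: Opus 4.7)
The plan is to realize the staircase laminate $\nu^\infty_A$ as the gradient distribution of a piecewise affine Sobolev map $u$, by iteratively applying Lemma~\ref{l:basicconstruction} along the elementary splittings $\omega_k = (1-\gamma_k)\mu_k + \gamma_k\delta_{A_k}$ that comprise the staircase. Intuitively, at step $k$ the ``bad'' region where the gradient of the current approximation equals the previous tail $A_{k-1}$ is split: a portion of relative measure $\sim 1-\gamma_k$ is pushed into $K'$ via $\mu_k$, while a smaller portion $\sim\gamma_k$ becomes the new bad region, with gradient $A_k$.

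Concretely, set $u_0 = l_{A,b}$ and $B_0 = \Omega$. Inductively, assume $u_{k-1} \in W^{1,1}(\Omega) \cap C^\alpha(\overline\Omega)$ is piecewise affine with $u_{k-1} = l_{A,b}$ on $\partial\Omega$, that the bad set $B_{k-1} := \{\nabla u_{k-1}=A_{k-1}\}$ decomposes into regular subdomains $\{\Omega_{k-1,i}\}_i$ on which $u_{k-1}$ is affine, and that $\nabla u_{k-1} \in \bigcup_{j<k}\supp\mu_j \subset K'$ on $\Omega \setminus B_{k-1}$. On each $\Omega_{k-1,i}$ I would apply Lemma~\ref{l:basicconstruction} with the finite-order laminate $\omega_k$ (whose barycenter is $A_{k-1}$), combined with the rescaling and covering argument of Section~\ref{ss:basic}, producing a piecewise affine $v_{k,i}$ that coincides with $u_{k-1}$ on $\partial\Omega_{k-1,i}$, lies within $C^\alpha$-distance $\delta_k/2$ of $u_{k-1}$, and whose gradient distribution approximates $\omega_k$ to within a multiplicative tolerance $\eta_k$. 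Gluing these $v_{k,i}$'s with $u_{k-1}$ on $\Omega\setminus B_{k-1}$ yields $u_k$, whose new bad set $B_k := \{\nabla u_k=A_k\} \subset B_{k-1}$ satisfies $|B_k| \le (1+\eta_k)\gamma_k|B_{k-1}|$. With the tolerances chosen so that $\sum_k\delta_k < \delta$ and $\prod_k(1+\eta_k)\le 2$, the sequence $(u_k)$ is Cauchy in $C^\alpha(\overline\Omega)$, and since $|B_k|\le 2\beta_k|\Omega|\to 0$ while $u_k=u_{k-1}$ outside $B_{k-1}$, it converges pointwise almost everywhere to a limit $u$ satisfying $u = l_{A,b}$ on $\partial\Omega$, $\|u-l_{A,b}\|_{C^\alpha(\overline\Omega)} < \delta$, and $\nabla u \in K'$ almost everywhere (the ``persistent bad set'' $\bigcap_k B_k$ being null). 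In particular $\Omega_{error}$ has measure zero, so the error integral \eqref{eq:iteration_epserror} vanishes for every $s\in(1,\infty)$.

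For the weak $L^p$ estimate, the inductive construction provides the key bound: for every Borel $E\subset\R^{d\times m}$,
\begin{equation*}
    |\{x\in \Omega \setminus B_k : \nabla u_k(x)\in E\}| \le \prod_{j\le k}(1+\eta_j)\,\tilde\nu^k_A(E)\,|\Omega| \le 2\,\nu^\infty_A(E)\,|\Omega|,
\end{equation*}
where $\tilde\nu^k_A = \sum_{j\le k}\beta_{j-1}(1-\gamma_j)\mu_j$ is the ``$K'$-supported part'' of $\nu^k_A$, a monotonically increasing sequence of sub-probability measures with limit $\nu^\infty_A$. Since $|B_k|\to 0$ and $\nabla u = \nabla u_k$ outside $B_k$ (eventually in $k$), letting $k\to\infty$ yields $|\{\nabla u \in E\}| \le 2\nu^\infty_A(E)|\Omega|$. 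Applying this to $E=\{|X|>t\}$ and using the hypothesized bound \eqref{e:SL-bound} gives $|\{|\nabla u|>t\}| \le 2M^p(1+|A|^p)|\Omega|t^{-p}$ for $t>1$, while for $t\le 1$ the bound is trivial because $M^p(1+|A|^p)t^{-p}\ge 1$; redefining $M' := 2^{1/p}M$ establishes \eqref{eq:iteration_weakLp}. Finally, Remark~\ref{rmk:weakstrong} converts this weak $L^p$-bound into $u\in W^{1,q}(\Omega)$ for every $q<p$, which together with the $C^\alpha$ convergence gives $u\in W^{1,1}(\Omega)\cap C^\alpha(\overline\Omega)$, as required.

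The main technical obstacle will be the careful bookkeeping across the infinite iteration: because $|A_k|\to\infty$, one has no hope of bounding $\nabla u$ pointwise, and one must instead ensure that the gradient distribution of $u$ is, up to a bounded multiplicative constant, exactly $\nu^\infty_A$, so that the hypothesized weak $L^p$ bound on the staircase laminate transfers faithfully to $\nabla u$. The choice of the tolerances $\eta_k, \delta_k$ must simultaneously keep $\prod_k(1+\eta_k)$ and $\sum_k\delta_k$ bounded while respecting the rapidly increasing Lipschitz constants $\max_{X\in\supp\omega_k}|X|$ of the individual pieces $v_{k,i}$ produced by Lemma~\ref{l:basicconstruction}.
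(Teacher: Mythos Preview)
Your approach is essentially the paper's: iterate Lemma~\ref{l:basicconstruction} along the steps $\omega_k$ of the staircase, track the gradient distribution multiplicatively, and pass to the limit. The paper packages this as Proposition~\ref{pr:laminates_to_reduction} and then deduces the theorem in one line.

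There is, however, a genuine oversight in your claim that $\Omega_{error}$ has measure zero. Lemma~\ref{l:basicconstruction} does \emph{not} produce a map whose gradient lies exactly in $\supp\omega_k$: estimate~\eqref{e:basicconstruction-2} only gives $|\{\nabla v_{k,i}\notin\supp\omega_k\}|\le\eps|\Omega_{k-1,i}|$, and on this residual set the gradient can be anything up to $\max_{X\in\supp\omega_k}|X|$ (coming from the auxiliary slopes $\xi^{(j)}$ in the roof construction). Hence your inductive hypothesis ``$\nabla u_{k-1}\in\bigcup_{j<k}\supp\mu_j$ on $\Omega\setminus B_{k-1}$'' is false as stated, and the final map $u$ will have $\nabla u\notin K'$ on a set of positive measure.

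The fix is exactly what the paper does: introduce an explicit error set $\Omega_{error}^{(k)}$ at each step, and at step $k$ choose the tolerance $\eps$ in Lemma~\ref{l:basicconstruction} small enough that the new error contributes at most $\eta 2^{-k}|\Omega_{k-1,i}|$ to $\int(1+|\nabla v_{k,i}|)^s$. This is possible precisely because each $v_{k,i}$ is Lipschitz with a fixed (step-dependent) bound, so shrinking the error set shrinks the integral. The accumulated error then satisfies~\eqref{eq:iteration_epserror}, and your weak-$L^p$ argument goes through essentially unchanged once you also account for these error pieces in the distributional bound~\eqref{e:staircase1-NgeneralE}. Your closing paragraph already anticipates that the tolerances must be chosen against the growing Lipschitz constants; you just need to actually carry this through rather than assert the error vanishes.
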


The proof of Theorem \ref{t:SL-criterion} actually follows from the more general statement of Proposition~\ref{pr:laminates_to_reduction} below, which in turn can be seen as an analogue of Lemma \ref{l:basicconstruction} for unbounded staircase laminates, where the $L^\infty$ bound on the gradient is replaced by a weak $L^p$ bound. 

\begin{proposition}\label{pr:laminates_to_reduction}
Suppose $\nu^\infty$ is a staircase laminate supported on $K$, with barycenter $A$ and satisfying the bound \eqref{e:staircaseLpupper} for some $p>1$. Then, for each $b \in \R^d$,  $\eps \in (0,1)$, 
$\alpha \in (0,1)$, $s \in (1, \infty)$,  and 
each regular domain $\Omega \subset \R^m$,  
there exists a 
piecewise affine map $u \in W^{1,1}(\Omega)
\cap C^\alpha(\overline \Omega)$ with $u = l_{A,b}$
on  $\partial \Omega$ 
and the following properties: 
with 
$\Omega_{error} 
:= \{ x \in \Omega : \nabla u(x) \notin K\}$
we have
\begin{subequations}
\begin{equation}
\int_{\Omega_{error}}(1+|\nabla u|)^s\,dx 
<\eps |\Omega|,     \label{eq:iteration_epserror_staircase}
\end{equation}
and, for each Borel set 
$E \subset \R^{d \times m}$,
\begin{equation}
 (1- \eps)  \nu^\infty(E) \le \frac{|\{ x \in \Omega  : 
\nabla u(x) \in E \}|}{|\Omega|} \le (1+ \eps)  \nu^\infty(E).
\label{eq:bound_by_staircase_laminate2}
\end{equation}
\end{subequations}
\end{proposition}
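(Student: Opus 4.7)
The plan is to construct $u$ as the $C^\alpha$-limit of a sequence $(u_k)_{k \ge 0}$ of piecewise affine Lipschitz maps obtained by running Lemma~\ref{l:basicconstruction} along the splitting sequence that defines $\nu^\infty$. Choose positive sequences $(\eps_k)$ and $(\delta_k)$ with $\prod_k (1+\eps_k)\le 1+\eps$, $\prod_k (1-\eps_k) \ge 1-\eps$, and $\sum_k \delta_k < \infty$; the $\delta_k$ will control the $C^\alpha$-increments and the $\eps_k$ the distributional approximation at each step.

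Initialize $u_0 := l_{A,b}$ and set $E_0 := \Omega$. Inductively, suppose $u_k$ is piecewise affine with $u_k = l_{A,b}$ on $\partial\Omega$ and that $E_k \subset \Omega$ is the open set where $\nabla u_k = A_k$; on each connected component $\Omega_{k,i}$ of $E_k$ the map $u_k$ then equals an affine map $l_{A_k,b_{k,i}}$. On each such $\Omega_{k,i}$ apply Lemma~\ref{l:basicconstruction} to the finite-order laminate $\omega_{k+1} = (1-\gamma_{k+1})\mu_{k+1} + \gamma_{k+1}\delta_{A_{k+1}}$ (which has barycenter $A_k$), combined with the rescaling/covering argument of Section~\ref{ss:basic}, to produce a piecewise affine Lipschitz map $v_{k,i}$ on $\Omega_{k,i}$ such that $v_{k,i} = l_{A_k,b_{k,i}}$ on $\partial\Omega_{k,i}$, the gradient distribution of $v_{k,i}$ on $\Omega_{k,i}$ is within relative error $\eps_{k+1}$ of $\omega_{k+1}$, and $\|v_{k,i} - l_{A_k,b_{k,i}}\|_{C^\alpha(\overline{\Omega_{k,i}})} < \tfrac12 \delta_{k+1}$. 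Define $u_{k+1} := v_{k,i}$ on each $\Omega_{k,i}$ and $u_{k+1} := u_k$ elsewhere; the gluing lemma of Section~\ref{ss:basic} gives $u_{k+1} \in W^{1,1}(\Omega) \cap C^\alpha(\overline\Omega)$ with $u_{k+1} = l_{A,b}$ on $\partial\Omega$ and $\|u_{k+1}-u_k\|_{C^\alpha(\overline\Omega)} < \delta_{k+1}$. Let $E_{k+1} \subset E_k$ be the set where $\nabla u_{k+1} = A_{k+1}$; the construction yields $|E_{k+1}| \le (1+\eps_{k+1})\gamma_{k+1}|E_k|$, so telescoping gives $|E_N| \le \beta_N \prod_k(1+\eps_k)|\Omega| \to 0$ since $\beta_N \to 0$ by hypothesis.

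Set $u := \lim_k u_k$. The summability $\sum_k \delta_k < \infty$ guarantees that the limit exists in $C^\alpha(\overline\Omega)$, so $u \in C^\alpha(\overline\Omega)$ and $u = l_{A,b}$ on $\partial\Omega$. The crucial observation is that $u_{k+1} = u_k$ on $\Omega \setminus E_k$ by construction, so $u = u_k$ there, and the gradients added at step $k+1$ belong to $\supp\mu_{k+1} \subset K$; consequently $\Omega_{error} \subset \bigcap_k E_k$, which is a null set. Hence the left side of \eqref{eq:iteration_epserror_staircase} is identically zero and the bound holds trivially. For the distributional estimate \eqref{eq:bound_by_staircase_laminate2}, the mass contributed to each atom $B_j$ of $\nu^\infty$ is produced at a definite step and differs from its target by a cumulative factor in $\prod_k(1\pm\eps_k) \subset [1-\eps, 1+\eps]$; the extension from atoms to arbitrary Borel sets is immediate since $\nu^\infty$ is purely atomic. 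The required Sobolev regularity $u \in W^{1,q}(\Omega)$ for every $q < p$ (in particular $u \in W^{1,1}(\Omega)$) follows from the fact that the distribution of $\nabla u$ inherits the weak $L^p$ bound \eqref{e:staircaseLpupper} from $\nu^\infty$ up to a factor $1 + \eps$.

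The main conceptual point is that the \emph{infinite} iteration is essential: it forces $\Omega_{error}$ to have zero measure, so \eqref{eq:iteration_epserror_staircase} holds for every $s \in (1, \infty)$ rather than only for $s < p$. This is what allows the weak $L^p$ hypothesis on the staircase laminate — which provides no pointwise control on $|A_N|$ at the tail — to nonetheless imply the full reduction property of Definition~\ref{d:reduced}. Checking that the countable family of affine pieces produced across all steps assembles into a decomposition of $\Omega$ as in the definition of piecewise affine in Section~\ref{ss:basic}, and that the connected components of each $E_k$ are regular domains so that the inductive step applies, is routine and follows from the piecewise affine output of Lemma~\ref{l:basicconstruction}.
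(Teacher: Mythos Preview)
Your ``crucial observation'' is where the argument breaks. You assert that the gradients produced inside $E_k$ at step $k+1$ lie in $\supp\omega_{k+1}=\supp\mu_{k+1}\cup\{A_{k+1}\}$, and hence that $\Omega_{error}\subset\bigcap_k E_k$ is null. But Lemma~\ref{l:basicconstruction} does \emph{not} give this: by \eqref{e:basicconstruction-2} there is a residual set of relative measure up to $\eps$ on which $\nabla v_{k,i}\notin\supp\omega_{k+1}$ (in the proof of Lemma~\ref{l:roof} these are the points where $\nabla f_N$ equals one of the auxiliary vectors $\xi^{(j)}$ rather than $-\lambda_2\xi$ or $\lambda_1\xi$). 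These junk pieces are never revisited --- they lie outside $E_{k+1}$, so all later steps leave them untouched --- and therefore they persist in the limit and constitute a genuine part of $\Omega_{error}$. So $|\Omega_{error}|>0$ and \eqref{eq:iteration_epserror_staircase} is not automatic; your highlighted ``main conceptual point'' is precisely the false step.

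This is exactly the work that has to be done. The paper keeps track of the accumulating error sets $\Omega_{error}^{(N)}$ and, by choosing the tolerance in each application of Lemma~\ref{l:basicconstruction} sufficiently small relative to the $L^\infty$ bound $\max_j|A_j|$ supplied by that lemma at step $N$, controls $\int_{\Omega_{error}^{(N)}}(1+|\nabla u_N|^s)$ by a summable sequence; passing to the limit via Fatou then gives \eqref{eq:iteration_epserror_staircase}. The same junk mass also affects your distributional argument: your multiplicative-error bookkeeping tracks only the atoms of $\nu^\infty$, but a Borel set $E$ disjoint from $\supp\nu^\infty$ can still pick up positive mass from the junk gradients, so the passage ``from atoms to arbitrary Borel sets'' is not as immediate as you claim.
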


\medskip

We show first how Theorem \ref{t:SL-criterion} follows easily from Proposition \ref{pr:laminates_to_reduction}.

\begin{proof}[Proof of Theorem \ref{t:SL-criterion}]
The proof follows immediately from Proposition \ref{pr:laminates_to_reduction} below, by taking $E = \{ X \in \R^{d \times m} : |X| > t\}$ and $\eps=1/2$ in \eqref{eq:bound_by_staircase_laminate2} and using \eqref{e:SL-bound}.
\end{proof}

\begin{proof}[Proof of Proposition \ref{pr:laminates_to_reduction}] First note that the assumption $A_N\notin K$ implies $\nu^N(\{A_N\})=\beta_N$. 

Let $s\in (1,\infty)$, $\eta > 0$ and set
$$
c_N=\prod_{j=1}^N(1+2^{-j}\eta).
$$
We construct a sequence of piecewise affine Lipschitz maps $u_N:\Omega\to\R^{d}$ with $u_N=l_{A,b}$ on $\partial\Omega$ such that the following holds: recalling that $\Omega=\mathring{\Omega}\cup\mathcal{N}$ is the decomposition defined in Section \ref{ss:basic} corresponding to the piecewise affine map $u_N$, set
\begin{align*}
\Omega_{error}^{(N)}&:=\left\{x\in \mathring\Omega
:\,\nabla u_N(x)\notin\supp\nu^{(N)}_A\right\}\cup \mathcal{N},\\
\Omega_{inductive}^{(N)}&:=\left\{x\in\mathring\Omega:\,\nabla u_N(x)=A_N\right\}.
\end{align*}
We then have,
for every Borel set $E \subset \R^{d \times m}$,
\begin{subequations}
\begin{align}
&\int_{\Omega^{(N)}_{error}}  1+  |\nabla u_N|^s\,dx 
\le \eta|\Omega|(1-2^{-N})  
\label{e:staircase1-N1_general}\\
& u_N = u_k   \quad \text{in $\Omega\setminus  \Omega_{inductive}^{(k)}$,   for $1 \le k \le N-1$,}  \label{e:staircase1-N3_general}  \\
  c_N^{-1}\nu^N(E)\leq &\frac{|\{ x \in \Omega: \nabla u_N(x) \in E\}|}{|\Omega|}\leq c_N\nu^N(E)\label{e:staircase1-NgeneralE}
\end{align}
\end{subequations}

The existence of $u_1$  satisfying the estimates above follows immediately from applying Lemma \ref{l:basicconstruction} to $\nu^1$,
 and in particular from the estimates \eqref{e:basicconstruction}-\eqref{e:basicconstruction-2} with a suitable choice of $\eps > 0$.

To obtain $u_{N+1}$ from $u_N$ we note that, by construction, there exists a decomposition of $\Omega_{inductive}^{(N)}$ into a disjoint union of (at most) countably many  regular domains: $\Omega_{inductive}^{(N)}=\bigcup_i\Omega_i$, such that $u_N$ is affine on $\Omega_i$ with $\nabla u_N=A_N$. Then, we obtain $u_{N+1}$ by applying Lemma \ref{l:basicconstruction} to $\omega_{N+1}$ 
%from Example \ref{ex:staircase1}
in each $\Omega_i$, and gluing the 
resulting mapping $v_i$ to $u_N$ as explained 
in Section \ref{ss:basic}, i.e.
\begin{equation}\label{e:staircase1-sets0_general}
u_{N+1}=\begin{cases}u_N&\textrm{ outside }\Omega_{inductive}^{(N)}, \\ v_i&\textrm{ in }\Omega_i.\end{cases}
\end{equation}

In particular, we can achieve that 
\begin{equation} \label{eq:stage3_Calpha_general}
\| u_{N+1} - u_N\|_{C^\alpha} \le 2^{-N} \eta.
\end{equation}
Furthermore, for each Borel set $E \subset \R^{d \times n}$,
\begin{align*}
    &|\{x\in \Omega:\,\nabla u_{N+1}(x)\in E\}|=\\
    \overset{\textrm{\eqref{e:staircase1-sets0_general}}}{=}&|\{x\in \Omega\setminus\Omega_{inductive}^{(N)}:\,\nabla u_{N}(x)\in E\}|+|\{x\in \Omega_{inductive}^{(N)}:\,\nabla u_{N+1}(x)\in E\}|\\
    =&|\{x\in \Omega:\,\nabla u_{N}(x)\in E\setminus\{A_N\}\}|+\sum_i|\{x\in \Omega_i:\,\nabla v_i(x)\in E\}|
\end{align*}
Now, by the inductive assumption \eqref{e:staircase1-NgeneralE}, 
$$
c_N^{-1}\nu^N(E\setminus\{A_N\})\leq \frac{|\{x\in \Omega:\,\nabla u_{N}(x)\in E\setminus\{A_N\}\}|}{|\Omega|}\leq c_N\nu^N(E\setminus\{A_N\}).
$$
Moreover, by applying Lemma \ref{l:basicconstruction} with an appropriate choice of $\eps>0$ in each $\Omega_i$ we may ensure that
$$
(1+2^{-(N+1)}\eta)^{-1}\omega_{N+1}(E)\leq \frac{|\{x\in \Omega_i:\,\nabla v_i(x)\in E\}|}{|\Omega|}\leq 
(1+2^{-(N+1)}\eta)\omega_{N+1}(E).
$$
Recall from \eqref{e:nuNtilde} that $\nu^N=\tilde\nu^N+\beta_N\delta_{A_N}$ and $\nu^{N+1}=\tilde\nu^N+\beta_N\omega_N$. Since $A_N\notin K$ and $\supp\tilde\nu^N\subset K$, we obtain $\nu^N(E\setminus\{A_N\})=\tilde\nu^N(E)$. Thus, we deduce
\begin{align*}
    |\{x\in \Omega:\,&\nabla u_{N+1}(x)\in E\}|\leq c_N\tilde \nu^N(E)+(1+2^{-(N+1)}\eta)\sum_i|\Omega_i|\omega_{N+1}(E)\\
    &= c_N|\Omega|\tilde\nu^{N}(E)+(1+2^{-(N+1)}\eta)|\Omega_{inductive}^{(N)}|\omega_{N+1}(E)\\
    &\overset{\textrm{(*)}}{\leq} c_N|\Omega|\tilde\nu^{N}(E)+(1+2^{-(N+1)}\eta)c_N|\Omega|\gamma_N\omega_{N+1}(E)\\
    &\leq c_{N+1}|\Omega|\tilde\nu^{N+1}(E)\,.
\end{align*}
In inequality (*) we used again the inductive assumption \eqref{e:staircase1-NgeneralE} with $E=\{A_N\}$. The lower bound follows entirely analogously, thus verifying \eqref{e:staircase1-NgeneralE} for $N+1$.

 Regarding \eqref{e:staircase1-N3_general}, we have $u_{N+1} = u_N$ on $\Omega \setminus \Omega^{(N)}_{inductive}$. Since 
 $  \Omega \setminus \Omega^{(k)}_{inductive} \subset   \Omega \setminus \Omega^{(N)}_{inductive}$  for $k < N$,  assertion
  \eqref{e:staircase1-N3_general} follows for $N+1$ from the induction assumption.
  
It follows from \eqref{e:staircase1-sets0_general} that 

\begin{align}
 &\Omega_{error}^{(N)}\subset \Omega_{error}^{(N+1)}\subset \Omega_{error}^{(N)}\cup \Omega^{(N)}_{inductive}.\label{e:staircase1-sets2_general}
\end{align}

Thus to verify \eqref{e:staircase1-N1_general} we estimate, 
using \eqref{e:staircase1-sets2_general},
\begin{align*}
& \int_{\Omega^{(N+1)}_{error}}1+|\nabla u_{N+1}|^s  
\,dx\\
\leq & \int_{\Omega^{(N)}_{error}}1+|\nabla u_N|^s   \,dx+\int_{\Omega^{(N)}_{inductive}\cap \Omega^{(N+1)}_{error}}1+|\nabla u_{N+1}|^s  \,dx\\
\leq  &\eta (1-2^{-N})|\Omega| 
+\sum_{i}\int_{\Omega_i\cap \Omega^{(N+1)}_{error} }1+|\nabla v_i|^s \,dx\\
\overset{(*)}{\leq} & \eta (1-2^{-N})|\Omega| +\sum_{i}\eta 2^{-(N+1)}|\Omega_i|\\
\leq & \eta (1-2^{-(N+1)})|\Omega|,
\end{align*}
where inequality (*) is
a consequence of \eqref{e:basicconstruction-2}
in the application of Lemma \ref{l:basicconstruction},
with a suitable choice of $\eps >0$.
This concludes the proof of properties 
\eqref{e:staircase1-N1_general}--
\eqref{e:staircase1-NgeneralE}.

\smallskip

We now study the limit $N \to \infty$. 
It follows from  \eqref{eq:stage3_Calpha_general} that there exists a $u \in C^\alpha(\overline \Omega)$ such that $u_N \to u$ uniformly.
We next study the distribution function of $\nabla u_N$.

It follows from \eqref{e:staircase1-NgeneralE} and the choice of $c_N$ that, for any Borel set $E\subset\R^{d\times m}$ we have
\begin{equation}\label{e:staircase1-N_generalE2}
    e^{-\eta}\nu^N(E)|\Omega|\leq |\{x\in \Omega:\,\nabla u_N(x)\in E\}|\leq e^\eta\nu^N(E)|\Omega|.
\end{equation}
With $E=\{A_N\}$ we obtain
\begin{equation*}
|\Omega_{inductive}^{(N)}|\leq e^\eta\beta_N|\Omega|\to 0.
\end{equation*}
In addition, from \eqref{e:staircase1-N3_general} it follows that $u = u_N$ almost everywhere on $\Omega \setminus \Omega_{inductive}^{(N)}$. Consequently $\nabla u_N$ converges
to $\nabla u$ in measure. Next, we set $E = \{ X \in \R^{m \times d} : |X| > t\}$ in \eqref{e:staircase1-N_generalE2} and use \eqref{e:staircaseLpupper} to conclude
\begin{equation*}
    |\{x\in \Omega:\,|\nabla u(x)|>t\}|\leq C|A|^pt^{-p}
\end{equation*}
for some $C>1$ and all $t>0$.
Together with convergence of $\nabla u_N$ in measure
we deduce that  $\nabla u_N \to \nabla u$
 in $L^q$ for any $q<p$,  in particular $u\in W^{1,1}(\Omega)$.
Moreover,  
\begin{equation*}
\nabla u(x)\in K \quad \text{if $x \notin \Omega_{error} = \bigcup_{N=1}^\infty \Omega_{error}^{(N)}$.} 
\end{equation*}
 Fatou's lemma and \eqref{e:staircase1-N1_general} imply
\begin{align*}
      \int_{\Omega_{error}} (1 + |\nabla u|)^s \, dx 
  \le  \, 2^{s-1}  \int_{\Omega_{error}} (1 + |\nabla u|^s) \, dx 
  \le 2^{s-1} \eta |\Omega|
  \end{align*}
 Thus, given $\eps > 0$,  we can choose $\eta >0$ 
 sufficiently small so that the 
 condition \eqref{eq:iteration_epserror_staircase} is satisfied, whereas \eqref{e:staircase1-NgeneralE} follows from the passage $N\to\infty$ in \eqref{e:staircase1-N_generalE2} provided that $e^\eta \le 1+\eps$ (in which case also $e^{-\eta} \ge 1-\eps$). 
This concludes the proof of 
Proposition \ref{pr:laminates_to_reduction}.
\end{proof}

\section{Proof of the main results}  \label{s:proofs}

In this section we give the proofs of Theorem \ref{t:main} and Theorem \ref{th:approximate}. These are about the existence of exact and approximate solutions to the differential inclusion
\begin{equation}\label{e:LSigma}
\nabla u(x)\in L\cap\Sigma\textrm{ a.e. }x\in\Omega,
\end{equation}
with appropriate weak $L^2$ bounds and affine boundary conditions. Recall that $L$ is the set of split $2n\times 2n$ matrices and $\Sigma$ is the set of matrices of determinant $=1$, see \eqref{e:split}-\eqref{e:det1}.

\subsection{Breaking the construction into stages and the proof of the main theorems}\hfill

Our strategy to solve the problem \eqref{e:LSigma} consists of three stages, each corresponding to a simpler inclusion problem:

\begin{enumerate}
\item We construct for any $l_{A,b}$ piecewise affine approximate solutions to the problem 
\begin{subequations}%\label{e:Step1}
\begin{align*}
\rank (\nabla u(x))&=1\quad\textrm{ for a.e. }x\in\Omega,\\
u(x)&=l_{A,b}(x)\quad\textrm{ for }x\in\partial\Omega.
\end{align*}
\end{subequations}
\item Having reduced to affine pieces with rank one, we construct piecewise affine approximate solutions to 
\begin{subequations}%\label{e:Step2}
\begin{align*}
\nabla u(x)&\in L\quad\textrm{ for a.e. }x\in\Omega,\\
u(x)&=l_{A,b}(x)\quad\textrm{ for }x\in\partial\Omega
\end{align*}
\end{subequations}
for any $A$ with $\rank( A)=1$.
\item Finally, we pass from general affine pieces in $L$ to $L\cap\Sigma$ by constructing piecewise affine approximate solutions to 
\begin{subequations}%\label{e:Step2}
\begin{align*}
\nabla u(x)&\in L_i\cap \Sigma\quad\textrm{ for a.e. }x\in\Omega,\\
u(x)&=l_{A,b}(x)\quad\textrm{ for }x\in\partial\Omega
\end{align*}
\end{subequations}
for any $A\in L_i$, for both $i=1,2$ separately.
\end{enumerate}

The idea behind this overall strategy is that even though there are no rank-one connections between $L_1\cap\Sigma$ and $L_2\cap\Sigma$, rank-one connections between $L_1$ and $L_2$ do exist for singular matrices non-vanishing matrices. For example we have  $e_1\otimes e_1\in L_1$,  $e_{n+1}\otimes e_1\in L_2$, and $\rank(e_1\otimes e_1-e_{n+1}\otimes e_1)=1$.

The precise statements corresponding to the three stages above are as follows:
\begin{proposition}[Stage 1]\label{p:stage1}
$\R^{2n\times 2n}$ can be reduced to $\{X:\,\rank X\leq 1\}$ in weak $L^2$.
\end{proposition}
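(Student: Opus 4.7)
I would follow the outline indicated in the introduction of the paper: split Stage 1 into $2n-1$ substages by showing that, for each $m=2,\dots,2n$, $\Lambda^{(m)}$ can be reduced to $\Lambda^{(m-1)}$ in weak $L^m$. Iterating Theorem \ref{th:iteration} along the chain
\[
\Lambda^{(2n)}\to\Lambda^{(2n-1)}\to\cdots\to\Lambda^{(1)},
\]
and using $\min\{2,3,\dots,2n\}=2$, then yields that $\R^{2n\times 2n}=\Lambda^{(2n)}$ reduces to $\Lambda^{(1)}$ in weak $L^2$, which is the claim. Note that the exponents at consecutive links of the chain differ (they are $m$ and $m-1$), so the hypothesis $p\neq q$ in Theorem \ref{th:iteration} is satisfied at every step.

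For a fixed $m\geq 2$ I would verify the hypothesis of the staircase laminate criterion (Theorem \ref{t:SL-criterion}) with $K=\Lambda^{(m)}$, $K'=\Lambda^{(m-1)}$ and $p=m$. Given $A\in\Lambda^{(m)}$, if already $A\in\Lambda^{(m-1)}$ the Dirac mass $\delta_A$ is a (trivial) staircase laminate for which the required bound holds with a universal constant $M$. Otherwise $\rank A=m$, and via a singular value decomposition $A=UDV^{T}$ with $U,V\in O(2n)$ I reduce to a diagonal matrix $D\in\Lambda^{(m)}\cap\mathcal{D}$. Example \ref{ex:staircase2} (built on Lemma \ref{l:staircase2} via Proposition \ref{p:staircaseconstruction}) then produces a staircase laminate $\nu^{\infty}_D$ with barycenter $D$, support in $\Lambda^{(m-1)}\cap\mathcal{D}$, and bound
\[
\nu^{\infty}_D(\{|X|>t\})\leq 2^{1+m}|D|^m t^{-m}\quad\textrm{ for }t>|D|.
\]

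The linear map $T\colon X\mapsto UXV^{T}$ preserves rank (hence rank-one), so by Lemma \ref{l:invariance} the pushforward $\nu^{\infty}_A:=T_*\nu^{\infty}_D$ is again a staircase laminate, with barycenter $T(D)=A$ and support in $T(\Lambda^{(m-1)}\cap\mathcal{D})\subset\Lambda^{(m-1)}$. Since $U,V$ are orthogonal, $T$ is a Frobenius-norm isometry, $|T(X)|=|X|$, and the weak $L^m$ bound transports unchanged. A routine verification promotes this estimate from $t>|A|$ to all $t>1$: on $1<t\leq|A|$ one uses the trivial bound $\nu^{\infty}_A\leq 1$, which is absorbed into a slightly larger universal constant $M$, giving
\[
\nu^{\infty}_A(\{|X|>t\})\leq M^{m}(1+|A|^{m})t^{-m}\quad\text{for all }t>1,
\]
as required by Theorem \ref{t:SL-criterion}.

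The main ``obstacle'' is really only bookkeeping: matching the range of validity of the weak-$L^m$ estimate supplied by Example \ref{ex:staircase2} to the range demanded by Theorem \ref{t:SL-criterion}, handling the degenerate case $A\in\Lambda^{(m-1)}$ separately, and checking that the minimum of the exponents $2,3,\dots,2n$ appearing in the iteration is indeed $2$. All of the substantive work — the algebraic staircase construction, the passage from a staircase laminate to a genuine piecewise affine Sobolev map, and the iteration at the level of reductions — has been pre-packaged into Lemma \ref{l:staircase2}, Theorem \ref{t:SL-criterion}, and Theorem \ref{th:iteration}, respectively, so Stage 1 reduces to invoking them in the correct order.
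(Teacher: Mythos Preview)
Your proposal is correct and follows essentially the same approach as the paper: the paper packages the substage ``$\Lambda^{(m)}$ reduces to $\Lambda^{(m-1)}$ in weak $L^m$'' as Corollary~\ref{c:staircase2} (proved via SVD, Example~\ref{ex:staircase2}, and Lemma~\ref{l:invariance}, exactly as you describe), and then proves Proposition~\ref{p:stage1} by iterating Theorem~\ref{th:iteration} down the chain $\Lambda^{(2n)}\to\cdots\to\Lambda^{(1)}$. Your treatment is in fact slightly more careful in two places---you explicitly handle the degenerate case $A\in\Lambda^{(m-1)}$ and you observe that $T$ is a Frobenius isometry rather than using the looser bound $|A|\le d|D|$---but the overall structure is identical.
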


Proposition \ref{p:stage1}, together with Theorem \ref{th:generalsolution}, implies the existence of continuous Sobolev mappings with arbitrary affine boundary values, with weak $L^2$ gradients and such that $\rank\nabla u\leq 1$ a.e. This latter statement is already known, see \cite{FaracoMoraCorall2018,LiuMaly2016}.

\begin{proposition}[Stage 2]\label{p:stage2} The set
$\{X:\,\rank X\leq 1\}$ can be reduced to $L$ 
 in weak $L^p$ for any $p<\infty$.
%Moreover, if $A$ is invertible then $f$ can be chosen in addition to be a homeomorphism $f:\overline{\Omega}\to\overline{l_A(\Omega)}$ with the additional property $\|f^{-1}-l_A^{-1}\|_{C^\alpha(\overline{\Omega})}<\eps$.
\end{proposition}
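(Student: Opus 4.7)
The plan is to produce, for every $A \in \R^{2n \times 2n}$ with $\rank A \leq 1$, a laminate of \emph{finite} order with barycenter $A$, support in $L = L_1 \cup L_2$, and atoms of norm at most $4|A|$. Passing from such a laminate to a piecewise affine map via Lemma \ref{l:basicconstruction} (combined with the rescaling and covering argument of Section \ref{ss:basic} to control the $C^\alpha$-norm) yields a $u \in W^{1,1}(\Omega) \cap C^\alpha(\overline \Omega)$ with $u = l_{A,b}$ on $\partial \Omega$, $\|\nabla u\|_\infty \leq 4|A|$, and $|\{\nabla u \notin L\}| \leq \eps(1 + 4|A|)^{-s}|\Omega|$. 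The first bound gives $\int_{\Omega_{error}}(1+|\nabla u|)^s\,dx < \eps|\Omega|$, while the pointwise bound $\|\nabla u\|_\infty \leq 4|A|$ immediately gives the weak $L^p$ estimate
\[
|\{|\nabla u| > t\}| \leq 4^p (1+|A|^p)|\Omega| t^{-p} \quad \text{for every } t > 0,
\]
with a constant $M = 4$ that is uniform in $p \in (1, \infty)$. Thus a single construction handles every $p < \infty$ simultaneously, without needing the staircase machinery.

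The laminate is obtained by two elementary splittings. Assuming $A \ne 0$, write $A = u \otimes v$ with $u = (a_1, a_2),\, v = (v_1, v_2) \in \R^n \times \R^n$, and introduce the ``pure block'' vectors $u^{(1)} = (a_1, 0), u^{(2)} = (0, a_2), v^{(1)} = (v_1, 0), v^{(2)} = (0, v_2)$. The first splitting along the rank-one direction $u \otimes (v^{(1)} - v^{(2)})$,
\[
\delta_A \mapsto \tfrac{1}{2} \delta_{2 u \otimes v^{(1)}} + \tfrac{1}{2} \delta_{2 u \otimes v^{(2)}},
\]
and the second splitting of each of the two resulting atoms along $(u^{(1)} - u^{(2)}) \otimes v^{(j)}$,
\[
\delta_{2 u \otimes v^{(j)}} \mapsto \tfrac{1}{2} \delta_{4 u^{(1)} \otimes v^{(j)}} + \tfrac{1}{2} \delta_{4 u^{(2)} \otimes v^{(j)}},
\]
produces a four-atom laminate with equal weights $\tfrac{1}{4}$. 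Each atom $4 u^{(i)} \otimes v^{(j)}$ has a single nonzero $n \times n$ sub-block at position $(i, j)$ of its $2 \times 2$ block structure; hence it lies in $L_1$ when $i = j$ and in $L_2$ when $i \ne j$, and its norm equals $4|u^{(i)}||v^{(j)}| \leq 4|A|$. The equal-weighted average is $\sum_{i,j} u^{(i)} \otimes v^{(j)} = u \otimes v = A$, recovering the barycenter. Degenerate cases (a vanishing $u^{(i)}$ or $v^{(j)}$, or $A$ already in $L$) are handled by performing fewer splittings; the same $4|A|$ bound still applies.

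I do not anticipate this stage to present a serious obstacle: the algebraic structure of rank-one matrices accommodates a two-step splitting into ``pure block'' atoms of $L$ without any blow-up, and the gradient stays bounded throughout. This is precisely why Stage 2 avoids the staircase laminate machinery of Theorem \ref{t:SL-criterion}, in contrast to Stages 1 and 3 where the mass genuinely must be pushed to infinity to bridge sets that lack nearby rank-one connections.
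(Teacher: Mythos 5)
Your proposal is correct and follows essentially the same route as the paper: a two-step elementary splitting of the rank-one matrix $A=u\otimes v$ into four atoms of the form $4\,u^{(i)}\otimes v^{(j)}$ (the paper splits the left factor first and you split the right factor first, but the resulting four-atom laminate with weights $\tfrac14$, support in $L$, and norm bound $C|A|$ is the same), followed by Lemma \ref{l:basicconstruction} and the rescaling/covering argument to get the error bound and the weak $L^p$ estimate uniformly in $p$. No gaps.
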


\begin{proposition}[Stage 3]\label{p:stage3}
The set $L_1$ can be reduced to $L_1\cap \Sigma$ in weak $L^{2n}$, and $L_2$ can be reduced to $L_2\cap \Sigma$ in weak $L^{2n}$.
\end{proposition}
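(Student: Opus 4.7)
The two statements are equivalent via the linear map $T(X) = XP$ with $P = \begin{pmatrix} 0 & I_n \\ (-1)^n I_n & 0 \end{pmatrix} \in L_2$; one checks that $P$ is orthogonal with $\det P = 1$, so $T$ is rank-one preserving, sends $L_2$ bijectively to $L_1$ (using $L_2 \cdot L_2 = L_1$), preserves $\Sigma$, and is norm-preserving. By Lemma \ref{l:invariance} it therefore suffices to prove that $L_1$ can be reduced to $L_1 \cap \Sigma$ in weak $L^{2n}$.

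By Theorem \ref{t:SL-criterion}, the goal is to produce, for each $A \in L_1$, a staircase laminate $\nu_A^\infty$ with barycentre $A$, support in $L_1 \cap \Sigma$, and weak $L^{2n}$ bound $\nu_A^\infty(\{|X|>t\}) \leq M^{2n}(1+|A|^{2n})t^{-2n}$ for $t > 1$, where $M$ depends only on $n$. Writing $A = \diag(A_1, A_2)$ and taking block SVDs $A_i = U_i S_i V_i^T$ (with a sign flip of one column of $V_1$ if needed), I obtain $Q = \diag(U_1^T, U_2^T)$ and $R = \diag(V_1, V_2)$ in $L_1 \cap O(2n)$ with $\det Q \det R = 1$. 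The map $T_0(X) = QXR$ is rank-one preserving, norm-preserving, preserves $L_1 \cap \Sigma$, and sends $A$ to the diagonal matrix $\diag(S_1, S_2)$; a second application of Lemma \ref{l:invariance} thus reduces the problem to the case $A \in L_1 \cap \mathcal D$.

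For a diagonal $A = \diag(a_1, \ldots, a_{2n})$ with $|a_i| \geq 2$ for all $i$, Example \ref{ex:staircase1} with $d = 2n$ directly produces the required staircase laminate, supported in $\mathcal D \cap \Sigma \subset L_1 \cap \Sigma$ and satisfying the bound \eqref{e:staircase1-bound}. For a general diagonal $A$, I perform a preliminary finite-order splitting in the rank-one directions $e_i \otimes e_i \in L_1$: for each index $i$ with $|a_i| < 2$, replace each current atom $\delta_{A'}$ still satisfying $|(A')_i| < 2$ by
\[
\tfrac12 \delta_{A' + 5\, e_i \otimes e_i} + \tfrac12 \delta_{A' - 5\, e_i \otimes e_i}.
\]
After at most $2n$ rounds I obtain a finite-order laminate $\omega_0 = \sum_{k=1}^K \lambda_k \delta_{D_k}$ with barycentre $A$, $K \leq 2^{2n}$, each $D_k \in \mathcal D$ with every entry of absolute value $\geq 2$ and $|D_k| \leq C_n(1 + |A|)$. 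Applying Example \ref{ex:staircase1} to each $D_k$ to obtain staircase laminates $\nu_{D_k}^\infty$, and setting $\nu_A^\infty := \sum_k \lambda_k \nu_{D_k}^\infty$, yields a probability measure supported in $L_1 \cap \Sigma$ with barycentre $A$ satisfying the required weak $L^{2n}$ bound with $M$ depending only on $n$.

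The main obstacle is that the superposition $\nu_A^\infty$ does not literally fit the single-tail format of Definition \ref{d:staircase}. I address this by working with the finite-order approximants $\nu^N := \sum_k \lambda_k \nu_{D_k}^N$, obtained by replacing each atom $\delta_{D_k}$ in $\omega_0$ with the $N$-th stage from Proposition \ref{p:staircaseconstruction} applied to $D_k$. These $\nu^N$ are laminates of finite order (by the convex-combination property of $\mathcal L$ recalled in Section \ref{ss:laminates}), converge to $\nu_A^\infty$, and satisfy the same uniform weak $L^{2n}$ bound. The iterative gluing construction in the proof of Proposition \ref{pr:laminates_to_reduction} then extends essentially verbatim to this multi-tailed setting, delivering the reduction property and thus completing the proof.
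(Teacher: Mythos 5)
Your proof is correct, and its core ingredients are the same as the paper's: a blockwise singular value decomposition with factors in $SO(2n)\cap L_1$ (with a sign fix, as in the paper's use of $J=\diag(-1,1,\dots,1)$) to reduce to diagonal barycentres, a preliminary finite-order splitting along the rank-one directions $e_i\otimes e_i$ to push all diagonal entries to modulus $\geq 2$, and then the determinant-one staircase laminate of Example \ref{ex:staircase1}. Where you genuinely diverge is in how these pieces are assembled. The paper treats the preliminary splitting as a separate reduction stage — Lemma \ref{l:stage3l} plus Lemma \ref{l:basicconstruction} give ``$\mathcal D$ reduces to $\mathcal D_{\geq 2}$ in weak $L^p$ for every $p<\infty$'', Corollary \ref{c:staircase1} gives ``$\mathcal D_{\geq 2}$ reduces to $\mathcal D\cap\Sigma$ in weak $L^{2n}$'', and the two are composed via Theorem \ref{th:iteration} with $p>2n$ (this is exactly where the hypothesis $p\neq q$ is used), so that only genuine staircase laminates in the sense of Definition \ref{d:staircase} ever enter Theorem \ref{t:SL-criterion}. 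You instead fold everything into a single superposed measure $\sum_k\lambda_k\nu^\infty_{D_k}$ and extend Proposition \ref{pr:laminates_to_reduction} to this ``multi-tailed'' object; this is legitimate and is in fact precisely the device the paper formalizes later as Remark \ref{d:extendedstaircase} in the Appendix (used there for the elliptic and $p$-Laplace examples), where the cleanest justification is not a verbatim re-run of the induction but: apply Lemma \ref{l:basicconstruction} once to the finite-order laminate $\omega_0$, then apply Proposition \ref{pr:laminates_to_reduction} on each affine piece where the gradient equals $D_k$, glue, and use $|D_k|\leq C_n(1+|A|)$ to recover the uniform weak $L^{2n}$ bound. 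Your approach buys a self-contained single-barycentre construction that avoids invoking Theorem \ref{th:iteration}; the paper's buys strict adherence to the already-proved black boxes at the cost of one extra composition. Two small presentational points, neither a real gap: flipping a column of $V_1$ alone changes $A_1$, so you must compensate by flipping the corresponding diagonal entry of $S_1$ (negative diagonal entries are harmless for the subsequent construction); and since your $\nu^\infty_A$ is an extended rather than literal staircase laminate, the invariance and the criterion you cite (Lemma \ref{l:invariance}, Theorem \ref{t:SL-criterion}) should formally be applied piecewise to $\omega_0$ and to each $\nu^\infty_{D_k}$, as in Remark \ref{d:extendedstaircase}. Finally, you make explicit the $L_2$ case via right multiplication by a block-antidiagonal rotation, which the paper leaves implicit; your check that $\det P=1$ and that this preserves $\Sigma$, norms and rank-one lines is correct.
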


\begin{proof}[Proof of Theorem \ref{t:main}]
According to Theorem \ref{th:generalsolution}, Theorem \ref{t:main} follows from the statement that $\R^{d\times m}$ can be reduced to $L\cap \Sigma$ in weak $L^2$. In turn, this latter statement is a direct consequence of Propositions \ref{p:stage1}-\ref{p:stage3} together with Theorem \ref{th:iteration}.
\end{proof}

\medskip

\begin{proof}[Proof of Theorem~\ref{th:approximate}] Suppose $\rank A = 1$ and $A \notin L$.
 Let $s_k > 2n$ with $s_k < s_{k+1}$ and $s_k \to \infty$ as $s \to \infty$. 
By combining Stages 2 (Proposition \ref{p:stage2} and 3 (Proposition \ref{p:stage3} via Theorem \ref{th:iteration}, we obtain the statement: \emph{The set $\{X:\,\rank X\leq 1\}$ can be reduced to $L\cap\Sigma$ in weak $L^{2n}$.} Therefore there exist  maps $u^{(k)} : \Omega \to \R^{2n}$ such that 
 $u^{(k)} \in W^{1,1}(\Omega)\cap C^{\alpha}(\overline{\Omega})$ with $u^{(k)}=l_{A}$ on $\partial\Omega$ and
we have
\begin{subequations}
\begin{align}
\int_{ \{\nabla u^{(k)} \notin L \cap \Sigma \}}(1+|\nabla u^{(k)}|^{s_k})\,dx&<2^{-k}|\Omega|,\label{e:stage23-1}\\
|\{x\in\Omega:\,|\nabla u^{(k)}(x)|>t\}|&\leq M^{2n} (1+|A|^{2n})|\Omega|t^{-{2n}}.\label{e:stage23-2}
\end{align}
\end{subequations}
By the rescaling and covering argument we may assume that, in addition,  
$\| u^{(k)} - l_{A}\|_{C^\alpha} < 2^{-k}$. Hence $u^{(k)} \to l_A$ in $C^\alpha(\overline \Omega)$. 
Let $1 \le p < 2n$. Then it follows from \eqref{e:stage23-2} that $\sup_k \|  \nabla u^{(k)}\|_{L^p} < \infty$. 
Thus  $  u^{(k)} \rightharpoonup l_A$ in $W^{1,p}(\Omega)$.  
%We now show that 
%$\nabla u$ also satisfies a weak $L^{2n}$ estimate.  Let $h_j(X) = \max( |X|- 2^{j-1}, 0)$.
%Then $h_k$ is convex and thus
%\begin{eqnarray*}
%\int_\Omega h_j(|\nabla u) \, dx &\le &\liminf_{k \to \infty} \int h_j(|\nabla u_k|) \, dx, \\
%& \le & \int_{ |\nabla u_k| > 2^{j-1} } |\nabla u_k| \, dx \\
%& \le & \int_{2^{j-1}}^{\infty}   |\{  |\nabla u_k| > t \} \, dt, \\
%& \le  & 2^{2n-1} M (1 + |A|^{2n})  2^{-j (2n-1)}.
%\end{eqnarray*}
%Since $h_j(X) \ge \frac12 |X|$ if $|X| \ge 2^j$ it follows that
%\begin{equation*}
%|\{ |\nabla u| > 2^j \} | \le  \frac{1}{2^j}  \int_\Omega 2 h_j(|\nabla u|) \, dx \le  2^{2n} M (1 + |A|^{2n})  2^{-2nj}.
%\end{equation*}
%Hence $|\{ |\nabla u| > t \} |  \le  4^{2n} M (1 + |A|^{2n})  t^{-2nj}$ for all $t > 1$. 
Since $s_k \to \infty$, it follows from \eqref{e:stage23-1} that  $\int_{ \{\nabla u^{(k)} \notin L \cap \Sigma \}}(1+|\nabla u^{(k)}|^{s})\,dx$
converges to zero, for all $ s\in [1, \infty)$.

Finally, if $ \liminf_{k \to \infty} \dist(\nabla u^{(k)}, L_1) = 0$ in $L^1(\Omega)$,  then arguing along a subsequence for which the
limit inferior is achieved we see that  $A \in L_1$, since $L_1$ is a linear subspace and hence
weakly closed. This contradicts the hypothesis $A \notin L$.  Similarly we see that $\liminf_{k \to \infty}\\
 \dist(\nabla u^{(k)}, L_2) > 0$.
 \end{proof}

\bigskip

\subsection{Stage 1: Proof of Proposition \ref{p:stage1}}\label{ss:stage1}

Recall the sets $\Lambda^{(m)}$ introduced in Section \ref{ss:laminates}, for the case $d=2n$:
$$
\Lambda^{(m)}=\{X\in \R^{2n\times 2n}:\,\rank X\leq m\}.
$$
We start with the following consequence of Theorem \ref{t:SL-criterion} and the construction of staircase laminates in Example \ref{ex:staircase2}:
\begin{corollary}\label{c:staircase2}
   For any $2\leq m\leq 2n$ the set $\Lambda^{(m)}$ can be reduced to $\Lambda^{(m-1)}$ in weak $L^m$.
\end{corollary}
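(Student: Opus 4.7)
My overall strategy is to verify the hypothesis of the staircase laminate criterion (Theorem~\ref{t:SL-criterion}) with $K = \Lambda^{(m)}$, $K' = \Lambda^{(m-1)}$, and $p = m$. Example~\ref{ex:staircase2} already produces, for every nonzero $D \in \Lambda^{(m)} \cap \mathcal{D}$, a staircase laminate $\nu^\infty_D$ supported in $\Lambda^{(m-1)} \cap \mathcal{D}$ with barycenter $D$ and with tail bound $\nu^\infty_D(\{|X| > t\}) \leq 2^{1+m}|D|^m t^{-m}$ for $t > |D|$. The only thing missing for Theorem~\ref{t:SL-criterion} is to upgrade this family of laminates from diagonal matrices to arbitrary matrices of rank $\leq m$, with a constant $M$ depending only on $m$ (and $n$).

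To carry out this upgrade I will exploit orthogonal invariance via Lemma~\ref{l:invariance}. Given $A \in \Lambda^{(m)}$ with $A \ne 0$, I write a singular value decomposition $A = U D V^T$ with $U, V \in O(2n)$ and $D = \diag(\sigma_1, \dots, \sigma_{2n}) \in \mathcal{D} \cap \Lambda^{(m)}$. The linear map $T : \R^{2n \times 2n} \to \R^{2n \times 2n}$ defined by $T(X) := UXV^T$ is invertible and therefore preserves rank; in particular it preserves rank-one matrices (so Lemma~\ref{l:invariance} applies), maps $\Lambda^{(m-1)}$ into itself, and is a Frobenius isometry. Consequently $T_* \nu^\infty_D$ is a staircase laminate with barycenter $T(D) = A$, $\supp(T_* \nu^\infty_D) \subset \Lambda^{(m-1)}$, and, using $|A| = |D|$,
\begin{equation*}
(T_* \nu^\infty_D)(\{X : |X| > t\}) = \nu^\infty_D(\{X : |X| > t\}) \leq 2^{1+m} |A|^m t^{-m}, \qquad t > |A|.
\end{equation*}
For $A = 0$ we have $A \in \Lambda^{(m-1)}$ and the Dirac mass $\delta_0$ trivially serves as the required (degenerate) laminate.

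Finally I will check that the bound \eqref{e:SL-bound} holds uniformly for all $t > 1$. For $t > |A|$ the displayed inequality already gives $\leq 2^{1+m}(1+|A|^m) t^{-m}$. For $1 < t \leq |A|$ the trivial estimate $(T_* \nu^\infty_D)(\{|X| > t\}) \leq 1 \leq |A|^m t^{-m} \leq (1+|A|^m) t^{-m}$ closes the remaining range. Taking $M = 2^{(1+m)/m}$, a constant depending only on $m$, the hypothesis of Theorem~\ref{t:SL-criterion} is satisfied and the corollary follows.

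There is no real obstacle: Example~\ref{ex:staircase2} has already performed the main algebraic work of constructing the diagonal staircase laminate with sharp weak $L^m$ tail, and the orthogonal pushforward $T_*$ transfers everything essentially for free, because the Frobenius norm, the rank, and membership in $\Lambda^{(m-1)}$ are all preserved by $T$. The only mild care required is the matching of the two regimes $t > |A|$ and $1 < t \leq |A|$ to produce a bound of the exact form demanded by \eqref{e:SL-bound}.
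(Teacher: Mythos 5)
Your proof is correct and follows essentially the same route as the paper: singular value decomposition to reduce to the diagonal case, the staircase laminate of Example~\ref{ex:staircase2}, the invariance principle of Lemma~\ref{l:invariance} applied to $X\mapsto UXV^T$, and then Theorem~\ref{t:SL-criterion}. The only differences are cosmetic refinements (using that the orthogonal conjugation is a Frobenius isometry, so $|A|=|D|$, and explicitly treating $A=0$ and the range $1<t\leq|A|$), which the paper handles with the cruder bound $|A|\leq d|D|$ and leaves implicit.
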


\begin{proof}
Let $A\in \Lambda^{(m)}$. The singular value decomposition of $A$ has the form $RDQ$ with $R,Q\in O(2n)$ orthogonal and $D\in \mathcal{D}\cap\Lambda^{(m)}$ diagonal. Then Example \ref{ex:staircase2} provides a staircase laminate $\nu^{\infty}_{D}$ with barycenter $D$ and weak $L^m$ bound
\begin{equation*}
\nu^{\infty}(\{X:\,|X|>t\})\leq 2^{1+m}|D|^mt^{-m}\textrm{ for all }t>0.
\end{equation*}
Then we apply the invariance principle Lemma \ref{l:invariance} with $T(F)=RFQ$ for $F\in \R^{2n\times 2n}$. Observe that $T(\Lambda^{(m-1)})=\Lambda^{(m-1)}$, so that $\nu^{\infty}_A:=T_*\nu^{\infty}$ is a staircase laminate supported in $\Lambda^{(m-1)}$ with barycenter $RDQ=A$. Moreover, $|A|\leq |R||D||Q|\leq d|D|$ and hence there exists $C=C(m,n)>1$ such that
\begin{equation*}
\nu^{\infty}_A(\{X:\,|X|>t\})\leq C(m,n)|A|^mt^{-m}\textrm{ for all }t>0.
\end{equation*}
The claim follows from Theorem \ref{t:SL-criterion}.
\end{proof}

\begin{proof}[Proof of Proposition \ref{p:stage1}]

Applying Corollary \ref{c:staircase2} and Theorem \ref{th:iteration} inductively, starting with $m=2n$ and then $2n\mapsto 2n-1\mapsto \dots \mapsto 1$, we see that
$\Lambda^{(2n)}=\R^{2n\times 2n}$ can be reduced to $\Lambda^{(1)}$ in weak $L^2$. 
This concludes the proof of Proposition \ref{p:stage1}.

\end{proof}

\subsection{Stage 2: Proof of Proposition \ref{p:stage2}}

The proof of Proposition \ref{p:stage2} immediately follows from Lemma~\ref{l:basicconstruction} together with the following 

\begin{lemma}
Let $A\in \R^{2n\times 2n}$ with $\rank A\leq 1$. Then there exists a discrete laminate $\nu$ with barycenter $\bar{\nu}=A$ and support
$$
\supp\nu\subset L\cap\left\{|X|\leq 2|A|\right\}.
$$ 	
\end{lemma}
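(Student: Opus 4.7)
The plan is to construct the laminate via a single elementary splitting of $\delta_A$. Since $\rank A\leq 1$ we can write $A = a\otimes b$ for some $a,b\in\R^{2n}$, and I would decompose $a = (a_1,a_2)$, $b=(b_1,b_2)$ with $a_i,b_i\in\R^n$, matching the block structure that defines $L_1$ and $L_2$. Then $A$ has the block form
\[
A = \begin{pmatrix} a_1\otimes b_1 & a_1\otimes b_2 \\ a_2\otimes b_1 & a_2\otimes b_2\end{pmatrix}.
\]

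Next I would introduce the two candidates
\[
M_1 = \begin{pmatrix} 2\,a_1\otimes b_1 & 0 \\ 0 & 2\,a_2\otimes b_2\end{pmatrix}\in L_1, \qquad
M_2 = \begin{pmatrix} 0 & 2\,a_1\otimes b_2 \\ 2\,a_2\otimes b_1 & 0\end{pmatrix}\in L_2,
\]
and check directly that $A = \tfrac12 M_1 + \tfrac12 M_2$. The key algebraic observation is that setting $\tilde a = (a_1,-a_2)$ and $\tilde b = (b_1,-b_2)$ in $\R^{2n}$, one has
\[
M_1 - M_2 = 2\,\tilde a\otimes \tilde b,
\]
so $\rank(M_1-M_2)\leq 1$. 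Thus $\delta_A \mapsto \tfrac12\delta_{M_1}+\tfrac12\delta_{M_2}$ is a valid elementary splitting and $\nu := \tfrac12\delta_{M_1}+\tfrac12\delta_{M_2}\in\L(\R^{2n\times 2n})$ has barycenter $A$ and support in $L$.

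Finally I would verify the size bound. Using $|a_i\otimes b_j| = |a_i|\,|b_j|$ and $|A|^2 = |a|^2|b|^2 = (|a_1|^2+|a_2|^2)(|b_1|^2+|b_2|^2)$, expansion gives
\[
|M_1|^2 = 4(|a_1|^2|b_1|^2+|a_2|^2|b_2|^2)\leq 4|A|^2,
\]
and the same estimate for $|M_2|^2$, so $\supp\nu\subset L\cap\{|X|\leq 2|A|\}$. There is no real obstacle here — the only thing to get right is the pairing of signs in $\tilde a,\tilde b$ so that the block difference $M_1-M_2$ factors as a single tensor product; the degenerate cases ($a_i=0$ or $b_j=0$, including $A=0$, in which case $\nu=\delta_0$) are covered by the same formulas.
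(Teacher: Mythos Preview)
Your proof is correct and in fact more efficient than the paper's. The paper proceeds via three elementary splittings, first writing $A=\tfrac12 B_1+\tfrac12 B_2$ with $B_1=(2a_1,0)\otimes b$ and $B_2=(0,2a_2)\otimes b$, and then splitting each $B_i$ once more along the $b$-variable, ending up with the four rank-one atoms $A_1,\dots,A_4$ (each obtained by zeroing one half of $a$ and one half of $b$ and scaling by $2$). You instead notice that the block-diagonal and block-anti-diagonal parts $M_1,M_2$ of $2A$ already lie in $L_1$ and $L_2$, and that their difference factors as $2\,\tilde a\otimes\tilde b$, so a single splitting does the job. Your size bound $|M_i|\le 2|A|$ is correctly verified from $|M_i|^2=4(|a_1|^2|b_j|^2+|a_2|^2|b_k|^2)\le 4|a|^2|b|^2$. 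The paper's atoms have the incidental feature of being rank-one themselves, but this is not used in the application (Proposition~\ref{p:stage2} only needs support in $L$ with a uniform norm bound), so nothing is lost by your shortcut.
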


\begin{proof}
Let $A=a\otimes b$ for $a,b\in\R^{2n}$ and write 
$$
a=\begin{pmatrix}a_1\\a_2\end{pmatrix}\,,\quad b=\begin{pmatrix}b_1\\b_2\end{pmatrix},\quad A=\begin{pmatrix}a_1\\a_2\end{pmatrix}\otimes \begin{pmatrix}b_1\\b_2\end{pmatrix}.
$$
with $a_1,a_2,b_1,b_2\in\R^n$. 
Define the matrices
\begin{align*}
A_1&=\begin{pmatrix}2a_1\\0\end{pmatrix}\otimes \begin{pmatrix}2b_1\\0\end{pmatrix},\quad 
A_2=\begin{pmatrix}2a_1\\0\end{pmatrix}\otimes \begin{pmatrix}0\\2b_2\end{pmatrix},
\\
A_3&=\begin{pmatrix}0\\2a_2\end{pmatrix}\otimes \begin{pmatrix}2b_1\\0 \end{pmatrix},
\quad
A_4=\begin{pmatrix}0\\2a_2\end{pmatrix}\otimes \begin{pmatrix}0\\2b_2\end{pmatrix},\\
B_1&=\begin{pmatrix}2a_1\\0\end{pmatrix}\otimes \begin{pmatrix}b_1\\b_2\end{pmatrix},
\quad
B_2=\begin{pmatrix}0\\2a_2\end{pmatrix}\otimes \begin{pmatrix}b_1\\b_2\end{pmatrix}.
\end{align*}
Observe that $\textrm{rank}(B_1-B_2)\leq 1$ and $\frac{1}{2}B_1+\frac{1}{2}B_2=A$. Moreover,
$\textrm{rank}(A_1-A_2)\leq 1$, $\textrm{rank}(A_3-A_4)\leq 1$, and $\frac{1}{2}A_1+\frac{1}{2}A_2=B_1$, $\frac{1}{2}A_3+\frac{1}{2}A_4=B_2$. Therefore
$$
\nu=\frac{1}{4}\sum_{i=1}^4\delta_{A_i}
$$
is a laminate with the desired properties.  	
\end{proof}

\subsection{Stage 3: Proof of Proposition \ref{p:stage3}}

Let $\mathcal{D}$ denote the $2n\times 2n$ diagonal matrices, 
\begin{align*}
\mathcal{D}_{\geq 2}&=\{X=\diag(x_1,\dots,x_{2n})\textrm{ with }|x_i|\geq 2\textrm{ for all }i\},\\
\Sigma&=\{X\in \R^{2n\times 2n}:\,\det X=1\}.
\end{align*}
Theorem \ref{t:SL-criterion} and Example \ref{ex:staircase1} leads to the following statement.
\begin{corollary}\label{c:staircase1}
   The set $\mathcal{D}_{\geq 2}$ can be reduced to $\mathcal{D}\cap \Sigma$ in weak $L^{2n}$.
\end{corollary}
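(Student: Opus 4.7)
The plan is to invoke the staircase laminate criterion (Theorem \ref{t:SL-criterion}) with $K = \mathcal{D}_{\geq 2}$, $K' = \mathcal{D}\cap\Sigma$ and $p = 2n$, using as input the staircase laminate constructed in Example \ref{ex:staircase1} with $d = 2n$. Concretely, for any $A = \diag(a_1,\dots,a_{2n}) \in \mathcal{D}_{\geq 2}$ the hypothesis $|a_i| \geq 2$ is precisely what is needed in Example \ref{ex:staircase1}, so applying Proposition \ref{p:staircaseconstruction} to the sequence $A_n = 2^n A$ together with the elementary laminates from Lemma \ref{l:staircase1} produces a staircase laminate $\nu_A^{\infty}$ with barycenter $A$ and support contained in $\mathcal{D}\cap\Sigma$.

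Example \ref{ex:staircase1} already provides the weak $L^{2n}$ bound
\begin{equation*}
\nu_A^{\infty}(\{X : |X| > t\}) \leq 2^{1+2n}|A|^{2n} t^{-2n} \quad \text{for all } t > |A|.
\end{equation*}
To conclude via Theorem \ref{t:SL-criterion} I only need to upgrade this to hold for all $t > 1$ in the form $M^{2n}(1+|A|^{2n}) t^{-2n}$. For $t > |A|$ the bound above already gives what is needed with $M^{2n} = 2^{1+2n}$, since $|A|^{2n} \leq 1 + |A|^{2n}$. For $1 < t \leq |A|$ the trivial estimate $\nu_A^\infty(\{|X|>t\}) \leq 1$ suffices, because $(1+|A|^{2n})t^{-2n} \geq (1+|A|^{2n})|A|^{-2n} \geq 1$, and we pick $M$ large enough (say $M = 4$) so that $M^{2n} \cdot 1 \geq 1$ accommodates the bound uniformly. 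This verifies the hypothesis of Theorem \ref{t:SL-criterion}, which immediately gives the desired reduction.

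The only mild subtlety is the range mismatch between the threshold $t > |A|$ in Example \ref{ex:staircase1} and the threshold $t > 1$ required by Theorem \ref{t:SL-criterion}; this is handled trivially since $\nu_A^\infty$ is a probability measure and $|A| \geq 2\sqrt{2n} > 1$. There is no genuine analytic obstacle here — all the work has been done upstream in constructing the staircase laminate on diagonal matrices with prescribed determinant, and in verifying the sharp moment bounds in Lemma \ref{l:weakLpstaircase}.
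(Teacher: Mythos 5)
Your proposal is correct and follows exactly the paper's own route: the paper proves this corollary by citing Example \ref{ex:staircase1} with $d=2n$ together with Theorem \ref{t:SL-criterion}. The only addition you make is the (correct, and genuinely needed but routine) bookkeeping that upgrades the bound from the range $t>|A|$ to $t>1$ using the trivial estimate $\nu_A^\infty(\{|X|>t\})\le 1$, a detail the paper leaves implicit.
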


\begin{proof}
The statement follows from the existence of the staircase laminates constructed in Example \ref{ex:staircase1} with $d=2n$, together with Theorem \ref{t:SL-criterion}.
\end{proof}

Next, we have the following elementary construction.
\begin{lemma}\label{l:stage3l}
For any $A\in \mathcal{D}$ there exists a finite order laminate $\nu\in\mathcal{L}(\R^{2n\times 2n})$ with barycenter $\bar{\nu}=A$ and support
$$
\supp\nu\subset \mathcal{D}_{\geq 2}\cap\left\{|X|\leq C(1+|A|)\right\},
$$ 	
where $C=C(n)\geq 1$.
\end{lemma}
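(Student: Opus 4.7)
The plan is to construct $\nu$ by iterating a one-coordinate splitting along each of the $2n$ diagonal directions. Given a diagonal matrix $B = \diag(b_1, \dots, b_{2n})$ and an index $i$, consider the following atomic step: if $|b_i| \geq 2$ leave $\delta_B$ unchanged; if $|b_i| < 2$ write
$$ b_i = \lambda \cdot (-2) + (1-\lambda)\cdot 2, \qquad \lambda = \tfrac{2 - b_i}{4} \in (0,1), $$
and replace $\delta_B$ by $\lambda \delta_{B^-} + (1-\lambda)\delta_{B^+}$, where $B^\pm$ is obtained from $B$ by changing only the $i$-th diagonal entry to $\pm 2$. Since $B^+ - B^- = 4\, e_i \otimes e_i$ has rank one, this is a valid elementary splitting in the sense of Section~\ref{ss:laminates}, and it preserves the barycenter.

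Starting from $\delta_A$, I would apply this atomic step successively for $i = 1, 2, \dots, 2n$: at stage $i$, apply the one-coordinate split to every atom in the current finite-order laminate. After stage $i$, every atom is a diagonal matrix whose first $i$ entries have absolute value at least $2$, while the remaining entries equal $a_{i+1}, \dots, a_{2n}$ (these later coordinates are untouched because each split so far only modified coordinates $\leq i$). After stage $2n$ the resulting laminate $\nu$ is supported on at most $2^{2n}$ diagonal matrices, all belonging to $\mathcal{D}_{\geq 2}$, and has barycenter $A$.

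For the norm bound, each atom $X = \diag(x_1, \dots, x_{2n}) \in \supp \nu$ satisfies $x_i \in \{a_i\}$ if $|a_i| \geq 2$, and $x_i \in \{-2, 2\}$ otherwise; in either case $|x_i| \leq \max(|a_i|, 2) \leq |a_i| + 2$. Therefore
$$ |X|^2 = \sum_{i=1}^{2n} |x_i|^2 \leq \sum_{i=1}^{2n}(|a_i|+2)^2 \leq 2|A|^2 + 16n, $$
which gives $|X| \leq C(n)(1 + |A|)$ for a suitable $C(n) \geq 1$, as required.

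There is no real obstacle here; the only point that needs care is verifying that each elementary split has rank one, which is automatic since a change in a single diagonal entry is a rank-one perturbation in the direction $e_i \otimes e_i$. All other properties (finite order, barycenter preservation, support in $\mathcal{D}_{\geq 2}$) follow directly from the construction together with the basic stability of laminates of finite order under combining along a common splitting coordinate.
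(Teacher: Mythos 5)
Your proof is correct and follows essentially the same route as the paper: split each diagonal entry with $|a_i|<2$ into a convex combination of $\pm 2$ along the rank-one direction $e_i\otimes e_i$, iterating over the $2n$ coordinates, which is exactly the paper's construction. The norm estimate $|X|^2\le 2|A|^2+16n$ gives the required bound $|X|\le C(n)(1+|A|)$, matching the paper's conclusion.
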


\begin{proof}
Let us write $A=\diag(a_1,\dots,a_{2n})$ and, without loss of generality, assume that $|a_1|\leq 2$. Then $a_1\in \{-2,2\}^{co}$, more precisely we can write $a_1=\frac{2+a_1}{4}\cdot 2+\frac{2-a_1}{4}\cdot (-2)$ as a convex combination. Correspondingly, since $\rank(A_1-A_2)=1$, the (possibly degenerate) splitting
$$
\delta_A\mapsto \frac{2+a_1}{4}\delta_{A_1}+\frac{2-a_1}{4}\delta_{A_2},
$$
where $A_1=\diag(2,a_2,\dots,a_{2n})$, $A_2=\diag(-2,a_2,\dots,a_{2n})$, defines a simple laminate with barycenter $A$. By repeating the same splitting procedure for the entries $a_2,\dots,a_{2n}$, if necessary, we deduce that there exists a laminate of finite order $\nu\in \mathcal{L}(\R^{2n\times 2n})$ with barycenter $\bar{\nu}=A$ and 
$$
\nu=\sum_{i=1}^{2n}\lambda_i\delta_{A_i}\quad\textrm{ with }A_i\in \mathcal{D}_{\geq 2}\textrm{ for all }i.
$$
Note also that there exists a constant $C=C(n)$ such that, for all $A \in \mathcal D$,
\begin{equation} 
|A_i|  \le C (1 + |A|)   \label{e:split_cube2}
\end{equation}
\end{proof}

\begin{proof}[Proof of Proposition \ref{p:stage3}]
First of all we observe that Lemma \ref{l:basicconstruction}, applied to the laminate in Lemma \ref{l:stage3l}, implies: the set $\mathcal{D}$ can be reduced to $\mathcal{D}_{\geq 2}$ in weak $L^p$ for any $p<\infty$. By choosing $p>2n$ and combining with Corollary \ref{c:staircase1} via Theorem \ref{th:iteration} leads to the statement: 
\begin{center}
    (*) the set $\mathcal{D}$ can be reduced to $\mathcal{D}\cap\Sigma$ in weak $L^{2n}$. 
\end{center}
Next, observe that left and right multiplication by $R,Q\in SO(2n)\cap L_1$ leaves the set $L_1\cap\Sigma$ invariant. Therefore, by the invariance property of laminates (Section \ref{ss:laminates} and Lemma \ref{l:invariance}) and the fact that (*) was the consequence of the existence of two laminates (the staircase laminate in Example \ref{ex:staircase1} and the finite order laminate in Lemma \ref{l:stage3l}), Proposition \ref{p:stage3} follows if we can show that for any $A\in L_1$ there exist $R,Q\in SO(2n)\cap L_1$ and $D\in L_1\cap\mathcal{D}$ such that $A=RDQ^T$. 

To this end write 
$$
A=\begin{pmatrix}A_1&0\\0&A_2\end{pmatrix}
$$
 in $n\times n$ block matrix form. 
The singular value decomposition of $A_i$ has the form $R_iD_iQ_i^T$ with $R_i,Q_i\in O(n)$ orthogonal and $D_i\in \mathcal{D}$ diagonal $n\times n$ matrices. Then we obtain also $A=\tilde R\tilde D\tilde Q^T$, where 
$$
\tilde R=\begin{pmatrix}R_1&0\\0&R_2\end{pmatrix}, \tilde Q=\begin{pmatrix}Q_1&0\\0&Q_2\end{pmatrix}, \tilde D=\begin{pmatrix}D_1&0\\0&D_2\end{pmatrix}.
$$
If $\det R_1\det R_2=\det Q_1\det Q_2=1$, then we simply set $R=\tilde R$, $Q=\tilde Q$ and $D=\tilde D$. If, instead, for instance $\det R=-1$, then set $R=\tilde RJ$ and $D=J\tilde D$, where $J=\diag(-1,1,\dots,1)$. We can deal with the case $\det Q=-1$ similarly.

\end{proof}

\appendix

\section{Further examples based on staircase laminates}

We start with a simple but useful generalization of Proposition \ref{pr:laminates_to_reduction}. 

\begin{remark}\label{d:extendedstaircase}
 The statement of Proposition \ref{pr:laminates_to_reduction} (and consequently that of Theorem \ref{t:SL-criterion}) continues to hold if the requirement that ``$\nu_A^\infty$ is a staircase laminate supported in $K$'' is replaced by requiring that it is a probability measure of the form
   \begin{equation}\label{e:extendedstaircase}
   \nu^\infty=\sum_{j=1}^{J'}\lambda_j\delta_{B_j}+\sum_{j= J'+1}^{J}\lambda_j\tilde\nu^{\infty}_j,
   \end{equation}
   with
   \begin{itemize}
       \item[(a)] $\sum_{j=1}^{J}\lambda_j\delta_{B_j}$ is a laminate of finite order with barycenter $A$;
       \item[(b)] For every $j=1,\dots,J'$ we have $B_j\in K$;
       \item[(c)] For every $j=J'+1,\dots,J$ the probability measure $\tilde\nu^\infty_j$ is a staircase laminate supported in $K$ in the sense of Definition \ref{d:staircase} with barycenter $B_j$.
   \end{itemize}
 The proof of this claim follows immediately from applying Lemma \ref{l:basicconstruction} to the laminate of finite order in (a), followed by $J-J'$ applications of Proposition \ref{pr:laminates_to_reduction}, applied to each $\nu^\infty_j$ in (c).
\end{remark}

\subsection{Very weak solutions to linear elliptic equations with measurable coefficients}

The main result in \cite{astala_faraco_szekelyhidi08} rests on the following result, which is a variant of Theorem 3.18 in \cite{astala_faraco_szekelyhidi08}. 
Recall that
$$
E_\rho = \left\{  
\begin{pmatrix} 
\lambda & 0 \\ 0  & \rho \lambda \end{pmatrix} R :
\lambda \ge 0, \, R \in SO(2)
\right\}.
$$

\begin{theorem}\label{th:AFS}
For any $\mathcal{K}>1$ there exists $M>1$ with the following property. For any $A\in \R^{2\times 2}$, $b\in\R^2$, any $\alpha,\delta\in [0,1)$ and any $\Omega\subset\R^2$ regular domain there exists a piecewise affine mapping $u\in W^{1,1}(\Omega)\cap C^{\alpha}(\overline{\Omega})$ with 
\begin{itemize}
    \item $u=l_{A,b}$ on $\partial\Omega$,
    \item $\|u-l_{A,b}\|_{C^{\alpha}(\overline{\Omega})}<\delta$,
    \item $\nabla u(x)\in E_{\mathcal{K}}\cup E_{1/\mathcal{K}}$ for almost every $x\in \Omega$,
    \item for any $t>   1 + |A|$
    \begin{equation*}
        M^{-1}(1+|A|^{q_{\mathcal{K}}})t^{-q_{\mathcal{K}}}\leq \frac{|\{x\in\Omega:\,|\nabla u(x)|>t\}|}{|\Omega|}\leq M(1+|A|^{q_{\mathcal{K}}})t^{-q_{\mathcal{K}}},
    \end{equation*}
\end{itemize}
where $q_{\mathcal{K}}=\frac{2\mathcal{K}}{\mathcal{K}+1}$.
In particular $u\in W^{1,p}(\Omega)$ for any $p<q_{\mathcal{K}}$ but $\int_{\Omega}|\nabla u|^{q_{\mathcal{K}}}\,dx=\infty$.
\end{theorem}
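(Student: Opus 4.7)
The plan is to deduce Theorem~\ref{th:AFS} from the general framework in Section~\ref{s:general} applied to $K = E_\mathcal{K} \cup E_{1/\mathcal{K}}$. The task splits into establishing the reduction property $\R^{2\times 2} \to K$ in weak $L^{q_\mathcal{K}}$ --- from which Theorem~\ref{th:generalsolution} immediately supplies a piecewise affine $u \in W^{1,1}(\Omega) \cap C^\alpha(\overline\Omega)$ with $u = l_{A,b}$ on $\partial\Omega$, $\|u - l_{A,b}\|_{C^\alpha} < \delta$, $\nabla u \in K$ a.e., and the upper weak $L^{q_\mathcal{K}}$ bound --- and separately securing the matching \emph{lower} bound on $|\{|\nabla u|>t\}|$. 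The reduction property will be obtained from the staircase laminate criterion (Theorem~\ref{t:SL-criterion} combined with its extended form in Remark~\ref{d:extendedstaircase}) by constructing, for each $A \in \R^{2\times 2}$, an extended staircase laminate $\nu_A^\infty$ with barycenter $A$, support in $K$, and weak $L^{q_\mathcal{K}}$ tail controlled on both sides. The lower bound on $\nabla u$ will then follow by starting the iteration proving Theorem~\ref{th:generalsolution} from a first approximation $u_1$ produced by Proposition~\ref{pr:laminates_to_reduction} applied directly to $\nu_A^\infty$, whose two-sided distribution estimate \eqref{eq:bound_by_staircase_laminate2} transfers the lower half of \eqref{e:staircase3-weak} to $\nabla u_1$.

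To build $\nu_A^\infty$, I first exploit that $K$ is invariant under $A \mapsto R_1 A R_2^T$ with $R_1, R_2 \in SO(2)$, under multiplication by nonnegative scalars, and under $A \mapsto -A$ (since $-I \in SO(2)$); Lemma~\ref{l:invariance} lets me transport staircase laminates through each of these symmetries. The signed SVD reduces the problem to $A = \diag(a, b)$ with $a \geq |b| \geq 0$; the degenerate case $a = 0$ (equivalently $A = 0$) is handled by the same ingredients after a preliminary rank-one perturbation. For $a > 0$, the rank-one splitting
\[
\diag(a, b) \;=\; \tfrac{a+b}{2a}\, aI + \tfrac{a-b}{2a}\, \diag(a, -a)
\]
is valid since both atoms share the top-left entry, and the scaled identity splits further by another rank-one step as $aI = \tfrac{1}{\mathcal{K}+1}\diag(a,\mathcal{K} a) + \tfrac{\mathcal{K}}{\mathcal{K}+1}\diag(a,a/\mathcal{K})$, whose atoms lie in $E_\mathcal{K}$ and $E_{1/\mathcal{K}}$ respectively. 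To the remaining atom $\diag(a,-a) = -a\,\diag(-1,1)$ I apply Example~\ref{ex:staircase3} rescaled by $a$ and sign-flipped via $X \mapsto -X$; the result is a staircase laminate supported in $K$ and satisfying \eqref{e:staircase3-weak} rescaled by factors of $a$. Remark~\ref{d:extendedstaircase} combines these pieces into $\nu_A^\infty$, and since $a$ is comparable to $|A|$, both tails of $\nu_A^\infty$ take the required form $(1 + |A|^{q_\mathcal{K}}) t^{-q_\mathcal{K}}$ (up to universal constants) for $t > 1 + |A|$.

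The principal obstacle is propagating the lower bound through the correcting iterations in the proof of Theorem~\ref{th:generalsolution}, which modify $u_1$ on shrinking error sets $\Omega^{(k)}_{error}$ in order to enforce $\nabla u \in K$ everywhere. Choosing the strong-$L^s$ exponent in \eqref{e:pmaink1} to satisfy $s > q_\mathcal{K}$, the mass of $\{|\nabla u_k|>t\} \cap \Omega^{(k)}_{error}$ is bounded by $2^{-k} t^{-s}|\Omega|$, which for $t > 1$ is of strictly higher order than the $t^{-q_\mathcal{K}}$ target and can be absorbed into a slightly worsened universal constant. Consequently the lower bound on $|\{|\nabla u_1|>t\} \setminus \Omega^{(1)}_{error}|$ obtained from Proposition~\ref{pr:laminates_to_reduction} persists through the limit, yielding the theorem's lower bound on $|\{|\nabla u|>t\}|$ for $t > 1 + |A|$.
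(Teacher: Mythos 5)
Your overall architecture (reduce $\R^{2\times 2}$ to $K=E_{\mathcal K}\cup E_{1/\mathcal K}$ via an extended staircase laminate, get the upper bound from the general existence theorem, and recover the lower bound from the first approximation step) is the same as the paper's, but two steps in your laminate construction contain genuine gaps. First, the symmetry you invoke is false: $E_\rho=\{X:\,X_2=\rho X_1^\perp\}$ (rows $X_1,X_2$) is invariant under \emph{right} multiplication by $SO(2)$, under positive scalars and under $X\mapsto -X$, but not under \emph{left} multiplication by rotations; e.g.\ $Q_\theta\,\diag(1,\mathcal K)\in E_{\mathcal K}\cup E_{1/\mathcal K}$ only when $\sin\theta=0$ or $\cos\theta=0$. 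Hence Lemma~\ref{l:invariance} applied with $T(A)=R_1AR_2^T$ transports your laminate to one supported in $T(K)\neq K$, and the SVD reduction to diagonal barycenters breaks down. This is exactly why the paper only uses right rotations (matrices of the form $DR$) and then treats a general $A$ through the conformal--anticonformal decomposition $A=A_++A_-$, exploiting that conformal and anticonformal matrices of equal norm are rank-one connected (Step 5 of Lemma~\ref{l:staircase3-extended}); your proposal has no substitute for this step.

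Second, even for diagonal $A=\diag(a,b)$ with $a\ge|b|$, your laminate does not satisfy the required \emph{lower} bound with a constant depending only on $\mathcal K$: the heavy-tailed staircase is attached only to the atom $\diag(a,-a)$, whose weight $\tfrac{a-b}{2a}$ vanishes when $A$ is a positive multiple of the identity and degenerates as $b\to a$, so for conformal (or nearly conformal) $A$ your $\nu^\infty_A$ has (essentially) bounded support and $\nu^\infty_A(\{|X|>t\})\ge M^{-1}(1+|A|^{q_{\mathcal K}})t^{-q_{\mathcal K}}$ fails for large $t$; likewise for $|A|\ll1$ the rescaled staircase only yields a tail $\sim|A|^{q_{\mathcal K}}t^{-q_{\mathcal K}}\ll t^{-q_{\mathcal K}}$, so the bound fails for $t$ just above $1+|A|$. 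The paper's construction avoids both degeneracies by first splitting \emph{every} matrix so that a mass fraction bounded below by $\tfrac{\mathcal K-1}{\mathcal K+1}$ lands on anticonformal matrices $\diag(-y,y)$ with $y$ comparable to $\max(2,|A|)$ (the splitting \eqref{e:afs-1} and the corner construction in Steps 2--3 of Lemma~\ref{l:staircase3-extended}), and only these carry the staircase. A smaller, fixable point: absorbing the error-set contribution into the lower bound cannot rely solely on choosing $s>q_{\mathcal K}$ (at $t\approx1+|A|$ the factor $t^{-(s-q_{\mathcal K})}$ is not small when $|A|$ is bounded); as in Theorem~\ref{th:generalsolution1} one must take the error parameter $\eps$ in the first application of Proposition~\ref{pr:laminates_to_reduction} small compared with $M^{-p}$, after which your persistence argument (the gradient is never altered outside $\Omega^{(1)}_{error}$) is correct.
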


To prove Theorem \ref{th:AFS} we need the following extension of the construction in Example \ref{ex:staircase3}.
\begin{lemma}\label{l:staircase3-extended}
For any $\mathcal{K}>1$ there exists a constant $M=M(\mathcal{K})>1$ with the following property. For any $A\in \R^{2\times 2}$ there exists a probability measure $\nu^\infty_A$ of the form \eqref{e:extendedstaircase} with barycenter $A$ which is supported on  $E_{\mathcal{K}}\cup E_{1/\mathcal{K}}$ and satisfies the bound
\begin{equation}\label{e:staircase3-extended}
M^{-1}(1+|A|^{q_{\mathcal{K}}})t^{-q_{\mathcal{K}}}\leq \nu^{\infty}_A(\{X:|X|>t\})\leq M(1+|A|^{q_{\mathcal{K}}})t^{-q_{\mathcal{K}}}
\end{equation}
for all $t>1+|A|$.
\end{lemma}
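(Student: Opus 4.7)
The plan is to extend Example \ref{ex:staircase3} (which treats only the barycenter $\diag(-1,1)$) to arbitrary $A \in \R^{2\times 2}$ in three stages: a symmetry reduction to a canonical form, a finite-order decomposition of $\delta_A$ into pieces in $K := E_\mathcal{K} \cup E_{1/\mathcal{K}}$ and a few scaled reflections, and a lifting of Example \ref{ex:staircase3} to each reflection defect via Remark \ref{d:extendedstaircase}. For the first stage, observe that $K$ is invariant under right-multiplication by $Q \in SO(2)$: for $M = \diag(\lambda, \rho\lambda) R \in E_\rho$ we have $MQ = \diag(\lambda, \rho\lambda)(RQ) \in E_\rho$. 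By Lemma \ref{l:invariance} applied to the rank-preserving linear map $T(X) = XQ$, an extended staircase laminate with barycenter $A$ supported in $K$ transports to one with barycenter $AQ$ supported in $K$, while $|XQ| = |X|$ preserves the weak $L^{q_\mathcal{K}}$ bound. Using the SVD $A = U\Sigma V^T$ with $U, V \in SO(2)$ and $\Sigma = \diag(\sigma_1, \sigma_2)$ signed-diagonal (with $\sigma_2 < 0$ when $\det A < 0$), right-multiplication by $V$ reduces the problem to $A = U\Sigma$.

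\emph{Finite-order decomposition.} For $A = U\Sigma$ I will construct, by explicit case analysis on the signs and relative sizes of $\sigma_1, \sigma_2$, a laminate of finite order
\[
    \delta_A = \sum_{j=1}^{J'}\lambda_j\,\delta_{B_j} + \sum_{j=J'+1}^{J}\lambda_j\,\delta_{H_j},
\]
with each $B_j \in K$ and each $H_j = c_j R_j'$ a scaled reflection ($R_j' \in O(2)$, $\det R_j' = -1$), satisfying $J \le J(\mathcal{K})$, $c_j \le C(\mathcal{K})(1+|A|)$, and with at least one index $j \in \{J'+1,\dots,J\}$ such that $\lambda_j c_j^{q_\mathcal{K}} \ge c(\mathcal{K})(1+|A|)^{q_\mathcal{K}}$. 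The key rank-one connections exploited are: the ``diagonal-type'' connections within $U\mathcal{D}$ (two matrices $UD_1, UD_2$ sharing one diagonal entry are rank-one connected, since $U(D_1 - D_2)$ has rank $\le 1$), and the special two-dimensional phenomenon that any scaled rotation $cR_\theta$ and any equally-scaled reflection $cR_\phi'$ are rank-one connected (their difference has $\det = c^2 - c^2 = 0$ by direct computation).

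\emph{Lifting Example \ref{ex:staircase3} and assembly.} Each reflection $R_j'$ factors uniquely as $R_j' = \diag(-1,1) Q_j$ for some $Q_j \in SO(2)$, hence $H_j = \diag(-c_j, c_j) Q_j$. Scaling Example \ref{ex:staircase3} by $c_j$ yields a staircase laminate $\tilde\nu_j^0$ with barycenter $\diag(-c_j, c_j)$ supported in $K$ satisfying $\tilde\nu_j^0(\{|X| > t\}) \asymp c_j^{q_\mathcal{K}} t^{-q_\mathcal{K}}$ for $t \ge c_j$, with $\mathcal{K}$-dependent constants; right-multiplying by $Q_j$ produces the staircase $\tilde\nu_j$ with barycenter $H_j$ supported in $K$ with the same weak bound. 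The pieces assemble via Remark \ref{d:extendedstaircase} into $\nu^\infty_A$. For $t > 1+|A|$ only the staircase contributions are nonzero (since $|B_j| \le C(\mathcal{K})(1+|A|)$), giving $\nu^\infty_A(\{|X| > t\}) \asymp \bigl(\sum_{j>J'}\lambda_j c_j^{q_\mathcal{K}}\bigr)\, t^{-q_\mathcal{K}}$, and the uniform bounds on $c_j$ yield both sides of \eqref{e:staircase3-extended}.

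\emph{Main obstacle.} The essential work lies in the finite-order decomposition. The two-sided quantitative control $c_j \sim 1+|A|$ is what delivers the matching upper and lower bounds \eqref{e:staircase3-extended}, and it requires a delicate case analysis because $K$ is a restrictive 2-dimensional variety in $\R^{2\times 2}$ whose diagonal slice $K \cap \mathcal{D}$ consists of just two lines through the origin. The splittings at each stage must often step outside the 2-dimensional variety $U\mathcal{D}$ to reach matrices in $K$, and obtaining the uniform constant in every sign regime---including the degenerate cases $\sigma_2 = 0$ and $A = 0$---is the subtle part.
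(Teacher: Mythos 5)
Your overall assembly scheme is in fact the paper's strategy: scale and right--rotate the staircase of Example \ref{ex:staircase3} to obtain staircase laminates with anti-conformal barycenters $c_jR_j'=\diag(-c_j,c_j)Q_j$ supported in $E_{\mathcal{K}}\cup E_{1/\mathcal{K}}$, connect $A$ to these points and to points of $E_{\mathcal{K}}\cup E_{1/\mathcal{K}}$ by a laminate of finite order, and glue via Remark \ref{d:extendedstaircase}, reading off \eqref{e:staircase3-extended} from $|B_j|\le C(\mathcal{K})(1+|A|)$ together with a weight bound $\lambda_j c_j^{q_{\mathcal{K}}}\ge c(\mathcal{K})(1+|A|)^{q_{\mathcal{K}}}$. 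The genuine gap is that the decisive ingredient --- the existence, \emph{uniformly in $A$}, of such a finite-order laminate with boundedly many pieces, all support points of norm $\le C(\mathcal{K})(1+|A|)$, and at least one anti-conformal piece carrying the required weight --- is only announced (``I will construct, by explicit case analysis\dots'') and then explicitly deferred as the ``main obstacle''. That construction \emph{is} the content of the lemma; without it nothing is proved. Moreover, the reduction to $A=U\Sigma$ via the SVD buys less than you suggest: since $E_\rho$ is invariant under right but not left multiplication by rotations, the postponed case analysis would still have to handle the full rotation parameter $U$ (for generic $U$ one has $(E_{\mathcal{K}}\cup E_{1/\mathcal{K}})\cap U\mathcal{D}=\{0\}$, so splittings inside $U\mathcal{D}$ alone can never land on the target set), not merely the signs and sizes of $\sigma_1,\sigma_2$.

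For comparison, the paper closes exactly this gap as follows. Decompose $A=A_++A_-$ into conformal and anti-conformal parts and write $A=\lambda B+(1-\lambda)C$ with $B=\frac{|A_+|+|A_-|}{|A_+|}A_+$, $C=\frac{|A_+|+|A_-|}{|A_-|}A_-$, $\lambda=\frac{|A_+|}{|A_+|+|A_-|}$; since $\det(B-C)=0$ this is a legitimate splitting (your rotation/reflection connection, used globally), and it reduces the problem to matrices of the form $DR$ with $D$ diagonal and $R\in SO(2)$, where Lemma \ref{l:invariance} with $T(X)=XR$ reduces further to diagonal $D$. For diagonal matrices the explicit splitting $\diag(x,y)\mapsto \alpha\,\delta_{\diag(-y,y)}+(1-\alpha)\,\delta_{\diag(\mathcal{K}y,y)}$ with $\alpha=\frac{\mathcal{K}-x/y}{\mathcal{K}+1}\ge\frac{\mathcal{K}-1}{\mathcal{K}+1}$ (valid when $y\ge\max(|x|,2)$, $y\ne -x$), together with a preliminary splitting onto the corners $\diag(\pm2,\pm2)$ when $\max(|x|,|y|)<2$, delivers precisely the uniform norm and weight bounds you left unproven. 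A further small slip in your assembly: for $1+|A|<t\le C(\mathcal{K})(1+|A|)$ the finite-order pieces may still contribute to $\{|X|>t\}$, so ``only the staircase contributions are nonzero'' is not literally correct; in that range the upper bound in \eqref{e:staircase3-extended} should instead come from the trivial estimate $\nu^\infty_A(\{|X|>t\})\le 1\le C(\mathcal{K})^{q_{\mathcal{K}}}(1+|A|^{q_{\mathcal{K}}})t^{-q_{\mathcal{K}}}$, while the lower bound for such $t$ needs the weight condition $\lambda_jc_j^{q_{\mathcal{K}}}\gtrsim(1+|A|)^{q_{\mathcal{K}}}$ --- again the unproven quantitative heart of the argument.
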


\begin{proof}
We proceed by different levels of generality of the matrix $A$. Observe that Example \ref{ex:staircase3} precisely treats the case $A=\diag(-1,1)$, yielding a staircase laminate $\nu^\infty$.

\smallskip

\noindent{\bf Step 1. }If $A=\diag(-x,x)$ for some $x\geq 2$, we can use the invariance property in Lemma \ref{l:invariance} with $T(A)=xA$. Indeed, it is clear that both the set of rank-one matrices and $E_{\mathcal{K}}\cup E_{1/\mathcal{K}}$ are invariant under $T$. Then $T_*\nu^\infty$, where $\nu^\infty$ is the staircase laminate from Example \ref{ex:staircase3}, has barycenter $T(\diag(-1,1))=\diag(-x,x)$, and we directly obtain \eqref{e:staircase3-extended}.

\smallskip

\noindent{\bf Step 2. }If $A=\diag(x,y)$ with $\max\{|x|,|y|\}\geq 2$, assume without loss of generality $y\geq 2$, $y\geq |x|$
and $y \ne -x$. Consider the laminate
\begin{equation}\label{e:afs-1}
    \tilde\nu=\alpha\delta_{\diag(-y,y)}+(1-\alpha)\delta_{\diag(\mathcal{K}y,y)},
\end{equation}
with $\alpha=\frac{\mathcal{K}-\frac{x}{y}}{\mathcal{K}+1}$. Since $y\geq |x|$ and $y \ne -x$, it follows that  $1 > \alpha\geq \frac{\mathcal{K}-1}{\mathcal{K}+1}$. Then, if $\nu^{\infty}_y$ is the staircase laminate from Step 1 with barycenter $\diag(-y,y)$, the probability measure
\begin{equation*}
    \tilde\nu^{\infty}=\alpha\nu^{\infty}_y+(1-\alpha)\delta_{\diag(\mathcal{K}y,y)}
\end{equation*}
satisfies the conditions of  Remark \ref{d:extendedstaircase} with $J'=1$, $J=2$, and the $\mathcal{K}$-dependent lower bound on $\alpha$ implies that \eqref{e:staircase3-extended} continues to hold with a possibly larger constant $M$ (but only depending on $\mathcal{K}>1$).

\smallskip

\noindent{\bf Step 3. }If $A=\diag(x,y)$ with $\max\{|x|,|y|\}<2$, then we consider the splitting sequence
\begin{align*}
    \delta_{\diag(x,y)}&\mapsto \alpha_1\delta_{\diag(-2,y)}+(1-\alpha_1)\delta_{\diag(2,y)}\\
    &\mapsto\alpha_1(\alpha_2\delta_{\diag(-2,-2)}+(1-\alpha_2)\delta_{\diag(-2,2)})+\\
    &\quad+(1-\alpha_1)(\alpha_2\delta_{\diag(2,-2)}+(1-\alpha_2)\delta_{\diag(2,2)})
\end{align*}
where $\alpha_1=\frac{2-x}{4}$, $\alpha_2=\frac{2-y}{4}$.
The two terms $\delta_{\diag(-2,-2)}$ and $\delta_{\diag(2,2)}$ can now be split further as in \eqref{e:afs-1}. Overall we then obtain a laminate of finite order 
\begin{equation*}
   \mu=\lambda_1\delta_{\diag(2,-2)}+\lambda_2\delta_{\diag(-2,2)}+\lambda_3\delta_{\diag(-2\mathcal{K},-2)}+\lambda_4\delta_{\diag(2\mathcal{K},2)},
\end{equation*}
with $\lambda_1+\lambda_2\geq \frac{\mathcal{K}-1}{\mathcal{K}+1}$.
 supported on the the matrices $\{(\pm 2,\pm 2)\}$ with 
\begin{equation*}
    \mu(\{\diag(2,-2),\diag(-2,2)\})\geq \frac{\mathcal{K}-1}{\mathcal{K}+1}.
\end{equation*}
Then, with $\nu^\infty_{\pm 2}$ being the staircase laminates from Step 1, the measure 
\begin{equation*}
   \tilde\nu^\infty=\lambda_1\nu^\infty_{-2}+\lambda_2\nu^\infty_2+\lambda_3\delta_{\diag(-2\mathcal{K},-2)}+\lambda_4\delta_{\diag(2\mathcal{K},2)}
\end{equation*}
is of the form \eqref{e:extendedstaircase} with $J'=2$, $J=4$, and the estimate \eqref{e:staircase3-extended} again holds with a $\mathcal{K}$-dependent constant $M$.

\smallskip

\noindent{\bf Step 4. } 
%\green{ LS 19.07. corrected : 
Next, we may use the invariance property (Lemma \ref{l:invariance}) with $T(A)=AR$ for $R\in SO(2)$ together with the invariance of the set $E_{\mathcal{K}}\cup E_{1/\mathcal{K}}$ under $T$ to show that the statement of the Lemma holds for matrices of the form $A=DR$, where $D$ is diagonal and $R\in SO(2)$. In particular this is the case for any conformal or anti-conformal matrix (i.e. matrices of the form $\begin{pmatrix}x&y\\-y&x\end{pmatrix}$ or $\begin{pmatrix}x&y\\y&-x\end{pmatrix}$). 
%}

%\blue{ SM: it seems that the proof only shows the result for 
%$A = \text{diagonal matrix} \, R$  with $R \in SO(2)$}

%\blue{ SM: Should we add a Step 5 that every $A$ is the barycentre
%of a laminate of finite order supported in $\mathcal D \, SO(2) =
%\mathcal D  \, O(2)$, see the old Lemma A.10 in gray?}

%\green{Added LS 19.07.: 
\noindent{\bf Step 5}

More generally, any $A\in\R^{2\times 2}$ can be decomposed\footnote{For the well-known connection relating this decomposition to equations of the form \eqref{e:elliptic-eq} and quasiconformal mappings of the plane we refer to \cite{astala_faraco_szekelyhidi08}} as $A=A_++A_-$, with $A_+$ conformal and $A_-$ anti-conformal. Assuming that $A_+,A_-\neq 0$, we can write $A=\lambda B+(1-\lambda)C$, with 
\begin{equation*}
B=\frac{|A_+|+|A_-|}{|A_+|}A_+,\quad C=\frac{|A_+|+|A_-|}{|A_-|}A_-,\quad \lambda=\frac{|A_+|}{|A_+|+|A_-|}.
\end{equation*}
Since $\det(B-C)=0$, the measure $\lambda\delta_B+(1-\lambda)\delta_C$ is a laminate with barycenter $A$. We can then further split this measure using Step 4 and thus obtain a measure of the form \ref{e:extendedstaircase}.
%}
\end{proof}

\gray{LS:\, commented out; here gray for comparison:

We start again with a simple lemma on the existence of certain finite order laminates. 

\begin{lemma}
Let $A=\diag(x,y)$ be a $2\times 2$ diagonal matrix. There exists a finite order laminate $\nu\in \mathcal{L}(\R^{2\times 2})$ supported on diagonal matrices with barycenter $\bar{\nu}=A$ and such that 
\begin{equation*}
    \supp\nu\subset E_{\mathcal{K}}\cup E_{1/\mathcal{K}}\cup \{\diag(-t,t):\,t\geq 2\}
\end{equation*}
Moreover
\begin{equation*}
    \nu(\{\diag(-t,t):\,t\geq 2\})\geq \frac{\mathcal{K}-1}{\mathcal{K}+1}.
\end{equation*}
\end{lemma}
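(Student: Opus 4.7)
The plan is to construct $\nu$ by an explicit splitting sequence of short length, with the structure adapted to the location of $(x,y)$ in the plane of diagonal matrices. Since rank-one connections between diagonal $2\times 2$ matrices occur exactly when one diagonal entry is shared, every elementary splitting will be \emph{horizontal} (fixing the second entry) or \emph{vertical} (fixing the first). The diagonal cross-sections $E_{\mathcal{K}}\cap\mathcal{D}=\{\diag(\mu,\mathcal{K}\mu):\mu\in\R\}$ and $E_{1/\mathcal{K}}\cap\mathcal{D}=\{\diag(\mu,\mu/\mathcal{K}):\mu\in\R\}$ are two lines through the origin, and $\{\diag(-t,t):t\ge 2\}$ is a half-ray in the direction $(-1,1)$; this geometry dictates the splitting strategy.

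The core case is the ``aligned'' one $y\ge 2$ and $y\ge |x|$, in which a single horizontal splitting of $\delta_A$ at height $y$ yields
\[
\delta_{\diag(x,y)}\;\mapsto\;\alpha\,\delta_{\diag(-y,y)}+(1-\alpha)\,\delta_{\diag(\mathcal{K}y,y)},
\]
with the first atom in $\{\diag(-t,t):t\ge 2\}$ (choose $t:=y$) and the second in $E_{1/\mathcal{K}}$. Solving the barycenter equation $\alpha(-y)+(1-\alpha)\mathcal{K}y=x$ gives $\alpha=(\mathcal{K}-x/y)/(\mathcal{K}+1)$, and $|x|\le y$ together with $y>0$ forces $\alpha\in\bigl[(\mathcal{K}-1)/(\mathcal{K}+1),\,1\bigr]$. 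A symmetric vertical splitting handles the aligned case $x\le -2$ and $-x\ge |y|$, placing mass at $\diag(x,-x)\in\{\diag(-t,t):t\ge 2\}$ with coefficient $(\mathcal{K}-y/x)/(\mathcal{K}+1)\ge(\mathcal{K}-1)/(\mathcal{K}+1)$ and the residual mass in $E_{\mathcal{K}}$.

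For a general diagonal $A$ I would first reduce to one of the aligned cases by a preparatory splitting and then apply the core construction piecewise. When $\max(|x|,|y|)<2$, a two-step splitting (first horizontal along $x=\pm 2$, then vertical along $y=\pm 2$) reduces $\delta_A$ to a convex combination of the corners $\diag(\pm 2,\pm 2)$; the corner $\diag(-2,2)$ is already on the target ray, while $\diag(-2,-2)$ splits vertically as $\tfrac{\mathcal{K}-1}{\mathcal{K}+1}\,\delta_{\diag(-2,2)}+\tfrac{2}{\mathcal{K}+1}\,\delta_{\diag(-2,-2\mathcal{K})}$ (the second atom in $E_{\mathcal{K}}$) and symmetrically $\diag(2,2)$ splits horizontally as $\tfrac{\mathcal{K}-1}{\mathcal{K}+1}\,\delta_{\diag(-2,2)}+\tfrac{2}{\mathcal{K}+1}\,\delta_{\diag(2\mathcal{K},2)}$ (the second atom in $E_{1/\mathcal{K}}$). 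The off-aligned cases with $\max(|x|,|y|)\ge 2$ but $A$ not in aligned position are dispatched by a preliminary horizontal or vertical splitting into a piece lying in an aligned region and a piece lying directly in $E_{\mathcal{K}}\cup E_{1/\mathcal{K}}$, after which the core construction is applied to the aligned piece.

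The main obstacle is preserving the lower bound $\nu(\{\diag(-t,t):t\ge 2\})\ge(\mathcal{K}-1)/(\mathcal{K}+1)$ across the multi-step reductions, since in the aligned case this bound is saturated as $x/y\to 1$. My strategy for the bookkeeping is to arrange each preparatory splitting so that the aligned-case coefficient on the resulting piece strictly exceeds $(\mathcal{K}-1)/(\mathcal{K}+1)$ by an amount sufficient to absorb the dilution from the preparatory weights, exploiting the freedom to place target mass at several distinct values of $t\ge 2$ rather than at a single $t$. Verifying that this compensating strictness can be arranged uniformly in all sign configurations of $(x,y)$, particularly in the ``wrong quadrant'' configurations where the natural single-step routes to the target ray are obstructed, is the delicate step of the argument.
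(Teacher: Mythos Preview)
Your approach is essentially the paper's: the same one-step horizontal split in the aligned case $y\ge 2,\ y\ge |x|$, the same four-corner reduction when $\max(|x|,|y|)<2$, with the ``diagonal'' corners $\diag(2,2)$ and $\diag(-2,-2)$ fed back into the core split. The paper is simply more compact, collapsing everything with $\max(|x|,|y|)\ge 2$ to the single aligned case by a ``without loss of generality''; you try to make the symmetry explicit and thereby uncover the issue you flag at the end.

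That issue is genuine, and it is not merely a bookkeeping subtlety about the lower bound. With the target ray taken literally as $\{\diag(-t,t):t\ge 2\}$, the corner $\diag(2,-2)$ in your four-corner step (and, more generally, any $A$ in the open quadrant $x>0,\ y<0$) cannot be reached by a finite splitting sequence that stays in the diagonal matrices: the function $h(x,y)=\max(0,x)\cdot\max(0,-y)$ is separately convex, vanishes on $E_{\mathcal K}\cup E_{1/\mathcal K}\cup\{(-t,t):t\ge 2\}$, yet $h(2,-2)=4>0$, so Jensen's inequality for diagonal laminates rules out any such $\nu$. The paper's proof in effect sidesteps this by treating $\diag(2,-2)$ as already admissible --- its concluding estimate is on $\nu(\{\diag(2,-2),\diag(-2,2)\})$ --- which amounts to reading the target as the full anti-diagonal $\{\diag(-t,t):|t|\ge 2\}$. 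With that reading the symmetries $X\mapsto -X$ and coordinate swap both preserve the target set, the paper's ``WLOG'' is legitimate, your omitted corner $\diag(2,-2)$ is already good, and both arguments go through. Your concern about the lower bound then evaporates too: writing $p=\alpha_1,\ q=\alpha_2,\ c=\tfrac{\mathcal K-1}{\mathcal K+1}$, the total mass on the ray in the small case is
\[
p(1-q)+(1-p)q+\bigl(pq+(1-p)(1-q)\bigr)c \;=\; c+s(1-c)\ \ge\ c,\qquad s:=p(1-q)+(1-p)q\ge 0,
\]
so no ``compensating strictness'' is needed.
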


\begin{proof}
First assume that $\max\{|x|,|y|\}\geq 2$. Without loss of generality we may then further assume $y\geq |x|$ and $y\geq 2$. Then we may simply set
\begin{equation}\label{e:afs-1_old}
    \nu=\alpha\delta_{\diag(-y,y)}+(1-\alpha)\delta_{\diag(\mathcal{K}y,y)},
\end{equation}
with $\alpha=\frac{\mathcal{K}-\frac{x}{y}}{\mathcal{K}+1}$. Since $y\geq |x|$, it follows that $\alpha\geq \frac{\mathcal{K}-1}{\mathcal{K}+1}$ as required.

Now consider the case $\max\{|x|,|y|\}<2$. In this case we consider the splitting sequence
\begin{align*}
    &\delta_{\diag(x,y)}\mapsto \alpha_1\delta_{\diag(-2,y)}+(1-\alpha_1)\delta_{\diag(2,y)}\mapsto\\
    &\alpha_1(\alpha_2\delta_{\diag(-2,-2)}+(1-\alpha_2)\delta_{\diag(-2,2)})+(1-\alpha_1)(\alpha_2\delta_{\diag(2,-2)}+(1-\alpha_2)\delta_{\diag(2,2)})
\end{align*}
where $\alpha_1=\frac{2-x}{4}$, $\alpha_2=\frac{2-y}{4}$. The two terms $\delta_{\diag(-2,-2)}$ and $\delta_{\diag(2,2)}$ can now be split further as in \eqref{e:afs-1}. Overall we then obtain a laminate as required, with 
\begin{equation*}
    \nu(\{\diag(2,-2),\diag(-2,2)\})\geq \frac{\mathcal{K}-1}{\mathcal{K}+1}.
\end{equation*}
\end{proof}

Next, we may apply Example \ref{ex:staircase3} together with the scaling transformation $A\mapsto tA$ for any $t\neq 0$ and the invariance property Lemma \ref{l:invariance} to obtain, for any diagonal $2\times 2$ matrix $A$ a staircase laminate $\nu^{\infty}_A$ supported on $E_{\mathcal{K}}\cup E_{1/\mathcal{K}}$ with
\begin{equation*}
    \tilde C^{-1}|A|^{q_{\mathcal{K}}}t^{-q_{\mathcal{K}}}\leq \nu^{\infty}_{A}(\{X:\,|X|>t\})\leq \tilde C|A|^{q_{\mathcal{K}}}t^{-q_{\mathcal{K}}}
\end{equation*}
for all $t>|A|$.
Combined with the invariance of $E_{\mathcal{K}}\cup E_{1/\mathcal{K}}$ under the transformation $A\mapsto AR$ for any $R\in O(n)$ leads to the same, for any $A\in \R^{2\times 2}$.

}

It follows from Theorem \ref{t:SL-criterion} that $\R^{2\times 2}$ can be reduced to $E_{\mathcal{K}}\cup E_{1/\mathcal{K}}$ in weak $L^{q_{\mathcal{K}}}$. In turn, Theorem \ref{th:generalsolution} then almost implies the statement of Theorem \ref{th:AFS}, except the lower bound. However, the proof of Theorem \ref{th:generalsolution} can easily be modified to yield the following statement:

\begin{theorem}\label{th:generalsolution1}
Let $K\subset\R^{d\times m}$,  $1<p\leq r<\infty$  and 
$0 \le \rho \le r$ be such that, for some $M>1$ the following holds. For any $A\in\R^{d\times m}$ there exists a probability measure $\nu_A^\infty$ of the form \ref{e:extendedstaircase} with barycenter $A$ which is supported on $K$ and satisfies the bound
\begin{equation*}
    M^{-p}  (1+|A|^\rho) t^{-p}\leq \nu_A^\infty(\{X:|X|>t\})\leq M^p(1+|A|^r)t^{-p}
\end{equation*}
for all $t >  1 + |A|$. 
Then for any regular domain $\Omega\subset\R^m$, any $A\in\R^{d\times m}$, $b\in \R^d$ and any $\delta>0$, $\alpha\in[0,1)$ there exists a piecewise affine map $u\in W^{1,1}(\Omega)\cap C^{\alpha}(\overline{\Omega})$ with $u=l_{A,b}$ on $\partial\Omega$ such that 
\begin{align}     
\nabla u\in K\textrm{ a.e. in }&\Omega,   \label{eq:weakell1}\\
\|u-l_{A,b}\|_{C^{\alpha}(\overline{\Omega})}&<\delta
 \label{eq:weakell2}
\end{align}
and for all $t>|A|$
\begin{equation}   \label{eq:weakell3}
        \frac{1}{2}M^{-p}(1+  |A|^\rho)t^{-p}\leq \frac{|\{x\in\Omega:\,|\nabla u(x)|>t\}|}{|\Omega|}\leq 2M^p(1+|A|^r)t^{-p}.
    \end{equation}
\end{theorem}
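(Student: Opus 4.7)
The plan is to adapt the iterative construction from the proof of Theorem~\ref{th:generalsolution}, but with two modifications: replace the applications of Definition~\ref{d:reduced} by direct applications of Proposition~\ref{pr:laminates_to_reduction} (in the extended form of Remark~\ref{d:extendedstaircase}) to the measures $\nu_A^\infty$, and carefully track a lower bound on the distribution function of $|\nabla u|$. The key observation is that in the iteration of Theorem~\ref{th:generalsolution}, each map $u_{k+1}$ agrees with $u_k$ outside $\Omega^{(k)}_{error}$, so by induction $u = u_1$ almost everywhere outside $\Omega^{(1)}_{error}$. Hence the distribution function of $|\nabla u|$ differs from that of $|\nabla u_1|$ only on the small set $\Omega^{(1)}_{error}$, and a single well-chosen application of the two-sided bound in Proposition~\ref{pr:laminates_to_reduction} supplies the lower bound.

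First I would apply Proposition~\ref{pr:laminates_to_reduction} (extended per Remark~\ref{d:extendedstaircase}) with the measure $\nu_A^\infty$, small parameter $\eps_1 \in (0,1/4)$ to be chosen later, and exponent $s > \max(p,r)$, together with the rescaling-and-covering argument from Section~\ref{ss:basic}, to produce a piecewise affine $u_1$ with $u_1 = l_{A,b}$ on $\partial\Omega$, $\|u_1 - l_{A,b}\|_{C^\alpha} < \delta/2$, error set $\Omega^{(1)}_{error}$ satisfying $\int_{\Omega^{(1)}_{error}}(1+|\nabla u_1|)^s\,dx < \eps_1 |\Omega|$, and, for every Borel $E$,
\[(1-\eps_1)\nu_A^\infty(E) \le |\{\nabla u_1 \in E\}|/|\Omega| \le (1+\eps_1)\nu_A^\infty(E).\]
Then I would iterate as in the proof of Theorem~\ref{th:generalsolution}: on each connected component $\Omega_i$ of the current error set (where $\nabla u_k = A_i$), apply Proposition~\ref{pr:laminates_to_reduction} to $\nu_{A_i}^\infty$ with parameters $\eps_{k+1} = 2^{-k}\eps_1$ and $C^\alpha$ tolerance $2^{-k-1}\delta$, then glue. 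The limit $u = \lim u_k$ exists in $C^\alpha(\overline\Omega) \cap W^{1,1}(\Omega)$, satisfies the affine boundary condition and $C^\alpha$ bound \eqref{eq:weakell2}, and has $\nabla u \in K$ almost everywhere, verifying \eqref{eq:weakell1}.

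The upper bound in \eqref{eq:weakell3} follows by essentially the same telescoping argument as in the proof of Theorem~\ref{th:generalsolution}: the upper half of the two-sided laminate bound gives $|\{x\in \Omega_i : |\nabla u''_i| > t\}| \le 2 M^p(1+|A_i|^r)|\Omega_i|t^{-p}$, and $\sum_i (1+|A_i|^r)|\Omega_i|$ is controlled by $\int_{\Omega^{(k)}_{error}}(1+|\nabla u_k|^r)\,dx$, which decays geometrically because $s > r$. For the lower bound, the observation above gives
\[|\{|\nabla u|>t\}| \ge |\{|\nabla u_1|>t\}| - |\{|\nabla u_1|>t\} \cap \Omega^{(1)}_{error}|.\]
The first term is at least $(1-\eps_1) M^{-p}(1+|A|^\rho)|\Omega|t^{-p}$ for $t > 1+|A|$ by the lower half of the hypothesis and the two-sided bound on $u_1$, and Chebyshev's inequality bounds the second term by $\eps_1 |\Omega| t^{-s} \le \eps_1 |\Omega| t^{-p}$ since $t \ge 1$ and $s > p$. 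Choosing $\eps_1$ small enough (depending on $A$, $M$, $p$, $\rho$) so that $(1-\eps_1) M^{-p}(1+|A|^\rho) - \eps_1 \ge \tfrac{1}{2} M^{-p}(1+|A|^\rho)$ yields the claim for $t > 1+|A|$; the short additional range $|A| < t \le 1+|A|$ is handled trivially since $\{|\nabla u|>t\} \supset \{|\nabla u|>1+|A|\}$ and the target bound differs only by a bounded factor there. The only technical obstacle is the simultaneous calibration of $\eps_1$ and $s$ to close both the upper-bound telescoping estimate and the single-step lower-bound estimate with the stated constants $\tfrac12 M^{-p}$ and $2M^p$; this is routine thanks to the sharp two-sided control provided by Proposition~\ref{pr:laminates_to_reduction}.
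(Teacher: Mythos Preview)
Your proposal is correct and follows essentially the same route as the paper: both run the iteration of Theorem~\ref{th:generalsolution} using Proposition~\ref{pr:laminates_to_reduction} (via Remark~\ref{d:extendedstaircase}) in place of Definition~\ref{d:reduced}, and both extract the lower bound from the single observation that $\nabla u=\nabla u_1$ a.e.\ outside $\Omega^{(1)}_{error}$, combining the two-sided control on the distribution of $\nabla u_1$ with a smallness estimate on the error set obtained by choosing $\eps$ sufficiently small. The paper phrases the subtraction as a lower bound on $|\{x\in\Omega\setminus\Omega^{(1)}_{error}:|\nabla u_1|>t\}|$ whereas you write it as $|\{|\nabla u_1|>t\}|-|\{|\nabla u_1|>t\}\cap\Omega^{(1)}_{error}|$, but these are identical.
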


\begin{proof}
The proof is precisely the proof of Theorem \ref{th:generalsolution} and Remark \ref{re:generalsolution_nonhomogeneous_bound} with the following additional observation. As a result of Proposition \ref{pr:laminates_to_reduction}, in the proof of Theorem \ref{th:generalsolution} the first approximation $u_1$ satisfies in addition the lower bound 
\begin{equation*}
        \frac{|\{x\in\Omega:\,|\nabla u_1(x)|>t\}|}{|\Omega|}\geq \frac{2}{3}M^p(1+|A|^\rho)t^{-p}
    \end{equation*}
for any $t> 1+ |A|$. Furthermore, using estimate \eqref{eq:iteration_epserror_staircase} in Proposition \ref{pr:laminates_to_reduction} with a sufficiently small $\varepsilon>0$, we can ensure
\begin{equation*}
 \frac{|\{x\in\Omega_{error}^{(1)}:\,|\nabla u_1(x)|>t\}|}{|\Omega|}\leq \frac{1}{6}M^{-p} t^{-p},
\end{equation*} 
therefore we obtain
\begin{equation*}
        \frac{|\{x\in\Omega\setminus\Omega_{error}^{(1)}:\,|\nabla u_1(x)|>t\}|}{|\Omega|}\geq \frac{1}{2}M^p(1+|A|^\rho)t^{-p}.
    \end{equation*}
Now, since for any $k\geq 2$ the subsequent approximations $u_k$ satisfy $\nabla u_k=\nabla u_1$ outside $\Omega_{error}^{(1)}$, it follows that the limit $u=\lim_{k\to\infty}u_k$ satisfies
\begin{equation*}
        \frac{|\{x\in\Omega:\,|\nabla u(x)|>t\}|}{|\Omega|}\geq \frac{1}{2}M^p(1+|A|^\rho)t^{-p}.
\end{equation*}
This proves the additional lower bound, as stated in Theorem \ref{th:generalsolution1}.
\end{proof}

By standard arguments one also has the following extension of Theorem \ref{th:generalsolution1}:
\begin{corollary}\label{cor:everywhere}
   Under the conditions of Theorem \ref{th:generalsolution1}
   there exists $u \in W^{1,1}(\Omega) \cap C^{\alpha}(\overline \Omega)$ such that \eqref{eq:weakell1} and \eqref{eq:weakell2} hold and, in addition, 
   \begin{equation}\label{e:integrabilityeverywhere}
       \int_{B}|\nabla u|^p\,dx=\infty
   \end{equation}
   for all balls $B\subset \Omega$.
\end{corollary}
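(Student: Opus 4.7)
The plan is to iterate Theorem \ref{th:generalsolution1} over a countable family of balls dense in $\Omega$, localising the $L^p$ blow-up on pairwise disjoint sub-balls so that it is shielded from subsequent modifications. Fix an enumeration $\{B_k\}_{k\ge 1}$ of the open balls with rational centres and rational radii that are contained in $\Omega$; every ball $B\subset\Omega$ contains some $B_k$, so it suffices to construct $u$ with $\int_{B_k}|\nabla u|^p\,dx=\infty$ for every $k$.

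I would build inductively a sequence of piecewise affine maps $u_k\in W^{1,1}(\Omega)\cap C^\alpha(\overline\Omega)$ with $u_k=l_{A,b}$ on $\partial\Omega$ together with pairwise disjoint balls $\tilde B_k\subset B_k$. Start by taking $u_0$ from Theorem \ref{th:generalsolution1} on $\Omega$ with parameter $\delta/4$. Given $u_{k-1}$, the open set where $u_{k-1}$ is locally affine is dense in $\Omega$, and the previously chosen balls have total measure that can be kept below $|B_k|$, so one can select $\tilde B_k\subset B_k\setminus\bigcup_{j<k}\overline{\tilde B_j}$ lying inside a single affine piece of $u_{k-1}$, on which $u_{k-1}=l_{A_k^*,b_k^*}$. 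Apply Theorem \ref{th:generalsolution1} on $\tilde B_k$ with this affine boundary datum and parameter $\delta_k':=\delta\cdot 2^{-k-2}$ to obtain $v_k$; the lower bound in \eqref{eq:weakell3} forces $\int_{\tilde B_k}|\nabla v_k|^p\,dx=\infty$. Define $u_k$ to equal $v_k$ on $\tilde B_k$ and $u_{k-1}$ elsewhere.

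The gluing estimate from Section \ref{ss:basic} gives $\|u_k-u_{k-1}\|_{C^\alpha(\overline\Omega)}\le 2\delta_k'=\delta\cdot 2^{-k-1}$, so $u_k$ converges in $C^\alpha(\overline\Omega)$ to a limit $u$ with $\|u-l_{A,b}\|_{C^\alpha}<\delta$ and $u=l_{A,b}$ on $\partial\Omega$. For $W^{1,1}$-convergence I would use that the weak-$L^p$ bound of \eqref{eq:weakell3} and $p>1$ give
\[
\int_{\tilde B_k}|\nabla v_k|\,dx\le C(p,M)(1+|A_k^*|^r)^{1/p}|\tilde B_k|,
\]
so after $A_k^*$ is known one can shrink $\tilde B_k$ further to ensure $\|\nabla u_k-\nabla u_{k-1}\|_{L^1(\Omega)}\le 2^{-k}$. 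Then $\nabla u_k\to \nabla u$ in $L^1(\Omega)$ and $u\in W^{1,1}(\Omega)\cap C^\alpha(\overline\Omega)$. Since each $u_k$ satisfies $\nabla u_k\in K$ a.e. and the $\tilde B_k$ are pairwise disjoint, almost every $x\in\Omega$ lies either in $\Omega\setminus\bigcup_k\tilde B_k$ (where $u=u_0$) or in a unique $\tilde B_k$ (where $u=v_k$), so $\nabla u\in K$ almost everywhere.

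The non-integrability on every ball follows from disjointness: for each $k$ one has $u=v_k$ throughout $\tilde B_k$ (no later $\tilde B_j$ meets $\tilde B_k$), hence $\int_{B_k}|\nabla u|^p\,dx\ge\int_{\tilde B_k}|\nabla v_k|^p\,dx=\infty$, and density of $\{B_k\}$ upgrades this to every ball $B\subset\Omega$. The main subtlety is the apparent tension between $u\in W^{1,1}(\Omega)$ and $|\nabla u|^p\notin L^1(B)$ on every ball $B$; it is resolved by the fact that Theorem \ref{th:generalsolution1} produces only weak-$L^p$ (not $L^p$) gradients, so $L^1$-bounds on the successive increments are still available when $p>1$, and by using pairwise disjoint $\tilde B_k$ to prevent later perturbations from cancelling the divergence produced at step $k$.
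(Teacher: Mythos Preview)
Your construction has a gap at the selection of $\tilde B_k$. You claim the previously chosen balls ``have total measure that can be kept below $|B_k|$,'' but this must hold for \emph{every} $k$, forcing $\sum_{j}|\tilde B_j|\le\inf_k|B_k|=0$ since the enumeration includes arbitrarily small rational balls. Concretely: once $\tilde B_1$ is fixed, there are indices $k$ with $B_k\subset\tilde B_1$, and for those $B_k\setminus\overline{\tilde B_1}=\emptyset$, so no disjoint $\tilde B_k$ exists. Skipping such $k$ does not rescue the argument either, because on $\tilde B_1$ the limit $u$ equals the piecewise affine map $v_1$; on any single affine piece of $v_1$ the $L^p$-integral is finite, so for a small $B_k$ lying inside one such piece you would get $\int_{B_k}|\nabla u|^p<\infty$.

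The paper sidesteps this bookkeeping via the Baire category theorem: in the complete metric space $X=\{u\in C^\alpha(\overline\Omega):u=l_{A,b}\text{ on }\partial\Omega,\ \|u-l_{A,b}\|_{C^\alpha}\le\delta\}$ with the $C^0$ topology, each set $X_{B,R}=\{u\in X:\int_B|\nabla u|^p\le R\}$ is closed with empty interior (shown by interpolating towards $l_{A,b}$, approximating by a piecewise affine map, and then invoking Theorem~\ref{th:generalsolution1} on one affine piece inside $B$), so the complement of the countable union $\bigcup_{B,R}X_{B,R}$ is residual, hence nonempty. A constructive argument along your lines can also be made to work if you drop the disjointness requirement (always place $\tilde B_k$ in an affine piece of $u_{k-1}$ inside $B_k$, which does exist) and instead control, via volume fractions, how much of each earlier $\tilde B_j$ is overwritten by later steps so that a positive fraction of the blow-up created at step $j$ survives in the limit; but this needs additional estimates that the disjointness shortcut was meant to avoid.
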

We include the proof for the convenience of the reader (such use of the Baire category theorem has appeared e.g.~in \cite{kirchheim03,astala_faraco_szekelyhidi08}).
\begin{proof}
Let 
$$
X=\left\{u\in C^{\alpha}(\overline{\Omega}):\,u=l_{A,b}\textrm{ on }\partial\Omega,\,\|u-l_{A,b}\|_{C^{\alpha}(\overline{\Omega})}\leq \delta\right\}.
$$
Equipped with the $C^0$ topology,  $X$ is a complete metric space. Further, 
for any ball $B\subset\Omega$ and any $R>R_B:=|A|^p|B|$ let
$$
X_{B,R}=\left\{u\in X :   u|_B \in  W^{1,p}(B), \, \,\int_{B}|\nabla u|^p\,dx\leq R\right\}.
$$
By   weak lower-semicontinuity of the $L^p$ norm it follows that $X_{B,R}$ is a closed subset of $X$. 

Moreover, using Theorem \ref{th:generalsolution1} one can easily show that $X_{B,R}$ has empty interior. Indeed, for any $u\in X_{B,R}$ let $v=l_{A,b}+\lambda (u-l_{A,b})$ for some $\lambda\in (0,1)$. Since $R>R_B$, by the triangle inequality we obtain $\|\nabla v\|_{L^p(B)}\leq (1-\lambda)\|A\|_{L^p(B)}+\lambda\|\nabla u\|_{L^p(B)}\leq (1-\lambda)R_B+\lambda R<R$ and $\|v-l_{A,b}\|_{C^{\alpha}(\overline{\Omega})}<\delta$. After choosing $1-\lambda$ sufficiently small and then approximating $v$ uniformly by a piecewise affine mapping, for any $\varepsilon$ we can obtain a piecewise affine $u_1\in X_{B,R}$ with $\|u-u_1\|_{C^0(\overline{\Omega})}<\varepsilon$. In particular there exists a nonempty open subset $\tilde\Omega\subset B$ where $u_1$ is affine. Then, we can apply Theorem \ref{th:generalsolution1} to $u_1|_{\tilde\Omega}$ and obtain $u_2\in X$ with $\|u_2-u_1\|_{C^0(\overline{\Omega})}<\varepsilon$ and $\int_{\tilde\Omega}|\nabla u_2|^p\,dx=\infty$. This shows that $X_{B,R}$ has empty interior.

We now apply the Baire category theorem to conclude that $Y:=\bigcup_{B\subset\Omega}\bigcup_{R>R_B}X_{B,R}$ 
is meager\footnote{It suffices to take the union over all balls $B\subset\Omega$ with rational center and rational radius, and the union over all rational $R>R_B$}, i.e. $X\setminus Y$ is dense, in particular nonempty. On the other hand $X\setminus Y$ consists of all $u\in X$ which satisfy the conclusions of Theorem \ref{th:generalsolution1} (except possibly not piecewise affine) and in addition also \ref{e:integrabilityeverywhere} for all $B\subset\Omega$.
    \end{proof}

\subsection{Weak solutions of the $p$-Laplace equation}
In a recent paper Colombo and 
Tione \cite{ColomboTione2022} 
solved a longstanding 
question  by Iwaniec and Sbordone  
about uniqueness and higher regularity of 
low regularity solutions of the $p$-Laplace equation.

\begin{theorem}[\cite{ColomboTione2022}]  \label{th:plaplace_very_weak} Let $B \subset \R^2$  an open disc. 
Let $p \in (1, \infty) \setminus \{2\}$. Then 
there exists 
$q \in( \max(1, p-1), p)$
and a continuous  solution  $v \in W^{1,q}(B) \cap C(\overline B)$  of the $p$-Laplace equation
\begin{equation}  \label{eq:plaplace}
    \mathop{\mathrm{div}} |\nabla v|^{p-2} \nabla v = 0  \quad \text{in $B$}
\end{equation}
with affine boundary conditions such that
\begin{equation}  \label{eq:nowhere_Lp}
    \int_{B'} |\nabla v|^p \, dx = \infty \quad \text{for each disc $B' \subset B$}
\end{equation}
\end{theorem}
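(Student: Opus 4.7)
\medskip

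\noindent\textbf{Proof proposal.} The strategy follows exactly the template used for Theorem \ref{th:AFS}: reformulate the PDE as a first-order differential inclusion, produce a sufficient family of staircase laminates supported in the target set, apply Theorem \ref{th:generalsolution1}, and upgrade the conclusion by a Baire category argument.

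\emph{Step 1: Reformulation as a differential inclusion.} Since $B$ is simply connected, the condition $\mathrm{div}(|\nabla v|^{p-2}\nabla v) = 0$ is equivalent (for $v \in W^{1,q}_{\loc}$ with $q \ge \max(1,p-1)$, so that $|\nabla v|^{p-2}\nabla v \in L^1_{\loc}$) to the existence of a stream function $w$ with $\nabla w = (|\nabla v|^{p-2}\nabla v)^\perp$. Setting $u = (v,w): B \to \R^2$, the PDE becomes the differential inclusion $\nabla u(x) \in K_p$ a.e., with $K_p$ as in \eqref{e:CT22-Kp}.

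\emph{Step 2: Extended staircase laminates for arbitrary barycenters.} For $1 < p < 2$, Example \ref{ex:staircase4} provides, for a fixed $b > 1$ chosen large, a staircase laminate $\nu^\infty$ supported in $K_p$ with barycenter $A(1) = \diag(b,-1)$ and two-sided weak $L^{\bar q}$ bounds \eqref{e:staircase4-weak} for some $\bar q \in (1,p)$. Proceeding exactly as in the proof of Lemma \ref{l:staircase3-extended}, I would extend this to a measure of the form \eqref{e:extendedstaircase} with support in $K_p$, barycenter equal to a prescribed $A \in \R^{2 \times 2}$, and the analogous two-sided weak $L^{\bar q}$ bound. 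The three ingredients are: (i) the scaling invariance $T_\lambda(X) = \lambda X$, which preserves $K_p$ (since $K_p$ is a cone under $\lambda \ge 0$) and $\bar\nu^\infty_A \mapsto \lambda A$ via Lemma \ref{l:invariance}, used to reach all multiples of $\diag(b,-1)$ and then (after finite-order pre-splittings onto such diagonal seeds) all diagonal matrices of sufficiently large norm; (ii) the invariance $T_R(X) = XR$ for $R \in SO(2)$, which preserves $K_p$ and provides rotated seeds reaching all polar-decomposable matrices $DR$; (iii) finitely many elementary rank-one splittings to bridge any remaining small-norm matrices to a splittable pair of large-norm diagonals. The output, for every $A$, satisfies $M^{-1}(1+|A|^\rho) t^{-\bar q} \le \nu^\infty_A(\{|X|>t\}) \le M(1+|A|^r) t^{-\bar q}$ for $t > 1 + |A|$ and appropriate $\rho, r$.

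\emph{Step 3: From laminate to solution.} Combining Step 2 with Theorem \ref{th:generalsolution1} (in the form allowing the extended laminates of Remark \ref{d:extendedstaircase}) gives, for any affine boundary datum $l_{A,b_0}$, a piecewise affine $u \in W^{1,1}(B) \cap C^\alpha(\overline B)$ with $u = l_{A,b_0}$ on $\partial B$, $\nabla u \in K_p$ a.e., and two-sided weak $L^{\bar q}$ bounds. In particular $u \in W^{1,q}(B)$ for every $q < \bar q$, so for any fixed $q \in (\max(1,p-1), \bar q)$ the first component $v$ is a weak $W^{1,q}$ solution of the $p$-Laplace equation. Upgrading to nowhere-$L^p$ integrability \eqref{eq:nowhere_Lp} is then carried out by the Baire category argument of Corollary \ref{cor:everywhere}: the sets $X_{B',R} = \{u \in X : \int_{B'} |\nabla u|^p \le R\}$ are closed with empty interior (emptiness of interior is the place where the two-sided bound and Theorem \ref{th:generalsolution1} are used locally on affine pieces), so their countable union over discs $B' \subset B$ with rational data and rational $R$ is meager.

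\emph{Step 4: The case $p > 2$.} For $p > 2$, the conjugate exponent $p' = p/(p-1) \in (1,2)$, and the roles of $v$ and $w$ in the stream-function reformulation are interchanged: if $\nabla u \in K_p$ then $\nabla u^\perp \in K_{p'}$ (with $u^\perp = (w,v)$ after an obvious linear recoding), so the $p > 2$ case follows from the $1 < p < 2$ case applied to the conjugate equation.

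\emph{Main obstacle.} The crux is Step 2. Unlike $E_{\mathcal K} \cup E_{1/\mathcal K}$, which is invariant under left and right multiplication by all of $O(2)$ and under scalar multiplication by any $\lambda \in \R$, the set $K_p$ is only a one-sided $SO(2)$-orbit of a nonlinear one-parameter family. The asymmetry between the two singular values ($\lambda$ and $\lambda^{p-1}$) means rotating and scaling cannot alone reach an arbitrary prescribed barycenter while maintaining the correct weak $L^{\bar q}$ tail. The necessary additional rank-one bridging has to be controlled carefully so that the constants $M$, $\rho$, $r$ depend only on $p$ and $|A|$, and so that the final two-sided bound survives. This is precisely the delicate rank-one geometry that underlies the construction in \cite{ColomboTione2022}; the point of the present framework is that, once Step 2 is carried out, Steps 1, 3, and 4 follow mechanically from the black boxes (P1)--(P3).
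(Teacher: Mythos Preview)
Your overall architecture matches the paper's exactly: reformulate as $\nabla u\in K_p$, extend the staircase laminate of Example~\ref{ex:staircase4} to arbitrary barycenters, feed this into Theorem~\ref{th:generalsolution1} and Corollary~\ref{cor:everywhere}, and handle $p>2$ via the $p\leftrightarrow p'$ duality. Steps 1, 3, 4 are fine.

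There is, however, a concrete error in Step~2(i): $K_p$ is \emph{not} a cone. If $X=\diag(\mu,\mu^{p-1})R\in K_p$ and $\lambda>0$, then $\lambda X=\diag(\lambda\mu,\lambda\mu^{p-1})R$, and this lies in $K_p$ only if $\lambda\mu^{p-1}=(\lambda\mu)^{p-1}$, i.e.\ $\lambda^{p-2}=1$. So for $p\ne 2$ the isotropic scaling $T_\lambda(X)=\lambda X$ does not preserve $K_p$, and Lemma~\ref{l:invariance} cannot be invoked as you propose. (You essentially acknowledge this in your ``Main obstacle'' paragraph, which is in tension with the claim in Step~2(i).)

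The paper's repair (Lemma~\ref{l:staircase4-extended}, Step~3) is to use the \emph{anisotropic} scaling $T(X)=\diag(\lambda,\lambda^{p-1})X$, which does preserve both $K_p$ and rank-one lines. Since for $\lambda>1$ and $p\in(1,2)$ one has $\lambda^{p-1}|X|\le |T(X)|\le \lambda|X|$, the transported two-sided weak $L^{\bar q_p}$ bound acquires different powers of $|A|$ on the two sides, namely $\rho=(p-1)\bar q_p$ below and $r=\bar q_p/(p-1)$ above; Theorem~\ref{th:generalsolution1} is stated precisely to absorb this asymmetry. With this single correction your outline becomes the paper's proof.
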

In the above,  \eqref{eq:plaplace} is understood in the sense of distributions.

Our aim here is to show that Theorem \ref{th:plaplace_very_weak}
 easily follows from our general results about the passage from staircase laminates to approximate and exact solutions,
 once one has the key insight in \cite{ColomboTione2022}, namely the construction of a staircase laminate with the right integrability properties. 
Actually  in \cite{ColomboTione2022} a slightly 
sharper version of the statement is shown:  
one can achieve in addition that $\frac34 < \partial_2 v < \frac54$. Our approach does not give this extra condition, but leads to a shorter proof of the theorem as stated.
 
\medskip

First of all, arguing analogously to the case of elliptic equations with measurable coefficients (c.f.~\ref{e:elliptic-eq} and \eqref{e:k-elliptic} in the introduction of Section \ref{sec:staircase}), we see that 
\eqref{eq:plaplace} is equivalent to
the first order differential inclusion
\begin{equation}  \label{eq:inclusion_plaplace}
    \nabla u \in K_p \quad \text{almost everywhere}
\end{equation}
where 
\begin{equation}  \label{eq:K_plaplace}
    K_p = \left\{ \begin{pmatrix} \lambda & 0 \\0  &\lambda^{p-1}  \end{pmatrix} R : \lambda \ge 0, R \in SO(2) \right\},
\end{equation}
%Note that in \cite{ColomboTione2022} the choice $u= \binom{v}{-w}$ is made  hence a minus sign appears in front of $\lambda^{p-1}$ in the corresponding definition of $K_p$.
We seek a solution of $u = \binom{v}{w}$ of 
 \eqref{eq:K_plaplace} such that $v \in W^{1,q}(B)$ 
 for some $q \in (\max(1,p-1),p)$ and 
 $\int_{B'} |\nabla v|^p \, dx
 =\infty$ for every open disc $B' \subset B$.
 
 In the following we will focus on the case
\begin{equation} \label{eq:plaplace_p_less_2}
    p \in (1,2).
\end{equation}

By the following duality argument this is no loss of generality.
Let $p' = \frac{p}{p-1}$ denote the dual exponent of $p$.
Then $p'-1 = \frac{1}{p-1}$.  Setting $\mu = \lambda^{p-1}$ one
easily sees that  $u = \binom{v}{w}$ satisfies $\nabla u \in K_p$ a.e.\ if and only if
$u' = \binom{w}{v}$ satisfies $\nabla u' \in K_{p'}$ a.e.
Moreover $|\nabla w|^{p'-1} = |\nabla v|$. Thus $\nabla v$ 
in $L^q$  with $q \in (\max(1,p-1),p)$ 
if and only if $\nabla w \in L^s$ 
with $s \in (\max(1,p'-1),p')$ 
and $|\nabla w|^{p'} = |\nabla v|^p$. 

For $p \in (1,2)$ solutions $u = \binom{v}{w}$ of 
\eqref{eq:inclusion_plaplace} satisfy 
$|\nabla w| = |\nabla v|^{p-1}$ and hence,
by Young's inequality,
\begin{equation} \label{eq:nablav_equiv_nablau}
    |\nabla v|^2 \le |\nabla u|^2 \le 1 + 2  |\nabla v|^2
\end{equation} 
Thus in the following we can focus on the integrability
properties of $\nabla u$. The key result is the following
\begin{theorem}
For any $1<p<2$ there exists $\bar{q}_p\in (1,p)$ and $M>1$ with the following property. For any $A\in \R^{2\times 2}$ and $\alpha,\delta\in (0,1)$ and any  regular domain $\Omega\subset\R^2$ there exists a piecewise affine mapping $u\in W^{1,1}(\Omega)\cap C^{\alpha}(\overline{\Omega})$ with 
\begin{itemize}
    \item $u(x)=Ax$ on $\partial\Omega$,
    \item $\|u-Ax\|_{C^{\alpha}(\overline{\Omega})}<\delta$,
    \item $\nabla u(x)\in K_p$ for almost every $x\in \Omega$,
    \item for any $t>  1 + |A|$  
    
    \begin{equation*}
        M^{-1}(1+|A|^{\frac{\bar{q}_p}{p-1}})t^{-q_{p}}\leq \frac{|\{x\in\Omega:\,|\nabla u(x)|>t\}|}{|\Omega|}\leq M(1+|A|^{\frac{\bar{q}_p}{p-1}})t^{-q_{p}}.
    \end{equation*}
\end{itemize}
In particular $u\in W^{1,q}(\Omega)$ for any $q<\bar{q}_p$ but $\int_{\Omega}|\nabla u|^{\bar{q}_p}\,dx=\infty$.
\end{theorem}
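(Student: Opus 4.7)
The strategy is to apply the enhanced Theorem~\ref{th:generalsolution1} with $K = K_p$, weak exponent $\bar q_p \in (1,p)$ to be chosen below, and growth exponents $r = \rho = \bar q_p/(p-1)$. The hypothesis then reduces to producing, for every $A\in\R^{2\times 2}$, a probability measure $\nu^\infty_A$ of the extended staircase form of Remark~\ref{d:extendedstaircase}, with barycenter $A$, support in $K_p$, and two-sided weak $L^{\bar q_p}$ bound
\begin{equation*}
  M^{-\bar q_p}(1+|A|^{\bar q_p/(p-1)}) t^{-\bar q_p} \le \nu^\infty_A(\{|X|>t\}) \le M^{\bar q_p}(1+|A|^{\bar q_p/(p-1)}) t^{-\bar q_p}
\end{equation*}
for $t > 1+|A|$. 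Granted such $\nu^\infty_A$, Theorem~\ref{th:generalsolution1} supplies the required piecewise affine $u$ with affine boundary data, $C^\alpha$ control, $\nabla u \in K_p$ a.e., and the stated two-sided distributional bound. The divergence $\int_\Omega |\nabla u|^{\bar q_p}\,dx = \infty$ then follows from the lower bound via the layer-cake identity, since $\int_{1+|A|}^\infty t^{-1}\,dt = \infty$; and $u \in W^{1,q}$ for every $q<\bar q_p$ follows from the upper bound by Remark~\ref{rmk:weakstrong}.

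For the base case, fix $b > 1$ large enough that $\bar q := \frac{p-1}{b^{p-1}+1} + \frac{b}{b+1}$ lies in $(1,p)$, which is possible for $p \in (1,2)$ by \eqref{eq:define_barq}, and set $\bar q_p := \bar q$. Applying Proposition~\ref{p:staircaseconstruction} to the inductive family $A_n = A(x_0+n) = \diag(b(x_0+n), -(x_0+n)^{p-1})$, with the elementary splittings supplied by Lemma~\ref{l:staircase4}, yields for each starting point $x_0 \geq 1$ a staircase laminate $\nu^\infty_{A(x_0)}$ supported on $K_p \cap \mathcal D$ with barycenter $A(x_0)$. The same telescoping computation as in \eqref{eq:bound_wN2} gives $\beta_n \sim (x_0/(x_0+n))^{\bar q_p}$, so Lemma~\ref{l:weakLpstaircase} yields the two-sided weak $L^{\bar q_p}$ tail of order $|A(x_0)|^{\bar q_p} t^{-\bar q_p}$ for $t > |A(x_0)|$.

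To pass to arbitrary $A \in \R^{2\times 2}$, I exploit the two symmetries of $K_p$: right multiplication by $R \in SO(2)$, and the sign flip $X\mapsto -X$ (which is right multiplication by $-I \in SO(2)$). Both preserve rank-one matrices, so Lemma~\ref{l:invariance} transports the base staircase to any barycenter of the form $\pm A(x) R$ with $x \geq 1$ and $R \in SO(2)$. Given a general $A$, a short sequence of elementary rank-one splittings, carried out entry-by-entry in the spirit of Steps~2--3 of Lemma~\ref{l:staircase3-extended}, expresses $\delta_A$ as a finite-order laminate whose atoms are either already in $K_p$ (to be kept as Dirac masses) or of the form $\pm A(x_j) R_j$ with $1 \leq x_j \leq C |A|^{1/(p-1)}$ (to be replaced by the transported base staircase). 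Assembling these pieces yields the required $\nu^\infty_A$ in the form \eqref{e:extendedstaircase}; the exponent $\bar q_p/(p-1)$ on $|A|$ in the tail bound arises precisely because the worst-case choice $x_j \sim |A|^{1/(p-1)}$ contributes a weight $|A(x_j)|^{\bar q_p} \sim |A|^{\bar q_p/(p-1)}$ to the constant.

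The principal technical obstacle lies in this last algebraic reduction. Unlike the setting of Stage~3 in the main theorem, where $L_i \cap \Sigma$ enjoyed symmetry under both left and right rotation, $K_p$ admits only the right $SO(2)$-action together with the sign flip; in particular, scalar multiplication is not a symmetry for $p \neq 2$. One must therefore carry out the rank-one splittings directly on matrix entries, coupling the two diagonal coordinates through the nonlinear curve $\{(bx,-x^{p-1}): x \geq 1\}$. Once the extended staircase is in place, the machinery of Theorem~\ref{th:generalsolution1} delivers the remaining conclusions automatically.
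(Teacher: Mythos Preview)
Your overall strategy matches the paper exactly: apply Theorem~\ref{th:generalsolution1} after building, for every $A\in\R^{2\times2}$, a probability measure of extended-staircase type (Remark~\ref{d:extendedstaircase}) supported in $K_p$ with two-sided weak $L^{\bar q_p}$ tails. The paper packages this construction as Lemma~\ref{l:staircase4-extended}, and the theorem is then a one-line application of Theorem~\ref{th:generalsolution1}.

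Where you diverge from the paper is in the passage from the base staircase (barycenter $A(1)=\diag(\bar b,-1)$) to general $A$. You overlook a third symmetry of $K_p$ beyond right $SO(2)$-action and sign flip: the anisotropic left scaling $T_\lambda(X)=\diag(\lambda,\lambda^{p-1})X$, which preserves $K_p$ and rank-one lines. The paper's Lemma~\ref{l:staircase4-extended} uses $T_\lambda$ as its main device (Step~3): it rescales any large diagonal $A$ down to $\max(|\bar x|,|\bar y|)=\tfrac12$, applies the small-$A$ splitting of Step~2, and then pushes forward by $T_\lambda$. Your alternative --- starting the staircase at a variable $x_0$ --- is related (indeed $T_\lambda(A(n))=A(\lambda n)$), but it requires re-deriving the $\beta_n$ asymptotics and the constants of Lemma~\ref{l:weakLpstaircase} uniformly in $x_0$, work which the scaling argument bypasses entirely.

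There is also a genuine issue with your choice $\rho=r=\bar q_p/(p-1)$. The paper does \emph{not} obtain a symmetric bound: Lemma~\ref{l:staircase4-extended} gives lower exponent $(p-1)\bar q_p$ and upper exponent $\bar q_p/(p-1)$, and these are the values fed into Theorem~\ref{th:generalsolution1}. The asymmetry is forced by the anisotropic scaling, since $\lambda^{p-1}|X|\le|T_\lambda X|\le\lambda|X|$. Your symmetric claim cannot hold in general: for $A=\diag(0,L)$ with $L$ large, any laminate of the type you describe must place mass near $|X|\sim L^{1/(p-1)}$, so at $t\sim 2L$ the asserted lower bound $M^{-1}(1+L^{\bar q_p/(p-1)})t^{-\bar q_p}\sim L^{\bar q_p(2-p)/(p-1)}$ exceeds~$1$, which is impossible for a probability measure.
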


 The proof follows from Theorem \ref{th:generalsolution1}, 
 %\gray{and Corollary \ref{cor:everywhere}}
 %\blue{ Corollary is not used; does not give piecewise 
 %affine maps}, 
 provided we can show the existence of certain laminates with the right integrability properties. This is based on Example \ref{ex:staircase4} and Lemma \ref{l:staircase4-extended} below.  
 First of all, recall from from \eqref{eq:define_barq} in  Example \ref{ex:staircase4} the function 
 $$
 \bar{q}(p,b)=\frac{p-1}{b^{p-1}+1}+\frac{b}{b+1}.
 $$
 For definiteness, for any $p\in(1,2)$ set $\bar{q}_p=\max_{b>1}\bar{q}(p,b)$ and  denote by $\bar{b} = \bar b(p) >1$ a  value of $b$ for which the maximum is achieved (note that $\bar{q}(p,1)=p/2<1$ and $\bar{q}(p,b)\to 1$ as $b\to\infty$, so that, by the argument in Example \ref{ex:staircase4}, $\bar{q}_p\in (1,p)$ and $\bar{b}\gg 1$ exists and is finite, for any $p\in (1,2)$.

\begin{lemma}\label{l:staircase4-extended}
For any $1<p<2$ there exists a constant $M=M(p)>1$ with the following property. For any $A\in \R^{2\times 2}$ there exists a probability measure $\nu^\infty_A$ of the form \eqref{e:extendedstaircase} with barycenter $A$ which is supported on  $K_p$ and satisfies the bound
\begin{equation}\label{e:staircase4-extended}
M^{-1}(1+|A|^{(p-1)\bar{q}_p)}) \,  t^{-q_{p}}\leq \nu^{\infty}_A(\{X:|X|>t\})\leq M(1+|A|^{\frac{\bar{q}_p}{p-1}}) \, t^{-q_{p}}
\end{equation}
for all $t>1+|A|$.
\end{lemma}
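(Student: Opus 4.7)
The plan is to mirror the proof of Lemma~\ref{l:staircase3-extended}, building up the class of admissible barycenters $A$ through a hierarchy of cases, starting from the base staircase laminate provided by Example~\ref{ex:staircase4}.

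\textbf{Step 1 (Invariance extension).} Example~\ref{ex:staircase4} furnishes a staircase laminate $\nu^\infty$ supported in $\mathcal D\cap K_p$ with barycenter $A(1)=\diag(\bar b,-1)$ and satisfying the weak $L^{\bar q_p}$ bound \eqref{e:staircase4-weak}. I will identify three families of linear maps preserving both rank-one matrices and $K_p$: right multiplication $T_Q(X)=XQ$ by $Q\in SO(2)$; the negation $X\mapsto -X$ (using that $-R\in SO(2)$ whenever $R\in SO(2)$); and the anisotropic dilation $T_c(X)=\diag(c,c^{p-1})X$ for $c>0$ (using the identity $\diag(c,c^{p-1})\diag(\lambda,\lambda^{p-1})R=\diag(c\lambda,(c\lambda)^{p-1})R$, which shows directly that $T_c(K_p)\subset K_p$). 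Pushing $\nu^\infty$ forward under compositions of these maps via Lemma~\ref{l:invariance} will yield staircase laminates supported in $K_p$ whose barycenters sweep out the two-dimensional set
\[
\mathcal B := \{\pm A(c)Q : c>0,\ Q\in SO(2)\},
\]
and a short calculation based on $\min(c,c^{p-1})|X|\leq |T_cX|\leq \max(c,c^{p-1})|X|$ will show that the required weak $L^{\bar q_p}$ bounds are preserved; the asymmetric exponents $|A|^{\bar q_p/(p-1)}$ in the upper bound and $|A|^{(p-1)\bar q_p}$ in the lower bound of \eqref{e:staircase4-extended} arise precisely from this estimate, with a constant depending only on $p$.

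\textbf{Step 2 (Diagonal $A$).} For $A=\diag(u,v)\in \mathcal D$ with $v\neq 0$, I will construct a finite-order laminate with barycenter $A$ and support in $(\mathcal D\cap K_p)\cup (\mathcal D\cap \mathcal B)$, and then replace each atom lying in $\mathcal B$ by the corresponding staircase laminate from Step~1. The two natural diagonal targets at ordinate $v$ are $\diag(g(v),v)\in K_p$ with $g(v)=\mathrm{sgn}(v)|v|^{1/(p-1)}$ and $\diag(-\bar bv,v)=-vA(1)\in \mathcal B$; these are rank-one connected (they agree in their second entry), so a single splitting along the first diagonal entry suffices whenever $u$ lies between $g(v)$ and $-\bar bv$. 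Otherwise one or two preliminary splittings (in the style of Lemma~\ref{l:stage3l}, which already allows for this sort of ``corner-seeking'' reduction) will bring $u$ into the required range, and the degenerate case $v=0$ is handled by an initial perturbation.

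\textbf{Step 3 (General $A$).} For arbitrary $A\in\R^{2\times 2}$ I will exploit the right $SO(2)$-invariance. Writing $A=\tilde RQ$ via a QR-type decomposition with $Q\in SO(2)$ (after a sign adjustment, as in the proof of Proposition~\ref{p:stage3}, to ensure $\det Q=+1$) and $\tilde R$ upper triangular, the invariance $T_Q$ reduces the construction of $\nu_A^\infty$ to the upper triangular case. An iterated finite-order rank-one splitting, together with the atoms provided by Steps~1--2, then peels off the off-diagonal entry of $\tilde R$ and reduces the upper triangular case to the diagonal case already handled.

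\textbf{The hard part} will be Step~3: $K_p$ is only a two-dimensional submanifold of the four-dimensional $\R^{2\times 2}$, and its invariance group is strictly smaller than that of $E_{\mathcal K}\cup E_{1/\mathcal K}$. In particular, left multiplication by a general element of $SO(2)$ does \emph{not} preserve $K_p$, so there is no direct analogue of the conformal/anti-conformal decomposition from Step~5 of Lemma~\ref{l:staircase3-extended}. A further subtlety is that every upper triangular element of $K_p\cup\mathcal B$ is automatically diagonal, so the intermediate atoms appearing in the rank-one elimination of the off-diagonal entry of $\tilde R$ are forced to be non-triangular, and one must verify by hand that the iteration terminates in finitely many steps with all atoms in $K_p\cup\mathcal B$. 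Tracking the asymmetric $|A|$-dependence of \eqref{e:staircase4-extended} through these splittings, while keeping the constant $M$ dependent only on $p$, is the genuinely delicate part of the argument.
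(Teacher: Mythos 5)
Your Step 1 matches the paper's use of the invariances (negation, right multiplication by $SO(2)$, and the anisotropic dilation $X\mapsto\diag(\lambda,\lambda^{p-1})X$, which is indeed where the asymmetric exponents $(p-1)\bar q_p$ and $\bar q_p/(p-1)$ come from), but Steps 2 and 3 contain genuine gaps. In Step 2 the proposed anchor $\diag(-\bar b v,v)=-vA(1)$ does \emph{not} lie in your orbit $\mathcal B$: scalar multiplication does not preserve the $K_p$-type scaling, and the only diagonal elements of $\mathcal B$ with second entry $v\neq 0$ are $\diag(-\bar b\,\mathrm{sgn}(v)|v|^{1/(p-1)},v)$; so the splitting interval and all subsequent bookkeeping must be redone with $-\bar b\,g(v)$ in place of $-\bar b v$. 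Even after that repair, when $u$ is close to $g(v)$ (i.e.\ $A$ is close to $K_p$) the weight you assign to the staircase atom degenerates, so the lower bound in \eqref{e:staircase4-extended} cannot hold with a constant $M=M(p)$ only. The paper avoids both issues by ordering the steps differently: first a finite-order splitting for $\max(|x|,|y|)\le\tfrac12$, where every weight is bounded below by a constant depending only on $\bar b(p)$, and only afterwards the rescaling $T(X)=\diag(\lambda,\lambda^{p-1})X$ with $\lambda>1$, which transports those uniform weights to arbitrary diagonal $A$ and produces exactly the stated $|A|$-dependence.

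The more serious gap is Step 3, which you yourself leave open. Your premise that there is "no direct analogue of the conformal/anti-conformal decomposition" is mistaken, and this is precisely the missing idea: the decomposition $A=A_++A_-$ only requires that conformal and anti-conformal matrices be of the form $DR$ with $D$ diagonal and $R\in SO(2)$ (true independently of $K_p$), together with $\det(B-C)=0$ for the rescaled parts $B,C$; it never uses left $SO(2)$-invariance of the target set. Hence, exactly as in Steps 4 and 5 of the proof of Lemma \ref{l:staircase3-extended}, one splits a general $A$ along this determinant-zero connection into a conformal and an anti-conformal matrix, each of which is covered by the diagonal case combined with right $SO(2)$-invariance of $K_p$. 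Your substitute — a QR decomposition followed by an iterated elimination of the off-diagonal entry, with intermediate atoms forced outside the triangular matrices — is not carried out, and as you note its termination and the tracking of the constants are unresolved; as it stands the proposal does not prove the lemma for general $A$.
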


\begin{proof}

As in Lemma \ref{l:staircase3-extended} we proceed by different levels of generality of the matrix $A$. We start by noting that Example \ref{ex:staircase4} with $b=\bar{b}(p)$ treats the case $A=\diag(\bar{b},-1)$, yielding a staircase laminate $\nu^\infty_1$. 

\smallskip

\noindent{\bf Step 1. }If $A = \diag(-\bar b, 1)$ we use the invariance property (Lemma \ref{l:invariance}) with $T(X) = - X$. 
The linear map $T$ clearly preserves rank-one lines and also the set $K_p$. If $\nu^\infty_1$ is the staircase laminate from Example \ref{ex:staircase4}, then $\nu_{-1}^\infty:=T_*\nu^\infty_1$ is a staircase laminate supported in $K_p$ with barycenter $T(\diag(b,-1))=A$. Moreover $$T_*\nu^\infty_1(\{X:|X|>t\})=\nu^\infty_1(\{X:|TX|>t\}).$$
Since $|T(X)| = |X|$ we get
\begin{equation}  \label{eq:nu_infty_minus}
 \nu_{-1}^\infty(\{X:|X|>t\})=\nu^\infty_1(\{X:|X|>t\}).   
\end{equation}
Thus the estimate \eqref{e:staircase4-extended} follows from  the estimate \eqref{e:staircase4-weak} for $\nu_1^\infty$ in Example \ref{ex:staircase4}.

\smallskip

\noindent{\bf Step 2. }If $A = \diag(x,y)$ with $\max(|x|, |y|) \le \frac12$ we consider the splitting sequence
\begin{align*}
    \delta_{\diag(x,y)}&\mapsto \alpha_1\delta_{\diag(x,-1)}+(1-\alpha_1)\delta_{\diag(x,1)}\\
    &\mapsto\alpha_1(\alpha_2\delta_{\diag(-1,-1)}+(1-\alpha_2)\delta_{\diag(\bar b,-1)})\\
   &\quad+(1-\alpha_1)(\alpha_3\delta_{\diag(-\bar{b},1)}+
  (1-\alpha_3)\delta_{\diag(1,1)}),
\end{align*}
where $\alpha_1=\frac{1-y}{2}$, $\alpha_2=\frac{\bar{b}-x}{\bar b +1}$, $\alpha_3=\frac{1-x}{\bar b +1}$. 
The two terms $\delta_{\diag(\bar{b},1)}$ and $\delta_{\diag(-\bar{b},1)}$ can now be split further using
Example \ref{ex:staircase4} and Step 1. We finally  obtain the probability measure 
\begin{equation*}
   \tilde\nu^\infty=\lambda_1\nu^\infty_{1}+\lambda_2\nu^\infty_{-1}+\lambda_3\delta_{\diag(-1,-1)}+\lambda_4\delta_{\diag(1,1)},
\end{equation*}
with $\lambda_1 = \alpha_1 (1-\alpha_2)$ and $\lambda_2 = (1-\alpha_1) \alpha_3$. This measure
 is of the form \eqref{e:extendedstaircase} with $J'=2$, $J=4$. 
 Since $\min(\alpha_1, 1-\alpha_1) \ge \frac14$ and  $(1-\alpha_2) =\alpha_3\geq \frac{1}{2(\bar{b}+1)}$, the estimate \eqref{e:staircase4-extended} again holds with a $\bar{b}$-dependent constant $M$.
 
 \smallskip

\noindent{\bf Step 3. }If $A=\diag(x,y)$ with $\max(|x|, |y|) > \frac12$ we set 

$$ \lambda = \max( 2|x|, (2|y|)^{\frac{1}{p-1}}),
\quad (\bar x, \bar y) = \diag(\lambda^{-1} x, \lambda^{-(p-1)} y).$$
Then $\lambda > 1$ and $\max(|\bar x|, |\bar y|)= \frac12$. 
Thus by Step 2 there exists a measure  $\nu_{\bar x, \bar y}^\infty$ of the form \eqref{e:extendedstaircase} with barycentre $\diag(\bar x, \bar y)$ which satisfies

\begin{equation}  \label{eq:barxy_extended4}
 M^{-1} t^{-\bar q_{p}}\leq \nu^{\infty}_{\bar x, \bar y}(\{X:|X|>t\})\leq M t^{-\bar q_{p}}  
\end{equation}
for all $t \ge 1$ (the upper bound trivially holds also for $t < 1$).
We now  use the invariance property (Lemma \ref{l:invariance}) with $T(X) = \diag(\lambda, \lambda^{p-1}) X$. 
The linear map $T$ clearly preserves rank-one lines and also the set $K_p$. By  the invariance property, the  pushforward measure $\nu_A^\infty := T_*\nu^\infty_{\bar x,\bar y}$ is of the form \eqref{e:extendedstaircase} and it has barycentre 
$\diag(\lambda, \lambda^{p-1}) \diag(\bar x, \bar y)  = A$.
Moreover 
$$T_*\nu^\infty_{\bar x, \bar y}(\{X:|X|>t\})=\nu^\infty_{\bar x, \bar y}(\{X:|TX|>t\}).$$
Since $\lambda > 1$ we have
$$ \lambda^{p-1} |X| \le |TX| \le \lambda |X|.$$
Hence  \eqref{eq:barxy_extended4} implies that, for all $t \ge \lambda^{p-1}$, 
$$ M^{-1} \lambda^{(p-1) \bar q_p} t^{-q_p} \le  \nu^\infty_A( X: |X| > t) \le  M \lambda^{\bar q_p} t^{-q_p} $$
Since $\frac12 \lambda^{p-1} \le |A| \le \lambda$
we get, for all $t \ge 2 |A|$, 
$$ M^{-1} |A|^{(p-1) \bar q_p} t^{- \bar q_p} 
\le  \nu^\infty_A( X: |X| > t) \le 
M |A|^{\frac{\bar q_p}{p-1}} 
t^{-\bar q_p}.
$$
Thus we get the desired estimate
\eqref{e:staircase4-extended}.

\noindent{\bf Step 4. }For a general $A\in\R^{2\times 2}$ we use once more the invariance property (Lemma \ref{l:invariance}) with $T(A)=AR$ for $R\in SO(2)$, together with the invariance of the set $K_p$ under $T$, and argue as in Steps 4 and 5 of the proof of Lemma \ref{l:staircase3-extended}. This concludes the proof of Lemma \ref{l:staircase4-extended}.
\end{proof}

\gray{
\bigskip
for any $|\lambda|\geq 1$, we use the invariance property (Lemma \ref{l:invariance}) with $$
T(X) = \begin{pmatrix}\lambda
&0\\0&|\lambda|^{p-2} \lambda \end{pmatrix}.
$$
 The linear map $T$ clearly preserves rank-one lines and also the set $K_p$. If $\nu^\infty_1$ is the staircase laminate from Example \ref{ex:staircase4}, then $\nu_{\lambda}^\infty:=T_*\nu^\infty_1$ is a staircase laminate supported in $K_p$ with barycenter $T(\diag(b,-1))=A$. Moreover, $T_*\nu^\infty_1(\{X:|X|>t\})=\nu^\infty_1(\{X:|TX|>t\})$. Now observe that, since $1<p<2$ and $|\lambda| \ge 1$,  we have, for any $X\in K_p\cap\mathcal{D}$,  
$$
\frac{1}{\sqrt{2}}|\lambda| |X|\leq |T(X)|\leq |\lambda| |X|
$$
and therefore, for any $t\geq |\lambda|^{p'-1}$
\begin{align*}
\nu^\infty_1(\{X:|X|>\sqrt{2}\lambda^{-(p'-1)}t\})&\leq T_*\nu^\infty_1(\{X:|X|>t\})\\
&\leq \nu^\infty_1(\{X:|X|>\lambda^{-(p'-1)}t\}).
\end{align*}
From this and the estimate $|A|\sim \lambda^{p'-1}$ we deduce, using \eqref{e:staircase4-weak}, the estimate
\begin{equation*}
   C^{-1}|A|^{\frac{\bar{q}_p}{p-1}}t^{-\bar{q}_p} \leq \nu_\lambda^\infty(\{X:|X|>t\})\leq C|A|^{\frac{\bar{q}_p}{p-1}}t^{-\bar{q}_p}
\end{equation*}
for any $t\geq |A|$.
%%\blue{ I get $|A|^{q_p} t^{-q_p}$  ???}

\smallskip

\noindent{\bf Step 2. }If $A=\diag(x,y)$, let
%\green{LS corrected 20.07:\,$\bar{x}=\max\{1,|x|,|y|^{p'-1}\}$} 
and $\bar{y}=\bar{b}^{p-1}\bar{x}^{p-1}$ and consider the splitting sequence 
%\blue{ Don't we need $|y| \le \bar y$ for this splitting?
%What happens if $|y| > \bar y$?  Take $\bar x = {\bar b}^{-1} %|y|^{\frac1{p-1}}$ ??}
\begin{align*}
    \delta_{\diag(x,y)}&\mapsto \alpha_1\delta_{\diag(x,\bar{y})}+(1-\alpha_1)\delta_{\diag(x,-\bar{y})}\\
    &\mapsto\alpha_1(\alpha_2\delta_{\diag(-\bar{x},\bar{y})}+(1-\alpha_2)\delta_{\diag(\bar{b}\bar{x},\bar{y})})+\\
    &\quad+(1-\alpha_1)(\alpha_3\delta_{\diag(\bar{x},-\bar{y})}+(1-\alpha_3)\delta_{\diag(-\bar{b}\bar{x},-\bar{y})}),
\end{align*}
where $\alpha_1=\frac{y+\bar{y}}{2\bar{y}}$, $\alpha_2=\frac{\bar{b}-\tfrac{x}{\bar{x}}}{\bar{b}+1}$, $\alpha_3=\frac{\bar{b}+\tfrac{x}{\bar{x}}}{\bar{b}+1}$. 
The two terms $\delta_{\diag(-\bar{x},\bar{y})}$ and $\delta_{\diag(\bar{x},-\bar{y})}$ can now be split further using Step 1. Overall we then obtain the probability measure 
\begin{equation*}
   \tilde\nu^\infty=\lambda_1\nu^\infty_{-\bar{x}}+\lambda_2\nu^\infty_{\bar{x}}+\lambda_3\delta_{\diag(-\bar{b}\bar{x},-(\bar{b}\bar{x})^{p-1})}+\lambda_4\delta_{\diag(\bar{b}\bar{x},(\bar{b}\bar{x})^{p-1})},
\end{equation*}
which is of the form \eqref{e:extendedstaircase} with $J'=2$, $J=4$. Since $\alpha_2,\alpha_3\geq \frac{\bar{b}-1}{\bar{b}+1}$, the estimate \eqref{e:staircase4-extended} again holds with a $\bar{b}$-dependent constant $M$.

End new version Steps 1 and 2

\noindent{\bf Step 1. }If $A=\diag(\bar{b}\lambda,-|\lambda|^{p-2}\lambda)$ for any $|\lambda|\geq 1$, we use the invariance property (Lemma \ref{l:invariance}) with $$
T(X)=\begin{pmatrix}\lambda|\lambda|^{p'-2}&0\\0&\lambda\end{pmatrix}X,
$$
%\blue{ why not $T = \begin{pmatrix}\lambda
%&0\\0&|\lambda|^{p-2} \lambda \end{pmatrix}$ ?}
where $p'=\frac{p-1}{p}>2$ is the H\"older dual of $p$. The linear map $T$ clearly preserves rank-one lines and also the set $K_p$. If $\nu^\infty_1$ is the staircase laminate from Example \ref{ex:staircase4}, then $\nu_{\lambda}^\infty:=T_*\nu^\infty_1$ is a staircase laminate supported in $K_p$ with barycenter $T(\diag(b,-1))=A$. Moreover, $T_*\nu^\infty_1(\{X:|X|>t\})=\nu^\infty_1(\{X:|TX|>t\})$. Now observe that, since $1<p<2$, for any $X\in K_p\cap\mathcal{D}$ we have 
$$
\frac{1}{\sqrt{2}}|\lambda|^{p'-1}|X|\leq |T(X)|\leq |\lambda|^{p'-1}|X|
$$
and therefore, for any $t\geq |\lambda|^{p'-1}$
\begin{align*}
\nu^\infty_1(\{X:|X|>\sqrt{2}\lambda^{-(p'-1)}t\})&\leq T_*\nu^\infty_1(\{X:|X|>t\})\\
&\leq \nu^\infty_1(\{X:|X|>\lambda^{-(p'-1)}t\}).
\end{align*}
From this and the estimate $|A|\sim \lambda^{p'-1}$ we deduce, using \eqref{e:staircase4-weak}, the estimate
\begin{equation*}
   C^{-1}|A|^{\frac{\bar{q}_p}{p-1}}t^{-\bar{q}_p} \leq \nu_\lambda^\infty(\{X:|X|>t\})\leq C|A|^{\frac{\bar{q}_p}{p-1}}t^{-\bar{q}_p}
\end{equation*}
for any $t\geq |A|$.
%\blue{ I get $|A|^{q_p} t^{-q_p}$  ???}

\smallskip

\noindent{\bf Step 2. }If $A=\diag(x,y)$, let $\bar{x}=\max\{1,|x|\}$ and $\bar{y}=\bar{b}^{p-1}\bar{x}^{p-1}$ and consider the splitting sequence 
%\blue{ Don't we need $|y| \le \bar y$ for this splitting?
%What happens if $|y| > \bar y$?  Take $\bar x = {\bar b}^{-1} %|y|^{\frac1{p-1}}$ ??}
\begin{align*}
    \delta_{\diag(x,y)}&\mapsto \alpha_1\delta_{\diag(x,\bar{y})}+(1-\alpha_1)\delta_{\diag(x,-\bar{y})}\\
    &\mapsto\alpha_1(\alpha_2\delta_{\diag(-\bar{x},\bar{y})}+(1-\alpha_2)\delta_{\diag(\bar{b}\bar{x},\bar{y})})+\\
    &\quad+(1-\alpha_1)(\alpha_3\delta_{\diag(\bar{x},-\bar{y})}+(1-\alpha_3)\delta_{\diag(-\bar{b}\bar{x},-\bar{y})}),
\end{align*}
where $\alpha_1=\frac{y+\bar{y}}{2\bar{y}}$, $\alpha_2=\frac{\bar{b}-\tfrac{x}{\bar{x}}}{\bar{b}+1}$, $\alpha_3=\frac{\bar{b}+\tfrac{x}{\bar{x}}}{\bar{b}+1}$. 
The two terms $\delta_{\diag(-\bar{x},\bar{y})}$ and $\delta_{\diag(\bar{x},-\bar{y})}$ can now be split further using Step 1. Overall we then obtain the probability measure 
\begin{equation*}
   \tilde\nu^\infty=\lambda_1\nu^\infty_{-\bar{x}}+\lambda_2\nu^\infty_{\bar{x}}+\lambda_3\delta_{\diag(-\bar{b}\bar{x},-(\bar{b}\bar{x})^{p-1})}+\lambda_4\delta_{\diag(\bar{b}\bar{x},(\bar{b}\bar{x})^{p-1})},
\end{equation*}
which is of the form \eqref{e:extendedstaircase} with $J'=2$, $J=4$. Since $\alpha_2,\alpha_3\geq \frac{\bar{b}-1}{\bar{b}+1}$, the estimate \eqref{e:staircase4-extended} again holds with a $\bar{b}$-dependent constant $M$.
}
\smallskip

% \green{end new section}

\gray{LS:\, commented out; here gray for comparison:

\medskip

\medskip

\paragraph{Step 2: Construction of a staircase laminate  with good integrability properties}
Recall that we assume $p \in (1,2)$ and that  $\mathcal D$ denotes the set of diagonal matrices.
The key insight in \cite{ColomboTione2022} is the  construction of a staircase 
laminate supported 
in $K_p \cap \mathcal D$
with good integrability properties and  a non-trivial barycenter $A$.
Since $\Id, -\Id \in SO(2)$ 
we have
\begin{equation} K_p \cap \mathcal D = 
\left\{ \begin{pmatrix} x & 0\\ 0 & |x|^{p-2} x \end{pmatrix}
: x \in \R \right\} 
\end{equation}
In the following we identify the diagonal matrix with diagonal entries
$X_{11} =x$ and $X_{22} = y$ with the point $(x,y)$ in $\R^2$. Then rank-one lines in $\mathcal D$ correspond to horizontal or vertical lines in $\R^2$. 

Let $b > 1$.  We set $ A = A_0 = (1,-1)$ and  $A_n =(x_n, -y_n)$ with $$x_n = b n, \quad  y_n = n^{p-1}, \quad z_n = n
\quad \text{for $n\ge 1$.}$$
We consider the splittings
\begin{equation}
    \delta_{(1,-1)} \mapsto  \frac{b-1}{b+1}\delta_{(-1,-1)} + \frac2{b+1}\delta_{(x_1, -y_1)}
\end{equation}
and, for $n \ge 1$,
\begin{equation}\label{e:splitting_plaplace2}
\begin{split}
\delta_{ (x_n, -y_n) }  
&\mapsto \alpha_{n+1} \delta_{(x_n, x_n^{p-1})} +\alpha'_{n+1}\delta_{(x_n ,-y_{n+1})}\\
&\mapsto 
\alpha_{n+1}\delta_{(x_n, x_n^{p-1})}
+ \beta_{n+1} \delta_{(-z_{n+1}, -y_{n+1})}
+\gamma_{n+1}\delta_{ (x_{n+1} ,-y_{n+1})}
\end{split}
\end{equation}
with 
$$ \alpha'_{n+1} = \frac{x_n^{p-1} + y_n}{x_n^{p-1}+y_{n+1}}=
\frac{(b^{p-1}+1) n^p}{b^{p-1} n^p + (n+1)^{p-1}    }
= \frac{(b^{p-1}+1)}{b^{p-1}  + (1 + n^{-1})^{p-1}}.
$$
$$
\gamma_{n+1} =  \alpha_1' \frac{z_{n+1}+x_n}{z_{n+1}+x_{n+1}} =
\frac{(b^{p-1}+1)}{b^{p-1}  + (1 + n^{-1})^{p-1}}  \frac{b+1+n^{-1}}{b + 1 + (b+1) n^{-1}}
$$
and $\alpha_{n+1} = 1 - \alpha'_{n+1}$, 
$\beta_{n+1} = 1-\alpha_{n+1}-\gamma_{n+1}$.

The main observation is that, for a suitable $b >1$, we have
\begin{equation}  \label{eq:bound_wN}
    w_N := \prod_{j=1}^N \gamma_j \sim N^{- \bar q}
    \quad \text{with $\bar q \in (1,p)$.}
\end{equation} 
To see this, note that
\begin{equation}
\label{eq:define_barq_old} - \ln \gamma_{n+1} = \bar q  n^{-1} + \mathcal O(n^{-2}) \quad 
\text{with $\bar q =  \frac{p-1}{b^{p-1} + 1} 
+ \frac{b}{b+1}$} 
\end{equation}
Thus 
$ \ln w_N -  \bar q \ln N$ is uniformly bounded from above and below and hence 
\begin{equation}  \label{eq:bound_wN2}
   \frac1C N^{-\bar q} \le  w_N \le C N^{-\bar q}
\end{equation}for some constant $C$. 
Clearly $\bar q < p$. Moreover, with $a = b^{-1}$ we have
$$ \bar q - 1 =  \frac{p-1}{b^{p-1} + 1} 
- \frac{1}{b+1} = \frac{(p-1) a^{p-1}}{1+ a^{p-1}} - \frac{a}{1+a}.
$$
Since $p-1 \in (0,1)$ we have $a^{p-1} \gg a$ for $0 < a \ll 1$
and we conclude that $\bar q > 1$ for sufficiently 
small $a > 0$ or, equivalently, for sufficiently large $b >1$.
This concludes the proof of \eqref{eq:bound_wN}
where $\sim$ is understood in the sense of 
 \eqref{eq:bound_wN2}.

Hence we are in the situation of 
Proposition  \ref{pr:laminates_to_reduction} with $K = \{(1,-1)\}$,  $K' = K_p \cap \mathcal D$, 
%$$\gamma_1 = \frac2{(b^{p-1} +1)}, \quad   \gamma_n = \alpha_2' = %2^{-\bar q}
%\quad \text{for all $n \ge 2$}. $$ 
%Moreover 
$(1-\gamma_1)\mu_1  = \frac{b-1}{b+1} \delta_{(-1,-1)}$ and 
$$ (1-\gamma_n) \mu_n  = \alpha_n\delta_{(x_{n-1}, x_{n-1}^{p-1})}
+ \beta_n \delta_{(-z_{n}, -y_n)}  \quad \text{for $n \ge 2$.}
$$
As in Proposition  \ref{pr:laminates_to_reduction} denote by
$\nu^N$ the laminate of finite order obtained by iteratively spliting
$\delta_{A_{n-1}}$ for $1 \le n \le N$. Thus
$$ \nu^N = \sum_{n=1}^N w_{n-1} (1-\gamma_n) \mu_n + w_N \delta_{(x_N, -y_N)} $$
with $w_0 = 1$. Now $\mu_n$ is supported in the annulus
$ \{ X : c n \le |X| \le C n\}$ for suitable $C > c > 0$.
Thus it follows from  \eqref{eq:bound_wN2} and \eqref{eq:define_barq}
that, for some $C > 0$ and all $N \in \N$,
\begin{equation}  \label{eq:plaplace_staircase_weak}
    \nu^N( \{ X :|X| >t\}) \le C t^{-\bar q}
\end{equation}
and
\begin{equation} \label{eq:plaplace_staircase_infty}
    \int  |X|^{\bar q} \, d\nu^\infty(X) = \infty.
\end{equation}

Our main observation is that 
these estimates for the staircase 
laminate with barycenter $(1,-1)$  
easily allow one to construct a  non-trivial solution 
$u$ of the differential inclusion with $u \in W^{1,q}$
for all $q < \bar q$, but $u \notin W^{1,\bar q}$.

\medskip

\paragraph{Step 3: approximate solutions}

%From now  we fix 
%$$q \in (p-1, \bar q)$$
%and we do not indicate the dependence of the constants on $q$.

\begin{proposition}   \label{pr:plaplace_approximate_solutions}
There exists a constant $M$ with the following property.
For each regular set $\Omega \subset \R^2$, $A \in \R^{2,2}$, 
$b \in \R^2$, $s \in (1, \infty)$ and $\eps > 0$
there exists a $u \in W^{1,1}(\Omega)$ such that,
with $\Omega_{error} = \{ x \in \Omega : \nabla u(x) \notin K_p \}$
we have
\begin{equation}  \label{eq:plaplace_error_bound}
\int_{\Omega_{error}} (1+ |\nabla u|)^s < \eps,  
\end{equation}
\begin{equation}  \label{eq:weakLbarq_approximate}
   |\{ x \in \Omega : |\nabla u| > t\}| 
   \le M (1+ |A|^{\frac{\bar q}{p-1}}) |\Omega| t^{-\bar q}
   \text{ \, for all $t > 0$,}
\end{equation}
and
\begin{equation}  \label{eq:approximate_u1_notw1barq}
    \int_{\Omega\setminus\Omega_{error}} |\nabla u|^{\bar q} \, dx = \infty
\end{equation}
\end{proposition}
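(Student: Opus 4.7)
The plan is to combine Lemma \ref{l:staircase4-extended} with the extended version of Proposition \ref{pr:laminates_to_reduction} described in Remark \ref{d:extendedstaircase}. First, given $A \in \R^{2 \times 2}$ and $b \in \R^2$, Lemma \ref{l:staircase4-extended} produces a probability measure $\nu_A^\infty$ of the form \eqref{e:extendedstaircase}, supported in $K_p$, with barycenter $A$, and satisfying the two-sided weak $L^{\bar q_p}$ bound \eqref{e:staircase4-extended} for $t > 1+|A|$. Applying the extended Proposition \ref{pr:laminates_to_reduction} to $\nu_A^\infty$ with $K = K_p$, the given exponent $s$, a small parameter $\eps_0 \in (0,1/2)$ (depending on $\eps$ and $|\Omega|$), and any $\alpha \in (0,1)$ yields a piecewise affine map $u \in W^{1,1}(\Omega) \cap C^\alpha(\overline \Omega)$ with $u = l_{A,b}$ on $\partial \Omega$, the error estimate $\int_{\Omega_{error}}(1+|\nabla u|)^s\,dx < \eps_0 |\Omega|$, and the two-sided Borel distribution bound $(1-\eps_0)\nu_A^\infty(E) \le |\{\nabla u \in E\}|/|\Omega| \le (1+\eps_0)\nu_A^\infty(E)$.

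Choosing $\eps_0 \le \eps/|\Omega|$ delivers \eqref{eq:plaplace_error_bound}. The weak $L^{\bar q_p}$ bound \eqref{eq:weakLbarq_approximate} follows by testing the distribution bound against $E_t = \{|X| > t\}$ and invoking the upper inequality in \eqref{e:staircase4-extended} for $t > 1+|A|$; for $0 < t \le 1+|A|$ the bound is trivial after mildly enlarging $M$, using that $p<2$ gives $\bar q_p \le \bar q_p/(p-1)$ and hence $(1+|A|)^{\bar q_p} \lesssim 1 + |A|^{\bar q_p/(p-1)}$. The crucial observation for the non-integrability \eqref{eq:approximate_u1_notw1barq} is that $\supp \nu_A^\infty \subset K_p$, so $\nu_A^\infty(K_p^c)=0$; specialising the distribution bound to $E = K_p^c$ forces $|\Omega_{error}|=0$, so $\Omega\setminus\Omega_{error}$ equals $\Omega$ up to a null set. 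Combining the lower inequality in \eqref{e:staircase4-extended} with the distribution bound yields the pointwise lower estimate $|\{|\nabla u|>t\}| \ge \tfrac{1}{2}M^{-1}(1+|A|^{(p-1)\bar q_p})\,|\Omega|\,t^{-\bar q_p}$ for $t > 1+|A|$, and the layer-cake formula then produces the standard logarithmic divergence $\int_\Omega |\nabla u|^{\bar q_p}\,dx \gtrsim \int_{1+|A|}^\infty dt/t = +\infty$.

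The technical heart of the argument is already encapsulated in Lemma \ref{l:staircase4-extended} and Remark \ref{d:extendedstaircase}, so what remains is essentially assembly. The one point to verify carefully is that the extension of Proposition \ref{pr:laminates_to_reduction} recorded in Remark \ref{d:extendedstaircase} indeed delivers the two-sided Borel distribution bound \eqref{eq:bound_by_staircase_laminate2} (and not merely the integral error estimate), but this is immediate by combining Lemma \ref{l:basicconstruction} applied to the atomic part $\sum_{j=1}^{J'} \lambda_j \delta_{B_j}$ with the distributional approximation of each staircase summand $\tilde\nu_j^\infty$, which is preserved under the disjoint gluing argument from Section \ref{ss:basic}.
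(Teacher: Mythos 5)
Your overall strategy---build, for an arbitrary $A\in\R^{2\times 2}$, the extended measure $\nu^\infty_A$ of Lemma \ref{l:staircase4-extended} and transfer it to a map via Remark \ref{d:extendedstaircase}, then test \eqref{eq:bound_by_staircase_laminate2} on $E=\{|X|>t\}$---is essentially the mechanism the paper itself relies on (its own proof of this proposition performs the same reductions at the level of maps: the staircase for $\diag(1,-1)$, right multiplication by rotations, an $O(2)$-laminate for $|A|<1$, anisotropic rescaling for $|A|\ge 1$, while Lemma \ref{l:staircase4-extended} packages these reductions at the level of measures). The treatment of the error estimate, of the weak $L^{\bar q_p}$ bound including the trivial range $t\le 1+|A|$ (where $p<2$ is used), and the layer-cake computation are fine.

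However, the step where you deduce $|\Omega_{error}|=0$ by taking $E=K_p^c$ in \eqref{eq:bound_by_staircase_laminate2} is a genuine gap. That two-sided bound cannot be invoked for sets disjoint from $\supp\nu^\infty$: the maps produced by Proposition \ref{pr:laminates_to_reduction} are obtained by iterating Lemma \ref{l:basicconstruction} (hence Lemma \ref{l:roof}), which unavoidably creates a set of positive measure on which $\nabla u$ takes auxiliary values outside the support of the laminate; this is exactly why the proposition carries the error set $\Omega_{error}$ and the estimate \eqref{eq:iteration_epserror_staircase}, and why exact solutions require the infinite iteration of Theorem \ref{th:generalsolution}. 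If your reading were correct, the error clause in the very result you cite would be vacuous and a single application of Proposition \ref{pr:laminates_to_reduction} would already solve $\nabla u\in K$ a.e., making the whole reduction framework redundant. Fortunately the conclusion you need does not require it: run the construction with the exponent $s'=\max(s,\bar q_p)+1$ and a parameter $\eps'\le\eps/|\Omega|$. Then $\int_{\Omega_{error}}(1+|\nabla u|)^{s}\,dx\le\int_{\Omega_{error}}(1+|\nabla u|)^{s'}\,dx<\eps'|\Omega|\le\eps$, and in particular $\int_{\Omega_{error}}|\nabla u|^{\bar q_p}\,dx<\infty$; on the other hand the lower bound in \eqref{e:staircase4-extended} tested on $E=\{|X|>t\}$ for $t>1+|A|$, together with the layer-cake formula, gives $\int_{\Omega}|\nabla u|^{\bar q_p}\,dx=\infty$, so subtracting the finite error contribution yields $\int_{\Omega\setminus\Omega_{error}}|\nabla u|^{\bar q_p}\,dx=\infty$. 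Equivalently, Chebyshev with exponent $s'\ge\bar q_p$ applied to the error estimate lets you subtract the error-set contribution from the distribution function for every $t\ge1$, which is precisely how the lower bound is handled in the proof of Theorem \ref{th:generalsolution1}.
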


\begin{proof}
 {\bf Claim 1}: The result holds for $A = (1,-1) \in \mathcal D$.\\ 
This follows from Proposition 
\eqref{pr:laminates_to_reduction} with $K = \{(1,-1)\}$, 
$K' = K_p \cap \mathcal D$,
\eqref{eq:plaplace_staircase_weak},
\eqref{eq:approximate_u1_notw1barq}, and 
\eqref{eq:bound_by_staircase_laminate3}.

{\bf Claim  2}: The result holds for $ A \in O(2) \setminus SO(2)$.\\
In this case, $A$ can be written as
$A = (1,-1) R$ with $R \in SO(2)$. 
Let $\bar u$ be the solution for $A = (-1,1)$ on the set $R^{-1} \Omega$ and define $u := \bar u \circ R$. Since $K_p$   and the norm are invariant under the right action of $SO(2)$, $u$ has the desired properties. 

{\bf Claim 3}: The result holds if $|A| < 1$.\\
It is well-known (see Lemma  \ref{le:laminate_O2} below) 
that  there exists a laminate
of finite order $\nu$ supported on $O(2)$ with barycentre $A$ and such that $\nu(O(2) \setminus SO(2)) > 0$. 

First apply basic construction to this laminate.
Do nothing on $\Omega_{error}$. 
Extend trivially if $A_i \in SO(2)$ 
and use Claim 2 if $A_i \in O(2) \setminus SO(2)$.

{\bf Claim 4}: The result holds if $|A| \ge 1$. \\
Let $\lambda = 2 |A|^{\frac{1}{p-1}}$ and 
$$\bar A =  \begin{pmatrix} \lambda^{-1} & 0 \\ 0 & \lambda^{1-p}
 \end{pmatrix}  A.$$
 Then $|A| < 1$. 
Apply Claim 3 to $\bar A$ 
(with $\eps$ replaced by  $\eps' = \lambda^{-s} \eps$)
Let $\bar u$ be the corresponding solution.
Then 
\begin{equation}  \label{eq:weakLbarq_approximate2}
   |\{ x \in \Omega : |\nabla \bar u| > t\}| 
   \le M'  t^{-\bar q} |\Omega|
   \text{ \, for all $t > 0$.}
\end{equation}
Define $ u_1 = \lambda  \bar u_1$, $u_2 = \lambda^{p-1} \bar u_2$. 
Since $p \in (1,2)$ we have $|\nabla u| \le \lambda |\bar u|$
and hence $1 + |\nabla u| \le \lambda (1 + | \nabla \bar u)$.
Thus
\begin{align*}
   & \,  |\{ x \in \Omega : |\nabla  u| > t\}| 
   \le   |\{ x \in \Omega :
   |\nabla  \bar u| > \frac{t}{\lambda}\}|   \\
\le & \,    M'  \lambda^{\bar q}  t^{-\bar q}  |\Omega| 
\le    2^{\bar q} M' |A|^{\frac{\bar q}{p-1}} 
t^{-\bar q}  |\Omega|
\end{align*}
and 
\begin{align*}
  \,    \int_{\Omega_{error}} (1+ |\nabla u|)^s \, dx 
    \le \lambda^s  \int_{\Omega_{error}}
    (1+ |\nabla \bar u|)^s \, dx 
< \, \lambda^s \eps' \le \eps.
\end{align*}
\end{proof}

In the proof we use the following well known fact.
\begin{lemma}  \label{le:laminate_O2}
Assume that  $A \in \R^{2 \times 2}$ and $|A|^2 := \sum_{i,j} A_{ij}^2 < 1$. Then there exists a laminate of finite order  $\nu$  such that $\nu$ has barycenter $A$, $\nu$ is supported on $O(2)$ and
$\nu(O(2)\setminus SO(2)) > 0$.
\end{lemma}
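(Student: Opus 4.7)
My plan is to reduce to the case where $A$ is diagonal via the invariance property of laminates (Lemma~\ref{l:invariance}), and then construct an explicit order-two laminate supported on four diagonal orthogonal matrices.

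For the reduction, I would first write $A = U D V^T$ with $U, V \in SO(2)$ and $D = \diag(d_1, d_2)$ (possibly with one negative entry), obtained from the usual singular value decomposition $A = \tilde U \Sigma \tilde V^T$ (with $\tilde U, \tilde V \in O(2)$ and $\Sigma \ge 0$ diagonal) by absorbing a sign into $D$ via multiplication by $J = \diag(1,-1)$ if needed, so that both orthogonal factors end up in $SO(2)$. Since the Frobenius norm is invariant under orthogonal multiplication, $d_1^2 + d_2^2 = |A|^2 < 1$, whence $|d_1|, |d_2| < 1$. The linear map $T(X) := U X V^T$ preserves rank, hence rank-one connections, so it sends laminates of finite order to laminates of finite order; moreover $T(O(2)) = O(2)$, and $\det T(X) = \det U \cdot \det X \cdot \det V^T = \det X$, so $T$ preserves the partition $O(2) = SO(2) \sqcup (O(2) \setminus SO(2))$. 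It therefore suffices to produce the desired laminate with barycenter $D$ and push it forward under $T$.

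For the diagonal case $D = \diag(x,y)$ with $|x|, |y| < 1$, I would use the order-two laminate
\begin{align*}
\delta_D \;\mapsto\;& \tfrac{1+x}{2}\delta_{\diag(1,y)} + \tfrac{1-x}{2}\delta_{\diag(-1,y)} \\
\mapsto\;& \tfrac{(1+x)(1+y)}{4}\delta_{I} + \tfrac{(1+x)(1-y)}{4}\delta_{\diag(1,-1)} \\
&{} + \tfrac{(1-x)(1+y)}{4}\delta_{\diag(-1,1)} + \tfrac{(1-x)(1-y)}{4}\delta_{-I},
\end{align*}
where each splitting is elementary, using the rank-one connections along $e_1 \otimes e_1$ and $e_2 \otimes e_2$ respectively. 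The support lies in $O(2)$; all weights are strictly positive because $|x|, |y| < 1$; and a direct computation confirms the barycenter equals $\diag(x,y)$. Finally, the mass on $O(2) \setminus SO(2)$ is
\[
\tfrac{(1+x)(1-y) + (1-x)(1+y)}{4} = \tfrac{1-xy}{2},
\]
which is strictly positive since $|xy| \le (x^2+y^2)/2 < 1/2$ by AM--GM. No step presents a real obstacle; the only point requiring care is arranging the SVD so that $U, V \in SO(2)$, which is why $D$ may carry a negative entry.
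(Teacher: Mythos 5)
Your proof is correct and follows essentially the same route as the paper: reduce to a diagonal matrix $D$ with entries of modulus less than $1$ using the $SO(2)\times SO(2)$ invariance (the paper symmetrizes by left multiplication and then conjugates, which amounts to the same signed SVD you use), and then split each diagonal entry into $\pm1$ by elementary rank-one splittings, landing on $\diag(\pm1,\pm1)\subset O(2)$ with positive mass on the reflections. Your explicit computation of the mass $(1-xy)/2$ on $O(2)\setminus SO(2)$ is a small bonus over the paper's ``clearly'' but not a different argument.
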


\begin{proof} Since left multiplication by $SO(2)$ preserves $O(2)$,
rank-one connections,  and $|X|$, we may assume that $A$ is symmetric. Hence there exist $R \in SO(2)$ such that $A = R^{-1} D  R$ und $D \in \mathcal D$.
Since $|X| \le 1$ we have $D = (d_1, d_2)$ with $|d_i| < 1$. 
Hence a laminate of finite order with barycenter $A$ is 
obtained from the three  splitings
$$ \delta_{R^{-1} (d_1, d_2) R} \mapsto  \frac{1-d_1}{2} \delta_{R^{-1} (-1, d_2) R}  +  \frac{1+d_1}{2} \delta_{R^{-1} (1, d_2) R}, $$
$$ \delta_{R^{-1}(\pm 1, d_2) R} 
\mapsto  \frac{1-d_2}{2} \delta_{R^{-1} (\pm 1, 1) R}  +  \frac{1+d_2}{2} \delta_{R^{-1}(\pm 1, 1) R}. $$
Clearly the laminate obtained by splitting is 
supported on $O(2)$, but not on $SO(2)$.
\end{proof}

\medskip

\paragraph{Step 4: solutions of the differential inclusion}

\paragraph{Step 5:} Solution of the differential conclusion that are not in $W^{1,\bar q}(U)$ for any open $U$.

\begin{corollary}
   also can get, for every open disc $B'$,  
 \begin{equation}  \label{eq:exact_nowhere_W1barq_cor}
     \int_{B'} |\nabla u|^{\bar q} \, dx = \infty
 \end{equation}
 but then of course  $u$ no longer piecewise affine
\end{corollary}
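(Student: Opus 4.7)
The plan is to imitate the Baire category argument used for Corollary~\ref{cor:everywhere} in the elliptic case. Fix $\alpha\in(0,1)$ and $\delta>0$, and let $A\in\R^{2\times 2}$, $b\in\R^2$. Define the complete metric space
\[
X=\bigl\{u\in C^\alpha(\overline\Omega):\, u=l_{A,b}\textrm{ on }\partial\Omega,\ \|u-l_{A,b}\|_{C^\alpha(\overline\Omega)}\leq\delta,\ \nabla u\in K_p\textrm{ a.e.}\bigr\}
\]
equipped with the $C^0$ topology. The set $X$ is nonempty by the exact-solution result obtained from Theorem~\ref{th:generalsolution1} applied to Lemma~\ref{l:staircase4-extended}, and it is closed in $C^\alpha$ (and hence complete in $C^0$) because the inclusion $\nabla u\in K_p$ is preserved under $W^{1,\bar q_p}$-weak limits by a standard compensated compactness / Young measure argument (equivalently, $K_p$ is the zero set of the continuous and rank-one convex function $X\mapsto \mathrm{dist}(X,K_p)$, and sequences with uniformly bounded weak $L^{\bar q_p}$ norm are weakly precompact in $W^{1,q}$ for $q<\bar q_p$).

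For each open ball $B\subset\Omega$ and each $R>R_B:=(1+|A|)^{\bar q_p}|B|$ set
\[
X_{B,R}=\bigl\{u\in X:\, u|_B\in W^{1,\bar q_p}(B),\ \textstyle\int_B|\nabla u|^{\bar q_p}\,dx\leq R\bigr\}.
\]
Weak lower semicontinuity of the $L^{\bar q_p}$ norm (combined with weak $W^{1,q}$ compactness for $q<\bar q_p$ and uniform $C^\alpha$ bounds) shows that each $X_{B,R}$ is closed in $X$.

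The key step is to show that each $X_{B,R}$ has empty interior in $X$. Given $u\in X_{B,R}$ and $\varepsilon>0$, first set $v=l_{A,b}+\lambda(u-l_{A,b})$ for $\lambda\in(0,1)$ close enough to $1$ that the resulting perturbation fits in $X$; then uniformly approximate $v$ by a piecewise affine map $u_1$ with $\nabla u_1\in K_p$ a.e., so that $\|u_1-u\|_{C^0}<\varepsilon/2$ and $\int_B|\nabla u_1|^{\bar q_p}\,dx<R$. Pick any open cell $\widetilde\Omega\subset B$ on which $u_1$ is affine, equal to $l_{A',b'}$, and apply Theorem~\ref{th:generalsolution1} on $\widetilde\Omega$ with boundary data $l_{A',b'}$ to obtain $\tilde u\in X$ satisfying $\nabla\tilde u\in K_p$ a.e., $\|\tilde u-l_{A',b'}\|_{C^\alpha(\overline{\widetilde\Omega})}<\varepsilon/2$, and (by the matching lower bound in Theorem~\ref{th:generalsolution1} combined with $\int_0^\infty t^{\bar q_p-1}\mathrm{dist}$-integration) $\int_{\widetilde\Omega}|\nabla\tilde u|^{\bar q_p}\,dx=\infty$. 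Gluing $\tilde u$ into $u_1$ on $\widetilde\Omega$ and leaving it unchanged elsewhere produces $u_2\in X$ with $\|u_2-u\|_{C^0}<\varepsilon$ and $\int_B|\nabla u_2|^{\bar q_p}\,dx=\infty$, so $u_2\notin X_{B,R}$. The main (only) obstacle here is precisely the matching lower bound on the distribution function of $|\nabla u|$, which guarantees that the $L^{\bar q_p}$ integral can be driven to infinity on any subdomain; this is exactly the content of \eqref{eq:weakell3} in Theorem~\ref{th:generalsolution1}.

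Finally, taking the countable union of $X_{B,R}$ over balls $B\subset\Omega$ with rational centers and radii and rational $R>R_B$ yields a meager set $Y\subset X$. By the Baire category theorem $X\setminus Y$ is dense, and any $u\in X\setminus Y$ satisfies $\nabla u\in K_p$ a.e.\ together with $\int_{B'}|\nabla u|^{\bar q_p}\,dx=\infty$ for every open disc $B'\subset\Omega$, which gives \eqref{eq:exact_nowhere_W1barq_cor}. Of course such a $u$ is no longer piecewise affine.
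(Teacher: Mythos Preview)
Your argument has a genuine gap in the ``empty interior'' step. You include the constraint $\nabla u\in K_p$ a.e.\ in the definition of $X$, and then, to perturb $u\in X_{B,R}$, you pass to the convex combination $v=l_{A,b}+\lambda(u-l_{A,b})$, so that $\nabla v=(1-\lambda)A+\lambda\nabla u$. But $K_p$ is a one-parameter curve acted on by $SO(2)$, not an affine subspace; unless $A$ and $\nabla u(x)$ happen to lie on a common segment inside $K_p$, the point $\nabla v(x)$ will not belong to $K_p$, so $v\notin X$. Your subsequent step, uniformly approximating $v$ by a piecewise affine $u_1$ \emph{with} $\nabla u_1\in K_p$ a.e., is exactly the nontrivial approximation property that would itself require proof (essentially a further application of the exact-solution machinery), and you do not supply one. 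There is also a completeness issue: elements of your $X$ carry no a~priori integrability bound on the gradient ($K_p$ is unbounded), so a $C^0$ limit of elements of $X$ need not even lie in $W^{1,1}$, let alone satisfy $\nabla u\in K_p$ a.e.; your appeal to Young measures presupposes precisely the bound that is missing.

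The paper does not use Baire category here. It gives a direct inductive construction: enumerate the dyadic cubes $Q_k$ meeting $\Omega$ and, starting from a piecewise affine solution $u_0$ of $\nabla u\in K_p$ (take $u_0=l_A$ if $A\in K_p$, otherwise first apply the exact-solution theorem), at step $k$ locate an affine piece $U'$ of $u_k$ meeting $Q_k$, choose a tiny cube $Q'_k\subset U'\cap Q_k$ with $|Q'_k|\le 2^{-k-1}(M'+1)^{-1}|U'|$, and replace $u_k$ on $Q'_k$ by a new exact solution (again from the theorem, with the affine boundary data of $u_k$ on $U'$) having $\int_{Q'_k}|\nabla v|^{\bar q}=\infty$. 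The smallness of $|Q'_k|$ keeps the sequence $\nabla u_k$ bounded in $L^q$ for $q<\bar q$ and forces convergence in measure; a volume-fraction argument then shows that on a definite fraction of each $U\in\mathcal U_{k+1}$ the limit gradient agrees with $\nabla u_{k+1}$, so the limit satisfies both $\nabla u\in K_p$ a.e.\ and $\int_{Q'_k}|\nabla u|^{\bar q}=\infty$ for every $k$, hence on every ball. This construction stays inside the constraint set at every step and avoids both the convexity and the completeness issues.
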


In view of  \eqref{eq:nablav_equiv_nablau} this implies 
Theorem \ref{th:plaplace_very_weak}.

\begin{proof}
Now  need to do one  application of the theorem first. Then work inductively as before. Thus $u_0$ is not affine by given by the first application of the theorem.

We might want to put the proof in an appendix not included in the journal version. The argument is more or less standard and we want to keep our alternative proof of the main point in CT22 short.

To show assertion (2),  we first assume that $A \in K_p$. 
Consider the family of dyadic cubes $\mathcal F := \{ 2^{-k} (j + (0,1)^n) : k \in \N, j \in \Z^m\}$
and let 
$\mathcal F' := 
\{ Q \in \mathcal F : 
Q \cap \Omega \neq \emptyset\}$ denote those cubes which intersect $\Omega$. Since $\mathcal F'$ is countable, 
 there exists a
 bijection $k \mapsto Q_k$ from $\N$ to $\mathcal F'$.
Using assertion (1) and modifications on small subsets of $Q_k$ we will inductively construct a sequence of piecewise affine maps 
$u^{(k)}$ such that 
$\int_{Q_k} |\nabla u^{(k+1)}_1|^p \, dx = \infty$ 
and we will show that the limit of the $u^{(k)}$ has the desired properties. 

More precisely, we fix $\alpha \in (0,1)$ and  construct 
sequences of 
families $\mathcal U_k$ of disjoint regular subsets
of $\Omega$ and of  maps  $u_k: \Omega \to \R^2$ such that  
$|\Omega \setminus \bigcup_{U \in \mathcal U_k} U| = 0$,  
$u^{(k)}$ is affine on each $U \in \mathcal U_k$,
$u^{(k)} \in W^{1,\frac{q}{p-1}}(\Omega) \cap C^\alpha(\overline \Omega)$ and $u^{(k)} = l_A$ on $\partial \Omega$. 

Let  $u_0(x) = Ax $ and $\mathcal U_0 = \{ \Omega\}$. Given $u_k$
and $\mathcal U_k$ there exists $U' \in \mathcal U_k$  such that 
$U \cap Q_k \neq \emptyset$. Since $U'$ is open,  
there exists an open non empty  cube $Q'_k$ (not necessarily dyadic) 
such that $\overline{Q'_k} \subset U' \cap Q_k$ and 
\begin{equation}  \label{eq:bound_Qprime}
    |Q'_k| \le 2^{-k-1} \frac{1}{M'+1} |U'|
\end{equation}
Let $A'$ denote the value of $\nabla u^{(k)}$ in $U'$.
By assertion (1), there exist a piecewise affine map
$v: Q'_k \to \R^2$ such that $v = u_k$ on $\partial Q'_k$,
$\nabla v \in K_p$ a.e.\ in $Q'_k$  and
\begin{subequations}
\begin{align}
\label{eq:hoelder_assertion2}
\| v - u_k\|_{C^\alpha(\overline{Q'_k})} \le  & \, 2^{-k}\\
 \label{eq:v_exact_hq}
    \int_{Q'_k} 1+  h_q(\nabla v)  \, dx \le  & \, (M'+1) (1+ h_q(A')) |Q'_k|, \\
 \label{eq:v_exact_no_w1p}
    \int_{Q'_k} |\nabla v_1|^p \, dx = & \,  \infty.
\end{align}
\end{subequations}
Let $V_i$, $i \in \N$ denote the regular domains in $Q'$ on which $v$ is affine. More precisely the $V_i$ are the components of $\mathring Q'_v$. 
Set 
\begin{equation} \mathcal U_{k+1} := (\mathcal U_k \setminus \{U'\}) \cup \bigcup_{i \in \N} \{V_i\} \cup \{ U' \setminus Q'_k\} 
\end{equation}
and 
\begin{equation}  \label{eq:Qprime_replacement}
    u_{k+1} =
    \begin{cases}
    v & \text{in $Q'$,} \\
    u_k & \text{in $\Omega \setminus Q'$.}
    \end{cases}
\end{equation}
Then $u_{k+1}$ is affine on each element of $\mathcal U_{k+1}$ and  $\mathcal U_{k+1}$ is a refinement of $U_k$, i.e., for for each $V
\in U_{k+1}$ there exists a $\tilde V \in \mathcal U_k$ with $V \subset \tilde V$. 
Moreover, 
\begin{equation}   \label{eq:pnorm_Qprime}
\int_{Q'_k} |\nabla u^{(k+1)}_1|^p \, dx = \infty \quad \text{and} \quad
| Q'_k \setminus \bigcup_{ U \in \mathcal U_{k+1} : U \subset Q'_k} U| = 0.
\end{equation}
Since $\nabla u^{(k)} = A'$ in $U'$ it follows from 
\eqref{eq:v_exact_hq} and \eqref{eq:bound_Qprime} that 
\begin{equation}
    \int_{U'} 1+  h_q(\nabla u^{(k+1)}) \, dx \le (1 + 2^{-k}) 
     \int_{U'} 1+ h_q(\nabla u^{(k)}) \, dx
\end{equation}
By induction we get
\begin{equation}  \label{eq:hq_uk_assertion2}
    \int_{U'}1+ h_q(\nabla u^{(k)}) \, dx \le  \prod_{j=0}^{k-1} ( 1+ 2^{-k-1})  (1+h_q(A)) |\Omega|.
\end{equation}
In particular $ \sup_k \|  \nabla u^{(k)}\|_{L^{\frac{q}{p-1}}} < \infty$.

It follows from \eqref{eq:Qprime_replacement} and  
 \eqref{eq:bound_Qprime} that 
 \begin{equation}  \label{eq:small_volume_replacement}
     \{ x \in U : \nabla u_{k+1} \ne \nabla u_k\} \le 2^{-k-1} |U|
     \quad \text{for all $U \in \mathcal U_k$.}
 \end{equation}
 
 Thus $\nabla u^{(k)}$ converges in measure and, 
 in view of the uniform bound 
 in $L^{\frac{q}{p-1}}$, also in $L^1$.  
 It follows from \eqref{eq:hoelder_assertion2} that  
$u_k \to u$ in $C^\alpha(\overline \Omega)$.
Thus $u \in W^{1,1} \cap C^\alpha$ and $\nabla u^{(k)} \to \nabla u$
in measure.

Inductive application of \eqref{eq:small_volume_replacement}
yields
 \begin{equation}  \label{eq:small_volume_replacement2}
     \{ x \in U : \nabla u = \nabla u_k\} \ge \prod_{j=k}^\infty
     (1- 2^{-j-1})  |U|\ge \gamma |U|
     \quad \text{for all $U \in \mathcal U_k$.}
 \end{equation}
 where $\gamma = \prod_{j=0}^\infty (1- 2^{-j-1}) > 0$.
We now apply  \eqref{eq:small_volume_replacement2}  to each of the sets $U \in \mathcal U_{k+1}$ with $U \subset Q'_k$ and use 
 \eqref{eq:pnorm_Qprime}  and the fact that $\nabla u^{(k+1)}$ 
 is constant on $Q'$. This yields
 \begin{align}  \label{eq:assertion2_now1p}
  &  
   \int_{Q'_k} |\nabla u_1|^p \, dx
  = \,  \sum_{U \in \mathcal U_{k+1}: U \subset Q'_k} \int_{U} |\nabla u_1|^p \, dx \\
  \ge  & \, \gamma \sum_{U \in \mathcal U_{k+1}: U \subset Q'_k} \int_{U} |\nabla u^{(k+1)}_1|^p \, dx = 
  \int_{Q'_k} |\nabla u^{(k+1)}_1|^p \, dx =  \infty.  \nonumber 
 \end{align}
 Since every open disc $B' \subset \Omega$ contains one of the cubes $Q_k$  and hence one of the cubes $Q_k'$
 we obtain   \eqref{eq:exact_nowhere_W1barq_cor}.

Since $u^{(k)} = l_A$ on $\partial \Omega$ and $u^{(k)}$ converges in $C^\alpha(\overline \Omega$,  we get  $u=l_A$ on $\partial \Omega$.

Let $\Omega_k = \{ x \in \Omega : \nabla u(x) = \nabla u^{(k)}(x)$.
It follows from \eqref{eq:small_volume_replacement} that 
$|\Omega \setminus \Omega_k| \le 2^{-k}  |\Omega|$.
%\green{ \quad Check arithmetics with the $2^{-j-1}$}
Since $\nabla u_k \in K_p$ a.e.\, it follows that $\nabla u \in K_p$ a.e.

Finally we claim that 
\begin{equation}  \label{eq:hq_bound_assertion2_AKp}
\int_{\Omega} h_q(\nabla u) \, dx \le 2 h_q(A) |\Omega|.
\end{equation}   
To see this, 
set $E_k = \bigcup_{j \ge k} (\Omega \setminus \Omega_k)$.
Then $|E_k| \le 2^{-k+1} |\Omega$ and $E_{k+1} \subset E_k$. 
Since $\nabla u = \nabla u_k$ on $\Omega \setminus E_k$
\eqref{eq:hq_bound_assertion2_AKp} follows from 
\eqref{eq:hq_uk_assertion2} by applying the monotone convergence theorem to $f_k(x) = 1_{\Omega \setminus E_k} h_q(\nabla u^{(k)}(x))$.

\medskip

Finally,  we prove assertion (2) for a general $A \in \R^{d \times m}$. Let $\tilde u$ be as in assertion (1). Then $\tilde u$ is 
piecewise affine and $\nabla \tilde u \in K_p$ in each connected component $\Omega_i$  of
$\mathring \Omega_{\tilde u}$. Thus we can apply the previous
argument in each $\Omega_i$. Using   \eqref{eq:assertion2_now1p} \eqref{eq:hq_bound_assertion2_AKp} for $\Omega_i$ instead of $\Omega$ we easily conclude.
\end{proof}

}

\gray{LS:\, commented out; here gray for comparison:

\medskip

\section{Proof of Proposition~\ref{pr:diamond} for $\max(p,q) < \infty$} \label{se:diamond}

Here we give a detailed proof of Proposition~\ref{pr:diamond}. We restate the relevant part of  result for the convenience of the reader.
Note that the cases $p=\infty$ or $q= \infty$ were already discussed in the proof of Proposition~\ref{pr:diamond} in 
Section~\ref{se:general}

\begin{proposition}[Propagation of weak $L^p$-estimates under splitting]  \label{pr:diamond2}
Let $V$ be a finite-dimensional normed space. We use the abbreviation $\{ |X| > t\} := \{ X  \in V : |X| > t\}$. For a probability measure
$\nu$ on $V$ and $1 \le p < \infty$ we define the weak-$L^p$ moment by 
\begin{equation}
|\nu|_p := \left(  \sup_{t > 0} \,  t^p \nu( \{ |X| > t\}   \right)^{1/p} 
\end{equation}
%and we set 
%\begin{equation}
%|\nu|_\infty := \inf \{ t >  0 : \nu ( \{ |X| > t\} > 0 \}
%\end{equation}
%with the convention that $|\nu|_\infty = \infty$ if $\nu ( \{ |X| > t\} > 0$ for all $t > 0$.

Let $\nu' = \sum_{i \in \N} \lambda_i \delta_{A_i}$ be a probability measure on $V$ and let $\nu'' = \{ \nu''_i\}_{i \in \N}$
be a family of probability measures on $V$. We define the iterated probability measure $\nu' \diamond \nu''$ by
\begin{equation}
\nu' \diamond \nu'' = \sum_i \lambda_i \nu_i.
\end{equation}

Suppose that there exist $p, q \in [1, \infty)$  and $M'' \ge 1$ such that 
\begin{equation}
|\nu'|_p < \infty, 
 \quad |\nu''_i|_q  \le  M'' (1 +|A_i|)
\end{equation}
Then 
\begin{eqnarray}
|\nu' \diamond \nu''|_p &\le&     \left(  1+ \frac{q}{q-p} \right)^{\frac1p} 2 \max(|\nu'|_p, 1)     M''   \quad \text{if $p < q$},
\label{eq:iteration1app} \\
|\nu' \diamond \nu''|_q &\le&   \left( 1+  \frac{q}{p-q} \right)^{\frac1q} ( |\nu'|_p + 1)  M''   \quad \text{if $p >  q$}. 
\label{eq:iteration2app}
\end{eqnarray}
%with the convention that  $\frac{2q}{q-p} = 2$ for $q=\infty$ and   $\frac{2p}{p-q} =2$ for $p=\infty$.
\end{proposition}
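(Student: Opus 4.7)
\smallskip

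\noindent\textbf{Proof plan.} The argument closely mirrors the proof of Theorem~\ref{th:iteration}, with the role of Lebesgue measure on $\Omega$ played by the weights $\lambda_i$ and that of the gradient distribution of $u'$ played by $\nu'$. The common starting point is the identity
\begin{equation*}
(\nu'\diamond\nu'')(\{|X|>t\}) = \sum_{i\in \N} \lambda_i\,\nu''_i(\{|X|>t\}),
\end{equation*}
after which the two cases $p>q$ and $p<q$ are handled separately. In both, the smaller of the two exponents controls the iterated measure.

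In the case $p > q$ I would apply Chebyshev's inequality to each $\nu''_i$ to obtain the pointwise bound $\nu''_i(\{|X|>t\}) \le (M'')^q(1+|A_i|)^q t^{-q}$, and then sum and exchange orders of summation to reach
\begin{equation*}
(\nu'\diamond\nu'')(\{|X|>t\}) \le (M'')^q t^{-q}\int (1+|A|)^q\,d\nu'(A).
\end{equation*}
The remaining integral is controlled by a layer-cake decomposition: write $\int(1+|A|)^q\,d\nu' = q\int_0^\infty s^{q-1}\nu'(\{1+|A|>s\})\,ds$, use the trivial bound $\nu'(\{1+|A|>s\}) \le 1$ for $s \le 1+|\nu'|_p$, and use the weak-$L^p$ estimate $\nu'(\{|A|>s-1\}) \le |\nu'|_p^p(s-1)^{-p}$ for $s > 1+|\nu'|_p$. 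Since $q < p$, the tail integral converges and yields $\int(1+|A|)^q\,d\nu' \le (1+q/(p-q))(|\nu'|_p+1)^q$ up to harmless factors, giving the stated weak-$L^q$ bound after taking $q$-th roots.

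In the case $p < q$ the bound of $\nu''_i$ at fixed $t$ is only useful when $|A_i|$ is not too large, so I would introduce the threshold $T := t/M''$ and split the sum as $\Sigma_{>} + \Sigma_{\le}$ according to whether $|A_i|>T$ or $|A_i|\le T$. For $\Sigma_{>}$ the trivial bound $\nu''_i(\{|X|>t\}) \le 1$ gives $\Sigma_{>} \le \nu'(\{|A|>T\}) \le |\nu'|_p^p(M'')^p t^{-p}$ directly from weak-$L^p$ of $\nu'$. For $\Sigma_{\le}$ Chebyshev on each $\nu''_i$ gives $\Sigma_{\le} \le T^{-q}\int_{\{|A|\le T\}}(1+|A|)^q\,d\nu'(A)$, and a truncated layer-cake decomposition, together with the observation that because $q > p$ the integral $\int_0^{T+1} s^{q-1}(s-1)^{-p}\,ds$ now grows like $T^{q-p}$ rather than being tail-bounded, gives
\begin{equation*}
\int_{\{|A|\le T\}}(1+|A|)^q\,d\nu' \le C_{p,q}\max(|\nu'|_p^p,1)\,T^{q-p}.
\end{equation*}
Substituting $T=t/M''$ converts the apparent $t^{-q}$ decay of $\Sigma_{\le}$ into $t^{-p}$, matching $\Sigma_{>}$. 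Adding the two contributions and taking $p$-th roots produces the weak-$L^p$ bound, with the factor $2\max(|\nu'|_p,1)$ absorbing both pieces as well as the trivial regime of small $t$ where the estimate is automatic.

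The mechanism is entirely classical and the main obstacle is purely bookkeeping: to keep the explicit constants $(1+q/(q-p))^{1/p}$ and $(1+q/(p-q))^{1/q}$ one must split the layer-cake integral carefully at $s=1$ and at $s=1+|\nu'|_p$, isolating the region where the trivial mass bound $\nu' \le 1$ is used from the region where genuine weak-$L^p$ decay is exploited, so that integrals neither blow up at the origin nor at infinity.
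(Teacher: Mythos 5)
Your plan is correct and follows essentially the same route as the paper's proof: the same case split at $p\lessgtr q$, Chebyshev via the weak-$L^q$ bound on each $\nu''_i$, a threshold in $|A_i|$ comparable to $t/M''$ combined with the weak-$L^p$ decay of $\nu'$ for the large-$|A_i|$ part, and a layer-cake estimate of the (truncated) moment $\int(1+|A|)^q\,d\nu'$ split at $s\sim 1+|\nu'|_p$, with the small-$t$ regime absorbed into the factor $2\max(|\nu'|_p,1)$. Carried out with the bookkeeping you indicate, this reproduces the stated constants.
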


\begin{proof}
Assume first that $p < q$. Assume in addition that $q < \infty$ and  $t > 2 M''$. Then $\frac{t}{M''} -1 \ge \frac{t}{2 M''} > 1$ and

\begin{eqnarray} 
& & \mu( \{ |X| > t \})   \label{eq:iteration_weak_estimate1}\\
 &\le & \sum_{i : t \le  M'' (1 + |A_i|)} \lambda_i  + \sum_{i : t >  M'' (1 + |A_i|)} \lambda_i \nu''_i(\{ |X| > t\}),   \nonumber \\
&\le & \nu'( \{ |X| \ge  \frac{t}{M''} - 1 \}) +  \sum_{i : t >  M'' (1 + |A_i|)} \lambda_i  (M'')^q \frac{ (1+|A_i|)^q}{t^q},  \nonumber \\
& \le & \nu'( \{ |X| \ge  \frac{t}{2M''}  \})   +  \frac{(M'')^q}{t^q}  \sum_{i : t > M'' (1 + |A_i|)} \lambda_i   \left[ (1+|A_i|)^q -  1 \right] 
 \nonumber \\
    & &  + \frac{(M'')^q}{t^q}  \sum_{i : t >  M'' (1 + |A_i|)} \lambda_i  \nonumber  \\
&\le &  |\nu'|_p^p \left( \frac{2 M''}{t} \right)^p  +   \frac{(M'')^q}{t^q}    \nonumber \\
& &  +    \frac{(M'')^q}{t^q}  \sum_{i : t >  M'' (1 + |A_i|)} \lambda_i   \left[ (1+|A_i|)^q -  1 \right]   \nonumber 
\end{eqnarray}
Now
$$ \sum_{i : t >  M'' (1 + |A_i|)} \lambda_i   \left[ (1+|A_i|)^q -  1 \right]  = \int_{ \{ |X| < R \}}  \left[(1+ |X|)^q - 1 \right] \, d\nu'(X) $$
with $R = \frac{t}{M''} - 1$.
Moreover by Fubini's theorem we have 
$$ \int_{ \{ |X| < R \}} h(|X|) \, d\nu'(X)  = \int_0^R       h'(s) \, \nu'( \{s < |X| < R \})\, ds. $$
for every $C^1$ function $h$ with $h(0) = 0$. 
Hence
\begin{eqnarray*}
& &  \sum_{i : t >  M'' (1 + |A_i|)} \lambda_i   \left[ (1+|A_i|)^q -  1 \right] |\\
& \le & \int_0^{\frac{t}{M''}-1} q (1 + s)^{q-1}  \, \min\left( \frac{|\nu^{(1)}|^p_p}{s^p} , 1\right)  \, ds\\
& \le & \int_0^{\frac{t}{M''}-1} q (1 + s)^{q-1}  \, 2^p \frac{ \max(|\nu^{(1)}|^p_p, 1)}{(1+s)^p}  \, ds\\
&=& 2^p  \max(|\nu^{(1)}|^p_p, 1)     \frac{q}{q-p}\left( \frac{t^{q-p}}{(M'')^{q-p}} - 1  \right).
\end{eqnarray*}
Here the second inequality follows by considering the case $0 \le s \le 1$ and $s >1$ separately. 
Inserting this estimate into \eqref{eq:iteration_weak_estimate1}
we get
\begin{equation}
 \mu( \{ |X| > t \}) \le 2^p (M'')^p |\nu|_p^p t^{-p} +  2^p  \max(|\nu'|^p_p, 1)     \frac{q}{q-p}  (M'')^p t^{-p}
\end{equation}
whenever $t > 2 M''$. In combination  with the trivial estimate
\begin{equation} t^p  \mu( \{ |X| > t \}) \le 2^p (M'')^p \quad \text{if $0 < t \le 2 M''$}
\end{equation}
we get
$$ |\mu|_p^p \le   2^{p} (M'')^p  \left(1 + \frac{q}{q-p}  \right) \max(|\nu'|^p_p, 1)$$  
if $p < q < \infty$.

\smallskip

Now assume that $q < p$.  Then
\begin{eqnarray}
& & \mu( \{|X| > t\}) \\
% &\le & \sum_{i : t \le  M'' (1 + |A_i|)} \lambda_i  + \sum_{i : t >  M'' (1 + |A_i|)} \lambda_i \nu''_i(\{ |X| > t\}),   \nonumber \\
 &\le &   \sum_i  \lambda_i  t^{-q}  (M'')^q  (1+|A_i|)^q  \nonumber  \\
 & = & t^{-q} (M'')^q  \int (1+|X|)^q \, d\nu'(X).
%& \le &  \sum_i \lambda_i \nu_i( \{|X| > t \})  \nonumber  \\
\end{eqnarray}
%and
%\begin{eqnarray*}
%& &  \sum_i \lambda_i     (1+|A_i|)^q)  \nonumber  \\
%& = &  \int  (1 + |X|)^q \, d\nu'(X)  \nonumber  \\
%& = &1 +  \int_0^{\frac{t}{M''}-1}  q (1+s)^{q-1}  \, \nu'( \{ |X| >s \}) \, ds    \nonumber   \\
%& \le &  1+\int_0^{\frac{t}{M''}-1}  q (1+s)^{q-1}  \, \min( \frac{|\nu'|_p^p}{s^p},1) \, ds  \\
%& \le &   1+ 2^p \max(|\nu'|_p^p,1) \int_0^{\frac{t}{M''}-1} q (1+s)^{q-p-1} \, ds \\
%&  \le &  1+  2^p \max(|\nu'|_p^p,1)\left(  \frac{q}{p-q} - \frac{(M'')^{p-q}}{t^{p-q}} \right)
%\end{eqnarray*}
%
%
Using that $\frac{s+1}{s} \le \frac{R+1}{R}$ if $s \ge R > 0$ we get
\begin{eqnarray*}
&  &  \int  (1 + |X|)^q  \, d\nu'(X)  \nonumber  \\
&\le &  ( |\nu'|_p + 1)^q  + \int _{ \{|X| > |\nu'|_p \}}  \left[(1 + |X|)^q - (|\nu'|_p + 1)^q \right] \, d\nu'(X)    \nonumber  \\
& \le &     ( |\nu'|_p + 1)^q  + \int_{ |\nu'|_p}^\infty  q (1+s)^{q-1}   \nu'( \{ |X| > s \} \, ds \\
& \le &  ( |\nu'|_p + 1)^q    + \int_{ |\nu'|_p}^\infty  q (1+s)^{q-1}  \frac{|\nu'|_p^p}{s^p}  \, ds \\
& \le &  ( |\nu'|_p + 1)^q   + \left(\frac{( |\nu'|_p + 1}{ |\nu'|_p} \right)^p  
 \int_{ |\nu'|_p}^\infty  q (1+s)^{q-1-p}      |\nu'|_p^p \, ds \\
 & \le &   ( |\nu'|_p + 1)^q   + \left( |\nu'|_p + 1 \right)^p  \frac{q}{p-q}  \frac{1}{ (|\nu'|_p + 1)^{p-q}} \\
 & \le &  ( |\nu'|_p + 1)^q  \left( 1 + \frac{q}{p-q} \right)
\end{eqnarray*}
This implies  \eqref{eq:iteration2}  for $q < p < \infty$. 
\end{proof}
}

\gray{LS:\, commented out; here gray for comparison:
\medskip

\section{$L^\infty$ estimates for approximate solutions}  \label{se:app_linfty}

%\green{ SM Jan 13, 2022. The whole section is new. I left it in %black for better readability  and used  green colour for some %additional comments.\\}
The goal of this section is to prove the following result. Recall that $\rightharpoonup$ denotes weak convergence. 

\begin{theorem}  \label{th:approximate_appendix}   There exists a constant $M$ with the following property.
Let  $ A \in \R^{2n \times 2n} \setminus L$ with  $\rank A = 1$.
Let $\Omega \subset \R^{2n}$ be bounded and open,
 and  $\alpha \in [0, 1)$. Then there exists a  sequence of  maps
 $u^{(j)} : \overline \Omega \to \R^{2n}$ such that
$u^{(j)} \in  W^{1,1}(\Omega) \cap C^\alpha(\overline \Omega)$, $u^{(j)}=l_A$ on $\partial\Omega$  
and
\begin{eqnarray}
\left|\{|\nabla u^{(j)}| > t \} \right| &\le& M (1 + |A|)^{2n}  |\Omega| t^{-2n} \quad  \text{for $t > 0$,} \label{eq:approximate_weak_L2n} \\
\lim_{j \to \infty} \sup_{t > 1}  t^{2n}  & &  \!  \! \! \!    \!  \! \! \!  \! \! \!  \!  \! \! \!\left| \{  \nabla u^{(j)} \notin L \cap \Sigma\} \cap \{|\nabla u^{(j)}| > t \} \right|  = 0 \label{eq:approximate_bad_set_L2n}\\
\det Du^{(j)} &\to& 1   \quad \text{in $L^\infty(\Omega)$,}    \label{eq:approx_conv_det}     \\
u^{(j)} &\to&  l_A  \quad \text{in $C^\alpha(\overline \Omega)$,}\\
u^{(j)} &\rightharpoonup& l_A \quad \text{in $W^{1,p}(\Omega)$ for all $p \in [1, 2n)$}
\end{eqnarray}
and
\begin{equation}  \label{eq:approximate_both_Li}
\liminf_{j \to \infty} \|  \dist(Du^{(j)},L_i) \|_{L^1(\Omega)} > 0  \quad \text{for $i=1, 2$.}
\end{equation}
Moreover, the assertion  \eqref{eq:approx_conv_det} can be replaced by 
\begin{equation}
\dist(\nabla u^{(j)}, L \cap \Sigma) \to 0 \quad \text{in $L^\infty(\Omega)$.}
\end{equation}
\end{theorem}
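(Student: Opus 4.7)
The plan is to assemble Stages~2 and~3 into a single reduction statement, and then apply Definition~\ref{d:reduced} once to produce the approximating sequence. Proposition~\ref{p:stage2} gives that $\{X : \rank X \le 1\}$ can be reduced to $L$ in weak $L^q$ for every $q < \infty$, while Proposition~\ref{p:stage3} gives that each $L_i$ can be reduced to $L_i \cap \Sigma$ in weak $L^{2n}$. Applying Theorem~\ref{th:iteration} with some $q > 2n$ then yields that $\{X : \rank X \le 1\}$ can be reduced to $L \cap \Sigma$ in weak $L^{2n}$. Since $\rank A = 1$ by hypothesis, $A$ belongs to the source set of this reduction.

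I would next fix a sequence $s_k \to \infty$ and apply Definition~\ref{d:reduced} to $l_A$ with $s = s_k$ and $\eps = 2^{-k}$, simultaneously invoking the rescaling/covering argument of Section~\ref{ss:basic} to arrange $\|u^{(k)} - l_A\|_{C^\alpha(\overline\Omega)} < 2^{-k}$. The resulting piecewise affine maps $u^{(k)} \in W^{1,1}(\Omega) \cap C^\alpha(\overline\Omega)$ are equal to $l_A$ on $\partial\Omega$, satisfy the weak $L^{2n}$ bound \eqref{e:approximate_weak_L2n_intro} with a universal constant $M$, and satisfy
\begin{equation*}
\int_{\{\nabla u^{(k)} \notin L \cap \Sigma\}} (1 + |\nabla u^{(k)}|^{s_k}) \, dx < 2^{-k} |\Omega|.
\end{equation*}
For any fixed $s \in [1,\infty)$, once $s_k \ge s$ the pointwise inequality $1 + t^s \le 2(1 + t^{s_k})$ for $t \ge 0$ gives \eqref{e:approximate_bad_set_intro} immediately. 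The $C^\alpha$-convergence is built in, and combined with the uniform $W^{1,p}$ bound for each $p < 2n$ (which follows from the weak-$L^{2n}$ bound) yields $u^{(k)} \rightharpoonup l_A$ in $W^{1,p}(\Omega)$.

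The only step requiring input beyond Definition~\ref{d:reduced} is the non-degeneracy \eqref{e:approximate_both_Li_intro}, which I would prove by contradiction. Suppose $\dist(\nabla u^{(k)}, L_i) \to 0$ in $L^1(\Omega)$ along a subsequence for some $i \in \{1,2\}$. Let $\Pi_i : \R^{2n \times 2n} \to L_i^\perp$ denote the orthogonal projection onto the orthogonal complement of the linear subspace $L_i$; since $|\Pi_i X| = \dist(X, L_i)$, we get $\Pi_i \nabla u^{(k)} \to 0$ strongly in $L^1(\Omega;L_i^\perp)$. On the other hand, $\nabla u^{(k)} \rightharpoonup A$ weakly in $L^p(\Omega)$ for any $p \in (1, 2n)$, hence $\Pi_i \nabla u^{(k)} \rightharpoonup \Pi_i A$ weakly in $L^p$ by continuity of $\Pi_i$. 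Testing both limits against bounded functions and using the fact that strong $L^1$-limits and weak $L^p$-limits must agree, one concludes $\Pi_i A = 0$, i.e.\ $A \in L_i \subset L$, contradicting $A \notin L$. The main obstacle in the overall program lies upstream of this theorem, in the construction of the staircase laminates of Stages~2 and~3; at this final stage the work is essentially bookkeeping.
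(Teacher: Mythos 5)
Your argument is, in substance, the paper's proof of Theorem \ref{th:approximate}: combine Proposition \ref{p:stage2} and Proposition \ref{p:stage3} through Theorem \ref{th:iteration} to get that $\{\rank X\le 1\}$ can be reduced to $L\cap\Sigma$ in weak $L^{2n}$, apply Definition \ref{d:reduced} with $s=s_k\to\infty$, $\eps=2^{-k}$, use the rescaling/covering argument for the $C^\alpha$ closeness, and rule out $\liminf_k\|\dist(\nabla u^{(k)},L_i)\|_{L^1}=0$ by weak closedness of the subspace $L_i$ (your projection argument is fine and matches the paper). The weak-type bound on the bad set also follows from your integral estimate with $s=2n$ by Chebyshev. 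So for those conclusions the proposal is correct and essentially identical to the paper.

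However, the statement you are proving is the refined (appendix) version, which in addition asserts $\det \nabla u^{(j)}\to 1$ in $L^\infty(\Omega)$, and, in the final ``Moreover'', that one can instead achieve $\dist(\nabla u^{(j)},L\cap\Sigma)\to 0$ in $L^\infty(\Omega)$. These are essential-supremum statements on all of $\Omega$, including the error set, and here your argument has a genuine gap: Definition \ref{d:reduced} gives only an integral smallness of $\int_{\Omega_{error}}(1+|\nabla u|^s)$ and no pointwise control whatsoever of $\nabla u$ on $\Omega_{error}$. In the construction you invoke, $\Omega_{error}$ has small but in general positive measure, and on it the gradient takes values far from $\Sigma$: for instance the residual rank-one matrices left over from Stage 2 have determinant $0$, and the intermediate ``staircase'' matrices of Stage 3 have very large determinant. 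Hence $\|\det\nabla u^{(j)}-1\|_{L^\infty(\Omega)}$ does not tend to zero for the maps produced by the plain reduction scheme; smallness in measure of the bad set cannot be upgraded to an $L^\infty$ statement. The paper obtains the $L^\infty$ assertions from a refined version of Stage 3 (and of its building blocks, i.e. variants of Lemma \ref{l:roof} and Lemma \ref{l:basicconstruction} in which $\nabla u$ is forced to stay within prescribed $\delta$-balls of the support of the laminate at every step of the iteration), yielding $|\det\nabla u-1|\le\eps$ almost everywhere in $\Omega$ -- in particular on the error set -- respectively $\dist(\nabla u,L_1\cap\Sigma)\le\eps$ almost everywhere, and then combines this refined Stage 3 with Stage 2. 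This extra mechanism, which is the actual content of the refined theorem beyond Theorem \ref{th:approximate}, is absent from your proposal.
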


As in the proof of Theorem~\ref{th:approximate} it is easy to see that 
Theorem~\ref{th:approximate_appendix} easily follow from Proposition~\ref{p:stage2}
and the following refined version of Proposition~\ref{p:stage3}.

\begin{proposition}[Stage 3, refined]\label{p:stage3_refined} There exists a constant $M$ with the following property.
Let $\Omega\subset\R^{2n}$,  $\alpha\in (0,1)$, and $\varepsilon>0$.  
 Then, for  any affine map $l_{A,b}$ with $A \in L_1$  there exists a piecewise affine map $u\in W^{1,1}(\Omega)\cap C^{\alpha}(\overline{\Omega})$
  with $u=l_{A,b}$ on $\partial\Omega$ such that, 
with 
$$
\Omega_{error}:=\{x\in\Omega:\,\nabla u(x)\notin L_1\cap\Sigma\}
$$ 
we have
\begin{subequations}
\begin{align}
|\det \nabla u - 1| &\le \eps  \quad \text{a.e. in $\Omega$,} \label{e:stage3-0_refined}\\
| \Omega_{error} \cap \{x\in\Omega:\,|\nabla u(x)|>t\}|& \le \eps  (1 + |A|^{2n}) |\Omega| t^{-2n}  \label{e:stage3-1_refined}\\
|\{x\in\Omega:\,|\nabla u(x)|>t\}|&\leq M (1+|A|^{2n})|\Omega|t^{-{2n}}.\label{e:stage3-2_refined}
\end{align}
\end{subequations} 
Moreover, condition \eqref{e:stage3-0_refined} can be replaced by
\begin{equation} \label{e:stage3-0bis_refined}
\dist(\nabla u, L_1 \cap \Sigma) \le \eps  \quad \text{a.e. in $\Omega$.}
\end{equation}
\end{proposition}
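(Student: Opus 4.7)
My plan is to prove Proposition~\ref{p:stage3_refined} by combining the construction in the proof of Proposition~\ref{p:stage3} with an iteration argument modelled on Theorem~\ref{th:generalsolution}, carried out entirely within the linear subspace $L_1$. The underlying principle is that if the basic Stage~3 reduction can be executed so as to keep $\nabla u \in L_1$ everywhere (not only on the ``good'' set), then one may recursively apply the reduction on each affine piece of the error set to drive $\nabla u$ into $L_1 \cap \Sigma$ almost everywhere. In that limit, both pointwise conditions $|\det \nabla u - 1| \le \eps$ and $\dist(\nabla u, L_1 \cap \Sigma) \le \eps$ hold trivially (with equality to $0$), and $\Omega_{error}$ becomes a null set, making \eqref{e:stage3-1_refined} vacuous while \eqref{e:stage3-2_refined} is propagated through the iteration.

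The first step is to reduce to the case $A \in \mathcal D$ exactly as in the proof of Proposition~\ref{p:stage3}: write $A = R D Q^T$ with $R,Q \in SO(2n) \cap L_1$ and use that $L_1 \cap \Sigma$ is invariant under left/right multiplication by $SO(2n) \cap L_1$, as are the matrix norm and $|\det \nabla u|$. For $A \in \mathcal D$, the basic Stage~3 construction decomposes into the staircase reduction of Corollary~\ref{c:staircase1} (Example~\ref{ex:staircase1}) together with the finite-order splitting from Lemma~\ref{l:stage3l}. In both, the rank-one directions arising in the elementary splittings have the form $c\, e_j \otimes e_j$ for some diagonal index $j$, and therefore lie in a single one of the two $n \times n$ blocks that define the $L_1$ decomposition of $\R^{2n \times 2n}$.

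The key technical step is to adapt Lemma~\ref{l:roof}, applied with a rank-one direction $\eta \otimes \xi = c\, e_j \otimes e_j$ in one block, so that the resulting roof errors remain in $L_1$. By the rescaling/covering argument of Section~\ref{ss:basic} one may work on a product reference domain $\Omega_0 = \Omega_0' \times Q \subset \R^n \times \R^n$, with $\Omega_0'$ the standard roof domain of the proof of Lemma~\ref{l:roof} carried out in $\R^n$, and take the roof function to depend only on the variables of the block that contains $\eta$. Then $\nabla f$ is supported in the corresponding $n$ coordinates and the perturbation $\eta \otimes \nabla f$ is block-diagonal, so $\nabla u$ stays in $L_1$ on all of $\Omega_0$. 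The mismatch on the lateral boundary $\Omega_0' \times \partial Q$ is resolved by an affine correction in the complementary block contributing only a small additional region, which is absorbed into the next step of the iteration.

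With this $L_1$-preserving refinement of Stage~3 available, the iteration scheme of Theorem~\ref{th:generalsolution} applies verbatim inside $L_1$: inductively, on each affine piece of the current error set (where $\nabla u_k$ takes a value in $L_1 \setminus \Sigma$), re-apply the $L_1$-preserving Stage~3 with $A$ replaced by that value. The weak $L^{2n}$ bound is propagated with constant at most $2 M^{2n} (1 + |A|^{2n})$ exactly as in Theorem~\ref{th:generalsolution}, and the $C^\alpha$ norms converge geometrically along a telescoping sum. In the limit, $\nabla u \in L_1 \cap \Sigma$ almost everywhere, which gives all claimed properties. The hard part will be the adaptation of Lemma~\ref{l:roof} so as to preserve the $L_1$ structure: in particular, the boundary-matching on $\Omega_0' \times \partial Q$ must be carried out in a way that controls the small additional error in weak $L^{2n}$ so that it can be absorbed by the next iteration without destroying the constants.
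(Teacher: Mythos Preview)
Your plan has a fatal structural gap: the ``$L_1$-preserving roof'' you need cannot exist. Concretely, suppose $u\in W^{1,1}(\Omega;\R^{2n})\cap C(\overline\Omega)$ on a bounded open $\Omega$, with $u=l_{A,b}$ on $\partial\Omega$, $A\in L_1$, and $\nabla u\in L_1$ a.e. Writing $x=(x',x'')$ and $u=(u',u'')$, the condition $\nabla u\in L_1$ gives $\nabla_{x''}u'=0$ a.e., so $w:=u'-(A'x'+b')$ has $\nabla_{x''}w=0$ a.e. For a.e.\ $x'$ the restriction $w(x',\cdot)$ is continuous on $\overline{\Omega_{x'}}$, has zero weak gradient, and vanishes on $\partial\Omega_{x'}\subset\partial\Omega$; hence $w(x',\cdot)\equiv 0$, and by continuity $w\equiv 0$. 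The same argument on $u''$ forces $u\equiv l_{A,b}$. In other words, there is \emph{no} nontrivial piecewise affine map on a bounded domain with affine $L_1$ boundary data and $\nabla u\in L_1$ almost everywhere. The ``mismatch on $\Omega_0'\times\partial Q$'' that you try to absorb is exactly this rigidity: any correction that matches the boundary condition on the lateral faces must introduce $\nabla_{x''}u'\neq 0$, pushing $\nabla u$ out of $L_1$. Consequently the iteration of Theorem~\ref{th:generalsolution} cannot be run ``inside $L_1$'': after one roof step the error pieces already carry gradients outside $L_1$, and Stage~3 does not apply to them.

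The paper's route is genuinely different. It does not attempt to keep $\nabla u$ in $L_1$; instead it replaces Lemma~\ref{l:roof} by a refined version in which the auxiliary error gradients can be made to lie within an arbitrarily small ball of the target matrices $A_i$. Running the staircase iteration with this refined building block, the inductive set at step $N$ becomes $\{\nabla u_N\in B_{\delta_N}(2^NA)\}$ rather than $\{\nabla u_N=2^NA\}$, and on its complement one has $|\det\nabla u_N-1|<\eps$ (respectively $\dist(\nabla u_N,L_1\cap\Sigma)<\eps$) directly from closeness to the good support points. To continue the iteration from a matrix $A'\in B_{\delta_N}(2^NA)$ one conjugates the diagonal staircase laminate by the (near-identity) $R,Q\in SO(2n)$ coming from the SVD of $A'$ (which preserves $\det$), or, for the distance version \eqref{e:stage3-0bis_refined}, one simply translates the laminate by $A'-2^NA$. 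The resulting ``bad'' pieces (where $\nabla u\notin L_1\cap\Sigma$) have norm $\sim 2^N$ and measure $\lesssim \eps'\,2^{-2nN}|\Omega|$, which is exactly what \eqref{e:stage3-1_refined} requires. So the mechanism is control of \emph{closeness} to $L_1\cap\Sigma$ on all of $\Omega$, not membership in $L_1$; your proposal should be reworked along these lines.
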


\purple{ Check how exactlyProposition~\ref{p:stage2} and Proposition~\ref{p:stage3_refined} need to be combined.
Perhaps it is easier  to directly state a 'refined' version of the combination of Stage 2 and Stage 3}

\subsection{Laminates and differential inclusions (refined)}\label{ss:di_refined}

\begin{lemma}\label{l:roof_refined}
Let $A,A_1,A_2\in\R^{d\times m}$ be matrices such that 
\begin{equation*}
\rank(A_1-A_2)=1,\textrm{ and }A=\lambda_1 A_1+\lambda_2 A_2
\end{equation*} 
for some $\lambda_1,\lambda_2> 0$, $\lambda_1+\lambda_2=1$. For any $b\in\R^d$, any $\delta  \in (0, \frac12 |A_1 - A_2|)$,  $\eps>0$ and any regular domain $\Omega\subset\R^m$ there exists a piecewise affine Lipschitz map $u:\Omega\to\R^d$ such that 
$u=l_{A,b}$ on $\partial\Omega$ and, for $i =1, 2$, 
\begin{equation*} 
 \left|\left\{x\in\Omega:\,\nabla u(x) \in B_\delta(A_i)\right\}\right|  =  \lambda_i|\Omega|.
\end{equation*}
and
\begin{equation*}
(1- \eps) \lambda_i 
|\Omega| \leq \left|\left\{x\in\Omega:\,\nabla u(x)=A_i\right\}\right|\leq \lambda_i|\Omega|.
\end{equation*}
In particular,  $\nabla u \in \cup_{i=1}^2 B_\delta(A_i)$ a.e. in $\Omega$ and
\begin{equation}    \label{eq:roof_linfty_refined} 
 \|\nabla u\|_{L^{\infty}(\Omega)}\leq \max(|A_1|,|A_2|) + \delta
\end{equation}  
Moreover the set  $\{  X : | \{ \nabla u = X \}| > 0 \}$ of $\nabla u$ is finite, i.e., the induced measure $\nu_u$ is a finite
combination of Dirac masses. 
\end{lemma}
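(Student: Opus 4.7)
The plan is to adapt the construction in the proof of Lemma~\ref{l:roof}, iterating it on the error regions so that every gradient value ultimately lies in $B_\delta(A_1)\cup B_\delta(A_2)$ rather than in an arbitrary neighborhood of $A$. The crucial algebraic observation is that if $X = A + \eta\otimes\zeta$ for some vector $\zeta\in\R^m$, then $X$ admits the rank-one splitting
$$X = \lambda_1(A_1 + \eta\otimes\zeta) + \lambda_2(A_2 + \eta\otimes\zeta),$$
in which each matrix $A_i + \eta\otimes\zeta$ lies within distance $|\eta|\,|\zeta|$ of $A_i$ and the rank-one direction $A_1-A_2 = -\eta\otimes\xi$ is preserved.

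First I would apply Lemma~\ref{l:roof} in the form given there, selecting the auxiliary vectors $\xi^{(1)},\ldots,\xi^{(J)}$ with $|\eta|\,|\xi^{(j)}|<\delta$ for all $j$. This produces a piecewise affine Lipschitz map $u_1$ with $u_1=l_{A,b}$ on $\partial\Omega$, gradient values in $\{A_1,A_2\}\cup\{X^{(j)}:=A+\eta\otimes\xi^{(j)}\}_{j=1}^J$, and measure of $\{\nabla u_1 = A_i\}$ close to $\lambda_i|\Omega|$. The total measure of the error set (where $\nabla u_1 \in \{X^{(j)}\}_j$) can be made arbitrarily small by choosing $r$ small and $N$ large. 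The $L^\infty$ bound $\max(|A_1|,|A_2|) + \delta$ is immediate from $|X^{(j)}|\leq|A|+\delta$ and $|A|\leq\max(|A_1|,|A_2|)$.

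Next, I would treat each affine error piece of $u_1$ as a new instance of Lemma~\ref{l:roof}: with $X^{(j)}$ now playing the role of $A$, the splitting $X^{(j)}=\lambda_1 B_1^{(j)}+\lambda_2 B_2^{(j)}$ where $B_i^{(j)}:=A_i+\eta\otimes\xi^{(j)}\in B_\delta(A_i)$ is rank-one connected. Applying Lemma~\ref{l:roof} on this piece produces a piecewise affine map whose gradient on the main sub-regions lies in $B_\delta(A_1)\cup B_\delta(A_2)$, at the cost of creating smaller second-level error pieces with gradient $X^{(j)}+\eta\otimes\xi^{(k)}$. Iterating this procedure, at each level the measure of the unresolved error is multiplied by a factor that can be chosen as small as we wish.

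The principal technical difficulty will be reconciling the requirement that $\nabla u\in B_\delta(A_1)\cup B_\delta(A_2)$ a.e.\ with the requirement that $\nabla u$ take only finitely many values. Naively, each iteration introduces new gradient values and the construction converges only after infinitely many steps. I would address this by reusing the same finite set $\{\xi^{(j)}\}$ across all levels and by suitable rescaling so that error pieces at every level are affinely similar to the original $\Omega_0$; the finite list of gradient values can then be taken to be $\{A_1,A_2\}\cup\{B_i^{(j)}\}_{i,j}$, with a final application on the remaining small error region using only matrices already in this list. The exact equality $|\{\nabla u\in B_\delta(A_i)\}|=\lambda_i|\Omega|$, as opposed to the approximate equality naturally coming out of the iteration, will be obtained by a concluding proportional adjustment, exploiting the continuous parameters (the radii $r_i$ in the rescaling-and-covering argument of Section~\ref{ss:basic}) to tune the sub-region sizes so that the two ball totals match exactly.
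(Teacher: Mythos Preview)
Your iterative strategy is genuinely different from the paper's, and the two points you flag as ``principal technical difficulties'' are in fact real gaps that your outline does not close.

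The tension you identify between the a.e.\ inclusion $\nabla u\in B_\delta(A_1)\cup B_\delta(A_2)$ and the finiteness of gradient values is unavoidable in an iteration of Lemma~\ref{l:roof}. Each application leaves an error region whose gradient sits near the \emph{barycenter} of that step (near $X^{(j)}$, hence near $A$), never near $A_1$ or $A_2$. After finitely many iterations a set of positive measure still has gradient outside $B_\delta(A_1)\cup B_\delta(A_2)$; after infinitely many you have produced the infinite family of values $A+\eta\otimes\sum_l\xi^{(j_l)}$. Your proposed ``final application using only matrices already in the list'' does not work: Lemma~\ref{l:roof} always creates fresh error values near its barycenter, and a generic error value $C=A+\eta\otimes\zeta$ cannot be written as a rank-one convex combination of matrices from the fixed list $\{A_1,A_2\}\cup\{B_i^{(j)}\}$. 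Similarly, the ``proportional adjustment'' via the radii $r_i$ of the covering argument cannot tune the measures, since rescaling-and-covering preserves the push-forward $\nu_u$ exactly.

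The paper bypasses both issues with a single-step \emph{thin diamond} construction rather than an iteration. With an orthonormal frame $\xi=\xi^{(1)},\xi^{(2)},\dots,\xi^{(m)}$ and a single tent $h(t)=\lambda_1\lambda_2-\lambda_2 t$ for $t\ge0$, $h(t)=\lambda_1\lambda_2+\lambda_1 t$ for $t<0$, one sets
\[
f(x)=h(x\cdot\xi)-\tfrac{\rho}{2\sqrt{m-1}}\max\Bigl(\textstyle\sum_{i=2}^m|x\cdot\xi^{(i)}|-R,\;0\Bigr),
\qquad \Omega_0=\{f>0\},\quad \rho=\delta/|\eta|.
\]
The convex domain $\Omega_0$ is cut by the hyperplane $\{x\cdot\xi=0\}$ into two pieces $\Omega_\pm$ of measures \emph{exactly} $\lambda_1|\Omega_0|$ and $\lambda_2|\Omega_0|$, and on $\Omega_\pm$ one has $\nabla u=A+\eta\otimes\nabla f\in B_\delta(A_i)$ with $\nabla u=A_i$ except on the thin boundary layer $\{\sum_{i\ge2}|x\cdot\xi^{(i)}|>R\}$, whose relative measure tends to $0$ as $R\to\infty$. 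Thus the exact equality, the a.e.\ inclusion, and the finiteness of gradient values all come for free from the geometry, with no iteration needed.
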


\begin{proof} This is a minor variant of the thin diamond construction which appears, e.g., in  XXXXX CITE  MS Annals.
Since $\rank(A_1-A_2)=1$, there exist nonzero vectors $\xi\in\R^m$, $\eta\in\R^d$ such that $A_2-A_1=\eta\otimes\xi$. Note that we can write
$A_1=A-\lambda_2\eta\otimes\xi$ and $A_2=A+\lambda_1\eta\otimes\xi$. We may assume that $|\xi| = 1$.

Set $\xi^{(1)} = \xi$ and let $\xi^{(2)},\dots,\xi^{(m)}\in \R^m$ be such that $(\xi^{(1)},\dots,\xi^{(m)})$ is an orthonormal basis of $\R^m$. 
We will construct  a bounded open set $\Omega_0$ and a Lipschitz function $f: \Omega_0 \to \R$  with $f|_{\partial \Omega_0} = 0$ such that 
$\nabla f$ is close to $-\lambda_2 \xi$ or $\lambda_1 \xi$  a.e. in $\Omega$ and the set $\{x \in \Omega : \nabla f \notin \{ -\lambda \xi, (- \lambda_2 \xi, \lambda_1 \xi)
\}$ has small measure.

 Set 
 $$ h(t) = \lambda_1 \lambda_2  +  \begin{cases}  - \lambda_2 t & \text{if $t  \ge 0$}, \\
 \lambda_1 t & \text{if $t< 0$.}
\end{cases}
$$
and
$$ f_0(x) = h(x \cdot \xi).$$
Then $\nabla f_0  =  -\lambda_2 \xi$ if $x \cdot \xi > 0$ and $\nabla f_0 =  \lambda_1 \xi$ if $x \cdot \xi < 0$. 
Moroever, 
$$ S :=  \{x \in \R^m :  f(x) > 0\} = \{ x :  - \lambda_2    <  x\cdot \xi < \lambda_1 \}.$$

Let $ \rho > 0$ and $R \ge 1$,  to be chosen later,  and set
$$ f(x) := f_0(x) -  \frac{ \rho}{2\sqrt{m-1}}  \max(\sum_{i=2}^m  |x \cdot \xi^{(i)}| - R, 0),$$
$$ \Omega_0 := \{ x \in \R^m : f(x) > 0\}.$$
Then $\nabla f$ takes only finitely many values (off a nullset),  $|\nabla f - \nabla f_0| \le \frac{\rho}{2}$ and $\Omega_0$ is a bounded, open, convex subset of  $S$. 
More precisely
$$ \Omega_0 = \{ x :   \sum_{i=2}^m  |x \cdot \xi^{(i)}|<  \frac1\rho h(x \cdot \xi)  +R \}$$
Setting $\Omega_\pm =\{ x \in  \Omega_0 : \pm (x \cdot \xi) > 0$ we see that
$$ |\Omega_+| = \lambda_1 |\Omega_0|, \quad |\Omega_-| =  \lambda_2 |\Omega_0|$$
and thus
$$ \{ x \in \Omega_0 : \nabla f \in B_{  \rho}(A_1)\} = \lambda_1 \Omega, 
\quad 
  \{ x \in \Omega_0 : \nabla f \in B_{  \rho}(A_2)\} = \lambda_2 \Omega.$$
Set  
$$ \Omega_0^R := \{ x \in \Omega_0 : \sum_{i=2}^m  |x \cdot \xi^{(i)}| <  R \}.$$
Then $f = f_0$ in $\Omega_0^R$. We  claim that
\begin{equation}  \label{eq:roof_volume_fraction}
\lim_{R \to \infty} \frac{|\Omega_0 \setminus \Omega_0^R|}{\Omega_0} = 0.
\end{equation}
Indeed, we have
\begin{eqnarray*} & &  |\Omega_0 \setminus \Omega_0^R| \\
& \le &
|\{ x :  - \lambda_2 < x \cdot \xi < \lambda_1 : R \le \sum_{i=2}^m  |x \cdot \xi^{(i)}| <  R +  \lambda_1 \lambda_2  \frac1\rho \}| \\
& \le& C R^{m-2} \rho^{-1}
\end{eqnarray*}
Thus,  for fixed $\rho$,
\begin{equation}
\lim_{R \to \infty} \frac{|\Omega_0 \setminus \Omega_0^R|}{\Omega_0} = 0.
\end{equation}
Moreover,  $\Omega_0^R = \{ x \in \R^m :  \sum_{i=2}^m  |x \cdot \xi^{(i)}| < R ,  -\lambda_2 < x \cdot \xi  < \lambda_1 \}$ and hence   
$|\Omega_0^R| =  c R^{m-1}$ with $c >0$. Thus  \eqref{eq:roof_volume_fraction} holds.

Finally set $\rho = \frac{\delta}{|\eta|}$. Then the map $u: \Omega_0 \to \R^d$ given by 
$$ u:= l_{A,b} + f$$
has all the required properties  for the special domain $\Omega_0$, provided we choose $R$ large enough. 
For a general regular domain $\Omega$ we apply the  rescaling and covering argument from Section \ref{ss:basic}.
\end{proof}

An obvious iteration of Lemma \ref{l:roof} along the splitting sequence of any laminate $\nu\in \L(\R^{d\times m})$ (c.f.~Section \ref{ss:laminates})
leads to the following lemma, which makes laminates so useful for inclusion problems of the type \eqref{e:diffincl}.

\begin{lemma}\label{l:basicconstruction_refined}
Let $\nu\in\L(\R^{d\times m})$ be a laminate of finite order with center of mass $A$. Write $\nu=\sum_{j=1}^J\lambda_j\delta_{A_j}$ with $\lambda_j>0$
and $A_j \ne A_k$ for $j \ne k$.
For any $b\in\R^d$, any $\delta \in (0, \frac12 \min_{j \ne k} |A_j-A_k|)$,  $\eps>0$ and any regular domain $\Omega\subset\R^m$ there exists a piecewise affine Lipschitz map $u:\Omega\to\R^d$ such that 
$u=l_{A,b}$ on $\partial\Omega$ and, for $i =1, \ldots, J$, 
\begin{equation*} 
 \left|\left\{x\in\Omega:\,\nabla u(x) \in B_\delta(A_i)\right\}\right|  =  \lambda_i|\Omega|.
\end{equation*}
and
\begin{equation} \label{e:basicconstruction_refined}
(1- \frac{\eps}{J}) \lambda_i 
|\Omega| \leq \left|\left\{x\in\Omega:\,\nabla u(x)=A_i\right\}\right|\leq \lambda_i|\Omega|.
\end{equation}
In particular,  $\nabla u \in \cup_{i=1}^J B_\delta(A_i)$ a.e. in $\Omega$ and
\begin{equation}    \label{eq:basic_linfty_refined} 
 \|\nabla u\|_{L^{\infty}(\Omega)}\leq \max(|A_1|,|A_2|) + \delta
\end{equation}  
Moreover the set $\{  X : |\{\nabla u = X \}| > 0 \}$  of $\nabla u$ is finite, i.e., the induced measure $\nu_u$ is a finite
combination of Dirac masses. 
\end{lemma}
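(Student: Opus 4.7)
The plan is to induct on the length $K$ of a fixed splitting sequence $\delta_A = \nu_0 \to \nu_1 \to \cdots \to \nu_K = \nu$ for $\nu$, constructing piecewise affine Lipschitz maps $u_0, u_1, \ldots, u_K$ with $u_k = l_{A,b}$ on $\partial \Omega$, where $u_K$ will be the desired $u$. The base case is trivial: $u_0 = l_{A,b}$. The key invariant I would maintain is the following. For each atom $B$ of $\nu_k$ with weight $\mu_B^{(k)}$, there is a region $\Omega_k^B \subset \Omega$ such that the $\Omega_k^B$ are pairwise disjoint,
$$ |\Omega_k^B| \,=\, \mu_B^{(k)} |\Omega| \quad \text{(exact)}, $$
on $\Omega_k^B$ the gradient $\nabla u_k$ takes finitely many values, all contained in $\overline{B}_{r_k}(B)$, and the set where $\nabla u_k = B$ \emph{exactly} has measure at least $\prod_{i\le k}(1-\eps_i)\, \mu_B^{(k)} |\Omega|$. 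Here $r_k$ and $\eps_k$ are small parameters to be chosen.

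In the inductive step from $u_k$ to $u_{k+1}$, the elementary splitting $\nu_k \to \nu_{k+1}$ replaces some atom $B$ of $\nu_k$ by $\mu_1 \delta_{B_1} + \mu_2 \delta_{B_2}$ with $\rank(B_1 - B_2) = 1$ and $B = \mu_1 B_1 + \mu_2 B_2$. On each connected affine piece $U \subset \Omega_k^B$ where $\nabla u_k = B'$ for some $B' \in \overline{B}_{r_k}(B)$, I apply Lemma~\ref{l:roof_refined} with the \emph{translated} splitting
$$B' = \mu_1 B_1' + \mu_2 B_2', \qquad B_i' := B_i + (B' - B).$$
The translation preserves the rank-one condition and the convex weights, so Lemma~\ref{l:roof_refined} applies with parameters $\delta_{k+1}$ and $\eps_{k+1}$ and produces a piecewise affine map agreeing with $l_{B',\cdot}$ on $\partial U$. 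Gluing these across all pieces of $\Omega_k^B$ (via the gluing argument of Section~\ref{ss:basic}) yields $u_{k+1}$. The subsets $\{\nabla u_{k+1} \in B_{\delta_{k+1}}(B_i')\}$ partition the pieces with exact measures $\mu_i |U|$, so summing over pieces gives the new regions $\Omega_{k+1}^{B_i}$ with $|\Omega_{k+1}^{B_i}| = \mu_i |\Omega_k^B| = \mu_{B_i}^{(k+1)} |\Omega|$ (exact), while $\nabla u_{k+1} \in \overline{B}_{r_k + \delta_{k+1}}(B_i)$ on $\Omega_{k+1}^{B_i}$.

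To close the induction and match the lemma's conclusions I choose $\delta_k > 0$ with $\sum_{k=1}^K \delta_k < \delta$ (so that $r_K < \delta$) and $\eps_k > 0$ with $\prod_{k=1}^K (1-\eps_k) \ge 1 - \eps/J$. After step $K$, the atoms of $\nu$ are the leaves $A_1, \ldots, A_J$, so the exact partition $\{\Omega_K^{A_i}\}$ delivers $|\{\nabla u \in B_\delta(A_i)\}| = |\Omega_K^{A_i}| = \lambda_i |\Omega|$. The lower bound on $|\{\nabla u = A_i\}|$ follows from the product of the $(1-\eps_i)$ factors, using that each application of Lemma~\ref{l:roof_refined} keeps the gradient equal to the (translated) target on a fraction $(1-\eps_{k+1})$ of the piece; only pieces where $\nabla u_k = B$ \emph{exactly} contribute to $\{\nabla u = A_i\}$ in the final step. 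Since the gradient on $\Omega_K^{A_i}$ lies in $\overline{B}_\delta(A_i)$, the $L^\infty$ bound $\|\nabla u\|_\infty \le \max_i |A_i| + \delta$ follows; and because each step of Lemma~\ref{l:roof_refined} produces only finitely many gradient values, $\nu_u$ is a finite combination of Dirac masses.

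The main conceptual obstacle is to reconcile the exact measure partition $|\{\nabla u \in B_\delta(A_i)\}| = \lambda_i|\Omega|$ demanded in the conclusion with the fact that the corrector regions of Lemma~\ref{l:roof_refined} at intermediate splittings cannot be further subdivided. The resolution, encoded in the invariant above, is that at every step we subdivide \emph{all} affine pieces of $\Omega_k^B$ --- both the exact pieces where $\nabla u_k = B$ and the corrector pieces where $\nabla u_k = B'$ is only \emph{close to} $B$ --- by translating the splitting data to the local affine value. This keeps the partition exact and merely inflates the allowed tube radius $r_k$ additively by $\delta_{k+1}$ at each step. Controlling this cumulative inflation by a telescoping choice of $\delta_k$ is the technical heart of the argument.
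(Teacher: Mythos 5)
Your proof is correct and is exactly the ``obvious iteration of the refined roof lemma along the splitting sequence'' that the paper invokes without giving details: your invariant (exact-measure regions for each atom of the intermediate laminate, translating the splitting data to the local affine value on the corrector pieces, tube radii accumulating additively to $r_K<\delta$, and multiplicative $(1-\eps_k)$ losses only on the exact-equality sets) is precisely how that iteration must be organized to preserve the exact identity $|\{\nabla u\in B_\delta(A_i)\}|=\lambda_i|\Omega|$. The one small point to add is that an elementary splitting may move only \emph{part} of the mass of an atom (the residual measure $\tilde\nu$ may still charge it), in which case your inductive step should be applied only on a sub-family of the affine pieces of $\Omega_k^B$ of exact total measure $\lambda|\Omega|$, chosen proportionally within each gradient-value class (cutting one piece by a hyperplane if needed), so that both the exact-measure bookkeeping and the exact-equality lower bound are inherited by the untouched remainder.
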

We remark that, since $\sum_{j=1}^J\lambda_j=1$, estimate \eqref{e:basicconstruction_refined} also implies
\begin{equation}\label{e:basicconstruction_refined-2}
\left|\{x\in\Omega:\,\nabla u(x)\notin \supp\nu\}\right|\leq \eps|\Omega|.
\end{equation}

\subsection{Stage 3, refined}

\begin{proof}[Idea of proof Proposition~\ref{p:stage3_refined}] 
This is very similar to the proof of Proposition~\ref{p:stage3}.
One difference is that we use Lemma~\ref{l:basicconstruction_refined} instead of
 Lemma~\ref{l:basicconstruction}. This ensures that  $\nabla u_N$ is uniformly close
 to the support of the laminates used in the construction. 

For $A \in \mathcal D_{>1}$
the main difference is in the definition of the  sets $\Omega^{(N)}_{inductive}$.
In the proof of Proposition~\ref{p:stage3} these sets  where given by $ \{ x \in \mathring\Omega_{u_N} : \nabla u_N(x) = 2^N A\}$.
Now we set
$$  \Omega^{(N)}_{inductive} := \{ x \in \mathring\Omega_{u_N} : \nabla u_N(x) \in B_{\delta_N}(2^N A)\}. $$
Here $\delta_N$ is chosen such that in $\mathring\Omega_{u_N} \setminus  \Omega^{(N)}_{inductive}$ the gradient
$\nabla u_N$ is close to the 'good' points in the support of the laminate.
In particular $|\det \nabla u_N - 1| < \eps_N$ in $\mathring\Omega_{u_N} \setminus  \Omega^{(N)}_{inductive}$.

The difficulty is that in the course of the iteration not only diagonal matrices appear, since $B_{\delta_N}(2^N A)$
also contains non-diagonal matrices. For $A' \in B_{\delta_N}(2^N A)$ we can find $R \in SO(2n)$ and $Q \in SO(2n)$
which are both close to the identity such that $A' = R D Q$ with a diagonal matrix $D$. If $\nu_D = \sum_j \lambda_j \delta_{B_j} + \gamma \delta_{2D}$
is the laminate in  Lemma~\ref{l:staircase1}, then $\nu_A' := \sum_j \lambda_j \delta_{R B_j Q} + \gamma \delta_{2A'}$
is also a laminate of finite order 
and we can apply  Lemma~\ref{l:basicconstruction_refined} to construct a corresponding piecewise affine map. 

We have $\det(R B_j Q) = \det B_j = 1$. 
In general, however, $R$ and $Q$ are not in $L_1$. Thus $R B_j Q$ in general will not be in $L_1$.
We still know that $|R B_j Q| \sim 2^N$ and we can use an estimate of the form 
$$  \left|  \{ x \in \mathring\Omega_{u_N} : \nabla u_N(x) \in B_{\delta_N}(2^N) A \setminus \{ 2^N A\} \} \right|  \le \eps' 2^{-2n N}   |\Omega| $$
to get  \eqref{e:stage3-1_refined}

If we would like to show \eqref{e:stage3-0bis_refined} rather than \eqref{e:stage3-0_refined} we can argue similarly, but in this case
we do not use the $SO(2n)$ action but simply translation. 
More specifically, we set $\nu_A' := \sum_j \lambda_j \delta_{B_j + A'-2^N A} + \gamma \delta_{2A'}$.
Since $ A'-2^N A$ is small, we  get \eqref{e:stage3-0bis_refined}.
\end{proof}

}

\bibliographystyle{amsalpha}
\bibliography{product_quotient}

\providecommand{\bysame}{\leavevmode\hbox to3em{\hrulefill}\thinspace}
\providecommand{\MR}{\relax\ifhmode\unskip\space\fi MR }
% \MRhref is called by the amsart/book/proc definition of \MR.
\providecommand{\MRhref}[2]{%
  \href{http://www.ams.org/mathscinet-getitem?mr=#1}{#2}
}
\providecommand{\href}[2]{#2}
\begin{thebibliography}{KMSJX23}

\bibitem[AFS08]{astala_faraco_szekelyhidi08}
Kari Astala, Daniel Faraco, and L\'{a}szl\'{o} Sz\'{e}kelyhidi, Jr.,
  \emph{Convex integration and the {$L^p$} theory of elliptic equations}, Ann.
  Sc. Norm. Super. Pisa Cl. Sci. (5) \textbf{7} (2008), no.~1, 1--50.
  \MR{2413671}

\bibitem[Ast94]{astala1994}
Kari Astala, \emph{Area distortion of quasiconformal mappings}, Acta Math.
  \textbf{173} (1994), no.~1, 37--60. \MR{1294669}

\bibitem[BSV13]{BorosSz2013}
Nicholas Boros, L\'{a}szl\'{o} Sz\'{e}kelyhidi, Jr., and Alexander Volberg,
  \emph{Laminates meet {B}urkholder functions}, J. Math. Pures Appl. (9)
  \textbf{100} (2013), no.~5, 687--700.

\bibitem[Cel93]{Cellina93}
Arrigo Cellina, \emph{On minima of a functional of the gradient: necessary
  conditions}, Nonlinear Anal. \textbf{20} (1993), no.~4, 337--341.
  \MR{1206422}

\bibitem[Cel05]{cellina2005}
A.~Cellina, \emph{A view on differential inclusions}, Rend. Semin. Mat. Univ.
  Politec. Torino \textbf{63} (2005), no.~3, 197--209.

\bibitem[CFM05]{conti_faraco_maggi05}
Sergio Conti, Daniel Faraco, and Francesco Maggi, \emph{A new approach to
  counterexamples to {$L^1$} estimates: {K}orn's inequality, geometric
  rigidity, and regularity for gradients of separately convex functions}, Arch.
  Ration. Mech. Anal. \textbf{175} (2005), no.~2, 287--300. \MR{2118479}

\bibitem[CFMM05]{conti_faraco_maggi_muller05}
S.~Conti, D.~Faraco, F.~Maggi, and S.~M\"{u}ller, \emph{Rank-one convex
  functions on {$2\times 2$} symmetric matrices and laminates on rank-three
  lines}, Calc. Var. Partial Differential Equations \textbf{24} (2005), no.~4,
  479--493. \MR{2180863}

\bibitem[CT22]{ColomboTione2022}
M.~Colombo and R.~Tione, \emph{Non-classical solutions of the $p$-laplace
  equation}, arXiv:2201.07484, 2022.

\bibitem[Dac85]{dacorogna85}
B.~Dacorogna, \emph{Remarques sur les notions de polyconvexit\'{e},
  quasi-convexit\'{e} et convexit\'{e} de rang {$1$}}, J. Math. Pures Appl. (9)
  \textbf{64} (1985), no.~4, 403--438. \MR{839729}

\bibitem[DM99]{dacorogna_marcellini99}
B.~Dacorogna and P.~Marcellini, \emph{Implicit partial differential equations},
  Progress in Nonlinear Differential Equations and their Applications, vol.~37,
  Birkh\"{a}user Boston, Inc., Boston, MA, 1999. \MR{1702252}

\bibitem[DMP08]{DMP2008}
B.~Dacorogna, P.~Marcellini, and E.~Paolini, \emph{Lipschitz-continuous local
  isometric immersions: rigid maps and origami}, J. Math. Pures Appl. (9)
  \textbf{90} (2008), no.~1, 66--81. \MR{2435215}

\bibitem[Far03]{faraco03}
Daniel Faraco, \emph{Milton's conjecture on the regularity of solutions to
  isotropic equations}, Ann. Inst. H. Poincar\'{e} Anal. Non Lin\'{e}aire
  \textbf{20} (2003), no.~5, 889--909. \MR{1995506}

\bibitem[FLS21]{FaracoLindberg2021}
Daniel Faraco, Sauli Lindberg, and László Székelyhidi, \emph{Magnetic
  helicity, weak solutions and relaxation of ideal mhd}, arXiv:2109.09106,
  2021.

\bibitem[FMCO18]{FaracoMoraCorall2018}
Daniel Faraco, Carlos Mora-Corral, and Marcos Oliva, \emph{Sobolev
  homeomorphisms with gradients of low rank via laminates}, Adv. Calc. Var.
  \textbf{11} (2018), no.~2, 111--138.

\bibitem[Gro86]{gromov_pdr}
M.~Gromov, \emph{Partial differential relations}, Ergebnisse der Mathematik und
  ihrer Grenzgebiete (3) [Results in Mathematics and Related Areas (3)],
  vol.~9, Springer-Verlag, Berlin, 1986. \MR{864505}

\bibitem[GT01]{GilbargTrudinger}
David Gilbarg and Neil~S. Trudinger, \emph{Elliptic partial differential
  equations of second order}, Classics in Mathematics, Springer-Verlag, Berlin,
  2001, Reprint of the 1998 edition. \MR{1814364}

\bibitem[Kir03]{kirchheim03}
Bernd Kirchheim, \emph{Analysis and geometry of microstructure}, Habilitation
  thesis, University of Leipzig,
  https://www.mis.mpg.de/preprints/ln/lecturenote-1603.pdf, 2003.

\bibitem[KMSJX23]{otherpaper}
Bruce Kleiner, Stefan M{\"u}ller, L{\'a}szl{\'o} Sz{\'e}kelyhidi~Jr, and
  Xiandong Xie, \emph{Sobolev mappings of {E}uclidean space and product
  structure}, in preparation, 2023.

\bibitem[KMX20]{KMX1}
B.~Kleiner, S.~M\"{u}ller, and X.~Xie, \emph{Pansu pullback and rigidity of
  mappings between {C}arnot groups}, arXiv:2004.09271, 2020.

\bibitem[K{\v{S}}M03]{Kirchheim:2002wc}
Bernd Kirchheim, Vladimir {\v{S}}ver{\'a}k, and Stefan M{\"u}ller,
  \emph{{Studying nonlinear pde by geometry in matrix space}}, Geometric
  analysis and nonlinear partial differential equations, Springer, Berlin,
  2003, pp.~347--395.

\bibitem[LM16]{LiuMaly2016}
Zhuomin Liu and Jan Mal\'{y}, \emph{A strictly convex {S}obolev function with
  null {H}essian minors}, Calc. Var. Partial Differential Equations \textbf{55}
  (2016), no.~3, 19.

\bibitem[M{\v{S}}96]{muller_sverak96}
Stefan M\"{u}ller and Vladimir {\v{S}}ver\'{a}k, \emph{Attainment results for
  the two-well problem by convex integration}, Geometric analysis and the
  calculus of variations, Int. Press, Cambridge, MA, 1996, pp.~239--251.
  \MR{1449410}

\bibitem[MS01]{MullerSychev2001}
S.~M\"{u}ller and M.~A. Sychev, \emph{Optimal existence theorems for
  nonhomogeneous differential inclusions}, J. Funct. Anal. \textbf{181} (2001),
  no.~2, 447--475.

\bibitem[M{\v{S}}03]{muller_sverak03}
S.~M\"{u}ller and V.~{\v{S}}ver\'{a}k, \emph{Convex integration for {L}ipschitz
  mappings and counterexamples to regularity}, Ann. of Math. (2) \textbf{157}
  (2003), no.~3, 715--742. \MR{1983780}

\bibitem[M{\"{u}}l99]{muller99}
S.~M{\"{u}}ller, \emph{Variational models for microstructure and phase
  transitions}, Calculus of variations and geometric evolution problems
  ({C}etraro, 1996), Lecture Notes in Math., vol. 1713, Springer, Berlin, 1999,
  pp.~85--210. \MR{1731640}

\bibitem[Oli16]{oliva16}
Marcos Oliva, \emph{Bi-{S}obolev homeomorphisms {$f$} with {$Df$} and
  {$Df^{-1}$} of low rank using laminates}, Calc. Var. Partial Differential
  Equations \textbf{55} (2016), no.~6, Art. 135, 38. \MR{3566935}

\bibitem[Ped93]{pedregal93}
Pablo Pedregal, \emph{Laminates and microstructure}, European J. Appl. Math.
  \textbf{4} (1993), no.~2, 121--149. \MR{1228114}

\bibitem[Pom10]{Pompe2010}
Waldemar Pompe, \emph{Explicit construction of piecewise affine mappings with
  constraints}, Bull. Pol. Acad. Sci. Math. \textbf{58} (2010), no.~3,
  209--220. \MR{2771571}

\bibitem[Syc01]{sychev01}
M.~A. Sychev, \emph{Comparing two methods of resolving homogeneous differential
  inclusions}, Calc. Var. Partial Differential Equations \textbf{13} (2001),
  no.~2, 213--229. \MR{1861098}

\bibitem[Sz{\'e}07]{szekelyhidi2005}
L.~Sz{\'e}kelyhidi, Jr., \emph{Counterexamples to elliptic regularity and
  convex integration}, The interaction of analysis and geometry, Contemp.
  Math., vol. 424, Amer. Math. Soc., Providence, RI, 2007, pp.~227--245.

\end{thebibliography}

\end{document}